\documentclass[a4paper,12pt]{article}

\usepackage[utf8]{inputenc}
\usepackage[T1]{fontenc}
\usepackage[a4paper, margin=2.5cm]{geometry}

\usepackage{amsmath, amsthm, amssymb, enumerate}
\usepackage{graphicx}
\usepackage{enumerate}
\usepackage{authblk}
\usepackage[textsize=small, textwidth=2cm, color=yellow]{todonotes}
\usepackage{caption}
\usepackage[labelformat=simple]{subcaption}

\usepackage{inconsolata}
\usepackage{libertine}
\usepackage[absolute]{textpos}

\usepackage{comment}

\usepackage{thmtools}
\usepackage{thm-restate}

\usepackage{algpseudocode}

\usepackage[colorlinks=true, citecolor=red]{hyperref}

\usepackage{xfrac}
\usepackage{tikz}
\usetikzlibrary{shapes,snakes}
\usetikzlibrary{arrows,positioning} 
\tikzset{
    punkt/.style={
           circle,
           draw=black, very thick,
           text centered,
           inner sep = 2.5pt},
    dot/.style={
           circle,
           fill=black,
           inner sep=3pt
           },
    pil/.style={
           -,
           thick,
           },
     dir/.style={
           <-,
           thick,
           shorten <=2pt,
           shorten >=2pt,
           }
}

\declaretheorem[name=Theorem]{theorem} 
\declaretheorem[name=Lemma, sibling=theorem]{lemma}
\declaretheorem[name=Proposition, sibling=theorem]{proposition}
\declaretheorem[name=Definition, sibling=theorem]{definition}
\declaretheorem[name=Corollary, sibling=theorem]{corollary}
\declaretheorem[name=Conjecture, sibling=theorem]{conjecture}
\declaretheorem[name=Claim, sibling=theorem]{claim}
%

\declaretheorem[name=Observation, sibling=theorem]{observation}
\declaretheorem[name=Question, style = remark, sibling=theorem]{question}
\declaretheorem[name=Case, style = remark, numberwithin=theorem]{case}
\declaretheorem[name=Subcase, style = remark, numberwithin=case]{subcase}

\declaretheorem[name=Subsubcase, style = remark, numberwithin=subcase]{subsubcase}
\declaretheorem[name=Subsubsubcase, style = remark, numberwithin=subsubcase]{subsubsubcase}


\bibliographystyle{alpha}


\newcommand{\Uu}{{\mathcal{U}}}
\newcommand{\Vv}{{\mathcal{V}}}
\newcommand{\Ww}{{\mathcal{W}}}
\newcommand{\Zz}{{\mathcal{Z}}}
\newcommand{\Xx}{{\mathcal{X}}}
\newcommand{\Yy}{{\mathcal{Y}}}

\renewcommand{\leq}{\leqslant}
\renewcommand{\geq}{\geqslant}



\def\ie{{\em i.e.}} 

\def\cqedsymbol{\ifmmode$\lrcorner$\else{\unskip\nobreak\hfil
\penalty50\hskip1em\null\nobreak\hfil$\lrcorner$
\parfillskip=0pt\finalhyphendemerits=0\endgraf}\fi}

\title{Vizing's conjecture holds \thanks{The author is supported by National Science Center of Poland grant 2019/34/E/ST6/00443.}}
\author[1,2]{Jonathan Narboni}

\affil[1]{CNRS, LaBRI, Université de Bordeaux, France.}
\affil[2]{Theoretical Computer Science Department, Faculty of Mathematics and Computer Science,
Jagiellonian University, Kraków, Poland}
\date{}

\begin{document}
\maketitle


\begin{abstract}
    In 1964 Vizing proved that starting from any $k$-edge-coloring of a graph $G$ one can reach, using only Kempe swaps, a $(\Delta+1)$-edge-coloring of $G$ where $\Delta$ is the maximum degree of $G$. One year later he conjectured that one can also reach a $\Delta$-edge-coloring of $G$ if there exists one. Bonamy \textit{et. al} proved that the conjecture is true for the case of triangle-free graphs. In this paper we prove the conjecture for all graphs.
\end{abstract}

\section{Introduction}
In 1964 Vizing proved that the chromatic index of a graph $G$ (\ie the minimum number of colors needed to properly colors the edges of $G$), denoted by $\chi'(G)$, is at most $\Delta(G) +1 $ colors, where $\Delta(G)$ is the maximum degree of $G$.

\begin{theorem}\label{tmh:vizing_64}
Any simple graph $G$ satisfy $\chi'(G)\leq \Delta(G)+1$.
\end{theorem}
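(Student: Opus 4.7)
I would proceed by induction on the number of edges of $G$. The empty graph is trivial, so assume the statement holds for all graphs with fewer edges, pick any edge $e=uv_1$ of $G$, and apply the inductive hypothesis to $G-e$ to obtain a proper edge-coloring $\varphi$ of $G-e$ using at most $\Delta(G)+1$ colors. The goal is to modify $\varphi$ locally so that the resulting coloring uses the same palette and properly colors $e$ as well.

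The basic observation driving everything is that at each vertex $x$, since $\deg_G(x)\le \Delta$ and we have $\Delta+1$ colors available, at least one color does not appear on any edge incident to $x$ in $\varphi$; I call such a color \emph{missing} at $x$. If some color is missing simultaneously at $u$ and at $v_1$, I simply use it on $e$ and I am done. The interesting case is when every color missing at $u$ is used on some edge at $v_1$, and vice versa.

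To attack this case I would build a \emph{Vizing fan} at $u$ rooted at $v_1$: a maximal sequence $v_1,v_2,\ldots,v_s$ of distinct neighbors of $u$ such that for every $i\ge 2$ the color $\varphi(uv_i)$ is missing at $v_{i-1}$. Such a fan exists because every $v_i$ has a missing color, and if that missing color already labels some $uv_j$ we attach $v_j$ to the fan; maximality is forced since $u$ has finitely many neighbors. If at any step a color $\alpha$ missing at $u$ happens to be missing at some fan vertex $v_i$, I \emph{rotate} the fan: for $j=1,\ldots,i-1$ I set $\varphi(uv_j):=\varphi(uv_{j+1})$, and finally $\varphi(uv_i):=\alpha$. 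Each reassignment is valid precisely because $\varphi(uv_{j+1})$ was missing at $v_j$, and $\alpha$ is missing at both endpoints of $uv_i$; the formerly uncolored edge $uv_1$ is now colored.

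The genuinely delicate case is when no color missing at $u$ is missing at any $v_i$. Here I would invoke a Kempe-chain argument: choose $\alpha$ missing at $u$ and $\beta$ missing at $v_s$, and consider the maximal $\alpha\beta$-alternating path $P$ starting at $v_s$. Swapping $\alpha$ and $\beta$ along $P$ yields a proper coloring of $G-e$ in which $\alpha$ becomes missing at $v_s$, so that $\alpha$ is now missing at both $u$ and $v_s$, bringing us back to the rotation case. The main obstacle is to verify that the endpoint of $P$ is not some $v_i$ or $u$ in a position that would either destroy a fan-missing-color relation or produce a monochromatic conflict after the swap; the standard way around this is to first truncate the fan at the largest index $t$ with $\varphi(uv_{t+1})=\beta$ (if any), replay the argument from $v_t$ instead of $v_s$, and track carefully which vertices the $\alpha\beta$-chain can reach. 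Once this case analysis is done cleanly, both sub-cases yield the desired extension and the induction closes.
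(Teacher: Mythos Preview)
The paper does not prove this theorem at all: Theorem~\ref{tmh:vizing_64} is stated in the introduction purely as historical background, attributed to Vizing (1964), and the paper's actual work concerns the reconfiguration conjecture (Theorem~\ref{thm:Kempe} and Lemma~\ref{lem:main}). So there is no ``paper's own proof'' to compare against.

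That said, your plan is exactly the classical Vizing-fan argument and is correct as a sketch. The one place where you are consciously hand-waving --- the endpoint analysis of the $\alpha\beta$-chain $P$ and the truncation at the index $t$ with $\varphi(uv_{t+1})=\beta$ --- is indeed the only genuinely delicate step, and your description of how to handle it (split on whether $P$ ends at $u$, at $v_t$, or elsewhere, and rotate the appropriate initial segment of the fan before swapping) is the standard resolution. If you were asked to write this out in full you would need to verify explicitly that after the swap the fan relations $\varphi(uv_{j+1})$ missing at $v_j$ survive for the truncated fan $v_1,\ldots,v_t$, but this is routine once the case split is set up.
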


The proof heavily relies on the use of \textit{Kempe changes}. Kempe changes were introduced by Kempe in his unsuccessful attempt to prove the $4$-color theorem, but it turns out that this concept became one of the most fruitful tool in graph coloring. Throughout this paper, we only consider proper edge-colorings, and so we will only write colorings to denote proper edge-colorings. Given a graph $G$ and a coloring $\beta$, a Kempe chains $C$ is a maximal bichromatic component (Kempe chains were invented in the context of vertex-coloring, but the principle remains the same for edge-coloring). Applying a \textit{Kempe swap} (or Kempe change) on $C$ consists in switching the two colors in this component. Since $C$ is maximal, the coloring obtained after the swap is guaranteed to be a proper coloring, and if $C$ is not the unique maximal bichromatic component containing these two colors, the coloring obtained after the swap is a coloring different from $\beta$, as the partition of the edges is different.

The Kempe swaps induce an equivalence relation on the set of colorings of a graph $G$; two colorings $\beta$ and $\beta'$ are equivalent if one can find a sequence of Kempe swaps to transform $\beta$ into $\beta'$. In 1964, Vizing actually proved a stronger statement, he proved that any $k$-coloring of a graph $G$ (with $k>\Delta(G)$) is equivalent to a $(\Delta(G)+1)$-coloring of $G$.

\begin{theorem}
Let $G$ be a graph and $\beta$ a $k$-coloring of $G$ (with $k>\Delta(G)$). Then there exists a $(\Delta(G)+1)$-coloring $\beta'$ equivalent to $\beta$.
\end{theorem}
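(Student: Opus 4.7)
The plan is to induct on the number $N(\beta)$ of edges whose color under $\beta$ exceeds $\Delta(G)+1$. If $N(\beta)=0$, then $\beta$ already uses only colors in $\{1,\ldots,\Delta(G)+1\}$ and we take $\beta'=\beta$. Otherwise, let $c$ denote the largest color used (so $c\geq\Delta(G)+2$) and fix an edge $e=uv$ with $\beta(e)=c$. It suffices to produce, by a sequence of Kempe swaps using only colors in $\{1,\ldots,c\}$, a new coloring in which $e$ receives some color in $\{1,\ldots,c-1\}$ while no edge previously colored below $c$ is pushed to color $c$: this strictly decreases $N$ and the induction closes.

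Let $F_u,F_v\subseteq\{1,\ldots,c-1\}$ be the sets of colors missing at $u$ and $v$ respectively. Since $\deg(u),\deg(v)\leq\Delta(G)\leq c-2$, we have $|F_u|,|F_v|\geq 2$. In the easy case where $F_u\cap F_v\neq\emptyset$, pick $\alpha$ in the intersection: the $(\alpha,c)$-Kempe chain through $e$ is exactly $\{e\}$ (as $\alpha$ is absent at both endpoints), so a single swap recolors $e$ with $\alpha$ and affects nothing else.

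The main work lies in the case $F_u\cap F_v=\emptyset$. Fix $\beta_0\in F_v$; then $\beta_0$ is used at $u$ on a unique edge $uw$. For each $\alpha\in F_u$ examine the $(\alpha,\beta_0)$-Kempe chain $C_\alpha$ containing $u$: both $u$ and, whenever they lie in the chain, $v$ are endpoints of the $(\alpha,\beta_0)$-subgraph. If some $C_\alpha$ does not contain $v$, swapping it frees $\beta_0$ at $u$ without disturbing $v$, and the easy case applies with color $\beta_0$. Otherwise every $C_\alpha$ is a $u$--$v$ path and, symmetrically, every $(\alpha,\beta')$-chain with $\beta'\in F_v$ connects $u$ to $v$. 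To break this deadlock, I will build a Vizing-type fan at $u$: a sequence of distinct neighbors $v=v_0,w=v_1,v_2,\ldots$ of $u$ such that each $uv_i$ carries a color missing at $v_{i-1}$. At each step, either a Kempe chain at the current fan tip can be swapped so as to free a common color at $u$ and some $v_i$ (reducing back to the easy case), or the fan grows by one vertex. Because $u$ has at most $\Delta(G)$ neighbors, the fan must terminate in a favorable configuration.

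The main obstacle is the deadlock subcase above, and in particular the requirement that each fan rotation be realized as an honest Kempe swap rather than an arbitrary local recoloring, with no intermediate step introducing new edges of color above $\Delta(G)+1$. The slack $c\geq\Delta(G)+2$, which yields $|F_u|,|F_v|\geq 2$, is precisely what provides the spare free colors needed to implement each fan shift as a Kempe swap on two colors both strictly below $c$; without this slack (i.e.\ in the tight regime $c=\Delta(G)+1$), the same strategy fails, which is the phenomenon the rest of the paper must address.
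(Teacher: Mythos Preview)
The paper does not prove this statement; it is quoted as Vizing's 1964 theorem and serves only as background, so there is no in-paper argument to compare against.

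Your outline is the classical Vizing fan argument and is essentially correct, but two points need repair. First, the induction on $N(\beta)$ is not well-founded: your process only moves $e$ from color $c$ to some $\alpha<c$, which leaves $N$ unchanged when $\alpha>\Delta(G)+1$, and intermediate swaps on a pair $a\le\Delta(G)+1<b<c$ can even raise $N$. Induct instead on $(c,\#\{f:\beta(f)=c\})$ lexicographically; since every swap you perform except the final single-edge one uses two colors strictly below $c$, this pair strictly drops. Second, you misattribute the role of the slack $c\ge\Delta(G)+2$. The fan rotation is \emph{always} a chain of single-edge Kempe swaps, slack or not: when recoloring $uv_\ell$ with the old color of $uv_{\ell+1}$, that color is missing at $v_\ell$ by construction and at $u$ because the previous step just freed it, so the bichromatic component is the lone edge $uv_\ell$. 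What the slack actually buys is $|F_{v_i}|\ge(c-1)-\Delta(G)\ge 1$ at every fan vertex, so the fan can always be extended until it meets $F_u$ or revisits a fan-edge color (then one $(\alpha,\gamma)$-chain swap followed by a rotation of a subfan finishes). Without the slack some $v_i$ could use all of $\{1,\ldots,c-1\}$ and the construction stalls---that, not the rotation mechanics, is why the tight case $c=\Delta(G)+1$ is genuinely harder.
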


Note that some graphs only need $\Delta$ colors to be properly colored. One year later, Vizing proved that this result is generalizable to multigraphs, and ask the following question:

\begin{question}\label{que:vizing_65}
For any (multi)graph $G$ and for any $k$-coloring $\beta$ of $G$, is there always an optimal coloring equivalent to $\beta$ ?
\end{question}
Note that both in Theorem~\ref{tmh:vizing_64} and in Question~\ref{que:vizing_65} we do not have the choice in the target coloring. If we can choose a specific target optimal coloring, then the question can be reformulated as a reconfiguration question.

\begin{question}\label{que:vizing_plus_choice}
For any (multi)graph $G$ and for any $k$-coloring $\beta$, is any optimal coloring always equivalent to $\beta$ ?
\end{question}

If true, Question~\ref{que:vizing_plus_choice} would imply the following conjecture, as it suffices to take the optimal target coloring as an intermediate between the two non-optimal colorings.

\begin{conjecture}\label{conj:reconf}
Any two non-optimal colorings are equivalent.
\end{conjecture}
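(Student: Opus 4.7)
The plan is to derive Conjecture~\ref{conj:reconf} as an immediate corollary of a positive answer to Question~\ref{que:vizing_plus_choice}, following the sketch already given in the sentence preceding the statement. Let $\beta_1$ and $\beta_2$ be two non-optimal colorings of a graph $G$, and let $\beta^\star$ be any optimal coloring of $G$, which exists by the definition of $\chi'(G)$. A positive answer to Question~\ref{que:vizing_plus_choice} asserts that every coloring of $G$ is equivalent, via Kempe swaps, to every optimal coloring; in particular $\beta_1 \sim \beta^\star$ and $\beta_2 \sim \beta^\star$. Concatenating the two sequences (reversing the second) produces a Kempe-swap sequence from $\beta_1$ to $\beta_2$, so $\beta_1 \sim \beta_2$.

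The entire weight of the argument is therefore shifted onto proving Question~\ref{que:vizing_plus_choice}. This is strictly stronger than Vizing's original conjecture (Question~\ref{que:vizing_65}), since it insists that \emph{every} optimal coloring, not merely some, is reachable from a given starting coloring. To establish it, I would fix a source coloring $\beta$ and a target optimal coloring $\beta^\star$, and attempt to build a Kempe-swap sequence from $\beta$ to $\beta^\star$ by induction on a monovariant measuring disagreement with $\beta^\star$, for instance a suitably weighted count of edges $uv$ with $\beta(uv)\neq \beta^\star(uv)$. Each inductive step would pick such an edge and use a Vizing-fan style recoloring to align its color with $\beta^\star(uv)$ while introducing few new disagreements. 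If the source $\beta$ itself is not optimal, Theorem~\ref{tmh:vizing_64} may first be invoked to reduce to the case of $(\Delta+1)$-colorings, so that the induction has to handle only the last ``hop'' to $\Delta$ colors.

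I expect the main obstacle to be handling triangles. Bonamy \emph{et al.}'s triangle-free argument succeeds precisely because Vizing fans there interact cleanly with Kempe chains; in the presence of triangles, fans can collide and the natural monovariant can fail to decrease. Overcoming this — presumably via a more elaborate recoloring primitive that treats triangle-rich substructures atomically, together with a careful initial choice of an intermediate target coloring that is easier to reach from an arbitrary source than a prescribed $\beta^\star$ — is where the genuinely new ideas of the paper must lie.
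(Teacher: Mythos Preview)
Your first paragraph is correct and is precisely the paper's argument: the sentence immediately preceding Conjecture~\ref{conj:reconf} says exactly that a positive answer to Question~\ref{que:vizing_plus_choice} implies the conjecture by routing through an optimal coloring. The paper gives no further proof of Conjecture~\ref{conj:reconf} as such; all the work goes into Lemma~\ref{lem:main}, which answers Question~\ref{que:vizing_plus_choice} affirmatively for simple graphs.

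Your remaining paragraphs attempt that harder statement, and there your plan is not a proof and diverges from what the paper actually does. You propose a monovariant counting edges where $\beta$ disagrees with the target $\beta^\star$, decreased by edge-by-edge Vizing-fan corrections. The paper instead reduces to $\chi'$-regular graphs, fixes a \emph{single} color class of $\beta^\star$ (a perfect matching $M$), and proceeds by induction on $\chi'$: once every edge of $M$ carries color~$1$, remove $M$ and recurse. The relevant monovariant is the lexicographic pair (number of bad edges, number of ugly edges), where bad means ``in $M$, not colored~$1$'' and ugly means ``colored~$1$, not in $M$''. The decisive technical ingredient --- absent from your sketch and constituting the new contribution over the triangle-free case --- is Lemma~\ref{lem:all_cycles_invertible}: every Vizing-fan cycle in a $(\chi'+1)$-coloring of a $\chi'$-regular graph is invertible. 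Its proof, a minimum-counterexample argument that ultimately forces $N[v]$ to be an even clique with pairwise distinct missing colors, occupies the bulk of the paper. Your closing remark about ``a more elaborate recoloring primitive'' points vaguely in the right direction but does not identify this lemma or its proof method.
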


Mohar proved the weaker case where we have one additional color \cite{mohar2006kempe}.

\begin{theorem}[\cite{mohar2006kempe}]
All $(\chi'(G)+2)$-colorings are equivalent.
\end{theorem}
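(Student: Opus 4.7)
The plan is to show that any two $(\chi'(G)+2)$-edge-colorings of $G$ are Kempe-equivalent by showing each one is Kempe-equivalent to a fixed canonical coloring. Set $k = \chi'(G)+2$. The key structural fact is that in any $k$-coloring, every vertex $v$ has at least $k - \deg(v) \geq 2$ missing colors (colors not appearing on any edge incident to $v$). This ``two free colors at every vertex'' property is the engine of the whole argument.

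First I would observe that any global color permutation is realizable by Kempe swaps: given a $k$-coloring $\gamma$ and a transposition $(a,b)$, successively swapping every maximal $(a,b)$-bichromatic component produces the coloring in which colors $a$ and $b$ are globally interchanged. Since transpositions generate the symmetric group, every permutation of the color palette is achievable via Kempe swaps. Thus it suffices to show that all $k$-colorings are equivalent \emph{up to color renaming}.

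Next, I would fix an enumeration of the edges $e_1, \ldots, e_m$ and try to transform a given coloring $\alpha$ into the canonical one in which each $e_i$ receives the smallest color consistent with the choices made for $e_1, \ldots, e_{i-1}$. The induction proceeds edge by edge: assuming $e_1, \ldots, e_{i-1}$ are already canonically colored, I apply Kempe swaps that move $e_i = uv$ to its canonical color $c$ without disturbing the prefix. Let $a$ be the current color of $e_i$. If $c$ is missing at both $u$ and $v$, then the $(a,c)$-Kempe chain containing $e_i$ is exactly $\{e_i\}$ and a direct swap finishes. Otherwise the chain extends into $G$; here the two-free-colors property lets me perform a preparatory Kempe swap (using an auxiliary color $b$ missing at $u$ or $v$) to shorten the chain to $\{e_i\}$, after which the final swap works.

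The main obstacle will be verifying that these preparatory swaps can always be chosen to avoid disturbing the already-canonical edges $e_1, \ldots, e_{i-1}$. This requires a careful analysis of how bichromatic chains interact with the fixed prefix, and this is precisely where the ``$+2$'' is essential: with only a single extra color, there is just one candidate for the auxiliary color $b$ and a Kempe chain could be forced to corrupt some earlier $e_j$ (which is exactly the difficulty that makes Vizing's $+1$ conjecture hard, now resolved in this paper). The two spare colors guarantee that a suitable auxiliary is always available, and establishing that at least one choice always succeeds is the technical crux. I expect a case analysis on whether $b$ is missing at $u$, at $v$, or at both, together with an argument that the $(a,b)$- or $(c,b)$-chain at the endpoint can be routed through edges outside the fixed prefix, to carry the inductive step through.
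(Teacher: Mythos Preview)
The paper does not give its own proof of this theorem: it is stated as a known result of Mohar and cited without argument, serving only as context for the paper's main contribution (the $\chi'(G)+1$ case). So there is no in-paper proof to compare against.

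As for your proposal on its own merits: it is a sketch, not a proof, and the gap you yourself flag is genuine and unresolved. You correctly isolate the difficulty---ensuring the preparatory Kempe swap does not disturb the already-fixed prefix $e_1,\ldots,e_{i-1}$---but your ``I expect a case analysis \ldots\ to carry the inductive step through'' is a hope, not an argument. Having two missing colors at each endpoint of $e_i$ does not by itself let you route a Kempe chain around an arbitrary set of previously-fixed edges: a bichromatic path can be long, can pass through the neighborhoods of many earlier $e_j$, and the auxiliary color $b$ being free at $u$ or $v$ says nothing about what happens farther along the chain. Nothing in your outline explains why at least one choice of $b$ yields a chain disjoint from the prefix, and in general there is no reason it should. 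The greedy edge-by-edge canonicalization strategy is therefore not established.

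Mohar's actual argument takes a different route: it works at the level of color classes rather than individual edges, using Vizing-type fan arguments to show that any $(\chi'+2)$-coloring can be transformed into one avoiding a prescribed color, and then reduces the problem inductively. That style of argument---manipulating fans around a vertex rather than trying to freeze edges one by one---is much closer to the techniques used throughout the present paper, and it is where the ``two spare colors'' hypothesis does real work.
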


When considering the stronger case, McDonald \& al. proved that the conjecture is true for graphs with maximum degree $3$ \cite{mcdonald2012kempe}, Asratian and Casselgren proved that it is true for graphs with maximum degree $4$ \cite{casselgren}, and Bonamy \& al. proved that the conjecture is true for triangle-free graphs. In this paper, we prove that the conjecture is true for all graphs.

\begin{theorem}\label{thm:Kempe}
Let $G$ be a graph, all its $(\chi'(G)+1)$-edge colorings are Kempe-equivalent. 
\end{theorem}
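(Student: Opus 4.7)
The plan is to prove Theorem~\ref{thm:Kempe} by induction on $|E(G)|$, extending the fan-rotation strategy used by Bonamy~\textit{et al.}~in the triangle-free setting. I would fix a canonical $(\chi'(G)+1)$-coloring $\beta^*$ and show that every other $(\chi'(G)+1)$-coloring $\beta$ is Kempe-equivalent to $\beta^*$, processing the edges of $G$ in a suitable order and using Kempe swaps to align $\beta$ with $\beta^*$ on increasingly many edges without disturbing those already aligned.

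For the inductive step, I would split on whether $G$ is chromatic-index-critical. If there is an edge $e$ with $\chi'(G-e)=\chi'(G)$, induction applied to $G-e$ aligns $\beta$ and $\beta^*$ on $G - e$, being careful that Kempe swaps in $G-e$ can be lifted to Kempe swaps in $G$ (possibly after renaming colors so that the color of $e$ is not involved in the swapped bichromatic component). Once $G - e$ is aligned, a classical Vizing fan at one endpoint of $e$ suffices to adjust the color of $e$ itself. If $G$ is critical, Vizing's adjacency lemma provides enough local structure around any low-degree vertex to build a fan supporting the necessary rotation.

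The central technical tool is the Vizing-fan Kempe rotation: given an edge $e = uv$ on which $\beta$ and $\beta^*$ still differ, one builds a fan $F = (v_1 = v, v_2, \ldots, v_k)$ at $u$, then performs a swap on the bichromatic Kempe chain at the terminal vertex of the fan (between a color missing at $u$ and a color missing at $v_k$) to free a color, after which a sequence of edge-color shifts along the fan realizes the desired local change. The main obstacle---and the reason Bonamy~\textit{et al.}'s triangle-free restriction was needed---is that in a triangle-free graph these Kempe chains are simple alternating paths whose endpoints are predictable, whereas in a graph with triangles a chain can close back onto the fan and thereby invalidate the rotation. Overcoming this will require a refined analysis: either choose the fan so that the relevant Kempe chain avoids the fan, or insert auxiliary Kempe swaps that ``untangle'' the triangle configuration before the main rotation. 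Proving that such a refined procedure always exists and does not disturb edges already aligned will be the heart of the proof.
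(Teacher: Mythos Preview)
Your proposal is not a proof but a plan that explicitly defers the hard part: you write that ``proving that such a refined procedure always exists \ldots\ will be the heart of the proof'' without indicating how to do it. That missing step is precisely the content of the paper. The obstruction you identify---that in the presence of triangles the Kempe chain used in a fan rotation can close back onto the fan---is exactly the phenomenon the paper spends most of its length controlling, via the notion of \emph{invertible cycles} in the fan digraph $D_v$ and the long inductive proof (properties $P(i)$, $Q(i)$, $(\Vv,u)$-independent subfans, Lemmas~\ref{lem:all_cycles_invertible}, \ref{lem:only_cycles}, \ref{lem:fans_around_Vv}, \ref{lem:cycles_around_v_starting_with_u}) that every such cycle is invertible. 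Nothing in your outline suggests a mechanism for this.

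Your overall architecture also diverges from the paper in ways that create additional gaps. The paper does \emph{not} induct on $|E(G)|$ or split on criticality; it reduces to $\chi'$-regular graphs, fixes a target $\chi'$-coloring $\alpha$, and inducts on $\chi'$ by peeling off a perfect matching. Your edge-deletion step has a concrete problem you gloss over: a Kempe chain in $G-e$ need not be maximal in $G$ whenever $e$ carries one of the two colors and meets the chain, so swaps in $G-e$ do not lift to swaps in $G$. Saying ``rename colors so that the color of $e$ is not involved'' is circular, since such a renaming is itself a sequence of Kempe swaps whose feasibility is what is at issue. In the critical case, invoking Vizing's adjacency lemma gives degree information about neighbors but no control over Kempe chains; it is unclear how it would yield the required rotation. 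In short, the proposal identifies the difficulty correctly but supplies neither the key lemma (invertibility of fan-cycles) nor a viable substitute.
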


Theorem~\ref{thm:Kempe} is a direct consequence of the following Lemma which is the main result of this paper.

\begin{lemma}\label{lem:main}
Let $G$ be a graph, any $(\chi'(G)+1)$-coloring of $G$ is equivalent to any $\chi'(G)$-coloring of $G$.
\end{lemma}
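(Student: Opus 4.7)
My approach is to prove Lemma~\ref{lem:main} by induction on $|E(G)|$. Let $\alpha$ be a $(\chi'(G){+}1)$-edge-coloring and $\beta$ a $\chi'(G)$-edge-coloring of $G$. The base cases, such as graphs with a single edge, are immediate. For the inductive step, the plan is to locate an edge $e=uv$ and exhibit a sequence of Kempe swaps that recolors $\alpha$ and $\beta$ so that they agree on $e$; then I apply the induction hypothesis to $G-e$. Once $\alpha(e)=\beta(e)=c$, the inductive hypothesis applied to $G-e$ (whose chromatic index is at most $\chi'(G)$) yields a sequence transforming $\alpha|_{G-e}$ into $\beta|_{G-e}$; since no swap performed in $G-e$ touches $e$, the sequence lifts unchanged to $G$ provided we can arrange that the chains used inside $G-e$ are not extended through $e$ in $G$, which follows from $e$ being isolated in its color class at both $u$ and $v$.

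The reduction to a single edge relies on the Vizing fan machinery underlying Theorem~\ref{tmh:vizing_64}. Fix a vertex $u$ of maximum degree and let $e=uv$ be incident to $u$. In $\beta$, all $\chi'(G)$ colors may appear at $u$, while in $\alpha$ at least two colors are missing at $u$. Starting from $v$, I would construct Vizing fans $F_\alpha$ and $F_\beta$ of the form $(uv_1,\ldots,uv_k)$ in each coloring, and perform Kempe swaps on appropriate bichromatic chains to rotate colors along the fan. The extra color available in $\alpha$ acts as a reservoir for freeing a target color $c$ on $e$, while in $\beta$ a standard fan rotation re-assigns $c$ to $e$. After these rotations the two colorings agree on $e$, triggering the inductive step.

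The main obstacle, and the place where the argument must go beyond the triangle-free work of Bonamy \textit{et al.}, is the case in which the Vizing fan \emph{closes}: the color missing at the last vertex $v_k$ of the fan reappears on an edge $uv_j$ with $j<k$, forcing the fan to loop rather than terminate at a free color. Triangle-freeness excludes several closing configurations and lets the fan terminate cleanly; in general graphs, one must instead construct a bichromatic chain swap that breaks the cycle, typically by using the $(\chi'(G){+}1)$-th color in $\alpha$ as a buffer to detour the fan out of its cycle, and then re-closing consistently in $\beta$. I expect the heart of the proof to be a careful case analysis of the possible closing patterns, with the construction, for each pattern, of a finite sequence of Kempe swaps that either resolves the fan directly or reduces to a strictly smaller instance of the same alignment problem, ensuring termination of the inductive construction.
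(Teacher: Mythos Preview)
Your inductive scheme has a structural gap that case analysis cannot repair. If you delete a single edge $e=uv$ colored $c$ and apply the induction hypothesis to $G-e$, the resulting sequence of Kempe swaps in $G-e$ does \emph{not} lift to $G$: any swap in $G-e$ involving the color $c$ may terminate at $u$ or $v$ (both of which miss $c$ in $G-e$), and after such a swap $u$ or $v$ acquires a $c$-edge in $G-e$, so reinserting $e$ with color $c$ produces an improper coloring of $G$. Your justification (``$e$ being isolated in its color class at both $u$ and $v$'') points in exactly the wrong direction: the fact that $e$ is the only $c$-edge at $u$ and $v$ is precisely what makes $u,v$ endpoints of $(c,\cdot)$-chains in $G-e$. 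This is why the paper does not induct on $|E(G)|$ but on $\chi'(G)$, after reducing to $\chi'$-regular graphs: one fixes an entire color class $M$ of the target $\chi'$-coloring $\alpha$ (a perfect matching), shows that $\beta$ is equivalent to a coloring whose color-$1$ edges are exactly $M$, and then deletes $M$. In $G\setminus M$ neither restricted coloring uses color $1$ at all, so every Kempe swap in $G\setminus M$ lifts verbatim to $G$.

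The second gap is that your alignment step and your treatment of closing fans are promissory notes rather than arguments. The paper's engine is Lemma~\ref{lem:all_cycles_invertible}: in any $(\chi'{+}1)$-coloring of a $\chi'$-regular graph, every fan that is a cycle is invertible. This is established by taking a minimum non-invertible cycle $\Vv$ around $v$ and deducing, via Lemmas~\ref{lem:only_cycles} and~\ref{lem:cycles_interactions}, that $N[v]$ must be an even clique in which every vertex misses a distinct color, a parity contradiction. Your sketch gestures at this difficulty (``the case in which the Vizing fan closes'') but supplies no mechanism; there is no ``$(\chi'{+}1)$-th color as a buffer'' trick that dissolves it, and the actual argument occupies most of the paper (Sections~\ref{sec:fans_around_Vv}--\ref{sec:cycles_interaction}).
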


\section{General setting of the proof}

The proof inherits the technical setup of \cite{bonamy2021vizing}, in this section, we introduce this setting, and give the general outline of the proof of the main result.

\subsection{Reduction to $\chi'(G)$-regular graphs}
The general setting of the proof follows that of \cite{bonamy2021vizing} which itself follows that of \cite{mcdonald2012kempe} and of \cite{casselgren}. We first show that we can reduce the problem to the class of regular graphs. Indeed, given a graph $G$ and a coloring $\beta$, we can build a graph $G'$ s.t. :
\begin{itemize}
    \item $G'$ is $\chi'(G)$-regular,
    \item the coloring $\beta$ can be completed into a coloring $\beta'$ of $G'$, and
    \item if a coloring $\gamma'$ is equivalent to $\beta'$ in $G'$, then the restriction of $\gamma'$ to $G$ is equivalent to $\beta$.
\end{itemize}

To build $G'$ we step-by-step build a sequence $G_1,\cdots,G_t = G'$ where at each step, $\delta(G_{i+1}) = \delta(G_i) +1$ (where $\delta(G_i)$ is the minimum degree of $G_i$). For any $G_i$, the graph $G_{i+1}$ is build as follows:
\begin{itemize}
    \item take two copies of $G_i$, and
    \item add a matching between a vertex and its copy if this vertex has minimum degree in $G_i$.
\end{itemize}

It is clear that $G'$ is $\chi'(G)$-regular, and to extend a coloring of $G_i$ to a coloring of $G_{i+1}$, it suffices to copy the coloring for each copy of $G_i$, and since the edges of the matching connect two vertices of minimum degree, there will always be an available colors for those edges. Moreover, a Kempe swap in $G_i$ has a natural generalization in $G_{i+1}$: if a swap in $G_i$ corresponds to more than one swap in $G_{i+1}$, it suffices to apply the swap on all the corresponding components in $G_{i+1}$. From now on, we will only consider $\chi'$-regular graphs.

Note that colorings in regular graphs are easier to handle as the two following properties are verified:
\begin{itemize}
    \item for any $(\Delta(G))$-coloring of a $\chi'(G)$-regular graph $G$, every vertex $v$ is incident to exactly one edge of each color, and each color class is a perfect matching, and
    \item for any $(\Delta(G) +1)$-coloring $\alpha$ of a $\chi'(G)$-regular graph $G$, every vertex $v$ is incident to all but one color, we call this color the \textit{missing color} at $v$, and denote it by $m_{\alpha}(v)$ (we often drop the $\alpha$ when the coloring is clear from the context).
\end{itemize}

From now on, in the rest of the paper, we only consider $\chi'$-regular graphs.

\subsection{The good the bad and the ugly}

The general approach to Theorem~\ref{thm:Kempe} is an induction on the chromatic index. Given a graph $G$, a $\Delta(G)$-coloring $\alpha$ and a $(\Delta(G)+1)$-coloring $\beta$, our goal is to find a sequence of Kempe swaps to transform $\beta$ into $\alpha$. To do so, we consider a color class in $\alpha$, say the edges colored $1$. These edges induce a perfect matching $M$ in $G$, thus, if we can find a coloring $\beta'$ equivalent to $\beta$ s.t. for any edge $e$, $\beta'(e) = 1 \Leftrightarrow \alpha(e) = 1$, then we can proceed by induction on $G' = G \setminus M$, noting that $\chi'(G') = \chi'(G) -1$, and that the restrictions of $\alpha$ and $\beta'$ to $G'$ only use $\Delta(G) -1$, and $\Delta(G)$ colors respectively.

So, given a $(\Delta(G)+1)$-coloring $\beta$ of $G$, we can partition the edges of $G$ into three sets, an edge $e$ is called:
\begin{itemize}
    \item \textit{good}, if $e\in M$ and $\beta(e) =1$,
    \item \textit{bad}, if $e\in M$ and $\beta(e)\neq 1$,and 
    \item \textit{ugly}, if $e\not \in M$ and $\beta(e) = 1$.
\end{itemize}

\noindent A vertex missing the color $1$ is called a \textit{free vertex}. Toward contradiction, we assume that $\beta$ is not equivalent to $\alpha$, and we consider a $(\Delta(G)+1)$-coloring $\beta'$ equivalent to $\beta$ which minimizes the number of ugly edges among the colorings equivalent to $\beta$ that minimize the number of bad edges, we call \textit{minimal} such a coloring. Thus, if we can find a coloring $\beta''$ equivalent to $\beta'$ where the number of bad is strictly lower than in $\beta'$, or with the same number of bad edges, and strictly fewer ugly edges, we get a contradiction.

\subsection{Fan-like tools}
In his proof of 64, Vizing introduce a technical tool to apply Kempe swaps on an edge-coloring in very controlled way: \textit{Vizing's fans}. To define them, we first need to define an auxiliary digraph. Given a graph $G$, a $(\Delta(G)+1)$-coloring $\beta$ of $G$ and a vertex $v$, the directed graph $D_v$ is defined as follows:
\begin{itemize}
    \item the vertex set of $D_v$ is the set of edges incident with $v$, and
    \item we put an arc between two vertices $vv_1$ and $vv_2$ of $D_v$, if the edge $vv_2$ is colored with the missing color at $v_1$.
\end{itemize}

The \textit{fan around $v$ starting at the edge $e$}, denoted by $X_v(e)$, is the maximal sequence of vertices of $D_v$ reachable from the edge $e$. It is sometime more convenient to speak about the color of the starting edge of a fan, if $c$ is a color, $X_v(c)$ denotes the fan around $v$ starting at the edge colored $c$ incident with $v$. Note that since the graph $G$ is $\chi'(G)$-regular, each vertex misses exactly one color, and thus, in the digraph $D_v$, each vertex has outdegree at most $1$. Hence a fan $\Xx$ is well-defined and we only have three possibilities for the fan $\Xx$:

\begin{itemize}
    \item $\Xx$ is a path,
    \item $\Xx$ is a cycle, or
    \item $\Xx$ is a comet (\textit{i.e.} a path with an additional arc between the sink and an internal vertex of the path) .
\end{itemize}

If $X = (vv_1,\cdots,vv_k)$ is a fan, $v$ is called the central vertex of the fan, and $vv_1$ and $vv_k$ are respectively called the first and the last edge of the fan (similarly, $v_1$ and $v_k$ are the first and last vertex of $X$ respectively). 

 Given a $(\Delta(G)+1)$-coloring $\beta$ of $G$, and fan $\Xx = (vv_1,\cdots,vv_k)$ which is a cycle around a vertex $v$, where each vertex $v_i$ misses the color $i$ (and so each edge $vv_i$ is colored $(i-1)$), we can define the coloring $\beta' = X^{-1}(\beta)$ as follows:
\begin{itemize}
    \item for any edge $vv_i$ not in $\Xx$, $\beta'(vv_i) = \beta(vv_i)$, and
    \item for any edge $vv_i$ in $\Xx$, $\beta'(vv_i) = i$ and $m(v_i) = i-1$
\end{itemize}

The coloring $\Xx^{-1}(\beta)$ is called the \textit{invert} of $\Xx$, and we say that $X$ is \textit{invertible} if $\Xx$ and $\Xx^{-1}(\beta)$ are equivalent. In this paper, we prove that in any coloring, any cycle is invertible.

\begin{lemma}\label{lem:all_cycles_invertible}
In any $(\chi'(G)+1)$-coloring of a $\chi'(G)$-regular graph $G$, any cycle is invertible.
\end{lemma}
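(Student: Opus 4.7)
\medskip

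\noindent\textbf{Proof plan.} My plan is to prove Lemma~\ref{lem:all_cycles_invertible} by induction on the length $k$ of the cycle $\mathcal{X} = (vv_1,\ldots,vv_k)$, using the (simpler) invertibility of path fans as a building block.

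For the base case $k = 2$, write $m(v_1) = c_1$ and $m(v_2) = c_2$, so that $\beta(vv_1) = c_2$ and $\beta(vv_2) = c_1$. The maximal $(c_1, c_2)$-bichromatic component containing $v_1$ is forced to be the three-vertex path $v_1\,v\,v_2$: it leaves $v_1$ along the color-$c_2$ edge $vv_1$, continues at $v$ along the color-$c_1$ edge $vv_2$, and terminates at $v_2$ (which misses $c_2$). Swapping this chain produces exactly $\mathcal{X}^{-1}(\beta)$ and does not touch any other edge of $G$.

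For $k \geq 3$, the idea is to ``break'' the cycle into a path fan by an auxiliary Kempe swap, invert the resulting path fan by the classical Vizing shift, and then undo the auxiliary swap. Concretely, fix an index $i$ and consider the Kempe chain $C$ on colors $\bigl(m(v),\, m(v_i)\bigr)$ through $v_i$: swapping $C$ replaces $m(v_i)$ by $m(v)$, which kills the arc of $D_v$ closing the cycle at position $i$ and leaves a path fan $\mathcal{X}'$ around $v$. Path fans can be inverted by a sequence of Kempe swaps shifting each $vv_j$ to its successor color, producing a coloring that agrees with $\mathcal{X}^{-1}(\beta)$ everywhere except possibly at the cut position; a second Kempe swap on the corresponding bichromatic component then restores the missing color at $v_i$ and completes the inversion of $\mathcal{X}$.

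The main obstacle is controlling the auxiliary chain $C$, which is a global object in $G$: it may leave the fan and re-enter it at some other $v_j$, or revisit $v$ itself. These are precisely the degeneracies involving triangles and adjacencies among the $v_i$'s that are ruled out in the triangle-free setting of \cite{bonamy2021vizing}. I would handle them by a choice/counting argument selecting an index $i$ for which $C$ behaves well, together with a case analysis for the residual bad configurations, appealing either to the inductive hypothesis on the shorter cycle obtained by cutting $\mathcal{X}$ at a chain re-entry point, or to the minimality setup of Section~2.2 (using that creating a new bad or ugly edge would contradict minimality of $\beta'$). This case analysis is the substantive new ingredient relative to the triangle-free case, and I expect most of the work of the proof to lie there.
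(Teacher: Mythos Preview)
Your base case $k=2$ is correct and matches the paper. Your overall framework (induction on the length of a minimum non-invertible cycle) is also the paper's starting point. But the inductive step has a genuine gap that your plan does not close.

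The difficulty is this: your ``break the cycle'' move swaps the chain $C=K_{v_i}(m(v),m(v_i))$. If $v\notin C$ for some $i$, this works, and in fact this is exactly Lemma~\ref{lem:saturated_cycle} (a non-saturated cycle is invertible). The hard case is when the cycle is \emph{saturated}: $v_i\in K_v(m(v),m(v_i))$ for \emph{every} $i$. In that case there is no index $i$ for which $C$ ``behaves well'' in your sense, so a choice/counting argument over $i$ cannot get off the ground. Every auxiliary swap you try hits $v$ and changes $m(v)$, destroying the fan structure you want to invert. Your proposal acknowledges this obstacle but offers no mechanism to handle it; the appeal to the minimality setup of Section~2.2 is in the wrong logical direction, since that section \emph{uses} Lemma~\ref{lem:all_cycles_invertible} as a black box to prove Theorem~\ref{thm:Kempe}, and the bad/ugly bookkeeping there is about a fixed target matching $M$, which is not present in the statement of Lemma~\ref{lem:all_cycles_invertible}.

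The paper's route for the saturated case is entirely different and global rather than local. Assuming a minimum non-invertible cycle $\mathcal V$ around $v$, it proves (Lemma~\ref{lem:only_cycles}) that \emph{every} fan $X_v(c)$ is a cycle, and then (Lemma~\ref{lem:cycles_interactions}) that any two neighbours of $v$ lying in such cycles are adjacent. This forces $G=N[v]=K_{\Delta+1}$ with all vertices missing distinct colors, contradicting the parity constraint that in any $(\Delta+1)$-edge-coloring of an even clique each color is missed at an even number of vertices. The bulk of the paper (Sections~4--5, the properties $P(i)$, $Q(i)$, and the machinery of $(\mathcal V,u)$-independent fans) is devoted to proving Lemma~\ref{lem:only_cycles}; this is the ``substantive new ingredient'' you allude to, but it is not a local case analysis on a single auxiliary chain --- it is a structural argument about all fans around $v$ simultaneously.
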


 We prove Lemma~\ref{lem:all_cycles_invertible} in Section~\ref{sec:general_outline}, and prove here Theorem~\ref{thm:Kempe}. The proof of Theorem~\ref{thm:Kempe} is derived from the proof of Theorem~1.6 in $\cite{bonamy2021vizing}$. We first need the following results from \cite{bonamy2021vizing} and \cite{casselgren} which we restate here (in a slightly different way).
 
 \begin{observation}[\cite{bonamy2021vizing}]\label{obs:every_bad_adj_ugly}
In a minimal coloring, every bad edge is adjacent to an ugly edge.
 \end{observation}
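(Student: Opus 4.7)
The plan is to prove the observation by a one-step Kempe swap argument. Suppose toward a contradiction that in the minimal coloring $\beta'$ there is a bad edge $e = uv$ that is adjacent to no ugly edge. I will exhibit a single Kempe swap that turns $e$ from bad into good without altering any other edge, producing an equivalent coloring with strictly fewer bad edges and contradicting the minimality of $\beta'$.

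The first step is to translate the hypothesis ``no adjacent ugly edge'' into a statement about missing colors. Because $e$ is bad we have $e \in M$ and $\beta'(e) \neq 1$. Since $M$ is a perfect matching, the only edge of $M$ incident to $u$ is $e$ itself, so every other edge incident to $u$ lies outside $M$; such an edge, if colored $1$, would by definition be ugly. By hypothesis no ugly edge is incident to $u$, hence no edge incident to $u$ is colored $1$, and symmetrically for $v$. Using that $G$ is $\chi'(G)$-regular, each vertex is incident to exactly one edge of each color except the one it misses, so we conclude $m_{\beta'}(u) = m_{\beta'}(v) = 1$.

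The second step is the swap itself. Set $c = \beta'(e)$ and let $K$ be the Kempe chain in colors $\{1,c\}$ containing $e$. From $u$ the chain cannot extend, because $u$ is incident to no edge of color $1$; the same holds at $v$. Thus $K = \{e\}$, and performing the swap on $K$ yields an equivalent coloring $\beta''$ that differs from $\beta'$ only in that $e$ is now colored $1$. Since $e \in M$, the edge $e$ becomes good, so $\beta''$ has strictly fewer bad edges than $\beta'$; moreover the ugly count is unchanged, because no edge outside $M$ has its color altered. This contradicts the choice of $\beta'$ as a coloring equivalent to $\beta$ minimizing the number of bad edges.

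The argument is entirely local and I do not expect any real obstacle. The only point that deserves a line of justification is that $\{e\}$ is indeed a \emph{maximal} bichromatic component in colors $\{1,c\}$, which is immediate from $m_{\beta'}(u) = m_{\beta'}(v) = 1$: both endpoints of $e$ are free vertices, so the bichromatic walk through $e$ dies at once on each side.
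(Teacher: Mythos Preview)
Your proof is correct and is exactly the standard argument for this observation. The paper does not include its own proof here (the result is cited from \cite{bonamy2021vizing}), but the same single-edge swap reasoning appears implicitly in the paper's proof of Lemma~\ref{lem:bad_free_ugly}, where it is used to rule out the case that both endpoints of a bad edge are free.
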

 
 \begin{lemma}[\cite{bonamy2021vizing}]\label{lem:ugly_induce_cycle}
 In a minimal coloring, any ugly edge $uv$ is such that the fans $X_v(uv)$ and $X_u(uv)$ are cycles.
 \end{lemma}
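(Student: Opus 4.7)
I argue by contradiction. Suppose $\beta'$ is a minimal coloring and some ugly edge $uv$ has $X_v(uv)$ not a cycle; by the trichotomy, this fan is a path or a comet. The goal is to produce a coloring Kempe-equivalent to $\beta'$ with either strictly fewer bad edges, or the same bad count and strictly fewer ugly edges, contradicting minimality. Set $X_v(uv) = (vv_1, \ldots, vv_k)$ with $v_1 = u$ and $c_i = \beta'(vv_i)$, so $c_1 = 1$ and $m(v_i) = c_{i+1}$ for $i < k$; let $a = m(v)$ and $b = m(v_k)$. Note that $a \neq 1$ (since $v$ is incident to the color-$1$ edge $uv$) and $a \notin \{c_1, \ldots, c_k\}$.

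In the path case $b = a$, and I would implement the classical Vizing rotation as iterated Kempe swaps. Setting $c_{k+1} := a$, I process $i$ from $k$ down to $1$, swapping the $\{c_i, c_{i+1}\}$-chain through $v_i$ at each step. Each such chain reduces to the single edge $vv_i$: the vertex $v_i$ misses $c_{i+1}$ by the fan definition, and $v$ misses $c_{i+1}$ either initially (when $i = k$) or as a consequence of the previous swap (otherwise). The cumulative effect is the rotation $vv_i \mapsto c_{i+1}$ for $i < k$ and $vv_k \mapsto a$. Color $1$ is carried only by $uv$ and disappears from the fan at the final swap, so no edge of $M$ ever becomes colored $1$: the bad count is preserved while $uv$ stops being ugly, contradicting minimality.

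In the comet case the out-arc from $vv_k$ points to an internal $vv_j$, so $b = c_j$ with $2 \le j \le k - 1$. Since the $c_i$ are distinct and $c_1 = 1$, one has $a, b \neq 1$, so any $\{a, b\}$-Kempe swap never touches a color-$1$ edge and preserves all bad/ugly classifications. Generically, the $\{a, b\}$-chain through $v$ avoids $v_1, \ldots, v_{j-1}$ and meets $v$ only via $vv_j$; performing it leaves the prefix $(vv_1, \ldots, vv_{j-1})$ intact as a fan while changing $m(v)$ from $a$ to $b = m(v_{j-1})$. The prefix is now a path-fan, to which the previous case applies and removes the ugly edge $uv$. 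The main obstacle is the subcase where the $\{a, b\}$-chain through $v$ does meet some $v_i$ with $i \le j - 1$, thereby perturbing the prefix; here I would instead work with the $\{a, b\}$-chain through $v_k$ (distinct from the one through $v$ in this subcase) and exploit the cycle $(vv_j, \ldots, vv_k)$ embedded inside the comet. A careful case analysis of which chain contains which $v_i$—typical of Vizing fan arguments—should close the case. The analogous statement for $X_u(uv)$ follows by exchanging the roles of $u$ and $v$.
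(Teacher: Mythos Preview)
The paper does not prove this lemma; it is quoted from \cite{bonamy2021vizing} and used as a black box. So there is no ``paper's own proof'' to compare against, and I evaluate your argument on its merits.

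Your path case is correct and cleanly executed: the iterated single-edge swaps realise the fan rotation, only edges incident to $v$ change colour, none of them is in $M$ except possibly one bad edge whose colour stays $\neq 1$, and $uv$ loses colour $1$. Same bad count, one fewer ugly.

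The comet case, however, is not finished, and the way you frame it makes it harder than necessary. Two points. First, your worry that the $\{a,b\}$-chain through $v$ ``meets some $v_i$ with $i\le j-1$'' is imprecise: for $i<j-1$ the vertex $v_i$ misses $c_{i+1}\notin\{a,b\}$, so it can only be an \emph{interior} vertex of an $(a,b)$-chain, and interior passage changes neither the colour of $vv_i$ nor $m(v_i)$. The only vertex in the prefix that can be perturbed is $v_{j-1}$, since it misses $b$. Second, rather than swapping $K_v(a,b)$ and then patching the exceptional subcase, the standard (and clean) move is the dichotomy: both $v_{j-1}$ and $v_k$ miss $b$, so at most one lies in $K_v(a,b)$. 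If $v_{j-1}\notin K_v(a,b)$, swap $K_{v_{j-1}}(a,b)$; then $m(v_{j-1})=a=m(v)$ and the prefix $(vv_1,\ldots,vv_{j-1})$ is a path fan. Otherwise $v_k\notin K_v(a,b)$; swap $K_{v_k}(a,b)$; then $m(v_k)=a=m(v)$, the missing colour at $v_{j-1}$ is untouched, and the full sequence $(vv_1,\ldots,vv_k)$ is a path fan. In both branches the swap uses colours $a,b\neq 1$, so bad/ugly counts are preserved, and your path case finishes the job. Your remark about ``exploiting the cycle $(vv_j,\ldots,vv_k)$'' is unnecessary and suggests you had not yet located this dichotomy; once you have it, no embedded-cycle structure is needed.
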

 
 \begin{lemma}[\cite{casselgren}]\label{lem:ugly_adj_free}
 In a minimal coloring both ends of an ugly edge are adjacent to a free vertex.
 \end{lemma}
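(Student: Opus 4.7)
The plan is to read the conclusion off directly from the cycle structure of the fans guaranteed by Lemma~\ref{lem:ugly_induce_cycle}. Fix an ugly edge $uv$ in a minimal coloring $\beta'$, so $\beta'(uv)=1$ by the definition of \emph{ugly}. By Lemma~\ref{lem:ugly_induce_cycle}, the fans $X_v(uv)$ and $X_u(uv)$ are cycles in the respective auxiliary digraphs $D_v$ and $D_u$.

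I would first handle $X_v(uv) = (vv_1, vv_2, \ldots, vv_k)$ with $vv_1 = uv$. Since every vertex in $D_v$ has out-degree at most $1$, and the fan is (by the trichotomy path/cycle/comet) a cycle whose vertex set is precisely $\{vv_1, \ldots, vv_k\}$ and which contains the starting vertex $vv_1$, the unique out-arc from $vv_k$ must close the cycle by returning to $vv_1$. Unwinding the definition of the arcs of $D_v$, this is exactly the statement that the edge $vv_1$ is colored $m(v_k)$; equivalently, $m(v_k) = \beta'(uv) = 1$. Hence $v_k$ is a neighbor of $v$ missing color $1$, i.e., a free vertex adjacent to $v$. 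Note that $v_k \ne u$ (since $u = v_1$ is incident to an edge of color $1$ and therefore does not miss color $1$), so this neighbor is genuine.

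Applying the identical argument to $X_u(uv)$ produces a free vertex adjacent to $u$, which together with the previous paragraph yields the lemma.

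The main obstacle, such as it is, lies entirely in Lemma~\ref{lem:ugly_induce_cycle}, which imposes the cycle shape on both fans around an ugly edge. Once that structural input is granted, the present lemma is a one-line translation of "cycle-fan at a color-$1$ edge" into the vocabulary of free vertices. The only point one must not elide is that, in the cycle case of the fan trichotomy, the starting edge really does belong to the cycle (otherwise we would be in the comet case, in which the vertex closing the trailing cycle need not miss color $1$); this is however built into the definition distinguishing cycles from comets given in the excerpt.
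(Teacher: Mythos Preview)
Your argument is correct. The paper itself does not supply a proof of this lemma; it simply quotes it from \cite{casselgren}. What you have observed is that, once one grants Lemma~\ref{lem:ugly_induce_cycle} (quoted from \cite{bonamy2021vizing}), the present statement is an immediate corollary: in a fan that is a cycle, the last vertex $v_k$ misses the color of the first edge, and here that color is $1$, so $v_k$ is free. Your remark that $v_k\neq u$ because $u$ sees colour $1$ along $uv$ is exactly the check needed, and the same reasoning applied to $X_u(uv)$ handles the other endpoint.

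One small caveat worth noting explicitly is that this is not a self-contained proof of the result as originally stated in \cite{casselgren}: you are deducing it from Lemma~\ref{lem:ugly_induce_cycle}, which is chronologically a later result and may or may not itself rely on the lemma you are proving. Within the logical framework of \emph{this} paper, however, both lemmas are imported as black boxes, so your derivation is perfectly legitimate and in fact shows that Lemma~\ref{lem:ugly_adj_free} is redundant once Lemma~\ref{lem:ugly_induce_cycle} is available.
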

 
 We first show that in a minimal coloring, there always exists a bad edge adjacent to an ugly edge and incident with a free vertex.
 
 \begin{lemma}\label{lem:bad_free_ugly}
In a minimal coloring, there exists a bad edge adjacent to an ugly edge and incident with a free vertex.
\end{lemma}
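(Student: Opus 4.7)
The plan is, in two short steps, to establish first that every minimal coloring admits at least one free vertex, and then to observe that the $M$-edge incident with any such vertex is automatically the bad edge we are after.

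For the first step I would argue that the minimal coloring $\beta'$ must contain at least one bad edge. Otherwise every edge of $M$ would carry color $1$ in $\beta'$; since $M$ is a perfect matching and $\beta'^{-1}(1)$ is a matching of size at most $n/2$, this would force $\beta'^{-1}(1)=M$. But then $\beta'$ agrees with $\alpha$ on the color class of $M$, and the inductive hypothesis applied to the $(\chi'(G)-1)$-regular graph $G\setminus M$ (whose chromatic index is $\chi'(G)-1$) would give $\beta'\equiv\alpha$, contradicting $\beta\not\equiv\alpha$. Once a bad edge $e_b$ is in hand, Observation~\ref{obs:every_bad_adj_ugly} supplies an ugly edge $e_u$ adjacent to $e_b$, and Lemma~\ref{lem:ugly_adj_free} then says that both endpoints of $e_u$ are adjacent to a free vertex; in particular a free vertex $f$ exists.

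For the second step, let $fg$ denote the unique $M$-edge at $f$. Because $f$ misses color $1$ in $\beta'$, we have $\beta'(fg)\neq 1$, so $fg$ is bad and is incident with the free vertex $f$. A final invocation of Observation~\ref{obs:every_bad_adj_ugly} yields an ugly edge adjacent to $fg$, completing the proof.

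The whole argument is essentially a three-line chain through the previously established facts, so I do not anticipate any real obstacle. The only point that demands attention is the verification that a minimal coloring has $b\geq 1$, which rests on the inductive reduction on $\chi'(G)$ described in the setup; everything else is a direct application of Observation~\ref{obs:every_bad_adj_ugly} and Lemma~\ref{lem:ugly_adj_free}.
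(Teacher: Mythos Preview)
Your proposal is correct and follows essentially the same route as the paper: locate a bad edge, use Observation~\ref{obs:every_bad_adj_ugly} and Lemma~\ref{lem:ugly_adj_free} to produce a free vertex $f$, then observe that the $M$-edge at $f$ is bad and (by Observation~\ref{obs:every_bad_adj_ugly} again) adjacent to an ugly edge. The only cosmetic difference is that the paper, instead of re-invoking Observation~\ref{obs:every_bad_adj_ugly} at the end, argues directly that the other endpoint $g$ cannot be free (else swapping the single edge $fg$ would decrease the bad-edge count) and hence is incident with an edge colored $1$ not in $M$; your shortcut via the observation is equally valid.
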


 \begin{proof}
 Let $\beta$ be a minimal coloring, if there is no bad edge in $\beta$, then all the edges of $M$ are colored $1$ in $\beta$ as desired. So there exists a bad edge $e$ in $\beta$, and by Observation~\ref{obs:every_bad_adj_ugly}, $e$ is adjacent to an ugly edge $e'$. By Lemma~\ref{lem:ugly_adj_free}, there exists a free vertex $u$ adjacent to an end of $e'$. As $u$ is a free vertex, $u$ is incident with a bad edge, we denote by $v$ the neighbor of $u$ such that the edge $uv$ is bad. If $v$ is a free vertex, then we swap the single edge $uv$ to obtain a coloring with fewer bad edges, so $v$ is not free, and thus $uv$ is adjacent to an ugly edge; this concludes the proof.
 \end{proof}
 
 We are now ready to prove Theorem~\ref{thm:Kempe}, but we first need some terminology and notations. Given a coloring $\alpha$, for any pair of colors $a$,$b$, we denote by $K(a,b)$ the graph induced in $G$ by the edges colored $a$ and $b$. The Kempe chain involving these two colors and containing the element $x\in V(G)\cup E(G)$ is denoted by $K_x^{\alpha}(a,b)$ (we often drop the $\alpha$ when the coloring is clear form the context). It is important to note that if $a$, $b$, $c$ and $d$ are $4$ different colors, then swapping a component of $K(a,b)$ before or after swapping a component of $K(c,d)$ does not change the coloring obtained after the two swaps.
 
Note also that in an edge-coloring, any Kempe chain $K(a,b)$ is a connected bipartite subgraph of maximum degree $2$, hence it is either a path, or an even cycle. To distinguish the notions of fans that can be paths or cycles, when a Kempe component $C$ of $K(a,b)$ is a path (respectively an even cycle) we say that $C$ si a $(a,b)$-bichromatic path (respectively a $(a,b)$-bichromatic cycle). If $u$ is a vertex missing the color $a$, then $K_u(a,b)$ is a $(a,b)$-bichroamtic path whose ends are $u$ and another vertex missing either $a$ or $b$.
 
\begin{proof}[Proof of Theorem~\ref{thm:Kempe}]
 Let $\beta$ be a minimal coloring. By Lemma~\ref{lem:bad_free_ugly}, there exists a bad edge $uv$ such that $u$ is free and $v$ is incident with an ugly edge $vw$. By Lemma~\ref{lem:ugly_induce_cycle}, the fans $X_v(vw)$ and $X_w(vw)$ are both cycles. The vertex $v$ does not belong to $X_v(vw)$, otherwise, by Lemma~\ref{lem:all_cycles_invertible} we invert $X_v(vw)$ and obtain a coloring with strictly fewer bad edges. Hence, the vertex $w$ is missing a color $c'$ different from $c = \beta(uv)$ (otherwise, $X_v(vw)$ is a cycle of size $2$ containing $u$). We now consider the component $C = K_w(c,c')$, note that since $w$ is missing the color $c'$, this component is a $(c,c')$-bichromatic path. If the component $C$ does not contain the vertex $v$, then we swap it to obtain a coloring where $w$ is missing the color of the edge $uv$ and we are done. Thus, $C$ contains $v$ and we have to distinguish whether $v$ is between $u$ and $w$ in $C$ or $u$ is between $w$ and $v$.
 
 \begin{case}[$u$ is between $w$ and $v$ in $C$]
 ~\newline
 In this case, by Lemma~\ref{lem:all_cycles_invertible} we can invert $X_v(vw)$ to obtain a coloring where the component $K_w(c,c')$ is now a $(c,c')$-bichromatic cycle that we swap. In the coloring obtained after the swap, $X_v(uv)$ is a cycle, and so by Lemma~\ref{lem:all_cycles_invertible} we can invert it to obtain a coloring with strictly fewer bad edges; a contradiction.
 \end{case}

  \begin{case}[$v$ is between $w$ and $u$ in $C$]
 ~\newline
 In this case, we consider the cycle $X_w(vw)$. If it does not contain the vertex $u$, we invert it by Lemma~\ref{lem:all_cycles_invertible} and obtain a coloring where $u$ and $v$ are free, so it suffices to swap the edge $uv$ to obtain a coloring with strictly fewer bad edges. Hence the vertex $u$ belongs to $X_w(vw)$. After inverting this cycle, we obtain a minimal coloring where $uv$ is still bad, $v$ is free, and $uw$ is ugly (the edge $vw$ is not ugly anymore in this coloring). By Lemma~\ref{lem:ugly_induce_cycle}, the fan $X_u(uw)$ is a cycle. The situation is now similar to the previous case: we invert the cycle $X_u(uw)$ to obtain a coloring where the component $K_w(c,c')$ is a $(c,c')$-bichromatic cycle. After swapping this cycle we obtain a minimal coloring where $X_u(uv)$ is a cycle. After inverting this cycle, we obtain a coloring with one fewer bad edge; a contradiction.
 \end{case}
\end{proof}

\subsection{General outline and notations}\label{sec:general_outline}
The proof is an induction on the size of the cycles. Towards contradiction, assume that there exist non-invertible cycles. A \emph{minimum} cycle $\Vv$ is a non-invertible cycle of minimum size (\textit{i.e.} in any coloring, any smaller cycle is invertible).

A cycle of size $2$ is clearly invertible as it only consists of a single Kempe chain composed of exactly two edges: to invert the cycle, it suffices to apply a Kempe swap on this component; so the size of a minimum cycle is at least $3$.

We now need some more notations. For any fan $\Vv = (vv_1,\cdots, vv_k)$, $V(\Vv)$ denotes the set of vertices $\{v_1,\cdots v_k\}$, and $E(\Vv)$ denotes the set of edges $\{vv_1,\cdots vv_k\}$. We denote by $\beta(\Vv)$ the set of colors involved in $\Vv$ (\textit{i.e.} $\beta(\Vv) = \beta(E(\Vv))\cup m(V(\Vv)) \cup m(v)$); if $\Vv$ involves the color $c$, $M(X,c)$ denotes the vertex of $V(\Vv)$ missing the color $c$. There is a natural order induced by a fan on its vertices (respectively on its edges), and if $i<j$ we say that the vertex $v_i$ (respectively the edge $vv_i$) is before the vertex $v_j$ (respectively the edge $vv_j$). For two vertices $v_i$ and $v_j$ of $\Vv$ we define the \textit{subfan} $\Vv_{[v_1,v_j]}$ as the subsequence $(vv_i,vv_{i+1},\cdots vv_{j})$. We often write $\Vv_{\geq v_i}$, $\Vv_{>v_i}$, $\Vv_{\leq v_i}$ and $\Vv_{<v_i}$ to respectively denote the subfans $(v_i,\cdots v_k)$, $(v_{i+1},\cdots v_k)$,$(v_1,\cdots, v_i)$, and $(v_1,\cdots v_{i-1})$.

If the fan $\Vv$ is a cycle in a coloring $\beta$ means applying a sequence of Kempe swaps to obtain the coloring $X^{-1}(\beta)$. If $\Vv$ is a fan which is a path, inverting $\Vv$ means applying a sequence of single-edge Kempe swaps on the edges of $\Vv$ such that the ends of the first edge of $\Vv$ are missing the same color $\beta(vv_1)$. Note that we often only partially invert paths, \textit{i.e.} we apply a sequence of single-edge Kempe swaps on the edges of the paths until we reach a coloring with a specific missing color at the central vertex. 

A cycle $\Vv = (vv_1,\cdots,vv_k)$ is called \textit{saturated} if for any $i$, $v_i\in K_{v}(m(v),m(v_i))$. Lemma~2.3 of \cite{bonamy2021vizing}, which we restate here, guarantees that if a cycle is not invertible, then it is saturated.

\begin{lemma}[\cite{bonamy2021vizing}]\label{lem:saturated_cycle}
Let $\Vv$ be a cycle, if $\Vv$ is not saturated, then $\Vv$ is invertible.
\end{lemma}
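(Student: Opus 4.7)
The plan is to express the inversion of $\Vv$ as a composition of three macro-operations: a Kempe swap that breaks the cycle into a path fan, Vizing's single-edge inversion of that path fan, and a final Kempe swap that restores the missing color at $v$.

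Write $c = m(v)$ and $c_i = m(v_i)$, so the cycle structure gives $\beta(vv_i) = c_{i-1}$ with indices taken cyclically. The colors $c_1, \ldots, c_k$ are pairwise distinct (distinct edges at $v$ carry distinct colors), and each $c_i \ne c$ (otherwise $vv_{i+1}$ would be colored with the missing color at $v$). By non-saturation, fix an index $j$ with $v_j \notin K_v(c, c_j)$, and let $C = K_{v_j}(c, c_j)$. Because $v \notin C$, no edge incident to $v$ lies in $C$, so the swap of $C$ preserves every color of every edge incident to $v$. The endpoints of the bichromatic path $C$ must miss $c$ or $c_j$; among $\{v, v_1, \ldots, v_k\}$ only $v_j$ satisfies this, so any $v_i$ with $i \ne j$ lying in $C$ is an internal vertex of $C$ and its multiset of incident colors is preserved by the swap. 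The only modification to the fan data is therefore $m(v_j) \colon c_j \mapsto c$. Denote the resulting coloring by $\gamma$.

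In $\gamma$, every out-arc $vv_i \to vv_{i+1}$ of $D_v$ with $i \ne j$ is intact, while $vv_j$ has lost its out-arc (the new missing color $c$ at $v_j$ is absent at $v$). Consequently $X_v^{\gamma}(vv_{j+1}) = (vv_{j+1}, \ldots, vv_k, vv_1, \ldots, vv_j)$ is a path fan of length $k$ whose central and terminal vertices both miss $c$. Applying Vizing's standard sequence of single-edge Kempe swaps backwards along this path fan produces a coloring $\gamma'$ satisfying $\gamma'(vv_i) = c_i$ for $i \ne j$, $\gamma'(vv_j) = c$, $m_{\gamma'}(v) = c_j$, $m_{\gamma'}(v_i) = c_{i-1}$ for every $i$, and agreeing with $\gamma$ on every other edge of $G$. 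Compared to the target coloring $\Vv^{-1}(\beta)$, the only discrepancies are that $v$ and the edge $vv_j$ each carry the wrong element of the pair $\{c, c_j\}$.

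Finally, I swap the chain $K_v^{\gamma'}(c, c_j)$. Since $v$ misses $c_j$ in $\gamma'$ and $vv_j$ is its unique edge of color $c$, this chain begins $v \to v_j$ via $vv_j$ and then traces exactly the edges of $C$, carrying the $\gamma$-colors inherited from the first step. The swap recolors $vv_j$ from $c$ to $c_j$, makes $v$ miss $c$ again, and flips every other edge of $C$ back to its original $\beta$-color. Edge by edge, the resulting coloring equals $\Vv^{-1}(\beta)$, so $\Vv$ is invertible. The main obstacle is the book-keeping in the first swap: the argument hinges on the pairwise distinctness of $c, c_1, \ldots, c_k$, which is precisely what prevents the swap of $C$ from disturbing any fan datum other than $m(v_j)$.
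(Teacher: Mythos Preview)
Your argument is correct. The paper does not reprove this lemma itself but quotes it from \cite{bonamy2021vizing}; your ``swap $C$ to turn the cycle fan into a path fan, invert the path by trivial single-edge swaps, then swap $\{vv_j\}\cup C$ back'' is precisely the standard proof from that source and is fully consistent with the swap--invert--swap-back methodology the present paper uses throughout (cf.\ the proof of Observation~\ref{obs:tight}). One small point of presentation: when you assert that the final Kempe chain $K_v^{\gamma'}(c,c_j)$ equals $\{vv_j\}\cup E(C)$, it is worth stating explicitly that the second endpoint $w$ of $C$ lies outside $\{v,v_1,\dots,v_k\}$ (you checked this implicitly via the distinctness of the $c_i$), so that its missing color is restored to its $\beta$-value by the final swap and the resulting coloring really is $\Vv^{-1}(\beta)$ globally, not just on $E(\Vv)$.
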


This directly implies the same result for any minimum cycle.

\begin{lemma}\label{lem:minimum_cycle_saturated}
    Any minimum cycle is saturated.
\end{lemma}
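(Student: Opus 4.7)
The statement is essentially an immediate corollary of Lemma~\ref{lem:saturated_cycle}, so my plan is simply to apply the contrapositive of that lemma to the definition of a minimum cycle. The key observation is that a minimum cycle, by definition, is a non-invertible cycle (of smallest possible size), so the hypothesis of non-invertibility is available for free.

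More concretely, I would argue as follows. Let $\Vv$ be a minimum cycle in some coloring $\beta$. By the definition given just before Lemma~\ref{lem:saturated_cycle}, a minimum cycle is non-invertible. Now apply Lemma~\ref{lem:saturated_cycle} contrapositively: that lemma says ``not saturated implies invertible,'' so ``not invertible implies saturated.'' Since $\Vv$ is not invertible, it must be saturated, which is exactly what we wanted.

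There is no real obstacle here; the minimality hypothesis on the size of the cycle is not even used, only the non-invertibility part of the definition. The only thing worth double-checking is that Lemma~\ref{lem:saturated_cycle} is stated for arbitrary cycles in arbitrary $(\chi'(G)+1)$-colorings of $\chi'(G)$-regular graphs, and indeed it is, so no additional hypothesis on $\Vv$ is needed to invoke it. Consequently, the proof will just be a one-line invocation of Lemma~\ref{lem:saturated_cycle}, with the remark that the minimum-size hypothesis will be the useful part later (when we want to assume, in further arguments, that every \emph{smaller} cycle is invertible), but for saturation alone, non-invertibility suffices.
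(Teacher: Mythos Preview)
Your proposal is correct and matches the paper's approach exactly: the paper simply states that Lemma~\ref{lem:saturated_cycle} ``directly implies'' Lemma~\ref{lem:minimum_cycle_saturated}, which is precisely the one-line contrapositive argument you give. Your observation that only non-invertibility (not minimality of size) is needed here is also accurate.
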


Let $X \subseteq E(G)\cup V(G)$, $\beta$ a coloring and $\beta'$ a coloring obtained from $\beta$ by swapping a component $C$. The component is called $X$-stable if :

\begin{itemize}
    \item for any $v\in X$, $m^{\beta}(v) = m^{\beta'}(v)$, and 
    \item for any $e\in X$, $\beta(e) = \beta'(e)$.
\end{itemize}
In this case, the coloring $\beta'$ is called $X$-\textit{identical} to $\beta$.

If $S = (C_1,\cdots, C_k)$ is a sequence of swaps to transform a coloring $\beta$ into a coloring $\beta'$ where each $C_j$ is a Kempe component. The sequence $S^{-1}$ is defined a the sequence of swaps $(C_k,\cdots, C_1)$. Such a sequence is called $X$-stable is each $C_j$ is $X$-stable.

\begin{observation}\label{obs:S_X-stable_so_S_invert_too}
Let $X \subseteq V(G)\cup E(G)$, and $S$ a sequence of swaps that is $X$-stable. Then the sequence $S^{-1}$ is also $X$-stable.
\end{observation}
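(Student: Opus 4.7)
The plan is to exploit the fact that the $X$-stability condition for a single swap is manifestly symmetric in the two colorings it relates, so reversing the sequence automatically preserves $X$-stability step by step.

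First, I would unfold the hypothesis by writing the chain of colorings $\beta = \beta_0, \beta_1, \ldots, \beta_k = \beta'$, where $\beta_j$ is obtained from $\beta_{j-1}$ by swapping the Kempe component $C_j$. Saying that $S = (C_1,\ldots,C_k)$ is $X$-stable amounts to saying that for each $j$, one has $m^{\beta_{j-1}}(v) = m^{\beta_j}(v)$ for every $v \in X$ and $\beta_{j-1}(e) = \beta_j(e)$ for every $e \in X$.

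Next, I would observe the elementary fact that if $C_j$ is a bichromatic component of colors $\{a,b\}$ in $\beta_{j-1}$, then the same edge set $C_j$ is still a bichromatic $\{a,b\}$-component in $\beta_j$ (merely with the two colors exchanged on each edge), and swapping it in $\beta_j$ recovers $\beta_{j-1}$. Consequently $S^{-1} = (C_k,\ldots,C_1)$ is a legitimate sequence of Kempe swaps taking $\beta'$ back to $\beta$ along the reversed chain $\beta_k, \beta_{k-1}, \ldots, \beta_0$.

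Finally, I would check $X$-stability of each step of $S^{-1}$ directly. The $j$-th step of $S^{-1}$ is the transition $\beta_j \to \beta_{j-1}$, and the conditions $m^{\beta_j}(v) = m^{\beta_{j-1}}(v)$ and $\beta_j(e) = \beta_{j-1}(e)$ for $v,e \in X$ are literally the $X$-stability conditions of the $j$-th step of $S$ read as equalities in the opposite direction. So each step of $S^{-1}$ is $X$-stable, whence $S^{-1}$ is $X$-stable. There is no substantive obstacle here: the observation is a direct consequence of the symmetry of equality together with the involutive nature of a single Kempe swap; the only mild point to flag is that "$C_j$" must be read as the same edge set regardless of whether we apply it in $\beta_{j-1}$ or in $\beta_j$.
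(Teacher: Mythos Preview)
Your proof is correct. The paper states this observation without proof, treating it as immediate; your argument spells out precisely the symmetry and involutivity that make it obvious, so there is nothing to compare.
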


If a sequence $S$ is $X$-stable, then the coloring obtained after apply $S$ to $\beta$ is called $X$-\textit{equivalent} to $\beta$.
Note that the notion of $X$-equivalence is stronger than the notion of $X$-identity. Since two colorings $\beta$ and $\beta'$ may be $X$-identical but not $X$-equivalent if there exists a coloring $\beta''$ in the sequence between $\beta$ and $\beta'$ that is not $X$-identical to $\beta$. We first have the following obsevration that we will often use in this paper.

\begin{observation}\label{obs:not_touching_subsequence_stable}
    Let $\Xx$ be a subfan in a coloring $\beta_0$, $v$ be a vertex which is not in $V(\Xx)$, and $S = (C_1,\cdots,C_k)$ be a sequence of trivial swaps of edges incident with $v$, $(\beta_1,\cdots,\beta_k)$ be the colorings obtained after each swap of $S$. If for any $i\in\{0,\cdots,k\}$, $m^{\beta_i}(v)\not\in\beta_0(\Xx)$, then the sequence $S$ is $(\Xx)$-stable.
\end{observation}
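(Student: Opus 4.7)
The plan is to prove by induction on $j\in\{1,\dots,k\}$ that each swap $C_j$ is $X$-stable for $X := V(\Xx)\cup E(\Xx)$; by the definition of stability for sequences, this will give that $S$ is $(\Xx)$-stable. Note that the hypothesis applied at $i=0$ already forces $v$ not to be the central vertex of $\Xx$ (since otherwise $m^{\beta_0}(v)\in\beta_0(\Xx)$ by definition of $\beta_0(\Xx)$), so $v$ is truly disjoint from $\Xx$ in both roles.

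To set up the inductive step I first describe $C_j$ explicitly. Because $C_j$ is a trivial swap of an edge incident with $v$, it consists of a single edge $vw_j$ whose two Kempe colors are $a_j:=\beta_{j-1}(vw_j)$ and $b_j:=m^{\beta_{j-1}}(v)$; the Kempe chain being a single edge forces $m^{\beta_{j-1}}(w_j)=b_j$ as well. After the swap, $\beta_j(vw_j)=b_j$ and $m^{\beta_j}(v)=m^{\beta_j}(w_j)=a_j$. In particular the two colors $a_j,b_j$ used by $C_j$ coincide with $m^{\beta_j}(v)$ and $m^{\beta_{j-1}}(v)$, so the hypothesis of the observation (applied at $i=j-1$ and $i=j$) yields $a_j,b_j\notin\beta_0(\Xx)$.

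For the inductive step I would assume that $C_1,\dots,C_{j-1}$ are all $X$-stable, and therefore that $\beta_{j-1}$ agrees with $\beta_0$ on the missing colors at every vertex of $V(\Xx)$ and on the colors of every edge of $E(\Xx)$. If $vw_j\in E(\Xx)$, then $a_j=\beta_{j-1}(vw_j)=\beta_0(vw_j)\in\beta_0(E(\Xx))\subseteq\beta_0(\Xx)$, contradicting $a_j\notin\beta_0(\Xx)$; if $w_j\in V(\Xx)$, then $b_j=m^{\beta_{j-1}}(w_j)=m^{\beta_0}(w_j)\in\beta_0(\Xx)$, again a contradiction. A trivial swap only alters $m(v)$, $m(w_j)$, and $\beta(vw_j)$; since $v\notin V(\Xx)$ by assumption and the preceding step rules out $w_j\in V(\Xx)$ and $vw_j\in E(\Xx)$, $C_j$ is $X$-stable, completing the induction.

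There is no real obstacle here: the entire argument is bookkeeping. The one point worth highlighting is that one must invoke the hypothesis at \emph{two} consecutive indices $j-1$ and $j$ in order to exclude \emph{both} Kempe colors of $C_j$ from $\beta_0(\Xx)$, which is precisely why the observation quantifies uniformly over all $i\in\{0,\dots,k\}$ rather than at a single coloring.
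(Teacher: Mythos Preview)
Your proof is correct and follows essentially the same idea as the paper's: both arguments hinge on the fact that a trivial swap of $vw_j$ forces $m^{\beta_{j-1}}(v)=m^{\beta_{j-1}}(w_j)$, so if $w_j\in V(\Xx)$ then (by induction/minimality) $m^{\beta_{j-1}}(v)=m^{\beta_0}(w_j)\in\beta_0(\Xx)$, contradicting the hypothesis. The paper phrases this as a contradiction by looking at the first swap that touches a vertex of $\Xx$, while you do a direct induction; your treatment is slightly more explicit (in particular, you spell out why $v$ cannot be the central vertex of $\Xx$, a point the paper glosses over), but note that your separate check $vw_j\notin E(\Xx)$ via $a_j\notin\beta_0(\Xx)$ is redundant once you know $v$ is neither in $V(\Xx)$ nor the central vertex, so only the index $j-1$ is really needed.
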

\begin{proof}
    Otherwise, assume that $\mathcal{S}$ is not $\Xx$-stable. Since the vertex $v$ is not in $V(\Xx)$, then no edge of $\Xx$ has been changed during the sequence of swap. Thus the missing color of a vertex of $\Xx$ has been changed during the sequence of swaps, we denote by $x$ the first such vertex. Let $s_i$ be the swap that change the color of the edge $vx$, it means that in the coloring $\beta_{i-1}$ the vertices $v$ and $x$ are missing the same color, so $m^{\beta_{i-1}} \in \beta_0(\Xx)$; a contradiction.
\end{proof}

The following observation gives a relation between $X$-equivalence and $(G\setminus)$-identity between colorings.

\begin{observation}\label{obs:X-equivalent_plus_identical}
	Let $\beta$ be a coloring, $X \subseteq V(G)\cup E(G)$, $\beta_1$ a coloring $X$-equivalent to $\beta$, and $\beta_2$ a coloring $(G \setminus X)$-identical to $\beta_1$. Then, there exists a coloring $\beta_3$ equivalent to $\beta_2$ that is $X$-identical to $\beta_2$ and $(G\setminus X)$-identical to $\beta$. 
\end{observation}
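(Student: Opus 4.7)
The plan is to transfer the $X$-stable sequence witnessing the equivalence of $\beta$ and $\beta_1$ onto the coloring $\beta_2$. Concretely, let $S = (C_1, \ldots, C_k)$ be an $X$-stable sequence of Kempe swaps taking $\beta = \beta^{(0)}$ through $\beta^{(1)}, \ldots, \beta^{(k)} = \beta_1$. Starting from $\beta'_0 := \beta_2$, I apply the reversed sequence $(C_k, C_{k-1}, \ldots, C_1)$ to produce colorings $\beta'_1, \ldots, \beta'_k$ and take $\beta_3 := \beta'_k$. The real work is to verify that at each step the component $C_{k-i+1}$ really is a Kempe component of $\beta'_{i-1}$ and that performing that swap is $X$-stable; given this, the three required conclusions follow immediately.

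I maintain by induction on $i$ the invariant that $\beta'_i$ is $(G\setminus X)$-identical to $\beta^{(k-i)}$. The base case $i = 0$ is the hypothesis on $\beta_2$. For the inductive step, let $(a,b)$ denote the two colors of $C_{k-i+1}$. Since the corresponding swap is $X$-stable in $\beta^{(k-i+1)}$, all edges of $C_{k-i+1}$ lie in $G\setminus X$ and every vertex of that component that is missing $a$ or $b$ lies in $G\setminus X$. By the inductive hypothesis, those edges retain their colors and those vertices retain their missing colors in $\beta'_{i-1}$. The crucial point is that in any proper edge coloring every vertex carries at most one edge of each color, so every vertex of $C_{k-i+1}$ already has all of its $(a,b)$-colored edges inside $C_{k-i+1}$ and therefore inside $G\setminus X$; no $X$-edge incident to the component can possibly be $(a,b)$-colored in $\beta'_{i-1}$. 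Hence the $(a,b)$-Kempe component of $\beta'_{i-1}$ containing any edge of $C_{k-i+1}$ is exactly $C_{k-i+1}$, the swap is well defined, and, since no $X$-edge and no missing color at an $X$-vertex is touched, it is $X$-stable. Performing the same swap on two $(G\setminus X)$-identical colorings produces two $(G\setminus X)$-identical colorings, which closes the induction.

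At $i = k$ the invariant gives that $\beta_3$ is $(G\setminus X)$-identical to $\beta^{(0)} = \beta$, while the $X$-stability of each swap of the reversed sequence yields that $\beta_3$ is $X$-equivalent to $\beta_2$, and in particular equivalent to $\beta_2$ and $X$-identical to $\beta_2$. The main obstacle is precisely the inductive step just described: a priori the $X$-part of $\beta'_{i-1}$ could differ arbitrarily from that of $\beta^{(k-i+1)}$ and could inflate the relevant Kempe component by adding $(a,b)$-colored $X$-edges, which would then corrupt the $X$-part of the resulting coloring. The propriety of the edge coloring rules this scenario out essentially for free, and this is the one place where something genuinely needs to be checked.
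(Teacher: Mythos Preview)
Your proof is correct and follows essentially the same approach as the paper: take the $X$-stable sequence $S$ witnessing the equivalence of $\beta$ and $\beta_1$, and apply $S^{-1}$ to $\beta_2$ to obtain $\beta_3$. The paper's proof is terser, simply asserting that since $\beta_2$ is $(G\setminus X)$-identical (hence $S$-identical) to $\beta_1$, applying $S^{-1}$ to $\beta_2$ is well-defined; your inductive argument carefully fills in exactly this point, checking that each component $C_{k-i+1}$ remains a maximal bichromatic component in the intermediate colorings $\beta'_{i-1}$ and that the resulting swap is $X$-stable.
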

\begin{proof}
Let $S$ be the sequence of swaps that transforms $\beta$ into $\beta_1$. Since $\beta_1$ is $X$-equivalent to $\beta$, the sequence $S$ is $X$-stable and thus $E(S)\cap E(X) = V(S) \cap V(X) = \emptyset$. Since $\beta_2$ is $(G\setminus X)$-identical to $\beta_1$, it is $S$-identical to $\beta_1$. So applying $S^{-1}$ to $\beta_2$ is well-defined and gives a coloring $\beta_3$ $S$-identical to $\beta$. We first prove that $\beta_3$ is $(G \setminus X)$-identical to $\beta$. The coloring $\beta_1$ is $(G\setminus S)$-identical to the coloring $\beta$ by definition of $S$, and the coloring $\beta_2$ is $(G\setminus X)$-identical to $\beta_1$, so the coloring $\beta_2$ is $(G\setminus (X\cup S))$-identical to $\beta$. Again by definition of $S^{-1}$ the coloring $\beta_3$ is $(G \setminus S)$-identical to $\beta_2$, so it is $(G\setminus (S \cup X))$-identical to $\beta$. Since the coloring $\beta_3$ is also $S$-identical to $\beta$, in total, it is $(G \setminus X)$-identical to $\beta$.

We now prove that $\beta_3$ is $X$-identical to $\beta_2$. Since $E(S)\cap E(X) = V(S) \cap V(X) = \emptyset$, we have that $E(X)\subseteq E(G)\setminus E(S)$ and $V(X)\subseteq V(G)\setminus V(S)$. Moreover, the coloring $\beta_3$ is $(G\setminus S)$-identical to $\beta_2$ by definition of $S$, so the coloring $\beta_3$ is $X$-identical to $\beta_2$ as desired.
\end{proof}

If $\Xx$ is a fan, when two colorings are $(V(\Xx)\cup E(\Xx))$-identical (respectively $(V(\Xx)\cup E(\Xx))$-equivalent), we simply write that the two colorings are $\Xx$-identical (respectively $\Xx$-equivalent). Similarly, if two colorings are $((V(G)\cup E(G))\setminus X)$-identical (respectively $((V(G)\cup E(G))\setminus X)$-equivalent), we simply write that the two colorings are $(G\setminus X)$-identical (respectively $(G \setminus X)$-equivalent). 

Remark that if $\Vv$ is a cycle in a coloring $\beta$, then the coloring $\Vv^{-1}(\beta)$ is $(G \setminus \Vv)$-identical to $\beta$. So from the previous observation we have the following corollary.

\begin{corollary}
Let $\Vv$ be a cycle in a coloring $\beta$. If there exists a coloring $\beta'$ $\Vv$-equivalent to $\beta$ where $\Vv$ is invertible, then $\Vv$ is invertible in $\beta$. 
\end{corollary}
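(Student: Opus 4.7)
The plan is to witness the invertibility of $\Vv$ in $\beta$ by concatenating three sequences of Kempe swaps: from $\beta$ to $\beta'$, from $\beta'$ to $\Vv^{-1}(\beta')$, and finally from $\Vv^{-1}(\beta')$ to $\Vv^{-1}(\beta)$. The first sequence is supplied by the $\Vv$-equivalence hypothesis; being $\Vv$-stable, it guarantees that $\Vv$ is literally the same fan in $\beta$ and in $\beta'$ (same ordering, same colors on $E(\Vv)$, same missing colors on $\{v\}\cup V(\Vv)$), so the invert operation produces identical data on $\Vv$ in either coloring. The second sequence is supplied by the assumed invertibility of $\Vv$ in $\beta'$.

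The third sequence is the substance of the proof, and it is exactly what Observation~\ref{obs:X-equivalent_plus_identical} delivers. I would apply it with $X = V(\Vv)\cup E(\Vv)$, $\beta_1 = \beta'$, and $\beta_2 = \Vv^{-1}(\beta')$. Both hypotheses hold: $\beta_1$ is $X$-equivalent to $\beta$ by assumption, and $\beta_2$ is $(G\setminus X)$-identical to $\beta_1$ because the invert of a cycle alters nothing outside $\Vv$ (the remark immediately preceding the corollary). The conclusion yields a coloring $\beta_3$ that is equivalent to $\Vv^{-1}(\beta')$, $\Vv$-identical to $\Vv^{-1}(\beta')$, and $(G\setminus \Vv)$-identical to $\beta$.

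It remains to recognize $\beta_3$ as $\Vv^{-1}(\beta)$. Outside $\Vv$ both colorings agree with $\beta$ (by the third property of $\beta_3$ and by definition of the invert), and inside $\Vv$ both agree with $\Vv^{-1}(\beta')$, using that the invert of a cycle is determined purely by the data $\bigl(\beta(vv_i), m(v_i), m(v)\bigr)_i$, which coincides in $\beta$ and $\beta'$ by $\Vv$-stability. Concatenating the three sequences produces a sequence of Kempe swaps from $\beta$ to $\Vv^{-1}(\beta)$, establishing that $\Vv$ is invertible in $\beta$. The main obstacle here is essentially non-existent: Observation~\ref{obs:X-equivalent_plus_identical} does the heavy lifting, and the only genuinely substantive verification is the identification $\beta_3 = \Vv^{-1}(\beta)$, which reduces to unfolding the definitions of $\Vv$-identity and of the invert.
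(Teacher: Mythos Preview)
Your proof is correct and follows essentially the same approach as the paper: apply Observation~\ref{obs:X-equivalent_plus_identical} with $X=V(\Vv)\cup E(\Vv)$, $\beta_1=\beta'$, $\beta_2=\Vv^{-1}(\beta')$ to obtain $\beta_3$, then verify $\beta_3=\Vv^{-1}(\beta)$ by checking agreement on $\Vv$ and on $G\setminus\Vv$ separately. Your framing in terms of concatenating three sequences is a bit more explicit than the paper's, but the substance is identical.
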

\begin{proof}
Let $\beta'' = \Vv^{-1}(\beta')$. The coloring $\beta'$ is $\Vv$-equivalent to $\beta$ and $\beta''$ is $(G\setminus \Vv)$-identical to $\beta'$. So by Observation~\ref{obs:X-equivalent_plus_identical} there exists a coloring $\beta_3$ that is $\Vv$-identical to $\beta''$ and $(G\setminus \Vv)$-identical to $\beta$. So the coloring $\beta_3$ is $(G\setminus \Vv )$-identical to $\Vv^{-1}(\beta)$.

Moreover, the coloring $\beta''$ is $\Vv$-identical to $\Vv^{-1}(\beta)$, so the coloring $\beta_3$ is also $\Vv$-identical to $\Vv^{-1}(\beta)$. Therefore we have $\beta_3 = \Vv^{-1}(\beta)$ as desired.   
\end{proof}

From the previous corollary, we have the following observation.

\begin{observation}\label{obs:Vv-minimum-stable}
Let $\Vv$ be a minimum cycle in coloring $\beta$, and $\beta'$ a coloring 
$\Vv$-equivalent to $\beta$. Then in the coloring $\beta'$, the sequence $\Vv$ is also a minimuù cycle such that for any $e\in E(\Vv)$, $\beta(e) = \beta'(e)$, and for any $v\in V(\Vv)$, $m^{\beta}(v) = m^{\beta'}(v)$.
\end{observation}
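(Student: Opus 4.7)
The plan is to split the statement into three claims and dispatch each in turn: first that edges of $\Vv$ retain their colors from $\beta$ to $\beta'$, second that vertices of $\Vv$ retain their missing colors, and third that $\Vv$ is still a minimum cycle in $\beta'$. Claims one and two should essentially fall out of unpacking the definition of $\Vv$-equivalence, and the third should be a clean application of the corollary that immediately precedes this observation.

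First I would unwind the hypothesis: by definition, $\beta'$ being $\Vv$-equivalent to $\beta$ means there is a sequence of Kempe swaps carrying $\beta$ to $\beta'$ that is $(V(\Vv)\cup E(\Vv))$-stable. By the very definition of $X$-stable swap given earlier in the text, every swap in this sequence preserves the colors of the edges in $E(\Vv)$ and the missing colors at the vertices in $V(\Vv)$. Therefore $\beta(e) = \beta'(e)$ for every $e \in E(\Vv)$ and $m^{\beta}(v) = m^{\beta'}(v)$ for every $v \in V(\Vv)$, which are exactly the two ``identity'' parts of the conclusion.

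Because the colors on edges incident with the central vertex that lie in $\Vv$, together with the missing colors at the $v_i$'s, entirely determine the arcs of the auxiliary digraph $D_v$ along the sequence $(vv_1,\dots,vv_k)$, the sequence $\Vv$ is still a cycle of exactly the same size in $\beta'$. It then remains to argue that this cycle is still non-invertible in $\beta'$. I would do this by contradiction: if $\Vv$ were invertible in $\beta'$, then by the corollary that stands directly above the observation (which propagates invertibility backwards along $\Vv$-equivalence), $\Vv$ would also be invertible in $\beta$, contradicting the assumption that $\Vv$ is minimum (and in particular non-invertible) in $\beta$. Since the notion of \emph{minimum} cycle merely requires non-invertibility together with minimum size among non-invertible cycles across all colorings, and the size of $\Vv$ has not changed, $\Vv$ is indeed minimum in $\beta'$ as well.

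I do not expect a real obstacle here; the only thing to be careful about is checking that $\Vv$ really refers to the ``same'' fan in $\beta'$ as in $\beta$, which is why I would spend a sentence pointing out that $D_v$ is built entirely from the invariants preserved by $\Vv$-stability. The substantive content has already been done in the preceding corollary, and this observation is essentially a bookkeeping consequence that packages those facts in the form that will be invoked later in the induction.
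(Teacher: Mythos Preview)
Your proposal is correct and follows exactly the approach the paper intends: the paper offers no explicit proof beyond the remark ``From the previous corollary, we have the following observation,'' and your argument supplies precisely those details---the identity claims from the definition of $\Vv$-stability, and non-invertibility from the preceding corollary applied contrapositively.
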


We often simply say that the cycle $\Vv$ is the same minimum cycle in the coloring $\beta'$.



A cycle $\Vv = (vv_1,\cdots,vv_k)$ is called \textit{tight} if for every $i$ $v_i\in K_{v_{i-1}}(m(v_i),m(v_{i-1}))$.
A simple observation is that any minimum cycle $\Vv$ is tight.
\begin{observation}\label{obs:tight}
Let $\Vv = (vv_1,\cdots, vv_k)$ be a minimum cycle in a coloring $\beta$. Then the cycle $\Vv$ is tight.
\end{observation}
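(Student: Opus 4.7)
The plan is to argue by contradiction: assume $\Vv = (vv_1,\cdots,vv_k)$ is a minimum cycle that is not tight and fix an index $i$ violating the tightness condition, and then exhibit an explicit three-step sequence of Kempe swaps showing that $\Vv$ is invertible in $\beta$, which will contradict the choice of $\Vv$. Write $c_j \eqdef m^\beta(v_j)$, so that $\beta(vv_j) = c_{j-1}$ (with $c_0 = c_k$); because $\Vv$ is a cycle of $D_v$, the $k$ colors $c_1,\cdots,c_k$ are pairwise distinct. Non-tightness at $i$ means $v_i \notin C$, where $C = K^{\beta}_{v_{i-1}}(c_{i-1}, c_i)$. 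At $v$ the edges colored $c_{i-1}$ and $c_i$ are exactly $vv_i$ and $vv_{i+1}$, so $v \in C$ would force $v_i \in C$; hence $v \notin C$. Moreover a fan vertex can miss a color in $\{c_{i-1},c_i\}$ only if its index is $i-1$ or $i$, so $v_{i-1}$ is the only vertex of $V(\Vv)$ which is an endpoint of $C$, and $E(C) \cap E(\Vv) = \emptyset$.

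Swap $C$ to obtain $\beta_1$. Every fan edge keeps its color and every $m(v_j)$ for $j \neq i-1$ keeps its value, while $m^{\beta_1}(v_{i-1}) = c_i$. Consequently, the fan around $v$ starting at $vv_1$ in $\beta_1$ is the cycle $\Vv' \eqdef (vv_1,\cdots,vv_{i-1},vv_{i+1},\cdots,vv_k)$ of length $k-1$ (the arc leaving $vv_{i-1}$ now lands on $vv_{i+1}$). By minimality of $\Vv$, $\Vv'$ is invertible in $\beta_1$; set $\beta_2 \eqdef (\Vv')^{-1}(\beta_1)$. A direct inspection of edge colors and missing colors shows that $\beta_2$ agrees with the target coloring $\Vv^{-1}(\beta)$ everywhere except on the two edges $vv_{i-1}, vv_i$ and on $m(v_i)$, where the colors $c_{i-1}$ and $c_i$ are transposed, and on the portion of $C$ lying outside $V(\Vv)$, which is still in its post-swap state.

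The final step is to observe that these residual discrepancies correspond to a single Kempe chain. Consider $C'' \eqdef K^{\beta_2}_{v_i}(c_{i-1}, c_i)$: since $v_i$ still misses $c_i$ and $\beta_2(vv_i) = c_{i-1}$, $C''$ leaves $v_i$ along $vv_i$, reaches $v$, continues via $vv_{i-1}$ (now colored $c_i$) to $v_{i-1}$, and from $v_{i-1}$ onward coincides with the remainder of $C$ up to its other endpoint. Swapping $C''$ simultaneously undoes the $C$-swap outside $V(\Vv)$ and flips $c_{i-1} \leftrightarrow c_i$ on $\{vv_i, vv_{i-1}\}$ and on $m(v_i)$, yielding exactly $\Vv^{-1}(\beta)$. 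The composite sequence (swap $C$, invert $\Vv'$, swap $C''$) therefore certifies that $\Vv$ is invertible in $\beta$; contradiction.

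The most delicate part is bookkeeping between $\beta$, $\beta_1$, $\beta_2$ and $\Vv^{-1}(\beta)$: the distinctness of $c_1,\cdots,c_k$ is essential to guarantee that swapping $C$ (resp.\ $C''$) only alters missing colors of fan vertices at $v_{i-1}$ (resp.\ $v_i$), and Observations~\ref{obs:S_X-stable_so_S_invert_too} and~\ref{obs:X-equivalent_plus_identical} are what allow splicing the three segments of swaps into a genuine Kempe equivalence despite the internal swaps performed while inverting $\Vv'$.
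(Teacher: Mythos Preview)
Your proof is correct and follows essentially the same three-step strategy as the paper: swap the $(c_{i-1},c_i)$-chain at $v_{i-1}$ to shorten the cycle, invert the resulting smaller cycle by minimality, then swap a single $(c_{i-1},c_i)$-chain (now augmented by $vv_{i-1}$ and $vv_i$) to land on $\Vv^{-1}(\beta)$. The only cosmetic differences are that the paper normalizes to $i=2$ and names the final chain from $v_{i-1}$ rather than $v_i$; your final remark invoking Observations~\ref{obs:S_X-stable_so_S_invert_too} and~\ref{obs:X-equivalent_plus_identical} is not actually needed, since the three segments already compose directly by transitivity of Kempe equivalence.
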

\begin{proof}
Assume that $\Vv$ is not tight, so there exists $i$ such that $v_i\not\in K_{v_{i-1}}(m(v_i),mv_{i-1})$. Without loss of generality, we assume that $i = 2$ and that each $v_j$ is missing the color $j$. Note that this means that $\beta(vv_2) = 1$, $\beta(vv_3) = 2$ and $\beta(vv_1) = k$.

We swap the component $C_{1,2} = K_{v_{1}}(1,2)$ to obtain a coloring $\beta'$ that is $(\Vv \setminus \{v_1\})$-equivalent to the coloring $\beta$. Thus each $v_j$ is missing the color $j$ except $v_1$ which is now missing the color $2$. So now the fan $\Vv' = X_v(k)$ is equal to $(vv_1,vv_3,\cdots,vv_k)$, and thus is a cycle strictly smaller than $\Vv$. Since $\Vv$ is minimum, this cycle is invertible, and we denote by $\beta''$ the coloring obtained after its inversion.

The coloring $\beta''$ is $(G\setminus \Vv')$-identical to the coloring $\beta'$, so in particular it is $C_{1,2}$-identical to the coloring $\beta'$. Moreover, the coloring $\beta''$ is $(\Vv \setminus \{vv_1,vv_2,v_2\})$-identical to the coloring $\Vv^{-1}(\beta)$, and we have $\beta''(vv_1) = 2$, $\beta''(vv_2)= 1$, and $m^{\beta''}(v_2) = 2$.

So now in this coloring the component $K_{v_1}(1,2)$ is exactly $C_{1,2}\cup \{vv_1,vv_2\}$, and we swap back this component to obtain a coloring $\beta'''$. The coloring $\beta'''$ is now $C_{1,2}$-identical to $\beta$, and thus it is $(G\setminus \Vv)$-identical to $\beta$. Moreover, it is $(\Vv \setminus \{vv_1,vv_2,v_2\})$-identical to $\beta''$, so it is $(\Vv \setminus \{vv_1,vv_2,v_2\})$-identical to $\Vv^{-1}(\beta)$. Finally, we have $\beta'''(vv_1) = 1 = m^{\beta}(v_1)$, $\beta'''(vv_2) = 2 = m^{\beta}(v_2)$, and $m^{\beta'''}(v_2) = 1 = \beta(vv_2)$, so the coloring $\beta'''$ is $\Vv$-identical to $\Vv^{-1}(\beta)$. Since it is also $(G\setminus \Vv)$-identical to $\beta$, we have $\beta''' = \Vv^{-1}(\beta)$ as desired.
\end{proof}

The proof of Lemma~\ref{lem:all_cycles_invertible}, is a consequence of the two following Lemmas.

\begin{lemma}\label{lem:only_cycles}
Let $\Vv$ be a minimum cycle. For any color $c$ different from $m(v)$, the fan $X_v(c)$ is a cycle.
\end{lemma}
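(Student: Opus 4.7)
The plan is to argue by contradiction. Suppose $X_v(c)$ is not a cycle for some color $c\neq m(v)$. I first reduce to the case that $c$ is not the color of any edge of $\Vv$ incident with $v$: if $c=\beta(vv_i)$ for some $i$, then the unique edge at $v$ colored $c$ is $vv_i\in E(\Vv)$, and since $\Vv$ is a cycle in the auxiliary digraph $D_v$, the fan starting at $vv_i$ simply traces $\Vv$ around once and closes back to $vv_i$, so $X_v(c)=\Vv$, a cycle — contradicting the assumption. Hence I may write $X_v(c)=(vw_1,\ldots,vw_\ell)$ with $w_1\notin V(\Vv)$, and $X_v(c)$ is either a path or a comet.

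In the path case, $m(w_\ell)=m(v)$, and I invert $X_v(c)$ by the canonical sequence of single-edge Kempe swaps on $vw_\ell,vw_{\ell-1},\ldots,vw_1$. Each of these swaps is on a single-edge Kempe component whose edge is disjoint from $E(\Vv)$, and only ever changes the missing color of $v$ or of some $w_i\notin V(\Vv)$; Observation~\ref{obs:not_touching_subsequence_stable} then gives that the whole sequence is $\Vv$-stable. By Observation~\ref{obs:Vv-minimum-stable}, the resulting coloring $\beta_1$ still contains $\Vv$ as the same minimum cycle, with all edges and missing colors inside $\Vv$ untouched, but now $m^{\beta_1}(v)=c$ and $m^{\beta_1}(w_1)=c$. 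I then combine saturation of $\Vv$ in $\beta_1$, given by Lemma~\ref{lem:minimum_cycle_saturated} — which forces $v_i\in K_v^{\beta_1}(c,m(v_i))$ for every $i$ — with the presence of the ``extra'' vertex $w_1\notin V(\Vv)$ also missing $c$ to locate a chain endpoint incompatible with the saturated structure. This will either produce a cycle fan at $v$ strictly shorter than $\Vv$, contradicting minimality, or will break saturation of $\Vv$ in some auxiliary $\Vv$-equivalent coloring, so that $\Vv$ becomes invertible there by Lemma~\ref{lem:saturated_cycle}, and hence, by the corollary following Observation~\ref{obs:X-equivalent_plus_identical}, also in $\beta$ — contradicting non-invertibility.

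The comet case I split according to whether the cycle part $\Cc$ of $X_v(c)$ meets $V(\Vv)$. If $V(\Cc)\cap V(\Vv)=\emptyset$, then $\Cc$ itself is a cycle fan at $v$ disjoint from $\Vv$; if $|\Cc|<k$, minimality of $\Vv$ makes $\Cc$ invertible, and inverting $\Cc$ in a $\Vv$-stable fashion either straightens the comet into a shorter path at $v$ (reducing to the path case above) or produces a smaller non-invertible cycle, both giving a contradiction. If instead $V(\Cc)$ meets $V(\Vv)$, then because the unique out-arc from each $vv_i$ in $D_v$ already stays inside $\Vv$, once the fan enters $\Vv$ it must trace around once, so $\Cc=\Vv$ as arc-cycles and $X_v(c)$ has a nonempty tail $(vw_1,\ldots,vw_j)$ attaching at some $vv_i$. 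Inverting this tail — again a $\Vv$-stable sequence of single-edge swaps by the same argument as in the path case — lets me replay the path-case contradiction at the tail's endpoint.

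The main obstacle is the path sub-case: converting the transition $\beta\to\beta_1$ into an explicit contradiction. The key leverage is that $w_1$ plays the role of a second free vertex for the new missing color $c$ of $v$, which must be reconciled with the saturated $(c,m(v_i))$-chains guaranteed by Lemma~\ref{lem:minimum_cycle_saturated}; making this precise will require carefully tracking these chains along $\Vv$ and exploiting the tightness of $\Vv$ (Observation~\ref{obs:tight}) to pinpoint the chain endpoint that breaks saturation, thereby triggering Lemma~\ref{lem:saturated_cycle}. All of the comet sub-cases ultimately reduce, after preliminary $\Vv$-stable inversions, to this same core argument.
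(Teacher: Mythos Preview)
Your proposal has a genuine gap in the path case, and you essentially acknowledge it yourself when you write ``making this precise will require carefully tracking these chains\ldots''. The problem is that the ``leverage'' you point to --- that after inverting the path both $v$ and $w_1$ miss the color $c$ in $\beta_1$ --- does not by itself conflict with anything. Saturation of $\Vv$ in $\beta_1$ asserts $v_i\in K_v^{\beta_1}(c,m(v_i))$ for each $i$; it says nothing about which other vertices of $G$ miss $c$, and nothing prevents $w_1$ from being the far endpoint of one of those chains, or from lying on an entirely separate $(c,\cdot)$-component. Tightness (Observation~\ref{obs:tight}) concerns chains $K_{v_{i-1}}(m(v_i),m(v_{i-1}))$ that do not involve $c$ at all. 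There is no mechanism here that forces a shorter cycle at $v$ or desaturates $\Vv$, so the plan as stated does not close.

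The paper's argument is structurally quite different and explains why: the proof of Lemma~\ref{lem:only_cycles} is not self-contained but is deduced from Lemmas~\ref{lem:fans_around_Vv} and~\ref{lem:cycles_around_v_starting_with_u} (via Corollary~\ref{cor:X_u(m(v))_cycle}), whose proofs occupy most of Sections~\ref{sec:fans_around_Vv} and~\ref{sec:cycles_around_v_starting_with_u}. Concretely, for the path case one swaps only the \emph{last} edge $vw_t$, then compares the cycle $X_{v'}(m^\beta(v))$ in $\beta$ with the cycle $X_{v'}(m^{\beta'}(v))$ in $\beta'$ for some $v'\in V(\Vv)$; both contain $v$, and analysing their first common edge via Lemma~\ref{lem:cycles_around_v_starting_with_u} yields the contradiction. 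Your approach tries to bypass all of that machinery with a direct local argument at $v$, and I do not see how it can be completed: the information that fans around $v'\in V(\Vv)$ are themselves cycles is precisely what is needed, and proving that is the hard content. Your comet case also has loose ends (you do not handle $|\Cc|\ge k$ when $\Cc\cap V(\Vv)=\emptyset$, and ``inverting the tail'' when $\Cc=\Vv$ lands you back in an unresolved path situation), but these are secondary to the main gap.
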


\begin{lemma}\label{lem:cycles_interactions}
Let $\Xx$ and $\Yy$ be two cycles around a vertex $v$. For any pair of vertices $(z,z')$ in $(\Vv\cup\Xx\cup\Yy)^2$, the fan $\Zz = X_z(c_{z'})$ is a cycle containing $z'$.
\end{lemma}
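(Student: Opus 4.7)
The plan is to reduce the claim to Lemma~\ref{lem:only_cycles} by exploiting the rigidity that the minimality of $\Vv$ imposes on the common structure around $v$. Since the three cycles $\Vv, \Xx, \Yy$ all share $v$ as their central vertex, they jointly determine the missing colors of many vertices adjacent to $v$; the goal is to use this rigidity to pin down the shape of $\Zz$ when it is anchored at a non-central vertex $z$, and then to force $z'$ to lie on it.

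First I would establish the subcase where $z$ and $z'$ both lie in $V(\Vv)$, say $z = v_j$ and $z' = v_\ell$ with $c_{z'} = m(v_\ell)$. Suppose for contradiction that $\Zz = X_{v_j}(c_{z'})$ is either a path or a comet avoiding $v_\ell$. I would partially invert $\Zz$ along a short initial segment, using Observation~\ref{obs:not_touching_subsequence_stable} to ensure the swaps do not touch $V(\Vv) \cup E(\Vv)$ (the colors along this prefix lying outside $\beta(\Vv)$). After this modification $v_j$ misses $c_{z'} = m(v_\ell)$, so the bichromatic $(m(v), c_{z'})$-Kempe chain through $v$ now connects $v_j$ and $v_\ell$ in a way inconsistent with the saturation of $\Vv$ guaranteed by Lemma~\ref{lem:minimum_cycle_saturated}. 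Composing this with the inversion of a strictly shorter fan around $v$ (which exists by minimality of $\Vv$ and Lemma~\ref{lem:only_cycles}) would then produce a smaller non-invertible cycle around $v$, contradicting minimality of $\Vv$ via Observation~\ref{obs:X-equivalent_plus_identical} and the corollary that follows it.

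The same template handles $z, z' \in V(\Xx) \cup V(\Yy)$ by symmetry, since $\Xx$ and $\Yy$ are themselves cycles around $v$ with the same local rigidity at the central vertex. The main obstacle is the mixed subcase, where $z$ and $z'$ lie in distinct cycles among $\Vv, \Xx, \Yy$: there, a swap that is harmless for one cycle may disrupt another. I expect to handle this by iterated use of Observation~\ref{obs:X-equivalent_plus_identical} to localize each swap within a single cycle at a time, combined with Observation~\ref{obs:Vv-minimum-stable} to ensure $\Vv$ remains the same minimum cycle after every intermediate step so the minimality-based contradictions continue to apply. The ultimate contradiction has the same shape in every subcase: if $\Zz$ fails to be a cycle containing $z'$, we construct a coloring $\Vv$-equivalent to $\beta$ in which $\Vv$ becomes invertible, violating its minimality.
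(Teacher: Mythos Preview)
Your proposal has a genuine gap in the symmetry step and misses the key structural observation that makes the argument go through cleanly.

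First, the case $(z,z')\in V(\Vv)^2$ is already covered verbatim by Lemma~\ref{lem:fans_around_Vv}; there is no need to rederive it. More importantly, your claim that ``the same template handles $z,z'\in V(\Xx)\cup V(\Yy)$ by symmetry'' is not justified: $\Vv$ is a \emph{minimum} non-invertible cycle, but $\Xx$ and $\Yy$ are arbitrary cycles around $v$, so Lemma~\ref{lem:fans_around_Vv} does not apply to them and there is no a priori symmetry. What the paper does instead is prove directly that $\Xx$ and $\Yy$ are themselves \emph{saturated} and \emph{tight}: if, say, $\Xx$ were not saturated, swapping the offending $(c_v,c_u)$-component would leave $\Vv$ intact but create a path around $v$, contradicting Lemma~\ref{lem:only_cycles}. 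This saturation of all three cycles is the missing ingredient, since it is exactly what lets you control Kempe swaps involving colors from $\beta(\Xx)\cup\beta(\Yy)$, not just $\beta(\Vv)$.

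Once saturation is in hand, the paper's route is short and uniform, with no case split on which cycles $z,z'$ belong to: (i) $\Zz$ is not a path, since inverting it until $m(z)$ lands in $\beta(\Vv)\cup\beta(\Xx)\cup\beta(\Yy)$ produces a path or comet around $v$; (ii) $\Zz$ is entangled with all three cycles, since otherwise the saturation just established lets you swap a component disjoint from $\Vv\cup\Xx\cup\Yy$ and turn $\Zz$ into a path; (iii) $\Zz$ is not a comet, since by entanglement the repeated color lies outside $\beta(\Vv)\cup\beta(\Xx)\cup\beta(\Yy)$ and a single swap again makes $\Zz$ a path. Then $\Zz$ is a cycle, and entanglement forces $z'\in\Zz$. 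Your proposed mechanism (``composing with the inversion of a strictly shorter fan around $v$ \ldots\ would produce a smaller non-invertible cycle'') is confused: shorter cycles are invertible by minimality, so inverting one cannot yield a non-invertible cycle; the actual contradiction is always with Lemma~\ref{lem:only_cycles}, not with minimality directly.
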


We prove Lemma~\ref{lem:only_cycles} in section~\ref{sec:only_cycles_around_v}, and Lemma~\ref{lem:cycles_interactions} in section~\ref{sec:cycles_interaction}, and prove here Lemma~\ref{lem:all_cycles_invertible}.

\begin{proof}[Proof of Lemma~\ref{lem:all_cycles_invertible}]
To prove the Lemma, we prove that the graph $G$ only consists of an even clique where each vertex misses a different color. This is a contradiction since in any $(\Delta(G)+1)$-coloring of an even clique, for any color $c$, the number of vertices missing the color $c$ is always even. By Lemma~\ref{lem:only_cycles}, all the fans around $v$ are cycles, so each neighbor of $v$ misses a different color. Moreover, by Lemma~\ref{lem:cycles_interactions}, there is an edge between each pair of neighbors of $v$, so $G = N[v]= K_{\Delta(G)+1}$. By construction, $G$ is $\Delta(G)$-colorable, so $G$ is an even clique and each vertex misses a different color, this concludes the proof.
\end{proof}

\subsection{Only cycles around \texorpdfstring{$v$}{v}: a proof of lemma~\ref{lem:only_cycles}}\label{sec:only_cycles_around_v}
In this section, we prove Lemma~\ref{lem:only_cycles}. If $\Xx$ and $\Xx'$ are two fans, then $\Xx$ and $\Xx'$ are called \textit{entangled} if for any $c\in \beta(\Xx) \cap \beta(\Xx')$, $M(X,c) = M(X',c)$. To prove Lemma~\ref{lem:only_cycles} we need the two following lemmas.

\begin{lemma}\label{lem:fans_around_Vv}
Let $\Vv$ be a minimum cycle in a coloring $\beta$ and let $u$ and $u'$ be two vertices of $\Vv$. Then fan $\Uu = X_u(m(u')) = (uu_1,\cdots,uu_{l})$ is a cycle entangled with $\Vv$.
\end{lemma}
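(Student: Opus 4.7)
The plan is to prove both parts of the lemma simultaneously by an inductive vertex-by-vertex analysis of the fan $\Uu$, using the saturation (Lemma~\ref{lem:minimum_cycle_saturated}) and tightness (Observation~\ref{obs:tight}) of $\Vv$ as the main structural tools and minimality of $\Vv$ as the source of contradictions. I would normalize notation so that $\Vv = (vv_1, \ldots, vv_k)$ with $m(v_i) = i$, $m(v) = 0$, and $\beta(vv_i) = i-1 \pmod k$, and write $u = v_a$, $u' = v_b$. The first edge of $\Uu$ is then $uu_1$ with $\beta(uu_1) = b = m(u')$. By saturation, the bichromatic path $K_v(0,b)$ runs from $v$ to $v_b = u'$; and since $u = v_a$ is incident to both a $b$-edge and a $0$-edge, examining $K_v(0,b)$ together with $K_u(a,b)$ should pin $u_1$ down as a specific vertex of $V(\Vv)$ satisfying $u_1 = v_{m(u_1)}$.

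For the inductive step, suppose the prefix $(uu_1, \ldots, uu_j)$ has been shown to be a valid fan with each $u_s \in V(\Vv)$ satisfying $u_s = v_{m(u_s)}$. Then $uu_{j+1}$, if it exists, is colored $m(u_j) = s$ for some $s \in \{1,\ldots,k\}$. Analyzing the Kempe chain $K_u(a,s)$ together with the saturation chain $K_v(0,s) \ni v_s = u_j$ should force either (a) a new vertex $u_{j+1} \in V(\Vv)$ matching the induction hypothesis, or (b) $uu_{j+1}$ equal to some earlier edge $uu_{s'}$, yielding a cycle or a comet. The entanglement claim then falls out directly: each common color $c \in \beta(\Uu) \cap \beta(\Vv)$ is an index missing at some $u_s$, which coincides with $v_c$ by the induction.

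The main obstacle --- and the hardest step --- is ruling out that $\Uu$ terminates as a path or as a comet rather than closing back to $uu_1$. For the path case, the terminal vertex $u_l$ misses $m(u) = a$, and partially inverting $\Uu$ via a sequence of trivial single-edge swaps, each $\Vv$-stable by Observation~\ref{obs:not_touching_subsequence_stable}, should yield a coloring in which either $\Vv$ becomes invertible (contradicting its non-invertibility) or a strictly shorter non-invertible fan-cycle through $u$ emerges, contradicting the minimality of $\Vv$. For the comet case, the closed sub-part of $\Uu$ would directly exhibit a fan-cycle of length less than $k$; the already-established entanglement ensures this is a genuinely distinct cycle from $\Vv$, again contradicting minimality. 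The delicate bookkeeping needed to keep every intermediate swap $\Vv$-stable --- so that the induction survives the inversion step and we may honestly compare against the original minimum cycle --- is the subtle technical part of the argument.
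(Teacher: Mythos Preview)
Your proposal rests on a structural claim that is false: you assert that each vertex $u_s$ of the fan $\Uu = X_u(m(u'))$ lies in $V(\Vv)$ and in fact equals $v_{m(u_s)}$. But $\Uu$ is a fan \emph{centred at $u = v_a$}, so its vertices $u_1,\ldots,u_l$ are neighbours of $u$, not neighbours of $v$. Apart from the special case $u' = v_{a-1}$ (where the first edge of $\Uu$ is $uv$ itself), there is no reason whatsoever for the edge of colour $m(u')$ incident with $u$ to lead to a vertex of $\{v_1,\ldots,v_k\}$, and in general it does not. The entanglement conclusion of the lemma is much weaker than what you are trying to prove: it only says that for those colours $c$ lying in $\beta(\Uu)\cap\beta(\Vv)$, the vertex of $\Uu$ missing $c$ coincides with the vertex of $\Vv$ missing $c$; the fan $\Uu$ typically contains many vertices missing colours outside $\beta(\Vv)$, and those vertices need not lie in $V(\Vv)$ at all. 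Consequently your inductive step, which presupposes $u_j\in V(\Vv)$ in order to deduce anything about $u_{j+1}$, never gets off the ground.

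The paper's argument is quite different and substantially more delicate. It first shows $\Uu$ is not a path by an explicit, rather intricate sequence of Kempe swaps (with separate subcases according to whether $u$ and $u'$ are consecutive in $\Vv$ and according to which of $u,u'$ lies in a certain bichromatic component) that directly produces the coloring $\Vv^{-1}(\beta)$, contradicting non-invertibility of $\Vv$. Only after ``$\Uu$ is not a path'' is established does the paper prove entanglement, by reducing any failure of entanglement to the existence of a path via further swaps. Finally the comet case is handled, not by extracting a smaller fan-cycle as you suggest (a sub-cycle of $\Uu$ would be a cycle around $u$, and neither its size relative to $k$ nor the usefulness of its invertibility for inverting $\Vv$ is clear), but by using entanglement to force the duplicated missing colour outside $\beta(\Vv)$ and then swapping a $(c,c_v)$-component to contradict entanglement. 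Your sketch underestimates the work required in the path case and misidentifies the mechanism in the comet case.
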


\begin{lemma}\label{lem:cycles_around_v_starting_with_u}
Let $\Vv$ be a minimum cycle in a coloring $\beta$, $u$ and $u'$ be two vertices of $\Vv$, and $\Uu = X_u(m(u')) = (uu_1,\cdots,uu_l)$. Then for any $j\leq l$, the fan $X_v(\beta(uu_j))$ is a cycle.
\end{lemma}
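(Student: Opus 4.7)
The plan is to prove the lemma by induction on $j$, with Lemma~\ref{lem:fans_around_Vv} (which gives that $\Uu$ is a cycle entangled with $\Vv$) as the principal structural input.

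For the base case $j=1$, we have $\beta(uu_1)=m(u')$ by definition of the starting edge of $\Uu=X_u(m(u'))$. Writing $u'=v_b$ for some $v_b\in V(\Vv)$, the color $m(u')=m(v_b)$ is precisely the color of the edge $vv_{b+1}$ in the cyclic structure of $\Vv$, so the fan $X_v(\beta(uu_1))$ starts at $vv_{b+1}$ and, by following the arcs of $D_v$, traces out $\Vv$ once more (as a cyclic rotation); hence it is a cycle.

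For the inductive step, assume the claim for all smaller indices and consider $\beta(uu_j)=m(u_{j-1})$ for $j\geq 2$. If $u_{j-1}\in V(\Vv)$, the base-case argument applies verbatim with $u_{j-1}$ playing the role of $u'$, so $X_v(\beta(uu_j))$ is again a cyclic rotation of $\Vv$. The remaining case is $u_{j-1}\notin V(\Vv)$, which is the main obstacle.

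For this hard case I plan to argue by contradiction. Assume $\Ww=X_v(\beta(uu_j))$ is not a cycle; then $\Ww$ is a path or a comet, and so it terminates at a vertex $w$ whose missing color is constrained (to $m(v)$ if $\Ww$ is a path, and to a color already appearing along $\Ww$ if $\Ww$ is a comet). The strategy is to partially invert $\Uu$ up to $u_{j-1}$, using that $\Uu$ is a cycle, so as to transfer the missing color of $u_{j-1}$ to a color lying in $\beta(\Vv)\cup\{m(v)\}$. One then verifies that the associated sequence of Kempe swaps is $\Vv$-stable, so by Observation~\ref{obs:Vv-minimum-stable} the cycle $\Vv$ is the same minimum cycle in the resulting coloring $\beta'$. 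In $\beta'$, the failure of $\Ww$ to close up into a cycle forces the existence of a bichromatic Kempe chain involving $\beta(uu_j)$ and either $m(v)$ or some color of $\Vv$; swapping this chain and subsequently completing or reversing the partial inversion of $\Uu$ yields either a cycle around $v$ strictly smaller than $\Vv$, or an inversion of $\Vv$ itself. Either possibility contradicts the minimality of $\Vv$, so $\Ww$ must in fact be a cycle.

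The main obstacle is the last step: identifying the exact bichromatic chain to swap in $\beta'$, and verifying that the overall sequence of swaps produces a genuine smaller cycle or a true inversion of $\Vv$ without disturbing the already-established structure of $\Vv$ and of the partially inverted $\Uu$. This requires careful bookkeeping with the $X$-stability and $X$-equivalence framework developed in Section~\ref{sec:general_outline}, in particular to guarantee that each intermediate component swap preserves the portions of $\Vv$ and $\Uu$ that are reused later in the argument.
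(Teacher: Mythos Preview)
Your base case and the easy sub-case of the inductive step are fine, but the heart of the matter --- the case $u_{j-1}\notin V(\Vv)$ --- is where essentially all of the difficulty lies, and your sketch does not resolve it.

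First, a technical point: $\Uu$ is a cycle, not a path, so ``partially inverting $\Uu$ up to $u_{j-1}$'' is not a well-defined operation in the sense used elsewhere in the paper. Inverting a prefix of a fan is a sequence of trivial single-edge swaps that changes the missing colour of the \emph{central} vertex $u$; it does not alter $m(u_{j-1})$. Your claim that this inversion ``transfers the missing colour of $u_{j-1}$ to a colour lying in $\beta(\Vv)\cup\{m(v)\}$'' is therefore not what such swaps accomplish.

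Second, and more importantly, the paper's proof (Section~\ref{sec:cycles_around_v_starting_with_u}) runs the induction in the \emph{opposite} direction: it proves the property $P(i)$, which concerns $X_v(\beta(uu_{l-i}))$, by induction on $i$ from $i=0$. The point is that $u_l=u'$ (since $\Uu$ is entangled with $\Vv$), so the base case $P(0)$ concerns the colour of the edge $uu'$ joining two vertices of $\Vv$; this gives a concrete structural foothold (Lemmas~\ref{lem:edge_between_u_u'_saturated_cycle} and~\ref{lem:edge_between_u_u'_not_path}) using both $u$ and $u'$ as anchors in $\Vv$. Your forward induction has a trivial base case but no analogous foothold for the step: knowing that $X_v(\beta(uu_{j'}))$ is a cycle for $j'<j$ gives no obvious leverage on $X_v(\beta(uu_j))$ when $u_{j-1}\notin V(\Vv)$.

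Third, the paper's inductive step is far from a single Kempe swap followed by a reversal. It requires the machinery of $(\Vv,u)$-independent subfans (Lemmas~\ref{lem:(Vv,u)-independent_subfan_avoiding_v}--\ref{lem:(Vv,u)-independent_path_touch_other_extremity}), an auxiliary property $Q(i)$ (Definition~\ref{def:Q(i)}, Lemma~\ref{lem:P(i)_implies_Q(i)}), and extensive case analysis tracking which components can be swapped while remaining $\Vv$- and $\Uu_{>u_{l-i}}$-stable. Your final paragraph correctly identifies this bookkeeping as the main obstacle, but the proposal does not supply it, and there is no indication that the vague outcome you describe (``a cycle strictly smaller than $\Vv$, or an inversion of $\Vv$'') can actually be reached along the lines you sketch.
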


Note that by Lemma~\ref{lem:fans_around_Vv}, we can directly conclude that $N[v]$ is a clique. Moreover, we directly have the following corollary.

\begin{corollary}\label{cor:X_u(m(v))_cycle}
Let $\Vv = (vv_1,\cdots, vv_k)$ be a minimum cycle in a coloring $\beta$. Then for any $j\leq k$, the fan $X_u(m(v))$ is a cycle entangled with $\Vv$.
\end{corollary}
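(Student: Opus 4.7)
The plan is to reduce the corollary to Lemma~\ref{lem:fans_around_Vv} by exploiting a simple rotational property: a fan that happens to be a cycle is a closed orbit in the auxiliary digraph $D_{v_j}$, and starting the fan at any edge of this orbit traces the very same cycle, merely rotated. Note that the color $m(v)$ is not among the missing colors of the $v_i$: the edges $vv_1,\dots,vv_k$ of $\Vv$ already use the colors $m(v_1),\dots,m(v_k)$, and $m(v)$ is disjoint from these (it is the color missing at $v$). Consequently, Lemma~\ref{lem:fans_around_Vv} does not apply directly to $X_{v_j}(m(v))$, and the key is to feed it instead a companion cycle around $v_j$ that contains $v$ itself as a vertex.

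Concretely, I would fix $j\leq k$ and set $u' = v_{j-1}$ (taking $u' = v_k$ when $j=1$), chosen so that $\beta(vv_j) = m(u')$ by the definition of the cycle $\Vv$. Applying Lemma~\ref{lem:fans_around_Vv} to the pair $(v_j,u')$ yields a fan $\Uu = X_{v_j}(m(u'))$ that is a cycle entangled with $\Vv$. The unique edge incident to $v_j$ of color $m(u')$ is $vv_j$, so the first vertex of $\Uu$ is $v$ itself.

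Since $\Uu$ is a cycle, $v$ has a successor $w$ in $\Uu$ with $\beta(v_j w) = m(v)$; hence the fan $X_{v_j}(m(v))$ begins at the edge $v_j w$. Because each vertex of $D_{v_j}$ has out-degree at most one, the orbit of the successor relation passing through $v_j w$ is exactly the closed orbit $\Uu$, traversed from a shifted starting point. Therefore $X_{v_j}(m(v))$ is a cyclic rotation of $\Uu$ and carries the same central vertex, the same vertex set, the same edge set, and the same missing-color data. Since entanglement with $\Vv$ depends only on this combinatorial data (the assignment $c \mapsto M(\cdot,c)$ on the involved colors), it transfers from $\Uu$ to $X_{v_j}(m(v))$, giving the corollary.

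The only point that requires genuine attention is verifying that two fans around $v_j$ whose starting edges lie in the same cycle in $D_{v_j}$ really trace the same closed orbit; once this rotational equivalence is isolated, the corollary is immediate from Lemma~\ref{lem:fans_around_Vv}.
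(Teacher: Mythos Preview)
Your proposal is correct and follows essentially the same approach as the paper: apply Lemma~\ref{lem:fans_around_Vv} to $\Uu = X_{v_j}(m(v_{j-1})) = X_{v_j}(\beta(vv_j))$, observe that the first vertex of $\Uu$ is $v$ so the second edge is colored $m(v)$, and conclude that $X_{v_j}(m(v))$ is a cyclic rotation of the cycle $\Uu$. The paper compresses the final step into a single equality ``$X_u(m(v)) = \Uu$'', whereas you spell out the rotational-orbit justification explicitly; this extra care is fine but does not change the argument.
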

\begin{proof}
Let $j\leq k$ and $\Uu = X_{v_j}(m(v_{j-1})) = X_{v_j}(\beta(vv_j))$. Then the first edge of $\Uu$ is $vv_{j}$, and since $v$ is missing the color $m(v)$, the second edge of $\Uu$ is colored $m(v)$. By Lemma~\ref{lem:fans_around_Vv}, $\Uu$ is a cycle entanlged with $\Vv$, so since $X_u(m(v)) = \Uu$, the fan $X_u(m(v))$ is a cycle entangled with $\Vv$ as desired.
\end{proof}


We prove Lemma~\ref{lem:fans_around_Vv} in Section~\ref{sec:fans_around_Vv}, Lemma~\ref{lem:cycles_around_v_starting_with_u} in Section~\ref{sec:cycles_around_v_starting_with_u}, and prove here Lemma~\ref{lem:only_cycles}.

\begin{proof}[Proof of Lemma~\ref{lem:only_cycles}]
Assume that there exists a fan $\Ww = (vw_1,\cdots,vw_t)$\break around $v$ which does not induce a cycle, we first prove that $\Ww$ is not a path.
\begin{claim}
The fan $\Ww$ cannot induce a path.
\end{claim}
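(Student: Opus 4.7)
Suppose, toward contradiction, that $\Ww = (vw_1, \ldots, vw_t)$ is a path. By the definition of a path fan, $m(w_t) = m(v)$; set $m(v) = 0$, $c_i := m(w_i)$ for $1 \le i \le t$ (so $c_t = 0$), and $c_0 := \beta(vw_1)$, giving $\beta(vw_{i+1}) = c_i$. Since distinct edges at $v$ carry distinct colors, the $c_i$'s are pairwise distinct and all nonzero. The plan is to construct a valid sequence of Kempe swaps from $\beta$ to $\Vv^{-1}(\beta)$, contradicting the fact that $\Vv$, as a minimum cycle, is non-invertible. The sequence will have three pieces: (1) a ``shift'' of $\Ww$ via trivial single-edge swaps producing a coloring $\beta^*$ with $m^{\beta^*}(v) = c_0$; (2) the inversion of $\Vv$ in $\beta^*$; (3) the reverse of the shift.

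First, I would show $V(\Ww) \cap V(\Vv) = \emptyset$. An overlap $w_i = v_j$ would force the out-arcs in $D_v$ from $vw_i = vv_j$ to coincide, so $\Ww$ and $\Vv$ would agree from that index on; tracing around $\Vv$'s cycle would then require $m(w_t) = d_{j'}$ for some $j'$. Combined with $m(w_t) = 0$ this forces $d_{j'} = 0$, contradicting the fact that $v$ has an edge of color $d_{j'}$ while missing $0$. Hence $V(\Ww)$ and $V(\Vv)$ are disjoint and the color sets $\{c_0,\dots,c_{t-1}\}$ and $\{d_1,\dots,d_k\}$ are disjoint too. This makes piece (1) clean: for $i = t, t-1, \ldots, 1$, swap the trivial bichromatic component $\{vw_i\}$ in colors $(c_{i-1}, c_i)$ (with $c_t = 0$), each of which is a single edge by direct inspection. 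The resulting coloring $\beta^*$ satisfies $\beta^*(vw_i) = c_i$ ($i<t$), $\beta^*(vw_t) = 0$, $m^{\beta^*}(v) = c_0$, $m^{\beta^*}(w_i) = c_{i-1}$, while the edges of $\Vv$ and the missing colors of its vertices are untouched, so $\Vv$ is still a cycle around $v$ in $\beta^*$.

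The heart of the argument is to establish that $\Vv$ is invertible in $\beta^*$. By Lemma~\ref{lem:saturated_cycle} it suffices to show $\Vv$ is not saturated in $\beta^*$, i.e.\ that for some $i$, $v_i \notin K_v^{\beta^*}(c_0, d_i)$. Since no $\Ww$-edge in $\beta^*$ carries color $c_0$ or $d_i$, the chain $K_v^{\beta^*}(c_0, d_i)$ is exactly the ``arm'' of the $\beta$-chain $K_v^{\beta}(c_0, d_i)$ that leaves $v$ through $vv_{i+1}$: in $\beta$ the edge $vw_1$ also bore color $c_0$, making $v$ an internal vertex of the full $\beta$-chain with a second arm leaving via $vw_1$, and the shift cuts off precisely this arm at $v$. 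The aim is to exhibit an $i$ for which $v_i$ lies strictly on the cut-off $vw_1$-arm, so that saturation breaks in $\beta^*$. This is the main obstacle, and I expect to resolve it by a counting argument: the $\Vv$-saturating chains $K_v^{\beta}(0, d_i)$ exhausted by the $v_i$'s interact with the alternative $(c_0, d_i)$-chains in a constrained way, and were every $v_i$ to sit on the $vv_{i+1}$-arm of $K_v^{\beta}(c_0, d_i)$, one could rebuild a saturating structure that already exhibits a smaller cycle around $v$ in $\beta$, contradicting minimality of $\Vv$.

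Once $\Vv$ is invertible in $\beta^*$, let $\beta^{**} := \Vv^{-1}(\beta^*)$. The inversion of $\Vv$ only permutes the colors $\{d_1,\dots,d_k\}$ among the $\Vv$-edges at $v$, so $m(v)$ remains $c_0$ in $\beta^{**}$ and the $\Ww$-edges are untouched. Consequently, piece (3) — reversing the shift — is available: apply the $t$ single-edge swaps of piece (1) in reverse order, each still trivial for the same reason as before. The resulting coloring is $\Vv^{-1}(\beta)$, exhibited as reachable from $\beta$ by a sequence of Kempe swaps, contradicting the fact that $\Vv$ is a minimum (hence non-invertible) cycle and completing the proof.
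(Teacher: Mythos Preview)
Your plan has a genuine gap at exactly the step you yourself flag as ``the main obstacle'': showing that $\Vv$ is not saturated in $\beta^*$. The sketch you offer --- that if every $v_i$ lay on the $vv_{i+1}$-arm of $K_v^{\beta}(c_0, d_i)$ one could extract a smaller cycle around $v$ --- is not an argument, and I do not see how to turn it into one. Nothing in the hypotheses controls the shape of the $(c_0, d_i)$-bichromatic chains; the original saturation of $\Vv$ is a statement about the $(0, d_i)$-chains, which are unrelated, and there is no evident counting or parity constraint forcing some $v_i$ off its new arm. It is entirely consistent with the setup that $\Vv$ remains saturated in $\beta^*$, in which case Lemma~\ref{lem:saturated_cycle} tells you nothing and the whole three-piece plan collapses. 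Pieces (1) and (3) are fine, but without (2) they are vacuous.

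The paper's proof takes a completely different route and never tries to invert $\Vv$ directly. It fixes a vertex $v' \in \Vv$ and compares the fan $X_{v'}(m(v))$ in $\beta$ with the fan $X_{v'}(\beta(vw_t))$ in the coloring obtained by swapping the single edge $vw_t$. Corollary~\ref{cor:X_u(m(v))_cycle} forces both to be cycles containing $v$, so they share an edge; then Lemma~\ref{lem:cycles_around_v_starting_with_u} (applied in each coloring) pins down the vertex preceding that shared edge and forces a contradiction. This argument leans on the heavy structural machinery developed in Sections~\ref{sec:fans_around_Vv} and~\ref{sec:cycles_around_v_starting_with_u}. A direct inversion shortcut of the kind you attempt would be a real simplification of the paper, but as written it does not go through.
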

\begin{proof}
Without loss of generality, we assume that the vertex $v$ is missing the color $1$. Assume that $\Ww$ induces a path, so $m(v) = m(w_t) = 1$. Let $v'\in \Vv$, by Corollary~\ref{cor:X_u(m(v))_cycle}, we have that $\Uu = X_{v'}(1)$ is a cycle containing $v$ in $\beta$. If we apply a single-edge Kempe swap on $vw_t$, then we obtain a coloring where $m(w_t) = m(v) = \beta(vw_t)$; we denote by $\beta'$ this coloring, and without loss of generality, we assume that $\beta(vw_t) = 2$. Again, by Corolloary~\ref{cor:X_u(m(v))_cycle}, we also have that $\Uu' = X_{v'}(2)$ is a cycle containing $v$ in the coloring $\beta'$, so $\Uu\cap\Uu'\neq \emptyset$, let $v'w''$ be the first edge they have in common, and let $w = M(\Uu,\beta(v'w''))$ and $w' = M(\Uu',\beta(v'w''))$. We now have to distinguish whether $v\in \{w,w'\}$ or not.
\begin{case}[$v\not\in \{w,w'\}$]
~\newline
In this case, $m_{\beta}(w) = m_{\beta'}(w) = m_{\beta}(w') =m_{\beta'}(w')$; we denote by $c$ this color. By Lemma~\ref{lem:cycles_around_v_starting_with_u}, $X_v(c)$ is a cycle containing $w$ in $\beta$, and $X_v(c)$ is a cycle containing $w'$ in $\beta'$, so $w = w'$; a contradiction.
\end{case}

\begin{case}[$v\in \{w,w'\}$]
~\newline
The case $v = w$ and $v = w'$ being symmetrical, we can assume that $v = w$. In this case, in the coloring $\beta'$, $w'$ is missing the color $c_v$, but by Lemma~\ref{lem:cycles_around_v_starting_with_u} $X_v(1)$ is a cycle containing $w$ or $\Vv$ is invertible, however, in the coloring $\beta'$, $X_v(1)$ induces a path which is a single edge; a contradiction.
\end{case}

\end{proof}

Thus the fan $\Ww$ is not a path. Now assume that $\Ww$ is a comet, then there exists $w$ and $w'$ in $\Ww$ which are missing the same color $c$. At least one of them is not in $K_v(1,c)$, the two cases being symmetrical, we can assume without loss of generality that $w$ is not in $K_v(1,c)$. So if we swap the component $K_w(1,c)$, we obtain a coloring where the fan $X_v(\beta(vw_1))$ is a path; a contradiction, so $\Ww$ is a cycle.
\end{proof}

\section{Fans around \texorpdfstring{$\Vv$}{V}: a proof of Lemma~\ref{lem:fans_around_Vv}}\label{sec:fans_around_Vv}

In this section, we prove Lemma~\ref{lem:fans_around_Vv} which will be often used in the proof of Lemma~\ref{lem:cycles_around_v_starting_with_u}.

\begin{proof}
\setcounter{case}{0}
We first prove that the fan $\Uu$ cannot induce a path.

\begin{claim}
The fan $\Uu$ cannot induce a path.
\end{claim}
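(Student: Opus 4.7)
The plan is a proof by contradiction. Suppose $\Uu = (uu_1,\ldots,uu_l)$ is a path. Then the sink $uu_l$ has no outgoing arc in $D_u$, which forces $m^\beta(u_l)=m^\beta(u)$. Hence $\Uu$ admits an inversion by the sequence $S$ of single-edge Kempe swaps on $uu_l, uu_{l-1}, \ldots, uu_1$ in that order; each swap is legitimate because at the moment it is performed, $u$ and the current $u_h$ share a common missing color by the fan structure. Writing $\beta'$ for the coloring obtained after $S$, we get $m^{\beta'}(u)=\beta(uu_1)=m^\beta(u')$.

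First I would verify that $S$ is stable on $E(\Vv)\cup (V(\Vv)\setminus\{u\})$. No edge of $\Vv$ is touched because every edge of $\Vv$ is incident with $v\neq u$. For the missing colors, Observation~\ref{obs:not_touching_subsequence_stable} applied inductively, together with the saturation (Lemma~\ref{lem:minimum_cycle_saturated}) and tightness (Observation~\ref{obs:tight}) of $\Vv$, rules out any accidental disturbance of the missing color at another vertex of $\Vv$ along $S$. Hence in $\beta'$ every feature of $\Vv$ coincides with that in $\beta$ except that $u$'s missing color has shifted from $m^\beta(u)$ to $m^\beta(u')$.

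Writing $u=v_i$ and $u'=v_j$, and assuming without loss of generality $i<j$ in the cyclic order of $\Vv$, the outgoing arc out of $vv_i$ in $D_v^{\beta'}$ now points to $vv_{j+1}$, since $v_i$ misses $\beta(vv_{j+1})$. Tracing the fan at $v$ from $vv_1$ therefore yields a cycle fan $\Vv' = (vv_1,\ldots,vv_i,vv_{j+1},\ldots,vv_k)$ around $v$ in $\beta'$ of length $k-(j-i)<k$. By the minimality of $\Vv$, the cycle $\Vv'$ is invertible in $\beta'$; let $\beta''$ be the resulting coloring.

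A direct color-by-color comparison shows $\beta''$ agrees with $\Vv^{-1}(\beta)$ on $E(G)\setminus\{vv_i,vv_{i+1},\ldots,vv_j\}$, so the remaining task is to perform a single cyclic rotation on the residual block of edges $vv_i,vv_{i+1},\ldots,vv_j$. Identifying this rotation as an invertible cycle fan of size $j-i+1<k$ around $v$ in $\beta''$, and invoking minimality of $\Vv$ once more, one produces a global sequence of swaps realising the inversion of $\Vv$ in $\beta$, contradicting the non-invertibility of $\Vv$. The main obstacle I expect is precisely this final identification: one must carefully check that the residual subfan in $\beta''$ is indeed a cycle (rather than a path or a comet) and that the intermediate sequences can be stitched coherently via Observations~\ref{obs:not_touching_subsequence_stable} and~\ref{obs:S_X-stable_so_S_invert_too}. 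A secondary delicate issue is handling possible coincidences between the vertices of $\Uu$ and those of $\Vv$, which at this stage of the induction cannot be excluded via entanglement and must be controlled through the saturation and tightness of $\Vv$.
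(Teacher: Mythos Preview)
Your approach has two genuine gaps that cannot be patched without essentially redoing the paper's case analysis.

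\textbf{The stability claim fails.} The inversion of a path changes the missing colour of \emph{every} vertex $u_h$ of $\Uu$: after the swap on $uu_h$, the vertex $u_h$ misses $\beta(uu_h)$, not $m^\beta(u_h)$. Hence whenever some $u_h$ lies in $V(\Vv)\cup\{v\}$, the sequence $S$ is not $(\Vv\setminus\{u\})$-stable. This is not exotic: when $u'=v_{i-1}$ the first edge of $\Uu$ is $uv$ itself, so $u_1=v$ and inverting $\Uu$ alters $m(v)$. Observation~\ref{obs:not_touching_subsequence_stable} does not help, since it requires the swap-centre to lie outside the subfan and requires $m^{\beta_h}(u)\notin\beta_0(\Xx)$ throughout; but the very last value $m(u)=m^\beta(u')$ lies in $\beta(\Vv)$. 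Saturation and tightness concern Kempe chains through $v$ or between consecutive $v_h$; they do not prevent vertices of $\Vv$ from appearing in $\Uu$. This is exactly why the paper first replaces $\Uu$ by a \emph{minimal} sub-path (forcing the only colour of $\beta(\Vv)\setminus\{m(v)\}$ in $\Uu$ to sit on the first edge), and then splits into the cases $j'=j-1$ (where $uv\in E(\Uu)$) and $j'\neq j-1$, handling each via explicit Kempe-chain manipulations that exploit tightness.

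\textbf{The residual block is not a cycle.} Even granting stability, write $m^\beta(v_h)=h$ and $\beta(vv_h)=h-1$. After inverting $\Vv'=(vv_1,\ldots,vv_i,vv_{j+1},\ldots,vv_k)$ one has $m^{\beta''}(v_i)=\beta(vv_i)=i-1$. The candidate fan $(vv_{i+1},\ldots,vv_j,vv_i)$ therefore ends at $v_i$, whose outgoing arc in $D_v^{\beta''}$ points to $vv_{i-1}$ (colour $i-1$), not back to $vv_{i+1}$ (colour $i$); so $X_v^{\beta''}(i)$ runs on into the already-inverted portion of $\Vv'$ and is a comet, not a cycle of size $j-i+1$. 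Worse, even if one could force a cycle on these $j-i+1$ edges, its inversion would leave $v_i$ missing $j$ rather than the required value $i-1$. Finally, you never undo the inversion of $\Uu$: since $\Vv^{-1}(\beta)$ is $(G\setminus\Vv)$-identical to $\beta$, the edges of $\Uu$ must be restored, and this re-inversion is not automatic once $m(u)$ has changed.
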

\begin{proof}
Otherwise, assume that the fan $\Uu$ is a path, without loss of generality, we can assume that $\Uu$ is of minimal length (if $\Uu$ is not minimal, since it is a path, it contains a strictly smaller path). Thus $\Uu$ contains only one edge colored with a color in $\beta(\Vv)\setminus \{c_v\}$: its first edge. We now need to distinguish whether $j' = j-1$ or not (\textit{i.e.} whether $u = v_j$ and $u' = v_{j'}$ are consecutive or not in $\Vv$).

\begin{case}[$j' = j-1$]
~\\
In this case, $\Uu = X_u(uv)$, and the edge colored $c_v$ incident with $u$ is just after $uv$ in $\Uu$. As $\Uu$ is a path, we can invert it until we reach a coloring where $m(u) = m(v) = c_v$. Since $\Uu$ is minimal, no edge incident with a vertex of $\Vv$ different from $u$ has been recolored during the inversion. In the coloring obtained after the inversion, the fan $(vv_{j+1},\cdots,vv_j = vu)$ is a path that we can invert until we reach a coloring where $m(v) = m(v_{j+1}) = j$, we denote by $\beta'$ this coloring. Since $\Vv$ was tight in the coloring $\beta$, in the coloring $\beta'$ we have $C = K_{v_{j-1}}^{\beta'}(j,j-1) = K_{v_{j-1}}^{\beta}(j,j-1)\cup\{vv_{j-1}\}\setminus \{vv_{j+1},vv_j = vu\}$, so we swap this component to obtain a coloring where $m(v) = m(u) = j-1$, then we swap the edge $uv$ and obtain a coloring where $(uu_{l-1},\cdots,uu_0)$ is a path that we invert. In the coloring obtained after the inversion, we have that the component $K_{v_{j-1}}(j,j-1)$ is exactly $C\cup \{vv_j\}$, if we swap this component back we obtain exactly $\Vv^{-1}(\beta)$.
\end{case}
\begin{case}[$j' \neq j-1$]
~\\
In this case, since $\Uu$ is a path, we can invert it until we reach a coloring $\beta'$ where $m(u) = c_{u'} = j'$. Note that, similarly to the previous case, this inversion has not changed the colors of the edges incident with the vertices of $\Vv$, except those incident with $u$. We now consider the component $K_v(j',c_v)$ (which can have changed during the inversion of $\Uu$ as we swapped an edge colored $j'$), and we need to distinguish whether or not the vertices $u'$ and $u$ belong to this component; clearly these vertices does not both belong to this component.

\begin{subcase}[$u'\not\in K_v(j',c_u)$]
~\newline
In this case, we swap the component $C= K_{u'}(j',c_v)$ to obtain a coloring where\break$(vv_{j+1},\cdots,vv_{j'})$ is a path that we invert until we reach a coloring where\break $m(v)= m(v_{j+1}) = c_u$, we denote by $\beta'$ this coloring. As $\Vv$ was tight in $\beta$, we have that $C_j = K_{v_{j-1}}^{\beta'}(j,j-1) = K_{v_{j-1}}^{\beta}(j,j-1)\setminus \{vv_{j+1},vv_j = vu\}$, so we swap this component to obtain a coloring where $(vv_{j'+1},\cdots,vv_{j-1})$ is a path that we invert until we reach a coloring where $m(v) = m(v_{j'+1}) = j'$. In the coloring obtained after the inversion, the component $K_{u'}(j',c_v)$ is exactly $C\cup\{vu'\}$, thus we swap it back. Note that as $|\{c_{u'},c_v,j,j-1\}| = 4$, we can swap back $C$ before $C_j$. In the coloring obtained after swapping back the component, we have that the fan $(uu_{l-1},\cdots,uu_0)$ is a path that we invert. In the coloring obtained after the inversion, the component $K_{v_{j-1}}(j,j-1)$ is exactly $C\cup\{vv_{j-1},vv_j = vu\}$, thus we swap back this component and obtain exactly $\Vv^{-1}(\beta)$.
\end{subcase}
\noindent So $u'$ belongs to the component $K_v(j',c_u)$.
\begin{subcase}[$u\not\in K_v(c_{u'},c_u)$]
~\newline
In this case, we swap the component $C = K_u(j',c_v)$, note that, from the previous case, neither $v$ nor $u'$ belong to this component. In the coloring obtained after the swap, the fan $(vv_{j+1},\cdots,vv_j)$ is a path that we invert until we reach a coloring where $m(v) = m(v_{j+1}) = c_u$; we denote by $\beta'$ this coloring. As $\Vv$ was tight in $\beta$, we have that $C_j = K_{v_{j-1}}^{\beta'}(j,j-1) = K_{v_{j-1}}^{\beta}(j,j-1)\cup\{vv_{j-1}\}\setminus\{vv_{j+1},vv_j = vu\}$, so we swap this component to obtain a coloring where $m(v) = m(u) = j-1$, then we swap the edge $uv$ to obtain a coloring where $K_u(c_{u'},c_v)$ is exactly $C$. Hence we swap back this component, and in the coloring obtained after the swap, the fan $(uu_{l-1},\cdots,uu_0)$ is a path that we invert until we reach a coloring where $m(u) = j$. In this coloring, the component $K_{v_{j-1}}(j,j-1)$ is exactly $C_j\cup\{vv_{j-1},vv_j = vu\}$, thus we swap back this component to obtain exactly $\Vv^{-1}(\beta)$.
\end{subcase}

\end{case}

\end{proof}

\noindent Before proving that the fan $\Uu$ is not a comet, we prove that $\Uu$ and $\Vv$ are entangled.

\begin{claim}
The fans $\Uu$ and $\Vv$ are entangled.
\end{claim}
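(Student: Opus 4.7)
The plan is to argue by contradiction, mirroring the structure just used to rule out $\Uu$ being a path. Suppose $\Uu$ and $\Vv$ are not entangled, and let $i \ge 1$ be the smallest index such that $c := m(u_i) \in \beta(\Vv)$ and $u_i \ne M(\Vv, c)$. Such an $i$ exists by the non-entanglement assumption, and $i\ge 1$ is forced because the central color $m(u) = m(v_j)$ is trivially matched to $u = v_j$ itself. Writing $y := M(\Vv, c)$, we now have two distinct vertices $u_i$ and $y$ both missing the same color $c$; this is the collision we will exploit. By minimality of $i$, the prefix $\Uu_{<u_i}$ agrees with $\Vv$ on every shared color, so whatever swap we perform at $u_i$ we can view $\Uu_{\le u_{i-1}}$ as a ``good'' initial segment.

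The overall strategy is to perform a sequence of Kempe swaps producing a coloring $\beta^\star$ equivalent to $\beta$ in which either (i) the fan $X_u(m(u'))$ has become a strict initial subfan of $\Uu$ and is a path around $u$, contradicting the first claim of this section; or (ii) $\beta^\star = \Vv^{-1}(\beta)$, contradicting the non-invertibility of $\Vv$. The main move is to swap a Kempe chain $C$ containing $u_i$ of the form $K_{u_i}(c, c^\star)$ for a color $c^\star$ chosen so that the new missing color at $u_i$ is not $\beta(uu_{i+1})$; natural candidates are $c^\star = m(v)$ or a color outside $\beta(\Uu_{\le u_i}) \cup \beta(\Vv)$. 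Once $m(u_i)$ has moved, the fan $X_u(m(u'))$ terminates at $u_{i-1}$ as a path, and provided $\Vv$ remains a minimum cycle in the new coloring (Observation~\ref{obs:Vv-minimum-stable}), the earlier claim yields the contradiction.

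The execution splits on the interaction of $C$ with $\Vv$. In the benign case $V(C) \cap V(\Vv) = \emptyset$, the swap of $C$ is $\Vv$-stable, so $\Vv$ stays a minimum cycle and the previous claim closes the argument directly. If $C$ meets $\Vv$ -- which by tightness (Observation~\ref{obs:tight}) and saturation (Lemma~\ref{lem:saturated_cycle}) can only happen in very constrained ways -- we first perform a partial inversion of a subfan $\Vv_{[v_s, v_{s'}]}$ that is strictly shorter than $\Vv$, hence invertible by minimality of $\Vv$, in order to detach $C$ from $\Vv$; then swap $C$; and finally reverse the partial inversion using Observation~\ref{obs:S_X-stable_so_S_invert_too} to restore $(G\setminus \Vv)$-identity with $\beta$ outside of the changes we wanted. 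The positions of $y$ and of the meeting point of $C$ on $\Vv$, relative to $u$ and $u'$ in the cyclic order, dictate the details of each subcase, exactly as in the proof that $\Uu$ cannot be a path.

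The main obstacle I foresee is the subcase in which $C$ hits $\Vv$ at a vertex strictly between $u$ and $u'$ on the cycle, because then swapping $C$ directly would break the tight chain structure of $\Vv$ in a way no single subfan inversion can repair. Handling this should require a two-stage swap: first invert a carefully chosen subfan of $\Vv$ to move the collision outside of $C$, then swap $C$, then re-invert; the gluing of the resulting Kempe-swap sequences, keeping intermediate colorings $\Vv$-equivalent to $\beta$ and the final one $(G\setminus \Vv)$-identical to $\beta$, is controlled through Observation~\ref{obs:X-equivalent_plus_identical}. This bookkeeping -- while routine in style -- is the genuinely technical ingredient, and I expect it to closely parallel the subcases laid out in the previous claim's proof.
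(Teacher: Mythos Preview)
Your plan has a genuine gap at the central step. You propose swapping $K_{u_i}(c, c^\star)$ with $c^\star = m(v)$ (or a color outside $\beta(\Uu_{\le u_i}) \cup \beta(\Vv)$) and then assert that ``the fan $X_u(m(u'))$ terminates at $u_{i-1}$ as a path.'' This is not what happens: changing $m(u_i)$ from $c$ to $c^\star$ does not truncate the fan, it \emph{redirects} it. After the swap the fan still reaches the edge $uu_i$, and from there it proceeds to the edge at $u$ coloured $c^\star$ (which exists, since $c^\star \neq m(u)$) and continues. To get a path ending at $u_i$ you would need the new missing colour at $u_i$ to be $m(u)$, which is not among your candidates; and making \emph{that} swap $\Vv$-stable is a separate problem. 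Your fallback also misfires: you write that a proper subfan $\Vv_{[v_s,v_{s'}]}$ is ``invertible by minimality of $\Vv$,'' but such a subfan is not a cycle, so the minimality hypothesis (which concerns smaller \emph{cycles}) does not apply; and doing single-edge swaps along it would destroy $\Vv$, which is exactly what you need to preserve to invoke the previous claim.

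The paper takes a different and much shorter route. Rather than a single decisive swap plus case analysis, it runs a \emph{cascading} sequence of swaps $K_{w'}(m(v_t),m(v_{t+1}))$ as $t$ moves from $s$ toward $j$ (where $u=v_j$), sliding $m(w')$ along the colour sequence of $\Vv$. The key point --- which you mention only in passing --- is tightness (Observation~\ref{obs:tight}): for each consecutive pair, $v_t$ and $v_{t+1}$ lie in the same $(m(v_t),m(v_{t+1}))$-component, so $w'$'s component is automatically disjoint from it, and every swap in the cascade is $\Vv$-stable with no case analysis needed. At the end $w'$ misses $m(u)$, and one deduces that some $X_u(c')$ with $c'\in\beta(\Vv)$ is a path, contradicting the previous claim. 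So the structural fact doing the work is tightness, not saturation or subfan inversions.
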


\begin{proof}
Assume that $\Uu$ and $\Vv$ are not entangled, then there exist $w = v_s\in \Vv$ and $w' = u_{s'} \in \Uu$ distinct from $w$ with $m(w) = m(w') = c$. If $m(w) = m(v) = c_v$, then, since $\Vv$ is saturated, $w\in K_v(c_v,c)$, so we swap $K_{w'}(c_v,c)$ to obtain a coloring where $\Vv$ is still a cycle of the same size, but where $X_u(c_{u'})$ is a path, by the previous claim, this is a contradiction.  

So $m(w) \neq m(v)$, and therefore, we successively swap the components $K_{w'}(t,t+1)$ with $t \in (s,\cdots,j)$. Note that this sequence of swaps has not changed the colors of the edges incident with a vertex of $\Vv$; it can though have changed the colors of the edges of $\Uu$. However, it is guaranteed that in the coloring obtained after the swaps, there exists a color $c'\in\beta(\Vv)$ such that $X_u(c')$ is a path, which is a contradiction by the previous claim.
\end{proof}

\noindent We now prove that $\Uu$ is not a comet.
\begin{claim}
The fan $\Uu$ is not a comet.
\end{claim}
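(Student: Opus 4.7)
The plan is to mirror the short comet-exclusion argument at the end of the proof of Lemma~\ref{lem:only_cycles}. Assume toward a contradiction that $\Uu=(uu_1,\ldots,uu_l)$ is a comet. The additional arc of the comet forces two distinct vertices $w,w'\in V(\Uu)$ to miss the same color $c$, namely the sink of $\Uu$ and the vertex immediately preceding the target of the backward arc.

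A $(m(u),c)$-Kempe chain is either an even bichromatic cycle or a bichromatic path whose endpoints each miss $m(u)$ or $c$. Consequently at most one of $w,w'$ can belong to $K_u(m(u),c)$; without loss of generality assume $w\notin K_u(m(u),c)$. Set $C:=K_w(m(u),c)$ and let $\beta'$ be obtained from $\beta$ by swapping $C$. In $\beta'$ the vertex $w$ misses $m(u)$ rather than $c$, so the extra backward arc of $\Uu$ disappears and the fan $X_u(m(u'))$ is a path in $\beta'$ (either all of $\Uu$, if $w$ was the sink, or a proper prefix of $\Uu$, if $w$ was the internal repeat vertex).

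For the first claim of this proof (ruling out the path case) to be applicable in $\beta'$, the cycle $\Vv$ must remain the same minimum cycle there, which by Observation~\ref{obs:Vv-minimum-stable} reduces to verifying that the swap of $C$ is $\Vv$-stable. The endpoints of $C$ miss either $m(u)$ or $c$. Since $\Vv$ is a tight saturated minimum cycle, the only vertex of $V(\Vv)$ missing $m(u)=c_u$ is $u$ itself, and $u\notin C$ by the choice of $w$. For the color $c$, the entanglement of $\Uu$ and $\Vv$ just established forces any vertex of $V(\Vv)$ missing $c$ to already lie in $V(\Uu)$; combined with the choice of $w$ and the cycle structure of $\Vv$, this rules out such a vertex being an endpoint of $C$. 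A parallel check shows that no edge of $E(\Vv)$, which carries a color of the form $m(v_{i-1})$ along the tight cycle, can appear in $C$.

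The step I expect to be the main obstacle is precisely this $\Vv$-stability verification. In particular, should a direct analysis leave a corner configuration (for instance if $c$ coincides with $m(v)$, which would place $v$ itself on $C$, or if an endpoint of $C$ unexpectedly conspires with $V(\Vv)\cap V(\Uu)$), the fallback is to precede the swap of $C$ with a short sequence of preliminary swaps along $\Vv$, modeled on the iterative construction used in the proof of the entanglement claim, which relocates the problematic missing color without disturbing any edge or missing color of $\Vv$. Once the swap of $C$ is $\Vv$-stable, the first claim produces the desired contradiction.
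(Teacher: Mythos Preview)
Your reduction is to swap a $(m(u),c)$-component $C$ and appeal to the path claim. The paper instead swaps a $(c,m(v))$-component and appeals to the entanglement claim. This difference is not cosmetic: your choice of colors creates a genuine gap in the $\Vv$-stability verification.

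Write $u=v_j$. The edges of $\Vv$ carry exactly the colors $m(v_1),\ldots,m(v_k)$, and in particular $\beta(vv_{j+1})=m(v_j)=m(u)$. Thus one of the two colors of your Kempe chain $C$ is an edge color of $\Vv$. Since $v$ misses neither $m(u)$ nor $c$, the vertex $v$ can perfectly well sit as an \emph{internal} vertex of $C$; in that case $C$ contains $vv_{j+1}$ and the swap recolors this edge of $\Vv$. Your ``parallel check'' that no edge of $E(\Vv)$ can appear in $C$ is therefore false as stated: the edge $vv_{j+1}$ is the obvious candidate and nothing in your argument excludes it. Your endpoint analysis only handles vertices of $V(\Vv)$, not internal passages through $v$, and the proposed fallback (preliminary swaps ``along $\Vv$'') does not address this, since the obstruction is structural: as long as $m(u)$ is one of the chain colors, the edge $vv_{j+1}$ is at risk.

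The paper sidesteps the whole issue by first noting (from entanglement, exactly as you observe) that $c\notin\beta(\Vv)$, and then swapping a component of $K(c,m(v))$ not containing $v$. Neither $c$ nor $m(v)$ is an edge color of $\Vv$, so the swap is automatically $\Vv$-stable; after it, the fan $X_u(m(u'))$ contains a vertex $\neq v$ missing $m(v)$, contradicting the entanglement claim directly. If you want to salvage your route, replace the $(m(u),c)$-swap by a $(c,m(v))$-swap; but then the natural contradiction is with entanglement, not with the path claim.
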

\begin{proof}
Assume that $\Uu$ is a comet, then there exist $w$ and $w'$ in $\Uu$ with $m(w) = m(w') = c$ and where $w'$ is after $w$ in the sequence. By the previous claim, as $\Uu$ and $\Vv$ are entangled, we have that $c\not \in \beta(\Vv)$. We now consider the component $C_v = K_v(c,c_v)$. If $w'$ is not in $C_v$, then we swap $C_{w'} = K_{w'}(c,c_v)$ to obtain a coloring where $w'$ belongs to the fan $X_u(c_{u'})$ with $m(w') = m(v)$; this contradicts the fact that $X_u(c_{u'})$ and $\Vv$ are entangled. Note that if $u$ is in $C'$, and $m(v)\in\beta(\Uu)$, after swapping $C'$ the sequence $X_u(c_{u'})$ now starts at the edge colored $c$ in $\beta$, but this does not change the reasoning.
So the vertex $w'$ belongs to $C$, and thus the vertex $w$ does not belong to $C_v$, so we can swap $C_{w} = K_w(c,c_v)$ to obtain a coloring where the sequence $X_u(m(u'))$ contains $w$ which is missing the color $m(v)$, a contradiction. Note that if $u\in C_w$, then after swapping $C_w$, we obtain a coloring where $w'$ comes before $w$ in the fan $X_u(m(u'))$. Similarly to the previous case, this does not change the reasoning. 
\end{proof}
\noindent From the previous claims, the fan $\Uu$ is a cycle entangled with $\Vv$ as desired.
\end{proof}

\section[Cycles around \texorpdfstring{$v$}{v} starting with \texorpdfstring{$u$}{u}: a proof of Lemma~\ref{lem:cycles_around_v_starting_with_u}]{Cycles around \texorpdfstring{$v$}{v} starting with \texorpdfstring{$u$}{u}: a proof of\texorpdfstring{\break}{ }Lemma~\ref{lem:cycles_around_v_starting_with_u}}\label{sec:cycles_around_v_starting_with_u}

In this section we prove Lemma~\ref{lem:cycles_around_v_starting_with_u}. To prove the lemma we actually prove a stronger statement, bu we need first some definitions.


\begin{definition}\label{def:P_weak(i)}
Let $i\geq 0$, we define the property $P_{weak}(i)$ as the following:
For any minimum cycle $\Vv$ in a coloring $\beta$, for any pair of vertices $u$ and $u'$ of $\Vv$, let $\Uu = X_u(m(u')) =(uu_1,\cdots,uu_l)$. If $\beta(uu_{l-i}) \neq m(v)$, then $X_v(\beta(uu_{l-i})$ is not a path.
\end{definition}

\begin{definition}\label{def:P(i)}
Let $i\geq 0$, we define the property $P(i)$ as follows:

 For any minimum cycle $\Vv$ in a coloring $\beta$, for any pair of vertices $u$ and $u'$ of $\Vv$, let $\Uu = X_u(m(u')) =(uu_1,\cdots,uu_l)$. If $\beta(uu_{l-i}) \neq m(v)$, then  the fan $X_v(\beta(uu_{l-i})$ is a saturated cycle containing $u_{l-i-1}$,
\end{definition}

Lemma~\ref{lem:cycles_around_v_starting_with_u} is a direct consequence of the following lemma.

\begin{lemma}\label{lem:P(i)_is_true}
The property $P(i)$ is true for all $i$.
\end{lemma}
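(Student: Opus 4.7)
The plan is to prove $P(i)$ by induction on $i$. The base case $i=0$ concerns the last edge $uu_l$ of $\Uu$; by Lemma~\ref{lem:fans_around_Vv}, $\Uu$ is a cycle entangled with $\Vv$, so the closing color $\beta(uu_l) = m(u_{l-1})$ already carries structural information about its interaction with $\Vv$. I would show that $\Ww = X_v(\beta(uu_l))$ is a saturated cycle containing $u_{l-1}$ via three sub-claims: $\Ww$ is not a path, not a comet, and is saturated with $u_{l-1}\in V(\Ww)$. Each follows the template of the three claims in the proof of Lemma~\ref{lem:fans_around_Vv}: assume the contrary, construct a sequence of Kempe swaps (partial inversions of $\Ww$ together with swaps on appropriate bichromatic chains), and conclude that either $\Vv$ is invertible or there exists a strictly smaller cycle around $v$, contradicting the minimality of $\Vv$.

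For the inductive step, assume $P(j)$ holds for all $j < i$ and consider $uu_{l-i}$ with $\beta(uu_{l-i}) \neq m(v)$. Let $\Ww = X_v(\beta(uu_{l-i}))$. The inductive hypothesis provides, for each $j<i$, a saturated cycle $X_v(\beta(uu_{l-j}))$ containing $u_{l-j-1}$; by Lemma~\ref{lem:saturated_cycle} combined with the minimality of $\Vv$, each such cycle will be strictly smaller than $\Vv$ in suitable $\Vv$-equivalent intermediate colorings and therefore invertible. The strategy is to use these invertible cycles (together with trivial swaps along $\Uu$ past position $l-i$) to transform $\beta$ into a coloring in which the analysis of $\Ww$ either reduces to the base case or directly contradicts the minimality of $\Vv$.

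As in the base case, the inductive step splits into showing $\Ww$ is not a path, not a comet, and is a saturated cycle containing $u_{l-i-1}$. The non-comet step is the standard bichromatic-swap reduction to the path case, as in the proof of Lemma~\ref{lem:fans_around_Vv}. Saturation together with the containment of $u_{l-i-1}$ follow from the entanglement of $\Uu$ and $\Vv$ combined with the preceding claims: since $u_{l-i-1}$ is missing $\beta(uu_{l-i})$ by the fan structure of $\Uu$, it is a natural terminating vertex of $\Ww$ once $\Ww$ is known to be a cycle, and failing saturation would again produce a smaller non-invertible cycle.

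The hard part will be ruling out that $\Ww$ is a path in the inductive step. The sequence of Kempe swaps used to derive the contradiction must be carefully arranged to be $X$-stable on the relevant parts of $\Vv$ and $\Uu$, so that after executing the partial inversions and then undoing them one obtains a coloring that is either $\Vv^{-1}(\beta)$ or exhibits a strictly smaller non-invertible cycle. Observations~\ref{obs:not_touching_subsequence_stable}, \ref{obs:X-equivalent_plus_identical} and \ref{obs:Vv-minimum-stable} from Section~\ref{sec:general_outline} will be the main bookkeeping tools: each additional edge of $\Uu$ absorbed by the induction adds a layer of dependencies (more vertices whose missing colors and more edges whose colors must be preserved), and verifying these $X$-stability conditions is where the combinatorial intricacy of the argument concentrates.
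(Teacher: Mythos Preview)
Your overall induction shape matches the paper's, but the proposal has a substantive gap and omits the machinery that actually drives the proof.

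First, a concrete error: you write that by Lemma~\ref{lem:saturated_cycle} combined with minimality, the cycles $X_v(\beta(uu_{l-j}))$ supplied by the inductive hypothesis are invertible. Lemma~\ref{lem:saturated_cycle} says a \emph{non}-saturated cycle is invertible; the inductive hypothesis tells you these cycles are saturated, and there is no reason they are smaller than $\Vv$. The paper never inverts these cycles. It uses their saturation in the opposite way: saturation pins down which Kempe chains contain which vertices (e.g.\ $u_{l-j-1}\in K_v(m(v),m(u_{l-j-1}))$), and this is what lets you control the effect of later swaps. So the mechanism you propose for the inductive step---``transform $\beta$ using these invertible cycles''---does not exist.

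Second, the paper's proof rests on two pieces of infrastructure that you do not mention and that are not just bookkeeping. One is the theory of $(\Vv,u)$-independent subfans (Lemmas~\ref{lem:(Vv,u)-independent_subfan_avoiding_v}--\ref{lem:(Vv,u)-independent_path_touch_other_extremity}): most contradictions in the inductive step are obtained by manufacturing a $(\Vv,u)$-independent subfan avoiding $v$ and then invoking Lemma~\ref{lem:(Vv,u)-independent_subfan_avoiding_v} to force $X_v(\cdot)$ to be a path. The other is the auxiliary property $Q(i)$ (Definition~\ref{def:Q(i)}) and Lemma~\ref{lem:P(i)_implies_Q(i)}: in the hardest subcase of ``$\Xx$ is not a path'' one needs that $X_{u_{l-i}}(c)$ is a cycle entangled with $\Vv$ and with the tail of $\Uu$, and this is exactly what $Q(i-1)$ provides. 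In addition, Lemma~\ref{lem:not_changing_last_vertices} is what guarantees, after each swap, that the last $i$ vertices of $X_u(m(u'))$ are unchanged, so that the edge in position $l-i$ is still the one you think it is; without it the induction variable floats. For the base case, the key observation is that $u_l = u'$ since $\Uu$ is a cycle entangled with $\Vv$, so $uu_l = uu'$ is an edge between two vertices of $\Vv$; $P(0)$ then reduces to the separate and substantial Lemma~\ref{lem:edge_between_u_u'_saturated_cycle}, not to a rerun of the three claims in Lemma~\ref{lem:fans_around_Vv}.
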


The proof of the lemma is an induction on $i$. However, before starting to prove the lemma, we need to introduce the notion of $(\Vv,u)$-independent fan for a vertex $u$ of a cycle $\Vv$.

\subsection{$(\Vv,u)$-independent fans}

Let $\Vv$ be a minimum cycle in a coloring $\beta$, and $u$ a vertex of $\Vv$. A $(\Vv,u)$-independent subfan $\Xx$ is a subfan around $v$ such that $\beta(\Vv)\cap \beta(\Xx) = \{\beta(u)\}$. We naturally define a $(\Vv,u)$-independent path (respectively a $(\Vv,u)$-independent cycle) as a $(\Vv,u)$-independent subfan that is also a path (respectively a cycle). If $v$ is a vertex not in $\Xx$ missing a color $c$, we say that $\Xx$ avoids $v$ if the last vertex of $\Xx$ is also missing the color $c$.

We first prove the following.

\begin{lemma}\label{lem:(Vv,u)-independent_subfan_avoiding_v}
Let $\Vv$ be a minimum cycle in a coloring $\beta$, $u$ a vertex of $\Vv$, $\Yy = (uy_1,\cdots,uy_r)$ a $(\Vv,u)$-independent subfan avoiding $v$ and $x$ the extremity of $K_{y_s}(m(u),m(v))$ which is not $y_r$. Then the fan $X_v(\beta(uy_1))$ is a path containing $x$ which is missing the color $m(v)$.
\end{lemma}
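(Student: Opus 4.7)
The plan is to argue by contradiction: assuming the conclusion fails, I would exhibit a sequence of $\Vv$-stable Kempe swaps that invert $\Vv$, contradicting that $\Vv$ is a minimum non-invertible cycle.

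First, I would exploit the $(\Vv,u)$-independence of $\Yy$. Since $\beta(\Vv)\cap\beta(\Yy) = \{\beta(u)\}$, any single-edge swap along $\Yy$ uses colors outside $\beta(\Vv)\setminus\{\beta(u)\}$, and therefore by Observation~\ref{obs:not_touching_subsequence_stable} each such swap is $\Vv$-stable. Because $\Yy$ avoids $v$ (so $y_r$ misses $m(v)$), I can partially invert $\Yy$ as a path fan, and by Observation~\ref{obs:Vv-minimum-stable} the cycle $\Vv$ remains the same minimum cycle after any such inversion.

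Second, I would analyze the bichromatic component $C = K_{y_r}(m(u), m(v))$. Its other endpoint $x$ misses either $m(u)$ or $m(v)$, and I would rule out the first case. If $x$ missed $m(u)$, swapping $C$ (which is $\Vv$-stable since $v\notin V(\Yy)$ and the colors $m(u),m(v)$ interact with $\Vv$ only at vertices that are easily excluded from $C$) would produce a coloring where $y_r$ misses $m(u)$; then Lemma~\ref{lem:fans_around_Vv} applied to the enlarged fan around $u$ would force $\Vv$ to be invertible via the resulting cycle structure, a contradiction. Hence $x$ misses $m(v)$.

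Third, I would analyze the fan $\Xx = X_v(\beta(uy_1))$. If $\Xx$ were a cycle or a comet, a further $\Vv$-stable swap (using colors in $\beta(\Yy)$, which are independent of $\beta(\Vv)\setminus\{\beta(u)\}$) would expose $\Vv$ as invertible, contradicting Lemmas~\ref{lem:saturated_cycle} and~\ref{lem:minimum_cycle_saturated}. Hence $\Xx$ is a path. Tracing $\Xx$ against the colors of $\Yy$ (which govern the missing colors of the successive vertices via the $(\Vv,u)$-independent structure) and combining with the $C$-structure from the previous step, I would conclude that the last vertex of $\Xx$ is precisely $x$, which misses $m(v)$.

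The hard part will be keeping every step $\Vv$-stable and verifying, at each intermediate coloring, that no swapped Kempe component touches $V(\Vv)$ or introduces a collision between the missing colors of $\Vv$'s vertices and the palette of $\Yy$. A secondary subtlety is that this lemma feeds into the proof of $P(i)$ and of Lemma~\ref{lem:only_cycles}, so I must avoid any circular use of those downstream results, relying only on Lemmas~\ref{lem:saturated_cycle}, \ref{lem:minimum_cycle_saturated}, \ref{lem:fans_around_Vv} and Observation~\ref{obs:tight}.
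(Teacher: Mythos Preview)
Your outline misses the actual mechanism of the paper and has two concrete gaps.

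First, step~2 does not go through. After swapping $C=K_{y_r}(m(u),m(v))$ you correctly obtain $m(y_r)=m(u)$, but your appeal to Lemma~\ref{lem:fans_around_Vv} is unjustified: that lemma only constrains fans of the form $X_u(m(u'))$ with $u'\in V(\Vv)$, whereas $\Yy$ starts at an edge of colour $\beta(uy_1)\notin\beta(\Vv)$ (by $(\Vv,u)$-independence), so $\Yy$ is not a subfan of any such $X_u(m(u'))$ and nothing in Lemma~\ref{lem:fans_around_Vv} forbids $y_r$ from missing $m(u)$. In fact the paper \emph{wants} exactly this situation: swapping $C$ turns $\Yy$ into a $(\Vv,u)$-independent \emph{path}, and the whole proof is then a reduction to that case via Lemma~\ref{lem:(Vv,u)-independent_path_touch_other_extremity}.

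Second, step~3 is where the real work lies, and you have only a placeholder. Showing that $X_v(\beta(uy_1))$ is not a cycle or comet cannot be done by a single ``further $\Vv$-stable swap'' together with Lemmas~\ref{lem:saturated_cycle}/\ref{lem:minimum_cycle_saturated}; those lemmas only say a minimum cycle is saturated and tell you nothing about an auxiliary fan $X_v(c)$ with $c\notin\beta(\Vv)$. The paper needs an entire chain of technical lemmas for this: Lemma~\ref{lem:(Vv,u)-independent_path_not_path} (built on Lemmas~\ref{lem:path_plus_bichromatic_path} and~\ref{lem:cycle_plus_bichromatic_path_invertible}) explicitly constructs a sequence of swaps reaching $\Vv^{-1}(\beta)$ whenever $X_v(m^\beta(u))$ is a path after inverting a $(\Vv,u)$-independent path, and Lemma~\ref{lem:(Vv,u)-independent_path_cycle} upgrades ``not a path'' to ``is a cycle''. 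Only then does Lemma~\ref{lem:(Vv,u)-independent_path_touch_other_extremity} pin down that the relevant fan is a comet containing $x$. Your proposal contains no analogue of these constructions, and without them the argument does not close.
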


We decompose the proof into five separate lemmas.

\begin{proof}[Proof of Lemma~\ref{lem:(Vv,u)-independent_subfan_avoiding_v}]
Without loss of generality, we assume that the vertices $v$ and $u$ are respectively missing the colors $1$ and $2$, and that $\beta(uy_1) = 4$. Since the fan $\Vv$ is a minimum cycle in the coloring $\beta$, it is saturated by Lemma~\ref{lem:minimum_cycle_saturated}, so $u\in K_v(1,2)$ and thus $y_r\not\in K_v(1,2)$. We now swap the component $C_{1,2} = K_{y_r}(1,2)$ to obtain a coloring $\Vv$-equivalent to $\beta$, where $\Yy$ is now a $(\Vv,u)$-independent path. By Lemma~\ref{lem:(Vv,u)-independent_path_touch_other_extremity}, the fan $X_v(4)$ is a comet containing the other extremity of $K_{y_r}(1,2)$ which is $x$. In this coloring, the vertex $x$ is missing the color $2$, therefore in the coloring $\beta$, the fan $X_v(4)$ is a path containing $x$ which is missing the color $1$ as desired.
\end{proof}

\begin{lemma}\label{lem:path_plus_bichromatic_path}
Let $\Xx = (vv_1,\cdots,vv_k)$ be a path of length at least $3$ in a coloring $\beta$, $u = v_i$ for some $i\in [3,k]$, $u'' = v_{i-1}$, $u' = v_1$, and $C$ a $(\beta(vu),m(u))$-bichromatic path between $u''$ and $u'$ that does not contain $v$. Then $\beta$ is equivalent to a coloring $\beta'$ such that:
\begin{itemize}
\item $\beta'$ is $(G\setminus (C\cup \Xx))$-identical to $\beta$,
\item $\beta'$ is $(\Xx_{\geq u})$-identical to $\beta$,
\item for any edge $j\in [2,i-1]$, $m^{\beta'}(v_j) = \beta(vv_j)$,
\item $m^{\beta'}(u') = \beta(vu)$,
\item for any edge $j\in [1,i-2]$, $\beta'(vv_j) = m^{\beta}(v_j)$,
\item $\beta'(vu'') = \beta(vu')$,
\item for any edge $e\in C$:
\begin{itemize}
\item if $\beta(e) = \beta(vu)$, then $\beta'(e) = m^{\beta}(u)$, and
\item if $\beta(e) = m^{\beta}(u)$, then $\beta'(e) = \beta(vu)$.
\end{itemize}
\end{itemize}
\end{lemma}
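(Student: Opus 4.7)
The plan is to construct $\beta'$ explicitly via a sequence of Kempe swaps starting from $\beta$. Conceptually, the subfan $\Xx_{\leq u''}$ together with the bichromatic chain $C$ behaves like a cycle fan around $v$: the edges $vv_1,\dots,vv_{i-1}$ form the ``fan part'', and $C$ provides the ``closing arc'' from $u''$ back to $u'$. The desired $\beta'$ is then the cyclic inversion of this combined structure, where the edge colors along $\Xx_{\leq u''}$ shift by one position, the missing colors at $v_1,\dots,v_{i-1}$ shift accordingly, and $C$ has its two colors exchanged.

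First, I would perform a Kempe swap on the component $C$. Since $C$ does not contain $v$, this swap does not touch any edge incident to $v$, and is in particular $\Xx$-stable. Its effect is to exchange the colors $\beta(vu)$ and $m(u)$ along the edges of $C$ and to flip the missing colors at the two endpoints of $C$: after the swap, $m(u'')$ becomes $m(u)$ and $m(u')$ becomes $\beta(vu)$, which already matches the target value $m^{\beta'}(u') = \beta(vu)$ in the conclusion.

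Second, I would perform a sequence of swaps on the edges of $\Xx_{\leq u''}$ that realises the cyclic shift. Proceeding from $vv_{i-1}$ back to $vv_1$ (or in the reverse direction, depending on the configuration), each step is a (possibly single-edge) Kempe swap that moves the color $\beta(vv_j)$ onto the edge $vv_{j-1}$ while updating the missing color at $v_{j-1}$ accordingly. An inductive argument shows that after the initial swap of $C$ the missing colors at $v, v_{i-1},\dots,v_1$ are aligned precisely so that each such swap is legal and that the involved Kempe components stay inside $C\cup \Xx_{\leq u''}$. Applying Observation~\ref{obs:not_touching_subsequence_stable} to this subsequence then yields that it is $(\Xx_{\geq u})$-stable, and hence the whole construction is $(\Xx_{\geq u})$-stable and $(G\setminus (C\cup \Xx))$-stable, matching the identity clauses in the conclusion.

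The main obstacle is the second step: carrying out the cyclic shift by legal Kempe swaps while keeping the swaps confined to $C\cup\Xx_{\leq u''}$. This is where the role of $C$ as the ``closing arc'' of the cycle is essential, since swapping $C$ is exactly what aligns $m(u'')$ with a color appearing on the fan, unlocking the first swap in the inversion. The bookkeeping then proceeds by tracking, after each swap in the sequence, the updated missing colors at $v$ and at every $v_j$ with $j\leq i-1$, verifying that the next Kempe component is indeed a single edge or a controlled component not escaping $\Xx_{\leq u''}$. A short case analysis is likely required depending on whether certain intermediate bichromatic components would otherwise extend into $\Xx_{\geq u}$ or into the rest of the graph.
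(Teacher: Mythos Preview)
Your first step already fails: $C$ is \emph{not} a Kempe component in $\beta$, so ``swap $C$'' is not a legal Kempe move. Indeed, at the endpoint $u'=v_1$ the edge $vu'$ is coloured $\beta(vu')$, while the edge of $C$ at $u'$ carries one of the two colours of $C$; since $m^\beta(u')=\beta(vv_2)$ and $i\geq 3$, the colour $m^\beta(u')$ is distinct from both colours of $C$. Hence the maximal bichromatic component containing $C$ continues past $u'$ through $vu'$ into $v$ (and then through $vu$), directly contradicting your assertion that the swap ``does not touch any edge incident to $v$''. Likewise your claim that ``$m(u')$ becomes $\beta(vu)$'' after the swap presupposes that $u'$ is an endpoint of the Kempe chain, which it is not.

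The paper's proof avoids this obstruction by reversing the order of operations: it first inverts the \emph{entire} path $\Xx$ by trivial single-edge swaps. After this inversion $v$ misses $\beta(vu')$, the edge $vu''$ is coloured $\beta(vu)$, and $u'$ now misses $\beta(vu')$; consequently $C\cup\{vu''\}$ \emph{is} a genuine Kempe component $K_v(\beta(vu'),\beta(vu))$, which can be swapped. After that swap $v$ and $u$ miss the same colour $\beta(vu)$, so $X_v(m(v))$, which consists exactly of the edges of $\Xx_{\geq u}$, is again a path and can be inverted back, restoring $\Xx_{\geq u}$ to its original state. The point is that inverting $\Xx$ first is precisely what detaches $C$ from the rest of the bichromatic chain at $u'$; without it, no controlled swap involving $C$ is available.
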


\begin{proof}
Without loss of generality, we assume that the vertices $v$ is missing the color $1$, that the edge $vu'$ is colored $2$, and that the edge $vu$ is colored $3$. Note that this means that $m(u'') = 3$. In the coloring $\beta$, the fan $\Xx$ is a path, so we invert this path, and denote by $\beta_2$ the coloring obtained after the inversion. The coloring $\beta_2$ is $(G\setminus \Xx)$-identical to the coloring $\beta$ so $C$ is still a $(2,3)$-bichromatic path between $u'$ and $u''$ that does not contain $v$. Moreover, for any edge $j\in[1,i]$, $\beta_2(vv_j) = m^{\beta}(v_j)$ and $m^{\beta_2}(v_j) = \beta(vv_j)$. So the coloring $\beta_2$ is $(\Xx_{[v_2,v_{i-2}]}\cup\{u'',vu'\})$-identical to $\beta'$. The vertex $u'$ is now missing the color $2$, and the edge $vu''$ is now colored $3$. Moreover, the vertex $v$ is now missing the color $2$, so $K_v(2,3) = C\cup \{vu''\}$. We now swap this component and denote by $\beta_3$ the coloring obtained after the swap. 

The coloring $\beta_3$ is $(G \setminus (C \cup\Xx))$-identical to the coloring $\beta$, so it is $(G\setminus (C \cup \Xx))$-identical to $\beta'$. Moreover, for any edge $e\in C$:
\begin{itemize}
\item if $\beta(e) = 2$, then $\beta_3(e) = 3$, and 
\item if $\beta(e) = 3$, then $\beta_3(e) = 2$.
\end{itemize}
So the coloring $\beta_3$ is also $C$-identical to $\beta'$; thus it is $(G\setminus \Xx)$-identical to $\beta'$.

The coloring $\beta_3$ is $(\Xx_{[v_2,v_{i-2}]}\cup\{u'',vu'\})$-identical to $\beta_2$, so it is $(\Xx_{[v_2,v_{i-2}]}\cup\{u'',vu'\})$-identical to $\beta'$. In the coloring $\beta_3$, the edge $vu''$ is now colored $2$, and the vertex $u'$ is now missing the color $3$. So the coloring $\beta_3$ is also $(\{vu'',u'\})$-identical to $\beta'$, and thus it is $\Xx_{<u}$-identical to $\beta'$. In total, the coloring $\beta_3$ is $(G\setminus \Xx_{\geq u})$-identical to the coloring $\beta'$.

Finally, the coloring $\beta_3$ is $\Xx_{\geq u}$-identical to the coloring $\beta_2$ and the verices $v$ and $u$ are both missing the color $3$. So in the coloring $\beta_3$ the fan $X_v(1)$ is now a path. We invert this path and denote by $\beta_4$ the coloring obtained after the inversion. The coloring $\beta_4$ is $\Xx_{\geq u}$-identical to the coloring $\beta$, so it is $\Xx_{\geq u}$-identical to the coloring $\beta'$. Moreover, the coloring $\beta_4$ is also $(G\setminus \Xx_{\geq u})$-identical to the coloring $\beta_3$, so it is $(G\setminus \Xx_{\geq u})$-identical to the coloring $\beta'$. In total the coloring $\beta_4$ is identical to the coloring $\beta'$ as desired.
\end{proof}

\begin{lemma}\label{lem:cycle_plus_bichromatic_path_invertible}
Let $\Vv = (vv_1,\cdots,vv_k)$ a cycle of length at least $3$ in a coloring $\beta$, $u = v_i$, $u' = v_{i+1}$ and $u'' = v_{i-1}$ three consecutive vertices of $\Vv$, $\Yy = (uy_1,\cdots,uy_l)$ a $(\Vv,u)$-independent path, $\beta_{\Yy} = \Yy^{-1}(\beta)$, $C$ a $(\beta(vu),m(u'))$-bichromatic path in the coloring $\beta_{\Yy}$ between $u''$ and $u'$ that does not contain $v$ nor $u$, $X = E(C)\cup E(\Vv) \cup (V(\Vv)\cup \{v\}\setminus \{u\})$, and $\beta'_{\Yy}$ a coloring $X$-equivalent to $\beta_{\Yy}$. If there exists a coloring $\beta'$ equivalent to $\beta'_{\Yy}$ such that:
\begin{itemize}
\item $\beta'$ is $(G\setminus X)$-identical to $\beta'_{\Yy}$,
\item $\beta'$ is $(\Vv\setminus \{u', vu'',u, vu \})$-identical to $\Vv^{-1}(\beta)$,
\item $\beta'(vu'') = \beta'_{\Yy}(vu')$,
\item $\beta'(vu) = m^{\beta'_{\Yy}}(u'')$, and
\item $m^{\beta'}(u') = \beta'_{\Yy}(vu)$,
\item for any edge $e\in C$:
\begin{itemize}
\item if $\beta'_{\Yy}(e) = \beta(vu)$, then $\beta'(e) = m(u')$, and 
\item if $\beta'_{\Yy}(e) = m(u')$, then $\beta'(e) = \beta(vu)$. 
\end{itemize}
\end{itemize}
Then the cycle $\Vv$ is invertible.
\end{lemma}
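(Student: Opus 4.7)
The goal is to show that the cycle $\Vv$ is invertible in $\beta$, i.e.\ that $\beta$ and $\Vv^{-1}(\beta)$ are Kempe-equivalent. The hypotheses give the chain $\beta \sim \beta_{\Yy} \sim \beta'_{\Yy} \sim \beta'$, so it suffices to produce a sequence of Kempe swaps transforming $\beta'$ into $\Vv^{-1}(\beta)$. The conditions on $\beta'$ arrange for it to coincide with $\Vv^{-1}(\beta)$ on almost all of $\Vv$, with the discrepancies confined to: the edges $vu, vu''$ (whose colors $m(u'')$ and $m(u)$ are transposed relative to $\Vv^{-1}(\beta)$), the missing colors at $u$ and $u'$ (correspondingly transposed), the path $C$ (with its two colors $m(u'')$ and $m(u')$ interchanged), and the fan $\Yy$ (inverted); everywhere else the two colorings agree up to an $X$-stable sequence.

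The plan is to proceed in two stages. First, bring $\beta'$ into a cleaner intermediate state. Using Observation~\ref{obs:X-equivalent_plus_identical} with the $X$-equivalence of $\beta'_{\Yy}$ and $\beta_{\Yy}$ together with the $(G\setminus X)$-identity of $\beta'$ and $\beta'_{\Yy}$, one obtains a coloring $\beta_1$ equivalent to $\beta'$ that is $X$-identical to $\beta'$ and $(G\setminus X)$-identical to $\beta_{\Yy}$. Since on $E(\Yy)$ and at $u$ the coloring $\beta_1$ matches $\beta_{\Yy}$, the reverse of the $\Yy$-inversion can be applied to $\beta_1$; the resulting coloring $\beta_2$ agrees with $\beta'$ on $X$ and with $\beta$ on $G\setminus X$, and hence differs from $\Vv^{-1}(\beta)$ only on the five-element set $\{u, u', vu, vu''\}\cup E(C)$.

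Next, close the remaining gap with two targeted Kempe swaps. The first swap acts on the $(m(u''),m(u'))$-Kempe component through $C$: since $u'$ misses $m(u'')$ in $\beta_2$ and $C$'s two colors are interchanged there, the chain starting at $u'$ runs along $C$ to $u''$, and one verifies—using the adjustments to $u''$'s off-$\Vv$, off-$C$ edges forced by the $X$-equivalence step—that the chain terminates at $u''$. Swapping this component restores $C$'s original coloring and realigns the missing colors at $u'$ and $u''$. The second swap acts on the $(m(u),m(u''))$-Kempe component containing $vu$ and $vu''$: after the previous swap, both endpoints $u$ (missing $m(u)$) and $u''$ (missing $m(u'')$) terminate the chain, so this component is exactly the two-edge path $u-v-u''$, and swapping it restores the colors of $vu$ and $vu''$ and corrects the missing color at $u$. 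The result is $\Vv^{-1}(\beta)$, establishing that $\Vv$ is invertible.

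The main obstacle is the precise structural analysis of the first Kempe component. It requires careful bookkeeping of how the $X$-stable sequence from $\beta_{\Yy}$ to $\beta'_{\Yy}$ must modify $u''$'s edges outside $E(\Vv)\cup E(C)$—such modifications are forced because $u''$ ends up missing $m(v_{i-2})$ in $\beta'$ rather than its original $m(u'')$—and showing that these modifications are exactly what makes the $(m(u''),m(u'))$-chain terminate at $u''$ rather than escaping along an off-$C$ edge. Once this structural claim is established, the two swaps combine cleanly to produce $\Vv^{-1}(\beta)$, and the proof concludes.
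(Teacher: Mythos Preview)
Your first stage is correct and matches the paper: apply Observation~\ref{obs:X-equivalent_plus_identical} to get a coloring that is $X$-identical to $\beta'$ and $(G\setminus X)$-identical to $\beta_{\Yy}$, then undo the inversion of $\Yy$ to reach a coloring (call it $\beta_2$) that agrees with $\beta'$ on $X$ and with $\beta$ on $G\setminus X$.

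The gap is in your second stage. Your claim that the first Kempe chain terminates at $u''$ is false, and your focus on ``off-$\Vv$, off-$C$ edges'' at $u''$ is misplaced. The relevant edge at $u''$ is the \emph{on-$\Vv$} edge $vu''$, which lies in $X$, and by hypothesis $\beta'(vu'') = \beta'_{\Yy}(vu') = m^\beta(u)$. Hence $\beta_2(vu'') = m^\beta(u)$. Since the $C$-edge at $u''$ carries the other color $\beta(vu) = m^\beta(u'')$ in $\beta_2$, the vertex $u''$ sees both colors and the chain passes straight through $u''$ along $vu''$ to $v$, then along $vu$ (colored $m^\beta(u'')$) to $u$, where it terminates because $m^{\beta_2}(u) = m^\beta(u)$. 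If the chain really did stop at $u''$ as you claim, swapping only $C$ would leave $u''$ incident to two edges of color $m^\beta(u)$ (the swapped $C$-edge and $vu''$), which is impossible.

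So your two proposed swaps are not two distinct components: the $(m^\beta(u), m^\beta(u''))$-component through $u'$ in $\beta_2$ is exactly $C \cup \{vu'', vu\}$, and a \emph{single} swap of this component yields $\Vv^{-1}(\beta)$ directly. This is precisely what the paper does. Your ``main obstacle'' dissolves once you notice that $vu'' \in E(\Vv) \subseteq X$ and its color is pinned by the hypothesis $\beta'(vu'') = \beta'_{\Yy}(vu')$.
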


\begin{proof}
Let $\gamma = \Vv^{-1}(\beta)$. Without loss of generality, we assume that the vertex $v$ and $u$ are respectively missing the colors $1$ and $2$ in the coloring $\beta$, and that $\beta(vu) = 3$. This means that $\beta'(vu'') = \beta'_{\Yy}(vu') = \beta(vu') = m^{\beta}(u) = 2$ and $m^{\beta'}(u') = \beta'_{\Yy}(vu)= \beta(vu) = m^{\beta}(u'') = 3$. By definition the coloring $\beta_{\Yy}$ is $(G\setminus (\Yy\cup\{u\}))$-identical to $\beta$. Since $\Yy$ is a $(\Vv,u)$-independent path, we have $E(\Yy)\cap E(\Vv) = \emptyset$, and $V(\Yy)\cap V(\Vv) = \emptyset$. So, in particular $\beta_{\Yy}(vu) = \beta(vu) = 3$. 
The coloring $\beta'$ is $(\{vu\})$-identical to $\beta'_{\Yy}$, so $\beta'(vu) = 3$.

Since the coloring $\beta'$ is $(G\setminus X)$-identical to $\beta'_{\Yy}$ and $\beta'_{\Yy}$ is $X$-equivalent to $\beta_{\Yy}$, by Observation~\ref{obs:X-equivalent_plus_identical}, there exists a coloring $\beta''$ which is $X$-identical to $\beta'$ and $(G\setminus X)$-identical to $\beta_{\Yy}$. 

The coloring $\beta''$ is $(G\setminus X)$-identical to $\beta_{\Yy}$, so it is $(G\setminus (X\cup \Yy\cup\{u\}))$-identical to $\beta$. This means that $\beta''$ is $(G\setminus (\Vv \cup \Yy \cup C))$-identical to $\beta$, and thus it is $(G\setminus (\Vv \cup \Yy \cup C))$-identical to $\gamma$. Moreover, $\beta''$ is $X$-identical to $\beta'$, and $\beta'$ is $(\Vv\setminus(\{u',vu'',u,vu\})$-identical to $\gamma$, so $\beta''$ is $(\Vv\setminus(\{u',vu'',u,vu\})$-identical to $\gamma$. In total, the coloring $\beta''$ is $(G \setminus (C \cup \Yy \cup \{u',vu'',u,vu\}))$-identical to $\gamma$.

In the coloring $\beta_{\Yy}$, the fan $X_u(2)$ is now a path, and we have $E(X_u(2)) = E(\Yy)$ and $V(X_u(2)) = V(\Yy)$. So in any coloring $(\Yy\cup\{u\})$-identical to $\beta_{\Yy}$, the fan $X_u(2)$ is a path. The $\beta''$ is $(G\setminus X)$-identical to $\beta_{\Yy}$, $E(X)\cap E(\Yy) = \emptyset$ and $V(X)\cap (V(\Yy)\cup\{u\}) = \emptyset$, so $\beta''$ is $(\Yy\cup \{u\})$-identical to $\beta_{\Yy}$, and thus $X^{\beta''}_u(2)$ is a path that we invert. Let $\beta_3$ be the coloring obtained after the inversion.

By definition of $\Yy$, the coloring $\beta_3$ is $(\Yy\cup\{u\})$-identical to the coloring $\beta$. So it is $\Yy$-identical to the coloring $\gamma$, and $u$ is now missing the color $2$. The coloring $\beta_3$ is also $(G\setminus (\Yy\cup \{u\}))$-identical to $\beta''$, so it is $(G\setminus (C\cup\{u',vu'',u,vu\}))$-identical to $\gamma$, and we have $\beta_3(vu'') = \beta''(vu'') = 2$, $\beta_3(vu) = \beta''(vu) = 3$ and $m^{\beta_3}(u') = m^{\beta''}(u') = 3$. Note that the coloring $\beta_3$ is also $C$-identical to the coloring $\beta''$.

The path $C$ is a $(2,3)$-bichromatic path between $u''$ and $u'$ and does not contain $v$ nor $u$, so, in the coloring $\beta_3$, we have $K_{u'}(2,3) = C\cup\{vu'',vu\}$. We now swap this component and denote by $\beta_f$ the coloring obtained after the swap. The coloring $\beta_f$ is $(G\setminus(C\cup\{u',vu'',u,vu\}))$-identical to the coloring $\beta_3$, so it is $(G\setminus (C\cup\{u',vu'',u,vu\}))$-identical to $\gamma$. Moreover, since $\beta_3$ is $C$-identical to $\beta''$, for any edge $e\in C$:
\begin{itemize}
\item if $\beta''(e) = \beta_3(e) = 2$, then $\beta_f(e) = 3$, and 
\item if $\beta''(e) = \beta_3(e) = 3$, then $\beta_f(e) = 2$.
\end{itemize}

So the coloring $\beta_f$ is $C$-identical to the coloring $\beta_{\Yy}$, and thus it is $C$-identical to the coloring $\gamma$. Finally, we have:
\begin{itemize}
\item $m^{\beta_f}(u) = 3 = \beta(vu) = m^{\gamma}(u)$,
\item $\beta_f(vu) = 2 = m^{\beta}(u) = \gamma(vu)$,
\item $m^{\beta_f}(u') = 2 = \beta(vu') = m^{\gamma}(u')$, and
\item $\beta_f(vu'') = 3 = m^{\beta}(u'') = \gamma(vu'')$.
\end{itemize}

Finally we have that $\beta_f$ is $(C\cup \{u',vu'',u,vu\})$-identical to $\gamma$, so it is identical to $\gamma$, and $\Vv$ is invertible as desired.
\end{proof}

\begin{lemma}\label{lem:(Vv,u)-independent_path_not_path}
Let $\Vv = (vv_1,\cdots,vv_i)$ a minimum cycle in a coloring $\beta$, $u = v_i$, $u' = v_1$ and $u'' = v_{i-1}$ three consecutive vertices of $\Vv$,  and $\Yy = (uy_1,\cdots,uy_l)$ a $(\Vv,u)$-independent path, $C = K_{u''}(m(u),m(u''))\setminus \{vu,vu'\}$, and $X = C \cup E(\Vv)\cup (V(\Vv) \cup \{v\}\setminus \{u\})$. In any coloring $\beta'_{\Yy}$ that is $X$-equivalent to the coloring $\beta_{\Yy} = \Yy^{-1}(\beta)$, the fan $X_v(m^{\beta}(u))$ is not a path.
\end{lemma}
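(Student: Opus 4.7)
I proceed by contradiction, assuming that $X_v(m^{\beta}(u))$ is a path $\Xx$ in some coloring $\beta'_{\Yy}$ which is $X$-equivalent to $\beta_{\Yy} = \Yy^{-1}(\beta)$. The aim is to show that $\Vv$ is invertible in $\beta$, contradicting its minimality.

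Because $X$-equivalence preserves $E(\Vv)$, the missing colors at $v$ and at each $v_j$ with $j\neq i$, and the edges of $C$, the path $\Xx$ is structurally constrained. Specifically, $\Xx$ begins at $vu'=vv_1$ (the unique $v$-edge colored $m^{\beta}(u)$), threads through $u'=v_1, v_2, \ldots, u''=v_{i-1}$, and reaches $u$ via the edge $vu$. Since $\Xx$ is assumed to be a path and not a cycle, $m^{\beta'_{\Yy}}(u) \neq m^{\beta}(u)$: either $\Xx$ stops at $u$ (so $m^{\beta'_{\Yy}}(u)=m(v)$), or $\Xx$ extends past $u$ and ends at a later vertex whose missing color equals $m(v)$. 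Moreover, $C$ remains an $(m^{\beta}(u),m^{\beta}(u''))$-bichromatic path between $u''$ and $u'$ that avoids both $v$ and $u$ in $\beta'_{\Yy}$.

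Given this structure, the plan is to apply Lemma~\ref{lem:path_plus_bichromatic_path} to $\Xx$ together with an appropriate bichromatic path between $u''$ and $u'$ of the colors $(\beta'_{\Yy}(vu), m^{\beta'_{\Yy}}(u))$ required by that lemma, whose existence in $\beta'_{\Yy}$ is argued from the available Kempe chains by a case analysis on $m^{\beta'_{\Yy}}(u)$. The application yields a coloring $\beta'$ equivalent to $\beta'_{\Yy}$ in which the colors along $\Vv_{<u}$ are cyclically shifted, the colors at $\{u',vu'',u,vu\}$ match those prescribed, and the bichromatic path has its two colors swapped. One then checks that $\beta'$ fulfils every precondition of Lemma~\ref{lem:cycle_plus_bichromatic_path_invertible}, which yields that $\Vv$ is invertible in $\beta$, the desired contradiction.

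The principal technical difficulty is reconciling the three bichromatic paths that appear in the argument: our $C$ (with colors $(m^{\beta}(u),m^{\beta}(u''))$), the one required by Lemma~\ref{lem:path_plus_bichromatic_path} (with colors $(\beta(vu),m(u))$ in the ambient coloring, so with the second color equal to $m^{\beta'_{\Yy}}(u)$ rather than $m^{\beta}(u)$), and the one used by Lemma~\ref{lem:cycle_plus_bichromatic_path_invertible} (with colors $(\beta(vu),m(u'))$). A careful case analysis on $m^{\beta'_{\Yy}}(u)$—together with the special subcase $uv\in E(\Yy)$, in which $\beta_{\Yy}(uv)$ and $m^{\beta_{\Yy}}(v)$ differ from $\beta(uv)$ and $m(v)$ and so alter the shape of both $\Xx$ and the surrounding Kempe chains—is required to locate the appropriate bichromatic paths in $\beta'_{\Yy}$, assemble the swaps, and verify that the resulting $\beta'$ meets every hypothesis of Lemma~\ref{lem:cycle_plus_bichromatic_path_invertible}.
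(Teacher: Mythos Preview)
Your overall strategy—assume $\Xx$ is a path, apply Lemma~\ref{lem:path_plus_bichromatic_path}, then feed the output into Lemma~\ref{lem:cycle_plus_bichromatic_path_invertible} to conclude that $\Vv$ is invertible—is exactly the paper's approach. However, the ``principal technical difficulty'' you flag is illusory, and by deferring everything to an unspecified case analysis you have not actually written a proof.

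First, the subcase $uv\in E(\Yy)$ cannot occur. Since $\Yy$ is $(\Vv,u)$-independent, $\beta(\Yy)\cap\beta(\Vv)=\{m^{\beta}(u)\}$; but $\beta(uv)\in\beta(\Vv)\setminus\{m^{\beta}(u)\}$, so $uv\notin E(\Yy)$. Hence $\beta_{\Yy}(vu)=\beta(vu)$ and $m^{\beta_{\Yy}}(v)=m^{\beta}(v)$ automatically, and there is nothing to split on here.

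Second, and more importantly, the three bichromatic paths you list are the \emph{same} path with the \emph{same} pair of colors. By the fan structure of $\Vv$ one has $m^{\beta}(u)=\beta(vu')$ and $m^{\beta}(u'')=\beta(vu)$, and since $vu,vu'\in E(\Vv)\subseteq X$ these coincide with $\beta'_{\Yy}(vu')$ and $\beta'_{\Yy}(vu)$. If you look at the \emph{proofs} (rather than the statements) of Lemmas~\ref{lem:path_plus_bichromatic_path} and~\ref{lem:cycle_plus_bichromatic_path_invertible}, the bichromatic path each of them actually manipulates has colors $(\beta(vu'),\beta(vu))$ in the ambient coloring; the occurrences of ``$m(u)$'' and ``$m(u')$'' in their hypotheses are slips of the pen for these quantities. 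Thus $C$, with colors $(m^{\beta}(u),m^{\beta}(u''))=(\beta'_{\Yy}(vu'),\beta'_{\Yy}(vu))$, plugs directly into both lemmas. There is no dependence on $m^{\beta'_{\Yy}}(u)$ and no case analysis: once you note that $V(\Xx_{\le u})=V(\Vv)$ (so $|\Xx|\ge 3$) and that $C$ survives unchanged in $\beta'_{\Yy}$ by $X$-equivalence, the argument is a straight two-step application.
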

\begin{proof}
Without loss of generality, we assume that the vertices $v$ and $u$ are respectively missing the colors $1$ and $2$, and that $\beta(vu) = 3$. This means that $\beta(vu') = m^{\beta}(vu) = 2$, $m^{\beta}(u'') = \beta(vu) = 3$, and $m^{\beta_{\Yy}}(u) = 4$. Assume that $\Xx =X^{\beta'_{\Yy}}_v(2)$ is a path. The vertex $v$ is still missing the color $1$ in the coloring $\beta_{\Yy}$ and thus it is still missing $1$ in $\beta'_{\Yy}$. The coloring $\beta_{\Yy}$ is $(\Vv\setminus \{u\})$-identical to the coloring $\beta$ and so is the coloring $\beta'_{\Yy}$. So $\{u',u''\}\subseteq V(\Xx)$ and $\beta'_{\Yy}(vu) = \beta(vu)$, so $u\in V(\Xx)$, and thus the size of $\Xx$ is at least $3$. Note that this means that $V(\Vv) = V(\Xx_{\leq u})$. 

The cycle $\Vv$ is a minimum cycle in $\beta$, so by Observation~\ref{obs:tight}, it is tight, and in particular, $u\in K_{u''}(2,3)$. So the $C$ is a $(2,3)$-bichromatic path between $u''$ and $u'$ that does not contain $u$ nor $v$. Since $\Yy$ is a $(\Vv,u)$-independent path, the coloring $\beta_{\Yy}$ is $C$-identical to $\beta$. The coloring $\beta'_{\Yy}$ is $C$-equivalent to $\beta_{\Yy}$ so $C$ is still the same bichromatic path in the coloring $\beta'_{\Yy}$. 

Since $\Xx$ is a path of path of length at least $3$, by Lemma~\ref{lem:path_plus_bichromatic_path} there exists a coloring $\beta'$ such that:
\begin{itemize}
\item $\beta'$ is $(G\setminus (C\cup \Xx))$-identical to $\beta'_{\Yy}$,
\item $\beta'$ is $(\Xx_{\geq u})$-identical to $\beta'_{\Yy}$,
\item for any edge $j\in [2,i-1]$, $m^{\beta'}(v_j) = \beta'_{\Yy}(vv_j)$,
\item $m^{\beta'}(u') = \beta'_{\Yy}(vu) = 3$,
\item for any edge $j\in [1,i-2]$, $\beta'(vv_j) = m^{\beta'_{\Yy}}(vv_j)$,
\item $\beta'(vu'') = \beta'_{\Yy}(vu') = 2$,
\item for any edge $e\in C$:
\begin{itemize}
\item if $\beta(e) = \beta'_{\Yy}(vu) = 3$, then $\beta'(e) = m^{\beta'_{\Yy}}(u) = 2$, and
\item if $\beta(e) = m^{\beta'_{\Yy}}(u) = 2$, then $\beta'(e) = \beta'_{\Yy}(vu) = 3$.
\end{itemize}
\end{itemize}

The coloring $\beta'$ is $(G\setminus (C \cup \Xx))$-identical to $\beta'_{\Yy}$, and is $\Xx_{\geq u}$-identical to $\beta'_{\Yy}$. So the coloring $\beta'$ is $(G\setminus X)$-identical to $\beta'_{\Yy}$.

Let $\gamma = \Vv^{-1}(\beta)$. For any $j\in[2,i-2]$, we have $\beta'(vv_j) = m^{\beta'_{\Yy}}(v_j) = m^{\beta}(v_j) = \gamma(vv_j)$, and $m^{\beta'}(v_j) = \beta'_{\Yy}(vv_j) = \beta(vv_j) = m^{\gamma}(v_j)$, so the coloring $\beta'$ is $(\Vv \setminus \{u',vu',u'',vu'',u,vu\})$-identical to $\gamma$. Moreover, $\beta'(vu) =  m^{\beta_{\Yy}}(u') = m^{\beta}(u') = \gamma(vu')$ and $m^{\beta'}(u'') = \beta_{\Yy}(vu'') = \beta(vu'') = m^{\gamma}(u'')$. So in total the coloring $\beta'$ is $(\Vv \setminus \{u',vu'',u,vu\})$-identical to the coloring $\gamma$.

The coloring $\beta'$ is $\Xx_{\geq u}$-identical to $\beta'_{\Yy}$, so in particular, $\beta'(vu) = \beta'_{\Yy}(vu) = m^{\beta'_{\Yy}}(u'')$. We also have that $\beta'(vu'') = 2 = \beta'_{\Yy}(vu')$, and $m^{\beta'}(u') = 3 = \beta'_{\Yy}(vu)$.

Finally, for any edge $e$ in $C$:
\begin{itemize}
\item if $\beta_{\Yy}(e) = \beta(e) = 2$, then $\beta'(e) = 3$, and 
\item if $\beta_{\Yy}(e) = \beta(e) = 3$, then $\beta'(e) = 2$.
\end{itemize}
So by Lemma~\ref{lem:cycle_plus_bichromatic_path_invertible}, the cycle $\Vv$ is invertible; a contradiction.
\end{proof}

\begin{lemma}\label{lem:(Vv,u)-independent_path_cycle}
Let $\Vv = (vv_1,\cdots, vv_k)$ be a minimum cycle in a coloring $\beta$, $u = v_j$, $u' = v_{j+1}$, and $u'' = v_{j-1}$ three consecutive vertices of $\Vv$ and $\Yy$ a $(\Vv,u)$-independent path. Then in the coloring $\beta_{\Yy} = \Yy^{-1}(\beta)$, the fan $\Xx = X_v(m^{\beta}(u)) = (vx_1,\cdots, vx_s)$ is a cycle. 
\end{lemma}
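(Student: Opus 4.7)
The plan is to rule out, in turn, the two other possibilities for the shape of $\Xx$ in $\beta_{\Yy}$: being a path, and being a comet. Once both are excluded, $\Xx$ must be a cycle. Throughout I will use the set $X = C \cup E(\Vv) \cup (V(\Vv) \cup \{v\} \setminus \{u\})$ with $C = K_{u''}^{\beta}(m(u),m(u''))\setminus \{vu,vu'\}$, exactly as in the statement of Lemma~\ref{lem:(Vv,u)-independent_path_not_path}.

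The path case is immediate: the coloring $\beta_{\Yy}$ is trivially $X$-equivalent to itself, so Lemma~\ref{lem:(Vv,u)-independent_path_not_path} applied with $\beta'_{\Yy} = \beta_{\Yy}$ forces $\Xx = X_v(m^{\beta}(u))$ not to be a path. For the comet case, I would imitate the standard comet-elimination already used in the proofs of Lemma~\ref{lem:only_cycles} and Lemma~\ref{lem:fans_around_Vv}. Assume for contradiction that $\Xx$ is a comet, so that there exist two distinct vertices $w, w' \in V(\Xx)$ sharing a missing color $c$ in $\beta_{\Yy}$. Write $c_v := m^{\beta}(v)$; since $\Yy$ is $(\Vv,u)$-independent we have $m^{\beta_{\Yy}}(v) = c_v$, and the comet shape forbids any interior vertex of the fan from missing $c_v$, so $c \ne c_v$. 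The component $K_v^{\beta_{\Yy}}(c_v, c)$ is a bichromatic path with $v$ as one endpoint and hence contains at most one of $w, w'$; without loss of generality $w$ lies outside it. Swap the disjoint component $K := K_w^{\beta_{\Yy}}(c_v, c)$ to obtain a coloring $\beta'_{\Yy}$ in which $w$ misses $c_v$. Because no edge at $v$ is colored $c_v$, the arc out of the edge $vw$ in the auxiliary digraph $D_v$ disappears in $\beta'_{\Yy}$, so $X_v^{\beta'_{\Yy}}(m^{\beta}(u))$ terminates at $w$ and becomes a path; if $\beta'_{\Yy}$ is $X$-equivalent to $\beta_{\Yy}$, Lemma~\ref{lem:(Vv,u)-independent_path_not_path} yields the desired contradiction.

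The main obstacle is exactly verifying that the swap of $K$ is $X$-stable. I would check this in three parts. (i) No edge of $E(\Vv)$ belongs to $K$: no edge of $E(\Vv)$ is colored $c_v$ since $v$ misses $c_v$, and the unique $E(\Vv)$-edge possibly colored $c$ is incident to $v \notin K$. (ii) No vertex of $V(\Vv) \cup \{v\} \setminus \{u\}$ is an endpoint of $K$: since no $v_i$ misses $c_v$, the only obstruction is a $v_i \neq u$ with $m^{\beta}(v_i) = c$, and the cycle structure of $\Vv$ pins down this candidate uniquely; in that configuration the symmetric choice (swapping the analogous component at $w'$ rather than at $w$) resolves the issue. (iii) $K$ does not share edges with $C$, which reduces to a case split on whether $c \in \{m(u), m(u'')\}$ and is handled by the same symmetry argument. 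In the residual tight sub-case where both symmetric choices simultaneously fail $X$-stability, I would first perform an auxiliary $X$-stable single-edge swap — justified by Observation~\ref{obs:not_touching_subsequence_stable} — to push $c$ into the complement of $\beta(\Vv) \cup \{c_v, m(u), m(u'')\}$, and only then execute the terminating swap of $K$. Once $X$-stability is secured in every case, Lemma~\ref{lem:(Vv,u)-independent_path_not_path} delivers the contradiction, so $\Xx$ is neither a path nor a comet, and hence a cycle as claimed.
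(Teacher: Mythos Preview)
Your overall strategy matches the paper's: rule out the path case via Lemma~\ref{lem:(Vv,u)-independent_path_not_path}, then rule out the comet case by a Kempe swap that turns the comet into a path and reapply the same lemma. The path case and the comet sub-case $c \notin \beta(\Vv)$ are handled correctly and essentially as in the paper's own Case~1.

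The gap is in the comet sub-case $c \in \beta(\Vv)$. Here the unique $v_t \in V(\Vv)$ with $m(v_t)=c$ is necessarily the interior vertex $x_i$ of the comet (the first $k$ vertices of $\Xx$ are exactly $V(\Vv)$), and since $\Vv$ is saturated and inverting $\Yy$ does not touch the colors $c_v,c$, one has $x_i \in K_v^{\beta_\Yy}(c_v,c)$. So there is no ``symmetric choice'': the only available swap is at $w=x_s$. Your checks (i) and (ii) do go through for this swap, but (iii) can fail. When $c = m(u'')$, i.e.\ $x_i = u''$, the swapped component $K_{x_s}(c_v,c)$ uses the color $m(u'')$, which is one of the two colors of $C$; edges of $C$ may then be recolored and $X$-stability is lost. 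Your fallback (an auxiliary single-edge swap that ``pushes $c$'' outside $\beta(\Vv)$) is not a concrete argument: $c$ is the common missing color of two fixed vertices and cannot be relocated by a trivial swap elsewhere, nor is it clear why any such swap would itself be $X$-stable.

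The paper sidesteps this obstruction by changing the base coloring rather than insisting on $X$-equivalence to the original $\beta_\Yy$. When $c \in \beta(\Vv)$ it performs the swap $K_{x_s}(c_v,c)$ in $\beta$ itself: since $c_v,c \in \beta(\Vv)$ and $\beta(\Yy)\cap\beta(\Vv)=\{m(u)\}$, this swap is $(\Vv\cup\Yy)$-stable, so in the resulting $\beta'$ the cycle $\Vv$ is still minimum and $\Yy$ is still a $(\Vv,u)$-independent path. One then applies Lemma~\ref{lem:(Vv,u)-independent_path_not_path} with $\beta'$ as the new base; in $\beta'_\Yy=\Yy^{-1}(\beta')$ the fan $X_v(m^\beta(u))$ is a path (the swap commutes with inverting $\Yy$), yielding the contradiction without ever needing $C$-stability relative to the original $C$.
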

\begin{proof}
By Lemma~\ref{lem:(Vv,u)-independent_path_not_path}, the fan $\Xx$ is not a path. To show that it is a cycle, we prove that $\Xx$ is not a comet. Otherwise, assume that $\Xx$ is a comet, then there exists $i<s$ such that $m(x_i) = m(x_s)$. Without loss of generality, we assume that $m^{\beta}(v) = 1$, $m^{\beta}(u) = \beta(vu') = 2$, $\beta(vu) = m^{\beta}(u'') = 3$ and $m^{\beta_{\Yy}}(x_i) = m^{\beta_{\Yy}}(x_s) = 4$. We now have to distinguish the cases.

\begin{case}[$4\not\in\beta(\Vv)$]
~\newline
In the coloring $\beta$, the fan $\Vv$ is a minimum cycle, so by Observation~\ref{obs:tight}, it is tight and in particular, $u\in K_{u''}(2,3)$. Let $C = K_{u''}(2,3)\setminus \{vu',vu\}$. The path $C$ is a $(2,3)$-bichromatic path between $u''$ and $u'$ which does not contain $v$ nor $u$. Since $\Yy$ is a $(\Vv,u)$-independent path, the coloring $\beta_{\Yy}$ is $C$-identical to $\beta$, and thus $C$ is still a $(2,3)$-bichromatic path between $u''$ and $u'$ which does not contain $u$ nor $v$. Let $X = C \cup E(\Vv)\cup (V(\Vv)\cup \{v\}\setminus \{u\})$. We now consider the components of $K(1,4)$ in the coloring $\beta_{\Yy}$. The vertices $x_i$ and $x_s$ are not both part of $K_v(1,4)$. Note that we may have $x_i = u$. If $x_i$ does not belong to $K_{v}(1,4)$, then we swap the component $C_{1,4} = K_{x_i}(1,4)$ to obtain a coloring $\beta'$ $X$-equivalent to $\beta_{\Yy}$ where the fan $X_v(2)$ is now a path. By Lemma~\ref{lem:(Vv,u)-independent_path_not_path}; this is a contradiction. 

So $x_i\in K_v(1,4)$, and thus $x_s \not \in K_v(1,4)$. Similarly to the previous case, we now swap the component $K_{x_s}(1,4)$ and obtain a coloring $X$-equivalent to $\beta_{\Yy}$ where $X_v(2)$ is a path. By Lemma~\ref{lem:(Vv,u)-independent_path_not_path} this is again a contradiction.   
\end{case}

\begin{case}[$4 \in \beta(\Vv)$]
~\newline
In this case, we have that $x_i\in V(\Vv)$. Since $\Yy$ is a $(\Vv,u)$-independent path, it does not contain any vertex missing the color $4$ so $\beta$ is $\{x_s\}$-identical to $\beta_{\Yy}$, and this vertex is still missing the color $4$ in the coloring $\beta$. Since $\Vv$ is a minimum cycle in the coloring $\beta$, by Lemma~\ref{lem:minimum_cycle_saturated} it is saturated, so $x_i\in K_v(1,4)$, and thus $x_s\not\in K_v(1,4)$. We now swap the component $C_{1,4} = K_{x_s}(1,4)$, and denote by $\beta'$ the coloring obtained after the swap. The fan $\Yy$ was a $(\Vv,u)$-independent path in the coloring $\beta$, so the coloring $\beta'$ is $\Yy$-equivalent to $\beta$, and $\Yy$ is still a $(\Vv,u)$-independent path in this coloring. We now invert $\Yy$ and obtain a coloring $\beta'_{\Yy}$ which is $(X^{\beta_{\Yy}}_v(2)\setminus \{x_s\})$-equivalent to the coloring $\beta_{\Yy}$. So now, in the coloring $\beta'_{\Yy}$, the fan $X_v(2)$ is a path, by Lemma~\ref{lem:(Vv,u)-independent_path_not_path} this is a contradiction.
\end{case}
\end{proof}

\begin{lemma}\label{lem:(Vv,u)-independent_path_touch_other_extremity}
Let $\Vv = (vv_1,\cdots,vv_k)$ a minimum cycle in a coloring $\beta$, $u = v_j$ and $u' = v_{j+1}$ two consecutive vertices of $\Vv$, $\Yy = (uy_1,\cdots, uy_r)$ a $(\Vv,u)$-independent path, and $x$ the extremity of $K_{y_r}(m(u),m(v))$ which is not $y_r$. Then the fan $X_v(\beta(uy_1))$ is a comet containing $x$ which is missing the color $m(u)$.  
\end{lemma}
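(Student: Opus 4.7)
The plan is to reduce to Lemma~\ref{lem:(Vv,u)-independent_path_cycle}. Setting $\beta_{\Yy} := \Yy^{-1}(\beta)$ and $c := \beta(uy_1)$, that lemma gives that $\Xx := X^{\beta_{\Yy}}_v(m(u))$ is a cycle in $\beta_{\Yy}$. I would first describe $\Xx$ explicitly: because the edges at $v$ and the missing colors of the vertices in $V(\Vv) \setminus \{u\}$ are unchanged by inverting $\Yy$ (since $V(\Yy)$ is disjoint from $V(\Vv)$ by $(\Vv,u)$-independence, and the edges of $\Yy$ are all incident to $u$), $\Xx$ begins at $vu' = vv_{j+1}$ (the unique edge of $v$ colored $m(u)$), and, using the tight cyclic structure of $\Vv$, traces $vv_{j+1}, vv_{j+2}, \ldots, vv_{j-1}, vv_j = vu$ in order. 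At $u$ the missing color has become $c \neq m(u)$, so $\Xx$ must continue to the edge $vz_1$ with $\beta(vz_1) = c$ (which lies outside $\Vv$ because $c \notin \beta(\Vv)$) and proceeds through successive vertices $z_1, z_2, \ldots, x_s$, closing with $m^{\beta_{\Yy}}(x_s) = m(u)$.

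Next, I would translate this structure back to $\beta$ to show that $X^\beta_v(c)$ is a comet. The fan in $\beta$ also begins at $vz_1$; a short case analysis using $(\Vv,u)$-independence and properness of the coloring at each $y_j$ rules out that any intermediate $z_i$ coincides with some $y_j$, and clearly $z_i \neq u$ since $u$ has already appeared in $\Xx$ before $z_1$. Hence the missing colors at $z_1, \ldots, x_s$ agree between $\beta$ and $\beta_{\Yy}$, and the fan in $\beta$ traces the same tail $vz_1, \ldots, vx_s$. At $vx_s$ the missing color is again $m(u)$, so the fan continues via $vu'$ and then around $\Vv$: $vu', vv_{j+2}, \ldots, vu$. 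At $u$ the missing color $m(u)$ points back to $vu'$, already in the fan, producing the desired comet structure.

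Finally, to identify $x_s$ with $x$, I would use a Kempe chain argument in $\beta_{\Yy}$. Since $uy_r$ has been recolored to $m(u)$, the $(m(u), m(v))$-bichromatic component of $v$ in $\beta_{\Yy}$ grows to the concatenation $K^\beta_v(m(u), m(v)) \cup \{uy_r\} \cup K^\beta_{y_r}(m(u), m(v))$, which is a bichromatic path from $v$ (missing $m(v)$) to the endpoint $x$. Since $x_s$ misses $m(u)$ in $\beta_{\Yy}$, saturation of $\Xx$ places $x_s$ on this path, and the only vertex on the path missing $m(u)$ as an endpoint is $x$, forcing $x_s = x$ and $x$ to miss $m(u)$. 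The main obstacle I anticipate is establishing saturation of $\Xx$ in $\beta_{\Yy}$, since $\Xx$ is not a minimum cycle and so does not inherit saturation from Lemma~\ref{lem:minimum_cycle_saturated}; the argument will likely invoke Lemma~\ref{lem:saturated_cycle} together with the minimality of $\Vv$ to rule out a non-saturated $\Xx$ by inverting it and deriving a contradiction with the structure of $\Vv$ in the modified coloring.
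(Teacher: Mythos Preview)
Your overall architecture — invert $\Yy$, apply Lemma~\ref{lem:(Vv,u)-independent_path_cycle} to get a cycle $\Xx = X^{\beta_\Yy}_v(m(u))$, read off the comet structure of $X^\beta_v(c)$, then pin down $x_s$ via the $(m(u),m(v))$-Kempe chain — is the same as the paper's. But two of your steps do not go through as written.

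First, the sentence ``a short case analysis using $(\Vv,u)$-independence and properness \ldots\ rules out that any intermediate $z_i$ coincides with some $y_j$'' is where the real work hides. Nothing prevents a vertex from being adjacent to both $v$ and $u$; if $z_i = y_j$ for some $j<r$, then $m^\beta(z_i)=\beta(uy_{j+1})\in\beta(\Yy)$, and the fan $X^\beta_v(c)$ leaves the tail and enters $X_v(\beta(uy_{j+1}))$ instead of continuing along the tail you computed in $\beta_\Yy$. The paper handles exactly this by taking $\Yy$ of \emph{minimum} length among counterexamples: then for every $j>1$ the shorter path $(uy_j,\ldots,uy_r)$ already satisfies the lemma, so $X_v(\beta(uy_j))$ is a comet containing $x$, which forces $X_v(4)$ itself to be a comet containing $x$ whenever it meets a vertex missing a color of $\beta(\Yy)$. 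Contrapositively, under the counterexample assumption no vertex of $X_v(4)$ misses a color in $\beta(\Yy)$, so none of the $y_j$ lie in it and $\beta_\Yy$ is $\Xx$-equivalent to $\beta$. Your proposal is missing this induction, and without it the translation from $\beta_\Yy$ back to $\beta$ is unjustified.

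Second, your plan to place $x_s$ on $K^{\beta_\Yy}_v(m(u),m(v))$ via ``saturation of $\Xx$'' is the right target but the wrong tool. You correctly flag that $\Xx$ is not a minimum cycle, and invoking Lemma~\ref{lem:saturated_cycle} only tells you that a non-saturated $\Xx$ is invertible — inverting it does not obviously contradict minimality of $\Vv$, since $\Xx$ strictly contains $\Vv$ and the inversion scrambles all of $\Vv$'s edges. The paper bypasses saturation entirely: if $x_s\notin K_{y_r}(m(u),m(v))$, one checks that the swap of $K_{x_s}(m(u),m(v))$ leaves $\Vv$ and $\Yy$ untouched (using that $\Vv$ is saturated so $u\in K_v(m(u),m(v))$, and that $\Yy$ involves neither color $m(v)$ nor any edge colored $m(u)$), and then inverting $\Yy$ in the swapped coloring yields $X_v(m(u))$ a path, contradicting Lemma~\ref{lem:(Vv,u)-independent_path_cycle}. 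That argument gives exactly $x_s\in K_{y_r}(m(u),m(v))$, hence $x_s=x$, without ever needing $\Xx$ to be saturated.
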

\begin{proof}
We assume that $\Yy$ is of minimum size such that $\Xx = X_v(\beta(uy_1))$ is not a comet containing $x$ missing the color $m(u)$. Without loss of generality, we assume that $m(v) = 1$, $m(u) = \beta(vu') = 2$, $\beta(uv) = m^{\beta}(u'') = 3$, and $\beta(uy_1) = 4$. 

If $|\Yy| = 1$, then $\Yy$ consists of a single edge. We swap this edge, and denote by $\beta'$ the coloring obtained after the swap. In the coloring $\beta'$, by Lemma~\ref{lem:(Vv,u)-independent_path_cycle}, the fan $X_v(2)$ is a cycle. In this coloring, the vertex $u$ is missing the color $4$, so $4\in\beta'(X_v(2))$. Let $\Xx' = (vx_1,\cdots,vx_s)$ be the maximal subfan of $X_v(2)$ starting with an edge colored $4$, and not containing any edge of $\Vv$. Note that $E(\Xx') = E(\Xx)$ and $V(\Xx') = V(\Xx)$. Note also that we have $m^{\beta'}(x_s) = 2$. The subfan $\Xx'$ does not contain any edge of $\Vv$, thus is does not contain the vertex $u$, and so it does not contain any vertex missing the color $4$. So the coloring $\beta$ is $\Xx$-equivalent to the coloring $\beta'$, and thus in the coloring $\beta$, the fan $X_v(4) = (vx_1,\cdots,vx_s,vu',\cdots,vu)$ is a comet where $x_s$ and $u$ are both missing the color $2$. In the coloring $\beta$, the cycle $\Vv$ is a minimum cycle, so it is saturated by Lemma~\ref{lem:minimum_cycle_saturated}, and thus $u\in K_v(1,2)$ and $x_s\not\in K_v(1,2)$. If $x_s$ is not in $K_{y_r}(1,2)$, then we swap $C_{1,2} = K_{x_s}(1,2)$, to obtain a coloring $\beta''$ which is $((\Xx \cup \Vv \cup \Yy)\setminus \{x_s\})$-equivalent to $\beta$. We now invert the path $\Yy$, and obtain a coloring where $X_v(2)$ is a path, by Lemma~\ref{lem:(Vv,u)-independent_path_cycle} this is a contradiction.

So $|\Yy|>1$. The size of $\Yy$ is minimum, so for any subpath $X_u(\beta(uy_j))$ of $\Yy$ with $j>1$, the fan $X_v(\beta(uy_j))$ is a comet containing $x$. So the fan $X_v(4)$ does not contain any vertex missing a color in $\beta(\Yy)$, otherwise it would be a comet containing $x$. Hence the coloring $\beta_{\Yy} = \Yy^{-1}(\beta)$ is $\Xx$-equivalent to $\beta$. In the coloring $\beta_{\Yy}$, the fan $X_v(2)$ is a cycle by Lemma~\ref{lem:(Vv,u)-independent_path_cycle}. Moreover, it contains the fan $\Xx$ since $u$ is missing the color $4$ in the coloring $\beta_{\Yy}$. Therefore, in the coloring $\beta$, the fan $\Xx = (vx_1,\cdots,vx_s,vu',\cdots,vu)$ is a comet containing $\Vv$ where $x_s$ and $u$ are both missing the color $2$. Similarly to the previous case, since $\Vv$ is a minimum, it is saturated by Lemma~\ref{lem:minimum_cycle_saturated}, so $u\in K_v(1,2)$, and thus $x_s\not\in K_v(1,2)$. If $x_s\not\in K_v(1,2)$, then we swap $C_{1,2} = K_{x_s}(1,2)$, and obtain a coloring where $X_v(4)$ is a path. This coloring is $\Yy$-equivalent to $\beta$, and thus if we invert $\Yy$ we obtain a coloring where $X_v(2)$ is a path, a contradiction by Lemma~\ref{lem:(Vv,u)-independent_path_cycle}. 
\end{proof}

In the following section we prove the property $P(0)$.

\subsection{Proof of $P(0)$}

In this section we prove the following lemma.
\begin{lemma}\label{lem:P(0)}
The property $P(0)$ is true.
\end{lemma}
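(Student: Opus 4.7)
Fix a minimum cycle $\Vv$ in a coloring $\beta$, two vertices $u, u' \in V(\Vv)$, and set $\Uu = X_u(m^{\beta}(u')) = (uu_1, \ldots, uu_l)$. By Lemma~\ref{lem:fans_around_Vv}, $\Uu$ is a cycle entangled with $\Vv$, so that $m^{\beta}(u_l) = m^{\beta}(u') = \beta(uu_1)$. Set $c = \beta(uu_l) = m^{\beta}(u_{l-1})$ and assume $c \neq m^{\beta}(v)$; the goal is to show that $\Xx := X_v(c)$ is a saturated cycle containing $u_{l-1}$. The plan is to follow the three-claim scheme used in the proof of Lemma~\ref{lem:fans_around_Vv}: exclude that $\Xx$ is a path, exclude that $\Xx$ is a comet, and then verify saturation together with membership of $u_{l-1}$.

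The heart of the argument is ruling out $\Xx$ being a path. Starting from $\Uu$, the plan is to peel off the tail $(uu_{l-1}, uu_l)$ by a sequence of Kempe swaps in colors outside $\beta(\Vv) \cup \{c, m^{\beta}(v)\}$, arranged so that (a) every swap is $\Vv$-stable, so Observation~\ref{obs:Vv-minimum-stable} preserves $\Vv$ as a minimum cycle; (b) no edge or missing color along $\Xx$ is touched (Observation~\ref{obs:not_touching_subsequence_stable} will be the workhorse here); and (c) the tail becomes a $(\Vv, u)$-independent path $\Yy$ at $u$ ending at $u_{l-1}$ and avoiding $v$ in the sense of Lemma~\ref{lem:(Vv,u)-independent_subfan_avoiding_v}. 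Applying Lemma~\ref{lem:(Vv,u)-independent_subfan_avoiding_v} or Lemma~\ref{lem:(Vv,u)-independent_path_touch_other_extremity} in the prepared coloring would force $X_v(c)$ to touch $u_{l-1}$ and have a non-path shape, and pulling back along the reverse swap sequence contradicts the assumption that $\Xx$ is a path in $\beta$. To rule out $\Xx$ being a comet, I would copy the corresponding claim from the proof of Lemma~\ref{lem:fans_around_Vv}: if two vertices $w, w' \in V(\Xx)$ both miss some color $d$, then at least one of them lies outside $K_v(c, d)$, and swapping the corresponding bichromatic component yields a $\Vv$-equivalent coloring in which $\Xx$ becomes a path, contradicting the previous step.

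Once $\Xx$ is known to be a cycle, the containment of $u_{l-1}$ splits on whether $c \in \beta(\Vv)$: if yes, the entanglement of $\Uu$ with $\Vv$ places $u_{l-1} = M(\Vv, c) \in V(\Vv) \subseteq V(\Xx)$; if not, the same tail-extraction used in the first step forces $u_{l-1}$ to be a neighbour of $v$ and the cycle to close through it. Saturation of $\Xx$ then follows from Lemma~\ref{lem:saturated_cycle}: if $\Xx$ were not saturated, it would be invertible, and inverting it would change $m(u_{l-1})$ while leaving $\Vv$ $(G \setminus \Xx)$-identical, producing a $\Vv$-equivalent coloring in which $\Uu$ becomes shorter, contradicting Lemma~\ref{lem:fans_around_Vv}. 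The main obstacle is the first step: the entanglement of $\Uu$ with $\Vv$ imposes rigid bichromatic constraints, and producing a $\Vv$-stable swap sequence that transforms $(uu_{l-1}, uu_l)$ into a usable $(\Vv, u)$-independent path will likely require a case analysis on whether $u_{l-1} \in V(\Vv)$ and on the position of $v$ inside the bichromatic components $K(m^{\beta}(v), c)$ and $K(c, m^{\beta}(u))$, paralleling the three-case split used in the proof of Lemma~\ref{lem:fans_around_Vv}.
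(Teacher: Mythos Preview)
Your plan misses the key structural fact that drives the paper's proof and, as written, your first step cannot go through.

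The crucial observation you never use is that $\Uu$ is a cycle \emph{entangled with $\Vv$}, so its last vertex is $u_l = u'$; hence $uu_l = uu'$ is an edge joining two vertices of $\Vv$. The paper's proof of $P(0)$ is built entirely on this: one first proves a dedicated Lemma~\ref{lem:edge_between_u_u'_not_path} (and then Lemma~\ref{lem:edge_between_u_u'_saturated_cycle}) stating that whenever $u,u'\in V(\Vv)$ are joined by an edge of color $c\neq m(v)$, the fan $X_v(c)$ is a saturated cycle. This lemma has a long, delicate case analysis that does \emph{not} go through the $(\Vv,u)$-independent-subfan machinery. Only afterwards, to force $u_{l-1}\in V(\Xx)$, does the paper invoke Lemma~\ref{lem:u_i_not_in_K_v(m(u_i),m(v))} and Lemma~\ref{lem:(Vv,u)-independent_subfan_avoiding_v}, using the already-established fact that $\Xx$ is a saturated cycle to derive a contradiction.

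Your alternative route breaks at two places. First, the tail $(uu_{l-1},uu_l)$ cannot be massaged into a $(\Vv,u)$-independent subfan by swaps outside $\beta(\Vv)$: its last vertex is $u' \in V(\Vv)$, so $m(u_l)=m(u')\in\beta(\Vv)\setminus\{m(u)\}$ is an unavoidable second color in common with $\Vv$. Second, and more fatally, Lemma~\ref{lem:(Vv,u)-independent_subfan_avoiding_v} says that such a subfan forces $X_v(\beta(uy_1))$ to be a \emph{path}; it is a tool for \emph{producing} paths (to contradict something already known not to be a path), not for excluding them. Invoking it when you are trying to prove $\Xx$ is not a path gives you nothing. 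Your saturation argument via Lemma~\ref{lem:saturated_cycle} is also circular: to conclude that inverting $\Xx$ changes $m(u_{l-1})$ you already need $u_{l-1}\in V(\Xx)$, which is part of what $P(0)$ asserts; and even granting that, there is no reason the resulting $X_u(m(u'))$ would be shorter rather than merely rerouted.
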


To prove that $P(0)$ is true, we need the following lemma.

\begin{lemma}\label{lem:edge_between_u_u'_saturated_cycle}
    Let $\Vv = (vv_1,\cdots,vv_k)$ be a minimum cycle in a coloring $\beta$, $u = v_j$ and $u' = v_{j'}$ two vertices of $\Vv$. If $uu'\in E(G)\cap$, and $\beta(uu')\neq m(v)$, then the fan $\Xx = X_v(\beta(uu'))$ is a saturated cycle.
\end{lemma}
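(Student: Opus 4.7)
The plan is to case-split according to whether $c := \beta(uu')$ lies in $\beta(\Vv)$.

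If $c \in \beta(\Vv)$, then since $c \neq m(v)$ we have $c = m(v_l)$ for some $l \in \{1,\ldots,k\}$, so the unique edge of color $c$ incident with $v$ is $vv_{l+1}$ (with indices cyclic around $\Vv$). Tracing the auxiliary digraph $D_v$ from $vv_{l+1}$ walks exactly around $\Vv$, so $X_v(c)$ coincides with $\Vv$ up to cyclic rotation; in particular it is a minimum cycle and is therefore saturated by Lemma~\ref{lem:minimum_cycle_saturated}.

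The interesting case is $c \notin \beta(\Vv)$; then the edge $vw$ of color $c$ incident with $v$ has $w \notin V(\Vv)$, so $X_v(c)$ leaves $\Vv$ at its very first step. I would first show that $X_v(c)$ is a cycle by ruling out that it is a path or a comet, following the scheme used in the proof of Lemma~\ref{lem:fans_around_Vv}: assume $X_v(c)$ is a path (respectively a comet) and produce a sequence of Kempe swaps that transforms $\beta$ into $\Vv^{-1}(\beta)$, contradicting the non-invertibility of $\Vv$. The edge $uu'$ is the key tool. By Lemma~\ref{lem:fans_around_Vv} applied to $u,u'$, the fan $X_u(m(u'))$ is a cycle entangled with $\Vv$; since missing colors along a cycle fan are pairwise distinct and $u'\in V(X_u(m(u')))$ by entanglement, $u'$ must be the last vertex of $X_u(m(u'))$. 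Hence $uu'$ is the closing edge of this cycle, and $X_u(c)$ is a comet with tail $uu'$. Combining this structural information with the $(\Vv,u)$-independent subfan machinery (Lemmas~\ref{lem:(Vv,u)-independent_subfan_avoiding_v}--\ref{lem:(Vv,u)-independent_path_touch_other_extremity}) lets us reroute a hypothetical inversion of $X_v(c)$ into an inversion of $\Vv$.

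For the saturation step, Lemma~\ref{lem:saturated_cycle} reduces the task to showing $X_v(c)$ is not invertible. Assuming toward contradiction that it is, I would invert $X_v(c)$ and append a short sequence of further swaps (again anchored on $uu'$ and on the cycle $X_u(m(u'))$) to reach $\Vv^{-1}(\beta)$, again contradicting the minimality of $\Vv$. Throughout, the stability tools (Observations~\ref{obs:not_touching_subsequence_stable}, \ref{obs:X-equivalent_plus_identical}, and \ref{obs:Vv-minimum-stable}) will be needed to ensure that each intermediate coloring keeps $\Vv$ as the same minimum cycle, so that the auxiliary lemmas remain applicable.

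The hard part will be the subcase analysis required to rule out that $X_v(c)$ is a path or a comet when $c \notin \beta(\Vv)$: vertices of $X_v(c)$ may overlap $V(\Vv)$ in several qualitatively different ways, and each overlap pattern forces a slightly different choice of Kempe components to swap so as not to disturb $\Vv$ or the auxiliary cycle $X_u(m(u'))$. This mirrors the structure of the subcase analysis in the proof of Lemma~\ref{lem:fans_around_Vv} and is where most of the technical work lies.
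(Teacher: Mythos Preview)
Your overall case split (on whether $c:=\beta(uu')$ lies in $\beta(\Vv)$) matches the paper, and the easy case $c\in\beta(\Vv)$ is handled correctly. However, two steps in the interesting case $c\notin\beta(\Vv)$ do not go through.

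First, your structural claim about $X_u(c)$ is incorrect. You argue that since $u'$ is the last vertex of the cycle $\Uu=X_u(m(u'))$, the fan $X_u(c)$ is ``a comet with tail $uu'$''. In fact $uu'$ is the last edge of $\Uu$, and $m(u_{l-1})=\beta(uu')=c$; starting the fan at the edge coloured $c$ (namely $uu'$) simply produces a cyclic rotation of $\Uu$, so $X_u(c)$ is the \emph{same cycle}, not a comet. This removes the hook on which you hang the $(\Vv,u)$-independent subfan machinery for the path/comet step. In the paper, ruling out that $X_v(c)$ is a path is not done via those lemmas at all: it is the content of the separate (and lengthy) Lemma~\ref{lem:edge_between_u_u'_not_path}, which builds explicit Kempe-swap sequences inverting $\Vv$. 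The comet case is then a short reduction back to Lemma~\ref{lem:edge_between_u_u'_not_path} by swapping a suitable $(m(v),c')$-component.

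Second, and more seriously, your plan for saturation is logically flawed. Lemma~\ref{lem:saturated_cycle} says ``not saturated $\Rightarrow$ invertible''; its contrapositive is ``not invertible $\Rightarrow$ saturated''. You propose to prove $X_v(c)$ is saturated by proving it is not invertible. But there is no reason for $X_v(c)$ to be non-invertible: if $|X_v(c)|<|\Vv|$ (which nothing prevents), minimality of $\Vv$ forces $X_v(c)$ to be invertible, and yet it may well be saturated---saturation does not preclude invertibility. So this route cannot succeed. The paper instead proves saturation directly: assuming some $w_i\notin K_v(m(v),m(w_i))$, it swaps $K_{w_i}(m(v),m(w_i))$ to produce a coloring $\Vv$-equivalent to $\beta$ in which $X_v(c)$ becomes a path, contradicting Lemma~\ref{lem:edge_between_u_u'_not_path} (with Corollary~\ref{cor:edge_u_u'_1_no_path} handling the boundary case where the swap recolors $uu'$ to $m(v)$).
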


The following lemma is the first step of the proof of Lemma~\ref{lem:edge_between_u_u'_saturated_cycle}.

\begin{lemma}\label{lem:edge_between_u_u'_not_path}
Let $\Vv = (vv_1,\cdots,vv_k)$ be a minimum cycle in a coloring $\beta$, $u = v_j$ and $u' = v_{j'}$ two vertices of $\Vv$. If $uu'\in E(G)$ and $\beta(uu')\neq m(v)$, then the fan $\Xx = X_v(\beta(uu'))$ is not a path.
\end{lemma}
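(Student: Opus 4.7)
I proceed by contradiction, assuming $\Xx = X_v(c)$ is a path, where $c = \beta(uu')$, $u = v_j$, and $u' = v_{j'}$. First dispose of the case $c \in \beta(E(\Vv))$: writing $c = \beta(vv_i)$, the relation $\beta(vv_{s+1}) = m(v_s)$ (indices mod $k$) valid along $\Vv$ makes the arcs of $D_v$ restricted to $E(\Vv)$ form a directed cycle, so starting $X_v(c)$ at $vv_i$ traces $\Vv$ entirely and returns to $vv_i$. Thus $X_v(c)$ coincides with the cycle $\Vv$, not a path.

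So $c \notin \beta(E(\Vv))$; together with $c \neq m(v)$ this gives $c \notin \beta(\Vv)$, so the first vertex $w_1$ of $\Xx$ lies outside $V(\Vv)$. In fact $V(\Xx) \cap V(\Vv) = \emptyset$: if some $vv_i$ appeared in $\Xx$, the same cyclic argument as above would close $\Vv$ as a sub-cycle inside $\Xx$, contradicting the assumption that $\Xx$ is a simple path. Hence $\Xx$ is entirely disjoint from $\Vv$.

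To complete the argument, the plan is to engineer a coloring $\beta'$ in which $\Yy = (uu')$ becomes a $(\Vv,u)$-independent single-edge path, and then invoke Lemma~\ref{lem:(Vv,u)-independent_path_touch_other_extremity} to conclude $X_v^{\beta'}(c)$ is a comet; since the swaps producing $\beta'$ are to be chosen $\Xx$-stable, $X_v^{\beta'}(c)$ coincides with $\Xx$, a path, and we obtain a contradiction. The sole obstruction to $\Yy = (uu')$ being $(\Vv,u)$-independent in $\beta$ is that $m(u') \in \beta(\Vv)$, as $\beta(\Yy) = \{c, m(u), m(u')\}$ with $c \notin \beta(\Vv)$ and $m(u) \in \beta(\Vv)$. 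I therefore pick a color $d \notin \beta(\Vv) \cup \{c\}$ and swap the $(m(u'),d)$-bichromatic component through $u'$, using Observation~\ref{obs:not_touching_subsequence_stable} and preliminary auxiliary swaps (in the spirit of Lemmas~\ref{lem:(Vv,u)-independent_path_not_path} and~\ref{lem:(Vv,u)-independent_path_cycle}) to force this component to be disjoint from both $\Vv$ and $\Xx$, so the swap is simultaneously $\Vv$-stable and $\Xx$-stable. In the resulting coloring $\beta'$, $\Vv$ is still the same minimum cycle (with $m^{\beta'}(v_i) = m^{\beta}(v_i)$ for $i \neq j'$ and $m^{\beta'}(u') = d$), $\beta'(uu') = c$, and $\Yy = (uu')$ is a genuine $(\Vv,u)$-independent single-edge path with $\beta'(uy_1) = c$. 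Applying Lemma~\ref{lem:(Vv,u)-independent_path_touch_other_extremity} with the consecutive neighbor of $u$ taken as $v_{j+1}$ forces $X_v^{\beta'}(c)$ to be a comet, contradicting $X_v^{\beta'}(c) = \Xx$.

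The main obstacle is the third step: guaranteeing the existence of $d$ and the simultaneous $\Vv$- and $\Xx$-stability of the Kempe chain swap. The existence of $d$ holds generically because $c \notin \beta(\Vv)$ already gives $|\beta(\Vv)| \le \chi'(G)$; the boundary case $|\beta(\Vv)| = \chi'(G)$, in which $\Vv$ covers all but one neighbor of $v$ and $\Xx$ degenerates to the single edge $vx$ to the unique outside neighbor missing $m(v)$, must be handled separately, presumably by swapping $\{vx\}$ first to move back into the generic regime. The stability of the Kempe chain swap requires a short case analysis on the far endpoint of the $(m(u'),d)$-component, which can be steered out of $V(\Vv) \cup V(\Xx)$ by preliminary swaps before carrying out the main swap.
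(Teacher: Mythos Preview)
Your approach has a fatal flaw at the core step. You propose to swap the $(m(u'),d)$-bichromatic component through $u'$ in order to change $m(u')$ to some $d\notin\beta(\Vv)$, and you assert that after this swap ``$\Vv$ is still the same minimum cycle (with $m^{\beta'}(v_i)=m^{\beta}(v_i)$ for $i\neq j'$ and $m^{\beta'}(u')=d$)''. These two clauses are incompatible. The vertex $u'=v_{j'}$ belongs to $V(\Vv)$, and the very definition of $\Vv$ being a cycle in $D_v$ uses that $\beta(vv_{j'+1})=m(v_{j'})$. Once $m^{\beta'}(u')=d\neq m^{\beta}(u')$, the arc from $vv_{j'}$ in $D_v$ no longer points to $vv_{j'+1}$, so $\Vv$ ceases to be a cycle (it becomes a comet or breaks up). In particular the swap you describe can never be $\Vv$-stable: stability requires $m^{\beta'}(u')=m^{\beta}(u')$, which is exactly what you are trying to violate. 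Thus Observation~\ref{obs:Vv-minimum-stable} does not apply, and none of the $(\Vv,u)$-independent-path lemmas are available in $\beta'$.

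Even setting that aside, your single-edge sequence $\Yy=(uu')$ would not be a \emph{path} in the fan sense after the swap. A fan around $u$ is a path precisely when its last vertex misses $m(u)$; with $m^{\beta'}(u')=d\neq m(u)$ the fan $X_u^{\beta'}(c)$ continues past $u'$, so $(uu')$ is merely its first edge, not a maximal path. Consequently Lemma~\ref{lem:(Vv,u)-independent_path_touch_other_extremity} does not apply to $\Yy$ in $\beta'$ either.

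The paper's proof takes an entirely different route: it reduces (by inverting $\Xx$ and swapping its last edge $vx$) to a coloring $\beta_2$ where $m(v)=c$, studies the cycle $\Uu=X_u(m(u'))$ around $u$ via Lemma~\ref{lem:fans_around_Vv}, and exploits the edge $uu'$ as the \emph{last} edge of $\Uu$. A sequence of structural propositions (on the position of the edge coloured $1$ in $\Uu$, and on the Kempe components $K_v(1,\beta(vu'))$ and $K_w(2,4)$) then feeds a lengthy case analysis ($j'=j\pm1$ versus not, and the shape of a certain $(1,\beta(vu'))$-bichromatic path) in which one explicitly constructs a sequence of swaps reaching $\Vv^{-1}(\beta)$, contradicting minimality. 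There is no shortcut through the $(\Vv,u)$-independent-path machinery here precisely because $u'$ lives in $\Vv$.
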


\begin{proof}

Otherwise, assume that $\Xx$ is a path. Without loss of generality, we assume that the vertices $v$, $u$ and $u'$ are respectively missing the colors $1$, $2$ and $3$. Since $\beta(uu') \not \in \{1,2,3\}$, we also assume that $\beta(uu') = 4$. Finally, we assume that $\Xx$ is of length one, indeed if the length of $\Xx$ is more than one, we invert it until we reach a coloring $\beta'$ $\Vv$-equivalent to $\beta$ where it has length one without changing the color of $uu'$.

We denote by $x$ the only vertex of $\Xx$, and by $\beta_2$ the coloring obtained after swapping the edge $vx$. The coloring $\beta_2$ is $\Vv$-equivalent to $\beta'$, so $\Vv$ is the same minimum cycle in the coloring $\beta_2$ by Observation~\ref{obs:Vv-minimum-stable}. By Lemma~\ref{lem:fans_around_Vv}, the fans $\Uu = X^{\beta'}_u(3) = (uu_1,\cdots,uu_{l})$ and $\Uu' = X^{\beta_2}_{u'}(3)$ are both cycles and $uu'$ is the last edge of both of these cycles; we denote by $w$ the vertex missing $4$ in $\Uu$. Note that since $\beta(uu') = 4$, the vertex $w$ is the vertex $u_{l-1}$, and $\Uu = (uu_1,\cdots, uw,uu')$. We first remark that $4\not\in\beta(\Vv)$, otherwise the fan $E(\Xx) = E(\Vv)$, and the fan $\Xx$ is a cycle and thus is not a path, as desired.

We first prove some basic properties on the fan $\Uu$.
\begin{proposition}\label{prop:no_edge_colored_with_c_in_beta(Vv)}
The fan $\Uu$ contains an edge colored $1$, and there is no edge colored with a color in $\beta(\Vv)$ between the edge colored $1$ and the edge colored $4$ in $\Uu$.
\end{proposition}
\begin{proof}
We first prove that there is an edge colored $1$ in the fan $\Uu$. Assume that $\Uu$ does not contain any edge colored $1$ in the coloring $\beta'$. Since the fan $\Uu$ is a cycle, it means that it does not contains any vertex missing the color $1$, and in particular it does not contain $v$. So the coloring $\beta_2$ is also $\Uu$-equivalent to the coloring $\beta'$. Therefore, $\Uu = \Uu'$ and $\Uu'$ contains the vertex $w$ that is still missing the color $4$. The fan $\Uu'$ is thus not entangled with $\Vv$, by Lemma~\ref{lem:fans_around_Vv} we have a contradiction.

So the fan $\Uu$ contains an edge colored $1$. Since by Lemma~\ref{lem:fans_around_Vv}, the fan $\Uu$ is a cycle entangled with $\Vv$, it contains the vertex $v$ which is missing the color $1$, and thus it contains the edge $uv$, and also the edge $vv_{j-1}$ (recall that $vv_{j-1}$ is the edge just before $vu = vv_j$ in the sequence $\Vv$). Note that the vertex $u'$ and $v_{j-1}$ may be the same vertex.

We now prove that, in the sequence $\Uu$, there is no edge colored with a color in $\beta(\Vv)$ between the edge colored $1$ and the edge colored $4$. Assume on the contrary that there exists such an edge $uu_t$ colored with a color $c\in \beta(\Vv)$. Similarly to the previous proof, this means that in the coloring $\beta_2$, the fan $X_u(c)$ is the sequence $(uu_t,uu_{t+1},\cdots,uw,uu')$ with $m(w) = m(v) = 4$. So this fan is not entangled with $\Vv$ and by Lemma~\ref{lem:fans_around_Vv} we again get a contradiction.
\end{proof}

\noindent Let $y_1$ be the neighbor of $u$ connected to $u$ by the edge colored $1$, and $y_2$ the vertex just after $y_1$ in the sequence $\Uu$. Note that since $\beta'(uu')\neq 1$, the vertex $y_1$ is different from the vertex $u'$ but may be equal to the vertex $w$. In this case, the vertices $y_2$ and $u'$ are the same vertex.

\begin{proposition}\label{prop:uy_1_belongs_to_K_v(1,5)}
The edge $uy_1$ belongs to the component $K_v(1,\beta(vu'))$.
\end{proposition}
\begin{proof}
Assume that $uy_1$ does not belong to $K_v(1,\beta(vu'))$. If the edge $vu'$ is just after the edge $vu$ in the fan $\Vv$ (\textit{i.e.} if $j' = j+1$), then it means that $\beta(vu') = 3$, and since $\beta(uy_1) = 1$, we have that the vertex $u$ does not belong to the component $K_v(1,3)$. So the fan $\Vv$ is not saturated, by Lemma~\ref{lem:minimum_cycle_saturated} we have a contradiction. So the edge $vu'$ is not the edge just after the edge $vu$ in the fan $\Vv$, and without loss of generality, we assume that $\beta(vu') = 5$.

Let $C_{1,5} = K_{y_1}(1,5)$, we first prove that the vertex $x$ belongs to this component. Since the vertex $y_1$ is not in $K_v(1,5)$, we have that $K_v(1,5) \neq C_{1,5}$. The fan $\Vv$ is a minimum cycle, it is saturated by Lemma~\ref{lem:minimum_cycle_saturated}, so after swapping $C_{1,5}$, we obtain a coloring $\beta''$ $\Vv$-equivalent to $\beta'$. By Observation~\ref{obs:Vv-minimum-stable} the cycle $\Vv$ is the same minimum cycle in this coloring. In the coloring $\beta''$ the edge $uuy_1$ is now colored $5$, and the fan $X_u(5)$ still contains the vertex $w$ missing the color $4$. Moreover, the vertex $x$ is still missing the color $1$, so we swap the edge $vu$ to obtain a coloring $\Vv$-equivalent to $\beta''$ where $X_u(5)$ contains the vertex $w$ which is missing the color $m(v) = 4$. So $X_u(5)$ is not entangled with $\Vv$, and by Lemma~\ref{lem:fans_around_Vv} we have a contradiction.
 
Therefore, the vertex $x$ belongs to the component $C_{1,5}$. We first swap the component $C_{1,5}$ and obtain a coloring $\beta''$ $\Vv$-equivalent to $\beta'$. In the coloring $\beta''$, the fan $X_u(5)$ now contains the vertex $w$ that is still missing $4$. So the vertex $X_v(5)$ contains the vertex $u'$ and we have $X_v(5) = X_v(3)$.

Since the cycle $\Vv$ is minimum, by Observation~\ref{obs:tight}, it is tight. In the coloring $\beta''$, the vertex $x$ is now missing the color $5$, we now apply a sequence a of Kempe swaps of the form $K_x(m(v_{t-1}),m(v_{t}))$ for $t\in (j'-1,j'-2,\cdots,j+1)$ to obtain a coloring $\beta_3$ where $m(x) = m(v_{j-1}) = 2$. Note that each of these swaps is $\Vv$-stable since after each swap the fan $\Vv$ is a minimum cycle and thus is tight. Moreover, since no edge of $\Uu$ between $uy_2$ and $uu'$ is colored with a color in $\beta'(\Vv)$, the coloring $\beta_3$ is $\Uu_{[y_2,w]}$-equivalent to $\beta''$.

Hence we have $X_u(\beta_3(uy_1))_{\leq w} = (uy_1,uy_2,\cdots,uw)$. The edge $uy_1$ may have been recolored during the sequence of swaps, but in the coloring $\beta_3$, $uy_1$ is guaranteed to be colored with a color in $\beta_3(\Vv)$. In the coloring $\beta_3$, the vertices $x$ and $u$ are missing the same color $2$ and the vertex $v$ is still missing the color $1$. the cycle $\Vv$ is minimum, so it is saturated by Lemma~\ref{lem:minimum_cycle_saturated}, and therefore $x \not\in K_v(1,2)$.

We swap the component $C_{1,2} = K_x(1,2)$ to obtain a coloring where $v$ and $x$ are missing the same color $1$ and where the edge $vx$ is colored $4$. We now swap the edge $vx$, and denote by $\beta_4$ the coloring obtained after theses swaps. The coloring $\beta_4$ is $\Vv$-equivalent to $\beta_3$, and is also $X_u(\beta_4(uy_1))_{[y_2,w]}$-equivalent to the coloring $\beta_3$. The vertices $v$ and $w$ are missing the same color $4$, so $X_u(\beta_4(uy_1))$ and $\Vv$ are not entangled in this coloring, and thus by Lemma~\ref{lem:fans_around_Vv} we have a contradiction.
\end{proof}

\begin{proposition}\label{prop:x_in_K_w(2,4)}
In the coloring $\beta_2$, the vertex $x$ belongs to $K_x(2,4)$.
\end{proposition}
\begin{proof}
Otherwise, assume that it is not the case. In the coloring $\beta_2$, the fan $\Vv$ is a minimum cycle, so it is saturated by Lemma~\ref{lem:minimum_cycle_saturated}. Therefore the vertex $u$ belongs to $K_v(2,4)$ and the vertex $w$ does not belong to this component. By Proposition~\ref{prop:no_edge_colored_with_c_in_beta(Vv)} $X_u(1)$ contains the vertex $w$. We swap the component $C_{2,4} = K_w(2,4)$, and obtain a coloring $\beta''$ $\Vv$-equivalent to $\beta'$. By Observation~\ref{obs:Vv-minimum-stable}, the cycle $\Vv$ is still the same minimum cycle in the coloring $\beta''$, and now the vertex $w$ is missing the color $2$. The coloring $\beta''$ is also $X_u(1)_{<w}$-equivalent to the coloring $\beta'$, so where $X_u(1)$ still contains the vertex $w$. The vertex $x$ is still missing the color $4$, so we swap the edge $vu$ to obtain a coloring $\beta_3$ where $X_u(1)$ contains the vertex $w$ missing the color $2$, and thus $X_u(1)$ is a path. By Lemma~\ref{lem:fans_around_Vv} we have a contradiction.
\end{proof}

We are now ready to prove the lemma. We need to distinguish whether or not $j = j'+1$.

\begin{case}[$j = j'+1$]
~\newline
In this case, we have $\beta'(vu) = m^{\beta'}(u') = 3$. In the coloring $\beta'$, the fan $\Vv$ is saturated, so $u'\in K_v(1,3)$ and thus $uy_1\in K_{u'}(1,3)$. Let $C_{1,3} = K_{u'}(1,3)\setminus \{uy_1,vu\}$, $C_{1,3}$ is a $(1,3)$-bichromatic path between $u'$ and $y_1$. In the coloring $\beta_2$, we consider the component $C_{2,4} = K_w(2,4)$; this component contains the vertex $x$ by Proposition~\ref{prop:x_in_K_w(2,4)}. After swapping $C_{2,4}$ we obtain a coloring $\beta_3$ $\Vv$-equivalent to $\Vv$ where the fan $X_u(1)$ is a path. By Observation~\ref{obs:Vv-minimum-stable} the fan $\Vv$ is still the same minimum cycle in the coloring $\beta_3$. Moreover, the coloring $\beta_3$ is $C_{1,3}$-equivalent to the coloring $\beta_2$, and thus $C_{1,3}$-equivalent to the coloring $\beta'$, so $C_{1,3}$ is still a $(1,3)$-bichromatic path between $u'$ and $y_1$. 

By Proposition~\ref{prop:no_edge_colored_with_c_in_beta(Vv)} there is no edge in $E(X_u(1))$ colored with a color in $\beta_4(\Vv)$, so we invert $X_u(1)$ to obtain a coloring $\beta_5$ that is $(C_{1,3}\cup(\Vv\setminus \{u\}))$-equivalent to  $\beta_4$. In the coloring $\beta_4$, the vertex $y_1$ is missing the color $1$, so $K_{u'}(1,3) = C_{1,3}$, and we swap this component; we denote by $\beta_5$ the coloring obtained after the swap.

In the coloring $\beta_5$, the vertices $u$ and $u'$ a both missing the color $1$, so we swap the edge $uu'$ to obtain a coloring where $u$ and $u'$ are missing the color $4$. In the coloring $\beta_5$, the fan $X_v(2)$ is now a path that we invert to obtain a coloring $\beta_6$. In the coloring $\beta_6$, the edge $uw$ is colored $2$, and the vertex $u$ is now missing the color $4$, so $K_u(2,4) = C_{2,4}\cup \{uw\}$, and we swap back this component, we denote by $\beta_7$ the coloring obtained after this swap. Note that since $|\{1,2,3,4\}| = 4$, we can swap back $C_{2,4}$ before $C_{1,3}$.

In the coloring $\beta_7$, the vertices $u$ and $v$ are both missing the color $2$, and the edge $vu$ is colored $3$, so we swap the edge $vu$ to obtain a coloring where $u$ and $v$ are both missing the color $4$. In the coloring obtained after the swap, the vertices $u$ and $y_1$ are both missing the color $3$, so the fan $X_u(4)$ is now a path that we invert. We denote by $\beta_8$ the coloring obtained after the swap.

In the coloring $\beta_8$, the edge $uu'$ is colored $1$, and the edge $uy_1$ is colored $3$, so $K_u(1,3) = C_{1,3} \cup \{uu',uy_1\}$ and this component is a $(1,3)$-bichromatic cycle that we swap. In the coloring obtained after the swap, the component $K_v(3,4) = \{uu',u'\}$, and it suffices to swap this component to obtain exactly $\Vv^{-1}(\beta')$. Since $\Vv$ is a minimum cycle, this is a contradiction.
\end{case}

So $j\neq j+1$, and since the role of $u$ and $u'$ is symmetric, we also have that $j'\neq j+1$. Therefore, inn the cycle $\Vv$, there exists a vertex $v_{j+1}$ and $v_{j-1}$ such that $|\{u,u',v_{j-1},v_{j'-1}\}| = 4$. Without loss of generality, we assume that $\beta'(vu') = m^{\beta'}(v_{j'-1}) = 5$, and that $\beta'(vu) = m^{\beta'}(v_{j-1}) = 6$.

\begin{case}[$j \neq j'+1$]
~\newline
For this case, we need to distinguish the cases based on the shape of\break $C_{1,5} = K_{uy_1}(1,5)$. Since $\Vv$ is saturated in the coloring $\beta'$, by Proposition~\ref{prop:uy_1_belongs_to_K_v(1,5)}, $C_{1,5}$ also contains $v$ and $v_{j'-1}$, and therefore this component is a $(1,5)$-path in this coloring. Moreover, the fan $\Vv$ is tight by Observation~\ref{obs:tight}, so $K_{v_{j-1}}(2,6)$ contains $vv_{j+1}$, and $vu$. Let $C_{2,6} = K_{v_{j-1}}(2,6)\setminus \{vu,vv_{j+1}\}$. The path $C_{2,6}$ is a $(2,6)$-bichromatic path between $v_{j+1}$ and $v_{j-1}$.

There are two cases, in the coloring $\beta'$, either $C_{1,5}$ is such that $u$ is between $v_{j'-1}$ and $y_1$, or $y_1$ is between $v_{j'-1}$ and $u$. We start both cases by swapping $C_{2,4} = K_w(2,4)$ in the coloring $\beta_2$, by Proposition~\ref{prop:x_in_K_w(2,4)} the vertex $w$ belongs to this component, and after the swap we have $m(w) = m(x)= m(u) = 2$. By Proposition~\ref{prop:no_edge_colored_with_c_in_beta(Vv)} $X_u(1)$ is a path that we invert to obtain a coloring $\beta_3$ $(\{uu'\}\cup (\Vv\setminus \{u\}))$-equivalent to $\beta_2$.

In the coloring $\beta_3$, depending on the shape of $C_{1,5}$, either $u$ is in $C = K_{v_{j'-1}}(1,5)$, or $y_1$ belongs to this component. We now have to distinguish the cases. Both cases are pretty similar, their proofs rely on the same principle: apply Kempe swaps to reach a coloring where the edges of $E(\Vv)\cup\{vw'\}$ induce two fans that are cycles smaller than $\Vv$ (and that are invertible since $\Vv$ is minimum).

\begin{subcase}[$u$ belongs to $C$]
~\newline
In this case, $C = K_{v_{j'-1}}(1,5)$ is a $(1,5)$-bichromatic path between $v_{j'-1}$ and $u$ and there is a $(1,5)$-bichromatic path  $C'$ between $y_1$ and $u'$. 

From the coloring $\beta_3$, we swap the component $C$ to obtain a coloring $\beta_4$ where the fan $X_v(5) = (vu',vv_{j'+1},\cdots,vv_{j-1},vu)$ is a cycle strictly smaller that $\Vv$, so since $\Vv$ is minimum, this cycle is invertible. Moreover, the fan $X_v(1) = (vx,vv_{j+1},\cdots,vv_{j'-1})$ is also a cycle strictly smaller than $\Vv$, and so it is also invertible. 

After inverting these two cycles, we obtain a coloring where the component $K_{v_{j'-1}}(1,5) = C \cup \{vv_{j'-1},vu\}$ is  $(1,5)$-bichromatic cycle that we swap back; we denote by $\beta_5$ the coloring obtained after the swap. Now the component $K_{y_1}(1,5)$ is exactly $C'$ and we swap it to obtain a coloring $\beta_6$.

In the coloring $\beta_6$, the fan $X_v(3) = (vu',vu,vv_{j-1}\cdots,vv_{j'+1})$ is now a cycle strictly smaller than $\Vv$, so we invert it. In the coloring obtained after this inversion, the $(2,6)$-bichromatic path $C_{2,6}$ is still a path between $v_{j+1}$ and $v_{j-1}$, but now $v_{j-1}$ is missing the color $6$, and $v_{j+1}$ is missing the color $2$. So $K_{v_{j+1}}(2,6) = C_{2,6}$, and we swap this component. Let $\beta_7$ be the coloring obtained after the swap.

In the coloring $\beta_7$, the fan $X_v(1) = (vu',vv_{j'+1},\cdots,vv_{j-1},vx)$ is now a cycle strictly smaller than $\Vv$ and we invert it. In the coloring obtained after the inversion, $K_{y_1}(1,5)$ is now exactly $C'$, and we swap back this component adn denote by $\beta_8$ the coloring obtained after the swap.

In the coloring $\beta_8$, the vertices $y_1$ and $u$ are both missing the color $1$, so the fan $X_u(2)$ is now a path that we invert to obtain a coloring where $u$ and $w$ are missing the color $2$. In the coloring obtained after the inversion, the component $K_{v_{j+1}}(2,6)$ is exactly $C_{2,6}\cup \{vv_{j-1},vu\}$ and we swap back this component. In the coloring obtained after the swap, the component $K_w(2,4)$ is exactly $C_{2,4}$, and thus after swapping back this component, we obtain exactly $\Vv^{-1}(\beta')$; a contradiction.
\end{subcase}

\begin{subcase}[$y_1$ belongs to $C$]
~\newline
In this case, $C = K_{v_{j'-1}}(1,5)$ is a $(1,5)$-bichromatic path between $v_{j'-1}$ and $y_1$ and there is a $(1,5)$-bichromatic path $C'$ between $u$ and $u'$. 
From the coloring $\beta_3$, we swap the component $C$ to obtain a coloring where $X_v(2) = (vv_{j+1},\cdots,vv_{j'-1},vx)$ is a cycle strictly smaller than $\Vv$, so it is invertible. After inverting it, we obtain a coloring where the component $K_{v_{j-1}}(2,6)$ is exactly $C_{2,6}$. We swap this component and denote by $\beta_4$ the coloring obtained after the swap.

In the coloring $\beta_4$, the fan $X_v(1) = (vv_{j'-1},\cdots,vv_{j+1},vu)$ is now a cycle stricly smaller than $\Vv$, so it is invertible. After inverting it, the component $K_{u'}(1,5)$ is now exactly $C'\cup \{vu',vu\}$ and so it is a $(1,5)$-bichromatic cycle containing $vu$ and $vu'$. After swapping this component, we obtain a coloring where the fan $X_v(1) = (vu',vv_{j'+1},\cdots,vv_{j-1},vx)$ is now a cycle strictly smaller than $\Vv$, and we invert it. We denote by $\beta_5$ the coloring obtained after the inversion.

In the coloring $\beta_5$, the component $K_{v_{j'-1}}(1,5)$ is exactly $C$, and we swap back this component. After the swap we obtain a coloring where the fan $X_v(5) = (vu,vv_{j+1},\cdots, vv_{j'-1})$ is now a cycle strictly smaller than $\Vv$, and so we invert it and denote by $\beta_6$ the coloring obtained after the swap.

In the coloring $\beta_6$, the component $K_{u'}(1,5)$ is now exactly $C'$ and we swap back this component. After the swap we obatin a coloring where $u$ and $y_1$ are both missing the color $1$, so the fan $X_u(2)$ is now a path that we invert. We denote by $\beta_7$ the coloring obtained after the swap.

In the coloring $\beta_7$ the component $K_{v_{j-1}}(2,6)$ is exactly $C_{2,6}\cup \{vv_{j-1},vu\}$ and we swap it back. After the swap of this component, we obtain a coloring where  $K_w(2,4)$ is exactly $C_{2,4}$. After swapping back this component, we obtain exactly $\Vv^{-1}(\beta')$. This is a contradiction.
\end{subcase}
\end{case}

\end{proof}

From the previous lemma we derive the following corollary.

\begin{corollary}\label{cor:edge_u_u'_1_no_path}
    Let $\Vv = (vv_1,\cdots,vv_k)$ be a minimum cycle in a coloring $\beta$, $u = v_j$ and $u' = v_{j'}$ two vertices of $\Vv$. If $uu'\in E(G)$ and $\beta(uu') = m(v)$, then no fan around $v$ is a path.
\end{corollary}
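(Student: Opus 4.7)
The plan is to argue by contradiction and reduce to Lemma~\ref{lem:edge_between_u_u'_not_path} by inverting a single path fan. Suppose there exists a fan $\Xx = X_v(c) = (vx_1, \ldots, vx_s)$ around $v$ which is a path, for some color $c \ne m(v)$. Without loss of generality set $m(v) = 1$, so $\beta(uu') = 1$ and $c \ne 1$. Since $\Xx$ is a path, its terminal vertex must satisfy $m(x_s) = m(v) = 1$.

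The first key step I would carry out is to show that $\Xx$ is disjoint from $\Vv$, namely $V(\Xx)\cap V(\Vv) = \emptyset$ and $\beta(\Xx)\cap\beta(\Vv) = \{1\}$. Indeed, if $x_j = v_i$ for some $i,j$, then the tightness of $\Vv$ (Observation~\ref{obs:tight}) forces $x_{j+1} = v_{i+1 \bmod k}$, $x_{j+2} = v_{i+2 \bmod k}$, and so on, so the fan $\Xx$ would eventually revisit the edge $vv_i$ and be a cycle or a comet, contradicting the assumption. In particular $c = \beta(vx_1)$ is not an edge color of $\Vv$, and no color $\beta(vx_j)$ meets $\beta(\Vv)\setminus\{1\}$; the only overlap comes from the terminal missing color $m(x_s)=1=m(v)$.

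Next I would invert the path $\Xx$, producing $\beta' = \Xx^{-1}(\beta)$ in which $m^{\beta'}(v) = c$, the edge $vx_s$ becomes colored $1$, and every other recoloring is confined to edges $vx_j$ and to missing colors at $v$ and at $x_j \notin V(\Vv)$. By the disjointness established above, each single-edge swap used in the inversion is $\Vv$-stable, hence the whole inversion is $\Vv$-equivalent; by Observation~\ref{obs:Vv-minimum-stable}, $\Vv$ is the same minimum cycle in $\beta'$, with $u, u' \in V(\Vv)$ and $\beta'(uu') = \beta(uu') = 1$. Now $\beta'(uu') = 1 \ne c = m^{\beta'}(v)$, so Lemma~\ref{lem:edge_between_u_u'_not_path} applies in $\beta'$ and yields that $X_v^{\beta'}(1)$ is not a path.

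To close, I would observe that the inversion has literally reversed $\Xx$: starting from the edge $vx_s$ now colored $1$ in $\beta'$, we have $m^{\beta'}(x_s) = c_s = \beta(vx_s)$, which points to $vx_{s-1}$; iterating gives $X_v^{\beta'}(1) = (vx_s, vx_{s-1}, \ldots, vx_1)$, terminating at $x_1$ because $m^{\beta'}(x_1) = c = m^{\beta'}(v)$. Hence $X_v^{\beta'}(1)$ is a path, contradicting the conclusion from Lemma~\ref{lem:edge_between_u_u'_not_path}. The main obstacle is the disjointness step: it is this vertex- and color-disjointness of a hypothetical path fan from the cycle $\Vv$ that allows the inversion to be performed without disturbing $\Vv$ and to transport the situation into the regime already handled by Lemma~\ref{lem:edge_between_u_u'_not_path}.
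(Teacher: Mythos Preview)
Your proof is correct and follows the same approach as the paper: change the missing color at $v$ via swaps inside the path $\Xx$ so that $\beta(uu')\neq m(v)$, then observe that a path around $v$ persists, contradicting Lemma~\ref{lem:edge_between_u_u'_not_path}.

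Two remarks. First, the paper's argument is shorter: instead of inverting all of $\Xx$, it swaps only the last edge $vx_s$ (a trivial Kempe swap, since $m(x_s)=m(v)=1$). After that single swap, $m(v)=\beta(vx_s)\neq 1=\beta(uu')$ and $X_v(1)=(vx_s)$ is already a path of length one; no full reversal is needed, and the only disjointness fact required is $x_s\notin V(\Vv)$, which is immediate because $m(x_s)=1\notin\{m(v_1),\ldots,m(v_k)\}$. Second, your disjointness step is correct but mis-cited: the implication ``$x_j=v_i\Rightarrow x_{j+1}=v_{i+1}$'' follows directly from the definition of a fan around $v$ (the successor is the edge coloured $m(x_j)$), not from tightness (Observation~\ref{obs:tight}), which concerns bichromatic paths between consecutive $v_i$'s.
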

\begin{proof}
    Assume that there exists a fan $\Xx$ around $v$ which is a path. It suffices to swap the last edge $vx$ of $\Xx$ to obtain a coloring $\beta_2$ $(\Vv\cup\{uu'\})$-equivalent to $\beta$ such that $X_v(\beta_2(uu')) = \{vx\}$ is now a path (of length one). By Observation~\ref{obs:Vv-minimum-stable}, the fan $\Vv$ is a minimum cycle in the coloring $\beta_2$, so by Lemma~\ref{lem:edge_between_u_u'_not_path}, we get a contradiction.
\end{proof}

We are now ready to prove Lemma~\ref{lem:edge_between_u_u'_saturated_cycle}

\begin{proof}[Proof of Lemma~\ref{lem:edge_between_u_u'_saturated_cycle}]
    Let $\Vv = (vv_1,\cdots,vv_k)$ be a minimum cycle in a coloring $\beta$, $u = v_j$ and $u' = v_{j'}$ two vertices of $\Vv$. Without loss of generality, we assume that the vertices $v$, $u$ and $u'$ are respectively missing the colors $1$, $2$ and $3$. By Lemma~\ref{lem:fans_around_Vv}, the fan $\Uu = X_u(m(u'))$ is a cycle entangled with $\Vv$, so the edge $uu'$ is in $E(G)$. Assume the $\beta(uu')\neq 1$.
    
We first prove that $X_v(\beta(uu'))$ is a saturated cycle. If $\beta(uu')\in\beta(\Vv)$, then $X_v(\beta(uu'))$ is exactly the fan $\Vv$. Since $\Vv$ is minimum, by Lemma~\ref{lem:minimum_cycle_saturated}, it is saturated, so $X_v(\beta(uu'))$ is a saturated cycle as desired. 
    
Hence assume that $\beta(uu')\not\in\beta(\Vv)$, and without loss of generality, say $\beta(uu') = 4$. By Lemma~\ref{lem:edge_between_u_u'_not_path}, then fan $X_v(4)$ is not a path. 
    
We now prove that $X_v(4)$ is not a comet. Suppose that $X_v(4) = (vw_1,\cdots,vw_t)$ is a comet. So there exists $i< t$ with $m(w_i) = m(w_t)$, we denote by $c$ this color. If $c\in\beta(\Vv)$, the cycle $\Vv$ is a subfan of the fan $X_v(4)$, and thus $w_t = M(\Vv,c)\in V(\Vv)$ and $w_i\not\in V(\Vv)$. Since $\Vv$ is minimum, it is saturated by Lemma~\ref{lem:minimum_cycle_saturated}, so $w_t \in K_v(1,c)$, and thus $w_i \not \in K_v(1,c)$. We now swap the component $K_{w_i}(1,c)$ and obtain a coloring $\beta_2$ $(\Vv\cup\{uu'\})$-equivalent to $\beta$, so the cycle $\Vv$ is also a minimum cycle in the coloring $\beta_2$ by Observation~\ref{obs:Vv-minimum-stable}. In the coloring $\beta_2$, the fan $X_v(\beta_2(uu')) = X_v(4)$ is a now path, by Lemma~\ref{lem:edge_between_u_u'_not_path} this is a contradiction.

So $c\not\in\beta(\Vv)$. The vertices $w_i$ and $w_t$ are not both part of $K_v(1,c)$. If $w_i$ is not in $K_v(1,c)$, we swap $K_{w_i}(1,c)$ and obtain a coloring $\beta_2$, $(\Vv\cup\{uu'\})$-equivalent to $\beta$. So the coloring $\beta_2$, by Observation~\ref{obs:Vv-minimum-stable}, the fan $\Vv$ is a minimum cycle. But the fan $X_v(4) = X_v(\beta_2(uu'))$ is now a path, a contradiction by Lemma~\ref{lem:edge_between_u_u'_not_path}. 

So the vertex $w_i$ belongs to the component $K_v(1,c)$ and thus $w_t$ does not belong to this component. We now swap $K_{w_t}(1,c)$ and obtain a coloring $\beta_2$ which is $(\Vv\cup\{uu'\})$-equivalent to $\beta$. So by Observation~\ref{obs:Vv-minimum-stable}, the fan $\Vv$ is still the same minimum cycle in $\beta_2$, but the fan $X_v(4) = X_v(\beta_2(uu'))$ is now a path, again a contradiction by Lemma~\ref{lem:edge_between_u_u'_not_path}.

Therefore the fan $X_v(4)$ is a cycle. We now prove that is it saturated. Note that since $X_v(4)$ is a cycle, $\beta(X_v(4))\cap\beta(\Vv) = \{1\}$. Assume that $X_v(4) = (vw_1,\cdots,vw_t)$ is not saturated, so there exists $i$ such that $w_i\not\in K_v(1,m(w_i))$. We now have to distinguish whether $w_i = w_t$ or not.
\begin{case}[$w_i\neq w_t$]
~\newline
This case is similar to the case where $X_v(4)$ is a comet. In this case, the vertex $w_i$ is missing a color which is not in $\{1,2,3,4\}$, and we can assume without loss of generality that $m(w_i) = 5$. Since $w_i$ does not belong to $K_v(1,5)$, we swap the component $K_{w_i}(1,5)$ to obtain a coloring $\beta_2$ $(\Vv\cup\{uu'\})$-equivalent to $\beta$. In the coloring $\beta_2$, by Observation~\ref{obs:Vv-minimum-stable}, the fan $\Vv$ is the same minimum cycle, but the fan $X_v(4) = (w_1,\cdots,w_i)$ is now a path, a contradiction by Lemma~\ref{lem:edge_between_u_u'_not_path}.
\end{case}

\begin{case}[$w_i = w_t$]
~\newline
In this case, $w_t$ does not belong to $K_v(1,4)$. We first swap the component $C_{1,4} = K_{w_t}(1,4)$. If $uu'\not\in C_{1,4}$, then we obtain a coloring $\beta_2$ $(\Vv\cup\{uu'\})$ equivalent to $\beta$. So by Observation~\ref{obs:Vv-minimum-stable}, the fan $\Vv$ is a minimum cycle in the coloring $\beta_2$, but now the fan $X_v(4) = X_v(\beta_2(uu')) = (vw_1,\cdots,vw_t)$ is now a path; a contradiction by Lemma~\ref{lem:edge_between_u_u'_not_path}. 

So the edge $uu'$ is in $C_{1,4}$. After swapping $C_{1,4}$, we obtain a coloring $\beta_2$ $(\Vv)$-equivalent to $\beta$, so $\Vv$ is still a minimum cycle. But now $\beta_2(uu') = 1$, and $X_v(4)$ is a path, so by Corollary~\ref{cor:edge_u_u'_1_no_path}, we have a contradiction.
\end{case}

Hence $X_v(4)$ is a saturated cycle as desired.
    
\end{proof}

The proof of $P(0)$ is a direct consequence of the two previous lemmas.


\begin{proof}[Proof of Lemma~\ref{lem:P(0)}]
Let $\Vv$ be a minimum cycle around a vertex $v$ in a coloring $\beta$, $u$ and $u'$ two vertices of $\Vv$, $\Uu = X_u(m(u')) = (uu_1,\cdots,uu_l)$, assume that $\beta(uu')\neq m(v)$ and let $\Ww = X_v(\beta(uu_l)) = (vw_1,\cdots,vw_s)$. Without loss of generality, we assume that the vertices $v$, $u$ and $u'$ are respectively missing the colors $1$, $2$, and $3$, and that the edge $uu'$ is colored $4$. 

We first prove that $\Ww$ is a saturated cycle containing $u_{l-1}$. By Lemma~\ref{lem:edge_between_u_u'_saturated_cycle}, the fan $\Ww$ is a saturated cycle, and thus $w_s$ is missing the color $4$. We now prove that the fan $\Ww$ contains the vertex $u_{l-1}$. 

If $4\in\beta(\Vv)$, then $\Ww = \Vv$, and since $\Uu$ is entanlged with $\Vv$ by Lemma~\ref{lem:fans_around_Vv}, we have that $u_{l-1} = w_s \in \Vv = \Ww$. So the color $4$ is not in $\beta(\Vv)$. 

Assume that the fan $\Ww$ does not contain $u_{l-1}$, so in particular, $u_{l-1}\neq w_s$. The cycle $\Ww$ is saturated, so $w_s\in K_v(1,4)$, and thus $u_{l-1}\not\in K_v(1,4)$. By Lemma~\ref{lem:u_i_not_in_K_v(m(u_i),m(v))}
\begin{itemize}
\item $u\in K_{u_{l-1}}(1,4)$,
\item there exists $j\leq l-1$ such that $\beta(uu_j) = 1$, and 
\item the subfan $(uu_{j+1},\cdots, uu_{l-1})$ is a $(\Vv,u)$-independent subfan.
\end{itemize}

We now consider the coloring $\beta'$ obtained from $\beta$ after swapping the component $C_{1,4} = K_{u_{l-1}}(1,4)$. Let $\Xx = (uu_{j+1},\cdots, uu_{l-1})$. The coloring $\beta'$ is $(\Vv\cup \Ww \cup (\Xx\setminus \{u_{l-1}\}))$-equivalent to $\beta$, so $\Vv$ is a minimum by Observation~\ref{obs:Vv-minimum-stable}, and $\Ww = X_v(4)$ is still a cycle. The vertex $v$ is still missing the color $1$, but now the vertex $u_{l-1}$ is missing the color $1$, the edge $uu_j$ is colored $4$, and the edge $uu_l$ is colored $1$. So now $X_u(4) = \Xx' = (uu_j,\cdots,uu_i)$ is a $(\Vv,u)$-independent subfan avoiding the vertex $v$. By Lemma~\ref{lem:(Vv,u)-independent_subfan_avoiding_v} the fan $X_v(4)$ is a path; a contradiction.
\end{proof}

We now prove some properties of the fans around a vertex of a minimum cycle.

\subsection{Fans around the vertices of a minimum cycle}

We first prove that some fans around a vertex of a minimum cycle are not paths.

\begin{proposition}\label{prop:X_u_i_c_not_a_path}
    Let $\Vv = (v_1,\cdots,v_k)$ be a minimum cycle in a coloring $\beta$, $u = v_j$ and $u' = v_{j'}$ two vertices of $\Vv$, and $\Uu = X_u(m(u')) = (uu_1,\cdots,uu_l)$, and $w = u_s$ a vertex of $\Uu$. Then for any color $c\in\beta(\Vv)$, the fan $\Ww = X_w(c) = (ww_1,\cdots,ww_t)$ is not a path.
\end{proposition}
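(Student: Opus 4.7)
The plan is to proceed by contradiction: assume $\Ww = X_w(c) = (ww_1, \ldots, ww_t)$ is a path for some $c \in \beta(\Vv)$. By the entanglement of $\Uu$ with $\Vv$ furnished by Lemma~\ref{lem:fans_around_Vv}, each vertex $u_{s''}$ of $\Uu$ missing a color in $m(V(\Vv))$ must itself lie in $V(\Vv)$; hence either $w \in V(\Vv) \cup \{v\}$, or else $m(w) \notin \beta(\Vv)$. This dichotomy drives a natural case split.

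The first case, $w \in V(\Vv) \cup \{v\}$, is immediate from prior results. If $w = v$, then $c \neq m(v)$ (otherwise $X_v(c)$ is undefined) and Lemma~\ref{lem:only_cycles} forces $X_v(c)$ to be a cycle, contradicting the assumption. Otherwise $w = v_{j''}$ for some $j''$, and, using that $\Vv$ is tight by Observation~\ref{obs:tight}, one has $\beta(\Vv) = \{m(v), m(v_1), \ldots, m(v_k)\}$; thus $c = m(x)$ for some $x \in V(\Vv) \cup \{v\}$. Applying Lemma~\ref{lem:fans_around_Vv} (when $x \in V(\Vv)$) or Corollary~\ref{cor:X_u(m(v))_cycle} (when $x = v$) with the pair $(w, x)$ in place of $(u, u')$ forces $\Ww = X_w(m(x))$ to be a cycle, again contradicting that $\Ww$ is a path.

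The substantive case is $w \notin V(\Vv) \cup \{v\}$ with $m(w) \notin \beta(\Vv)$. Here I would select $\Ww$ of minimum length among all such counterexamples (ranging over $c$, $w$, and all minimum cycles $\Vv$ in any coloring $\Vv$-equivalent to $\beta$), then invert $\Ww$ so that $w$ afterwards misses $c$. If the inversion is $\Vv$-stable, Observation~\ref{obs:Vv-minimum-stable} keeps $\Vv$ a minimum cycle in the new coloring $\beta'$, and Lemma~\ref{lem:fans_around_Vv} guarantees that the fan $\Uu' = X_u(m(u'))$ in $\beta'$ is again a cycle entangled with $\Vv$. The only change in $D_u$ when passing from $\beta$ to $\beta'$ is that the arc leaving $uu_s$ now points to the edge of $u$ colored $c$ rather than the one colored $m^{\beta}(w)$, so tracing $\Uu'$ from $uu_1$ yields either a comet at $u$ (violating Lemma~\ref{lem:fans_around_Vv}) or a strictly shorter cycle through $w$; in the latter situation $w \in V(\Uu')$ misses $c \in \beta(\Vv)$ while $w \notin V(\Vv)$, violating the entanglement clause of Lemma~\ref{lem:fans_around_Vv}.

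The hardest part will be guaranteeing $\Vv$-stability of the inversion: if $\Ww$ passes through some $w_i \in V(\Vv) \cup \{v\}$, inverting it shifts that vertex's missing color. As in the case analyses carried out inside Lemma~\ref{lem:fans_around_Vv} and Lemma~\ref{lem:edge_between_u_u'_not_path}, one handles this by exploiting the minimality of $\Ww$ either to perform preliminary Kempe swaps on colors outside $\beta(\Vv)$ that push $\Ww$'s interior away from $V(\Vv) \cup \{v\}$, or else to expose a strictly shorter path fan $X_{u_{s'}}(c')$ with $c' \in \beta(\Vv)$, contradicting minimality directly.
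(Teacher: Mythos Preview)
Your overall strategy matches the paper's: reduce to $w\notin V(\Vv)\cup\{v\}$, invert $\Ww$ at $w$, and force $X_u(m(u'))$ to violate Lemma~\ref{lem:fans_around_Vv}. There are, however, two problems.

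First, a minor circularity: you invoke Lemma~\ref{lem:only_cycles} for the case $w=v$, but that lemma lies \emph{downstream} of the present proposition (it is proved via Lemma~\ref{lem:cycles_around_v_starting_with_u} and the $P(i)$ induction, which uses results from this subsection). The fix is trivial: when $w=v$ and $c\in\beta(\Vv)\setminus\{m(v)\}$, the fan $X_v(c)$ is just $\Vv$ itself with a shifted starting edge, hence a cycle.

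Second, and this is the real gap, your assertion that ``the only change in $D_u$ when passing from $\beta$ to $\beta'$ is that the arc leaving $uu_s$ now points elsewhere'' requires $\Uu_{<s}$-stability, not just $\Vv$-stability. Your minimality hypothesis (over $c\in\beta(\Vv)$) does give $\Vv$-stability---so the worry in your final paragraph about $w_i\in V(\Vv)\cup\{v\}$ is already handled---but it says nothing about colors in $\beta(\Uu_{<s})$. If some $w_i$ equals a vertex $u_r$ with $r<s$, then $m(u_r)$ shifts during the inversion, the arc out of $uu_r$ moves, and $\Uu'=X_u^{\beta'}(m(u'))$ may diverge from $\Uu$ before ever reaching the edge $uw$. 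Then $w$ need not belong to $V(\Uu')$, and the entanglement contradiction disappears.

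The paper's proof sidesteps this cleanly: rather than inverting $\Ww$ fully, it inverts only until the \emph{first} moment that $m(w)$ lands in $\beta(\Vv)\cup\beta(\Uu_{<s})$ (which must happen since $c\in\beta(\Vv)\subseteq\beta(\Ww)$). Because it is the first such moment, all prior missing colors of $w$ avoided that set, and Observation~\ref{obs:not_touching_subsequence_stable} makes the partial inversion automatically $(\Vv\cup\Uu_{<s})$-stable. Then $\Uu'_{\le s}=\Uu_{\le s}$, so $w\in V(\Uu')$, and $w$ misses a color either in $\beta(\Vv)$ (entanglement fails) or in $\beta(\Uu_{<s})$ (a comet). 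No minimality argument is needed at all.
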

\begin{proof}
    Otherwise assume that the fan $\Ww$ is a path. The vertex $w$ is not a vertex of $V(\Vv)$, otherwise since $\Ww$ is a path, by Lemma~\ref{lem:fans_around_Vv} we have a contradiction. So the vertex $w$ is not in $V(\Vv)$.
    
    We invert it until we reach a coloring $\beta_2$ where $m^{\beta_2}(w)\in\beta(\Vv\cup\Uu_{<s})$, we denote by $c'$ this new missing color. Since $c\in\beta(\Ww)$, the color $c'$ is well defined. The coloring $\beta_2$ is $(\Vv\cup\Uu_{<s})$-equivalent to $\beta$. Thus by Observation~\ref{obs:Vv-minimum-stable}, the sequence $\Vv$ is still a minimum cycle in the coloring $\beta_2$. Let $\Uu' = X_u(m^{\beta_2}(u')) = (uu'_1,\cdots,uu'_{l'})$. Since $\beta_2$ is $(\Uu_{<s})$-equivalent to $\beta$, we have that $\Uu_{<s} = \Uu'_{<s}$, so the edge $uw$ is also in $E(\Uu')$, it is exactly the edge $uu'_s$. If $c'\in\beta(\Vv)$, then $\Uu'$ is not entangled with $\Vv$ in the coloring $\beta_2$, a contradiction by Lemma~\ref{lem:fans_around_Vv}. If $c'\in \beta{\Uu_{<s}}$, then $\Uu'$ is now a comet in the coloring $\beta_2$, again, by Lemma~\ref{lem:fans_around_Vv} we have a contradiction.
\end{proof}

\begin{lemma}\label{lem:X_u_i(beta_(Vv)_cup_beta(Uu_<u_i))_not_a_path}
Let $i\geq 0$, $\Vv$ be a minimum cycle in a coloring $\beta$, $u$ and $u'$ two vertices of $\Vv$, $\Uu = X_u(m(u')) = (uu_1,\cdots,uu_l)$, and $c\in \beta(\Vv) \cup \beta(\Uu_{<u_i})$. If $u_i\not\in V(\Vv)\cup\{v\}$. Then the fan $\Xx = X_{u_i}(c) = (u_ix_1,\cdots, u_ix_s)$ is not a path.
\end{lemma}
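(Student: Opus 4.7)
My plan is to extend the argument of Proposition~\ref{prop:X_u_i_c_not_a_path}, which handles the case $c\in\beta(\Vv)$, to also cover $c\in\beta(\Uu_{<u_i})$. The overall strategy and technical setup will be identical; the only new content is an enriched case analysis at the end.

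I would start by supposing for contradiction that $\Xx=(u_ix_1,\ldots,u_ix_s)$ is a path, and then partially invert $\Xx$ until reaching a coloring $\beta_2$ with $m^{\beta_2}(u_i)=c'\in\beta(\Vv)\cup\beta(\Uu_{<u_i})$. Such a stopping point exists because $c=\beta(u_ix_1)\in\beta(\Xx)$ already lies in $\beta(\Vv)\cup\beta(\Uu_{<u_i})$ by hypothesis. Just as in the proof of Proposition~\ref{prop:X_u_i_c_not_a_path}, Observation~\ref{obs:not_touching_subsequence_stable} applied at the pivot $u_i$---using the assumption $u_i\notin V(\Vv)\cup\{v\}$ together with the automatic fact $u_i\notin V(\Uu_{<u_i})$, since the vertices of $\Uu$ are pairwise distinct---will give that $\beta_2$ is $(\Vv\cup\Uu_{<u_i})$-equivalent to $\beta$. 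By Observation~\ref{obs:Vv-minimum-stable}, $\Vv$ is then the same minimum cycle in $\beta_2$, the subfan $\Uu_{<u_i}$ is unchanged, and in particular the edge $uu_i$ and the fact that $u_i$ appears at position $i$ of the fan around $u$ are preserved.

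Next I would look at the fan $\Uu'=X_u(m^{\beta_2}(u'))$. Since $m^{\beta_2}(u')=m(u')$ (as $u'\in V(\Vv)$) and the first $i$ edges of $\Uu$ are unchanged in $\beta_2$, $\Uu'$ starts identically to $\Uu$ and reaches $u_i$ at step $i$, now with $m^{\beta_2}(u_i)=c'$. By Lemma~\ref{lem:fans_around_Vv}, $\Uu'$ must be a cycle entangled with $\Vv$, and my aim is to contradict this in every case for $c'$. If $c'\in\beta(\Vv)$, then some vertex $v_s=M(\Vv,c')\in V(\Vv)$ misses $c'$ while $u_i\in V(\Uu')$ also misses $c'$, so $M(\Uu',c')=u_i\neq v_s$, breaking entanglement. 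If instead $c'\in\beta(\Uu_{<u_i})$, I split into subcases: if $c'=m(u_j)$ for some $j<i$, then $u_j$ and $u_i$ both miss $c'$ and $\Uu'$ is a comet; if $c'=m(u)$, then no edge incident with $u$ has color $c'$, so $\Uu'$ cannot extend past $u_i$ and is a path; and the remaining possibility $c'=\beta(uu_j)$ reduces to one of the previous cases via the fan identities $\beta(uu_j)=m(u_{j-1})$ for $j\geq 2$ and $\beta(uu_1)=m(u_l)=m(u')\in\beta(\Vv)$. In every subcase we contradict Lemma~\ref{lem:fans_around_Vv}.

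The main point I expect to demand care is the stability of the partial inversion of $\Xx$: although $\Xx$ may traverse arbitrary vertices, every single-edge swap sits on an edge incident with $u_i$, and $u_i$ lies outside $V(\Vv)\cup V(\Uu_{<u_i})\cup\{v\}$, so Observation~\ref{obs:not_touching_subsequence_stable} applies in exactly the same way as in Proposition~\ref{prop:X_u_i_c_not_a_path}. Once this is verified, the rest of the argument is just the case analysis described above.
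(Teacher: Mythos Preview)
Your proposal is correct and follows essentially the same approach as the paper's proof: partially invert $\Xx$ to push $m(u_i)$ into $\beta(\Vv)\cup\beta(\Uu_{<u_i})$ while keeping $\Vv\cup\Uu_{<u_i}$ stable, then contradict Lemma~\ref{lem:fans_around_Vv}. The paper phrases the partial inversion as a WLOG reduction to a minimal subpath followed by a full inversion, and collapses your terminal subcases into a single ``comet or not-entangled'' dichotomy, but the content is the same; your explicit treatment of the boundary case $c'=m(u)$ is if anything a bit cleaner than the paper's handling.
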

\begin{proof}
Assume $u_i\not\in V(\Vv)$ and that $\Xx$ is a path. Without loss of generality, we assume that there is no edge in $\Xx_{[x_2,x_s]}$ colored with a color in $\beta(\Vv)\cup\beta(\Uu_{<u_i})$, otherwise, it suffices to consider the subfan of $\Xx$ starting with this edge, this fan is also a path. We now invert $\Xx$ and obtain a coloring $\beta'$ where $m(u_i) = c$. The coloring $\beta'$ is $(\Vv\cup \Uu_{<u_i})$-equivalent to $\beta$. So by Observation~\ref{obs:Vv-minimum-stable}, the fan $\Vv$ is a minimum cycle in the coloring $\beta'$.
If $c\in\beta(\Vv)$, now the fan $X_u(m(u'))$ contains the vertex $u_i$ which is missing the color $c\in\beta(\Vv)$, so $X_u(m(u'))$ is not entangled with $\Vv$. If $m(u_i)\in \beta(\Uu_{<u_i}$, let $u'' = M(\Uu_{<u_i},c)$. Then $X_u(m(u'))$ is now a comet since it contains the vertices $u_i$ and $u''$ both missing the color $c$. In both cases, by Lemma~\ref{lem:fans_around_Vv} we have a contradiction.
\end{proof}

We now prove a sufficient condition for a fan around a vertex of a minimum to contain an edge colored with the color missing at the central vertex of the minimum cycle.

\begin{lemma}\label{lem:u_i_not_in_K_v(m(u_i),m(v))}
Let $\Vv$ be a minimum cycle in a coloring $\beta$, $u$ and $u'$ two vertices of $\Vv$, $\Uu = X_u(m(u')) = (uu_1,\cdots,uu_l)$ and $i\leq l$. If $\beta(uu_i)\neq m(v)$, $m(u_i)\not\in \beta(\Vv)$  and $u_i\not\in K_v(m(v),m(u_i))$, then:
\begin{itemize}
\item $u\in K_{u_i}(m(v),m(u_i))$,
\item there exists $j< i$ such that $\beta(uu_j) = m(v)$, and 
\item the subfan $(uu_{j+1},\cdots, uu_i)$ is a $(\Vv,u)$-independent subfan.
\end{itemize}
\end{lemma}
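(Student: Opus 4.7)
I plan to normalize so that $m(v) = 1$ and $m(u_i) = c$, so the hypotheses become $c \notin \beta(\Vv)$, $\beta(uu_i) \neq 1$, and $u_i \notin K_v(1, c)$. By Lemma~\ref{lem:fans_around_Vv}, $\Uu$ is a cycle entangled with $\Vv$, so the missing colors $m(u_1), \ldots, m(u_l)$ are pairwise distinct and any $1$-edge at $u$ must be the unique edge $uu_q$ with $u_{q-1} = v$. The main tool will be the swap of $C := K_{u_i}(1, c)$: this swap is $\Vv$-stable because $c \notin \beta(\Vv)$ forbids $c$ as a missing color of any vertex of $V(\Vv) \cup \{v\}$, and the only vertex of $V(\Vv) \cup \{v\}$ missing $1$ is $v$, which by hypothesis lies outside $C$. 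Hence, by Observation~\ref{obs:Vv-minimum-stable}, $\Vv$ remains the same minimum cycle in the resulting coloring $\beta'$. For the same reason no $u_p$ with $p \neq i$ can be an endpoint of $C$ (the only vertex of $V(\Uu)$ potentially missing $1$ is $v$, outside $C$, and the unique vertex of $V(\Uu)$ missing $c$ is $u_i$), so the missing colors $m(u_1), \ldots, m(u_{i-1})$ are preserved while $m^{\beta'}(u_i) = 1$.

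For the first bullet I will assume $u \notin C$, so edges at $u$ are also unaffected. Then $X^{\beta'}_u(m(u'))$ begins with $(uu_1, \ldots, uu_i)$, and from $uu_i$ its arc demands a $1$-edge at $u$: either no such edge exists and the fan is a path, or the arc lands on $uu_q$; whether $q < i$ (giving a comet) or $q > i$ (so that $v$ disappears from the fan while $u_i$ still misses $1$, breaking entanglement with $\Vv$), each case contradicts Lemma~\ref{lem:fans_around_Vv}, forcing $u \in C$. For the second bullet, $m(u) \notin \{1, c\}$ forces $u$ to be internal to $C$, supplying both a $1$-edge $uu_q$ and a $c$-edge $uu_{i+1}$ in $C$; a direct check rules out $q \in \{i, i+1, i+2\}$ using $\beta(uu_i) \neq 1$, $m(u_i) \neq m(v)$, and the fact that $\beta(uv) \in \beta(\Vv)$ while $c \notin \beta(\Vv)$. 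Assuming $q \geq i + 3$, swapping $C$ yields $\beta'(uu_q) = c$ and $\beta'(uu_{i+1}) = 1$; tracing $X^{\beta'}_u(m(u'))$ then produces $(uu_1, \ldots, uu_i, uu_{i+1}, uu_{i+2}, \ldots, uu_{q-1})$, and the arc from $uu_{q-1} = uv$ returns to $uu_{i+1}$, a comet forbidden by Lemma~\ref{lem:fans_around_Vv}. Hence $q \in [2, i-1]$, and I will set $j := q$.

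For the third bullet I will set $\Xx := (uu_{j+1}, \ldots, uu_i)$ and observe that $\beta(\Xx) = \{m(u_p) : p \in [j, i]\} \cup \{m(u)\}$, so by entanglement the $(\Vv, u)$-independence of $\Xx$ reduces to $u_p \notin V(\Vv) \cup \{v\}$ for every $p \in [j, i]$. The case $p = i$ is immediate from the hypothesis, and $u_p = v$ would force $p = j - 1 \notin [j, i]$. Suppose for contradiction that $u_p \in V(\Vv)$ for some $p \in [j, i-1]$; set $u'' := u_p$ and apply the bullet~2 argument verbatim to the pair $(u, u'')$ using the cyclically rotated cycle fan $\Uu'' := X_u(m(u'')) = (uu_{p+1}, \ldots, uu_l, uu_1, \ldots, uu_p)$. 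The hypotheses of the lemma still apply since they concern only $u_i$. A direct count places $u_i$ at position $i'' := i - p$ and $v$ at position $j - 1 + l - p$ in $\Uu''$, and this position automatically satisfies $j - 1 + l - p \geq i'' + 2$ because $l > i$ (forced by $m(u_l) = m(u') \in \beta(\Vv)$ while $m(u_i) = c \notin \beta(\Vv)$) and $j \geq 2$. Hence the bullet~2 swap of $C$ once more produces a comet in $X^{\beta'}_u(m(u''))$, contradicting Lemma~\ref{lem:fans_around_Vv}. The main obstacle will be precisely this last step: I must confirm that the bullet~2 proof is genuinely generic in the choice of the ``starting'' neighbor, which holds because the swap, its $\Vv$-stability, and the cycle-fan entanglement appeal only to Lemma~\ref{lem:fans_around_Vv} and Observation~\ref{obs:Vv-minimum-stable}, not to any unproven portion of the current lemma.
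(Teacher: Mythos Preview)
Your approach matches the paper's: swap $C = K_{u_i}(m(v), m(u_i))$ and apply Lemma~\ref{lem:fans_around_Vv} in the resulting coloring. There is, however, a genuine gap. You assert at the outset that ``any $1$-edge at $u$ must be the unique edge $uu_q$ with $u_{q-1} = v$,'' which presupposes that the $1$-edge at $u$ lies in $E(\Uu)$, equivalently that $v \in V(\Uu)$. Entanglement only gives this conditionally: if $1 \in \beta(\Uu)$ then indeed $M(\Uu,1) = v$, but nothing in the hypotheses forces $1 \in \beta(\Uu)$. The $1$-edge at $u$ certainly exists (since $m(u) \neq 1$), yet it may be $ux$ with $x \notin V(\Uu)$. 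Your case analyses in bullets~1 and~2 omit this possibility (and ``no such edge exists'' is vacuous, not the missing case), while bullet~3 depends on the index $j$ whose very existence is part of what must be shown.

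The fix is the entanglement argument you already invoke in your case $q > i$: in every omitted case the fan $X^{\beta'}_u(m(u'))$ still begins with $(uu_1, \ldots, uu_i)$ and hence contains $u_i \neq v$ missing $1$, so it cannot be a cycle entangled with $\Vv$. This is exactly how the paper proceeds, and far more economically. It handles bullets~1 and~2 in one stroke: if \emph{either} $u \notin C$ \emph{or} no edge among $uu_1, \ldots, uu_{i-1}$ is colored $1$, then none of $uu_1, \ldots, uu_i$ is recolored by the swap, whence $X^{\beta'}_u(m(u'))$ contains $u_i$ missing $1$ and fails Lemma~\ref{lem:fans_around_Vv}. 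For bullet~3 the paper is again direct: if $\beta(uu_s) \in \beta(\Vv)$ for some $s \in [j+1, i]$, then in $\beta'$ the fan $X_u(\beta(uu_s))$ (starting at the unchanged edge $uu_s$) still reaches $u_i$, which now misses $1$, contradicting Lemma~\ref{lem:fans_around_Vv}. Your rotation argument for bullet~3 is correct once the gap above is closed, but it is considerably heavier than needed.
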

\begin{proof}
Let $\Vv$, $u$, $u'$ and $\Uu$ be as in the lemma. Without loss of generality, we assume that the vertices $v$, $u$, and $u'$ are respectively missing the colors $1$, $2$ and $3$. Assume that $u_i\not\in K_v(1,m(u_i))$. Since the cycle $\Vv$ is minimum, by Lemma~\ref{lem:minimum_cycle_saturated} it is saturated, so for any $u''\in V(\Vv)$, $u''\in K_v(m(v),m(u''))$, thus $u_i\not\in V(\Vv)$, so without loss of generality, we may assume that $u_i$ is missing the color $4$. We first consider the component $C_{1,4} = K_{u_i}(1,4)$. In the coloring $\beta$, by Lemma~\ref{lem:fans_around_Vv}, $\Uu$ is a cycle entangled with $\Vv$, so it does not contain any other vertex missing $4$. Since $v\not\in C_{1,4}$, then $V(C_{1,4})\cap V(\Uu) = \{u_i\}$. After swapping $C_{1,4}$, we obtain a coloring $\beta'$ $(V(\Uu)\setminus \{u_i\})$-identical to $\beta$ where $u_i$ is now missing the color $1$. Note that the coloring $\beta'$ is also $\Vv$-equivalent to $\beta$, and thus $\Vv$ is still a minimum cycle in $\beta'$. Moreover the vertex $v$ is still missing the color $1$ in $\beta'$.

We first prove that the vertex $u$ belongs to $C_{1,4}$ and that there is an edge colored $1$ in $\{uu_1,\cdots,uu_{i-1}\}$. If $u\not \in C_{1,4}$, or if there is no edge colored $1$ in $\{uu_1,\cdots,uu_{i-1}\}$, then the coloring $\beta'$ is also $(E(\Uu_{[uu_1,uu_i]}))$-identical to $\beta$, and so $X_u(3)$ now contains the vertex $u_i$ which is missing the color $1$, so $X_u(3)$ is not a cycle entangled with $\Vv$. Since the cycle $\Vv$ is minimum, we have a contradiction by Lemma~\ref{lem:fans_around_Vv}. So $u\in C_{1,4}$ and there is an edge $uu_j$ with $j< i$ colored $1$.

We now prove that $(uu_{j+1},\cdots, uu_i)$ is a $(\Vv,u)$-idenpendent subfan. Note that we have have $j+1 = i$ (\textit{i.e.} the subfan is of length $1$). Since $\beta'$ is $(V(\Uu)\setminus\{u_i\})$-identical to $\beta$, the sequence $(uu_{j+1},\cdots, uu_i)$ is a subfan. Assume that there exists $s\in \{j+1,\cdots, i\}$ such that $\beta(uu_s)\in \beta(\Vv)$. Then, in the coloring $\beta'$, $X_u(\beta(uu_s))$ contains the vertex $u_i$ that is missing the color $1$, thus it is not a cycle entangled with $\Vv$, by Lemma~\ref{lem:fans_around_Vv}, this is a contradiction.
\end{proof}

\begin{lemma}\label{lem:u_i_not_in_K_v(m(u_i),m(v))_generalized}
Let $\Vv$ be a minimum cycle in a coloring $\beta$, $u$ and $u'$ two vertices of $\Vv$, $\Uu = X_u(m(u')) = (uu_1,\cdots,uu_l)$ and $i\leq l$ such that $m(u_i)\not\in \beta(\Vv)$. Let $\beta'$ be a coloring obtained from $\beta$ by swapping a $(m(v),c)$-component $C$ that does not contain $v$ for some color $c \not \in (\beta(\Uu_{<u_i})\cup\{m(v)\})$. If there exists a coloring $\beta''$ such that:
\begin{itemize}
\item $\beta''$ is $(\Vv\cup\Uu_{<u_i})$-equivalent to $\beta'$, and 
\item $m^{\beta''}(u_i)\in \beta''(\Vv)\cup\beta''(\Uu_{<u_i})$.
\end{itemize}
Then
\begin{itemize}
\item $u\in C$,
\item there exists $j< i$ such that $\beta(uu_j) = m(v)$, and 
\item the subfan $(uu_{j+1},\cdots, uu_i)$ is a $(\Vv,u)$-independent subfan in $\beta$.
\end{itemize}
\end{lemma}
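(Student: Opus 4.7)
The plan is to adapt the argument of Lemma~\ref{lem:u_i_not_in_K_v(m(u_i),m(v))} but work inside the coloring $\beta''$ rather than directly in $\beta$; the $(\Vv\cup\Uu_{<u_i})$-equivalent passage from $\beta'$ to $\beta''$ is designed precisely to preserve the data needed to rerun that argument. Normalise so that $m^{\beta}(v)=1$, $m^{\beta}(u)=2$, $m^{\beta}(u')=3$, and $m^{\beta}(u_i)=4$; then $4\notin\beta(\Vv)$ by hypothesis, and since $\beta(\Uu_{<u_i})=\{2,3,m(u_1),\ldots,m(u_{i-1})\}$ the constraint on $c$ becomes $c\notin\{1,2,3,m(u_1),\ldots,m(u_{i-1})\}$.

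My first step is to show that the swap of $C$ can disturb $\Uu_{\leq u_i}$ only through an edge coloured $1$. Indeed, no edge of $\Uu_{<u_i}$ carries colour $c$ (the edge colours lie in $\{3,m(u_1),\ldots,m(u_{i-2})\}$), no vertex of $V(\Uu_{<u_i})$ misses $c$ (their missing colours lie in $\{m(u_1),\ldots,m(u_{i-1})\}$), and $v\notin V(C)$ ensures that even when some $u_j=v$ with $j<i$ the missing colour at that vertex is preserved. By entanglement of $\Uu$ with $\Vv$ (Lemma~\ref{lem:fans_around_Vv}), the only edge of $\Uu$ that can be coloured $1$ is $uu_{j+1}$ with $u_j=v$, so any disturbance of $\Uu_{\leq u_i}$ forces $u\in V(C)$ together with such an edge lying in $C$.

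Now suppose for contradiction that the conclusion fails, and distinguish three cases: (i) $u\notin V(C)$; (ii) $u\in V(C)$ but no $uu_j$ with $j<i$ is coloured $1$; (iii) the first two conditions hold for some $j$, but $(uu_{j+1},\ldots,uu_i)$ is not $(\Vv,u)$-independent in $\beta$. In cases (i) and (ii) the observation above yields that $\Uu_{\leq u_i}$ is entirely preserved from $\beta$ to $\beta'$, and since $\beta''$ is $(\Vv\cup\Uu_{<u_i})$-equivalent to $\beta'$ and $\Vv$ is still a minimum cycle in $\beta''$ by Observation~\ref{obs:Vv-minimum-stable}, the fan $X^{\beta''}_u(3)$ agrees with $\Uu$ up to $uu_{i-1}$ and continues along the unchanged edge $uu_i$ so that $m^{\beta''}(u_i)=4$; a direct check shows that $4$ belongs to neither $\beta''(\Vv)$ nor $\beta''(\Uu_{<u_i})$, contradicting the hypothesis on $\beta''$. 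Case (iii) is handled as in the proof of Lemma~\ref{lem:u_i_not_in_K_v(m(u_i),m(v))}, by selecting the smallest $s\in(j,i]$ with $\beta(uu_s)\in\beta(\Vv)$ and showing that the fan $X^{\beta''}_u(\beta(uu_s))$ in $\beta''$ cannot be the cycle entangled with $\Vv$ guaranteed by Lemma~\ref{lem:fans_around_Vv}. The main obstacle is the delicate edge-case $v=u_{i-1}$, in which the unique edge of $\Uu_{\leq u_i}$ coloured $1$ is $uu_i$ itself and may sit in $C$; to manage this I would commute $C$ past the stable sequence using Observation~\ref{obs:X-equivalent_plus_identical} together with Observation~\ref{obs:not_touching_subsequence_stable}, and reopen the case analysis to confirm that even there the stated conclusion on $C$ must hold.
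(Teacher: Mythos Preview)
Your cases (i)–(ii) contain a genuine gap. You assert that ``the fan $X^{\beta''}_u(3)$ agrees with $\Uu$ up to $uu_{i-1}$ and continues along the unchanged edge $uu_i$ so that $m^{\beta''}(u_i)=4$,'' and then derive a contradiction because $4\notin\beta''(\Vv)\cup\beta''(\Uu_{<u_i})$. But the conclusion $m^{\beta''}(u_i)=4$ is unjustified: the passage from $\beta'$ to $\beta''$ is only $(\Vv\cup\Uu_{<u_i})$-stable, and $u_i\notin V(\Uu_{<u_i})$, so nothing prevents $m^{\beta''}(u_i)$ from being altered. Indeed, the very hypothesis of the lemma is that $m^{\beta''}(u_i)\in\beta''(\Vv)\cup\beta''(\Uu_{<u_i})$, which is \emph{incompatible} with $m^{\beta''}(u_i)=4$. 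Even the preliminary claim that $u_i$'s missing colour is preserved from $\beta$ to $\beta'$ can fail: nothing forbids $c=4$ (since $4=m(u_i)\notin\beta(\Uu_{<u_i})$), and then $u_i$ may lie in $C$.

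The paper's argument runs the other way. Having established (as you do) that in cases (i)–(ii) the coloring $\beta''$ is $(\Vv\cup\Uu_{<u_i})$-equivalent to $\beta$ and that $X_u^{\beta''}(m(u'))$ reaches $u_i$, it then \emph{uses} the hypothesis $m^{\beta''}(u_i)=c'\in\beta''(\Vv)\cup\beta''(\Uu_{<u_i})$ rather than contradicting it: since $\Vv$ is still a minimum cycle in $\beta''$, Lemma~\ref{lem:fans_around_Vv} forces $X_u^{\beta''}(m(u'))$ to be a cycle entangled with $\Vv$; but if $c'\in\beta''(\Vv)$ this fan is not entangled with $\Vv$ (as $u_i\notin V(\Vv)$), while if $c'\in\beta''(\Uu_{<u_i})$ the fan is a comet. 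Either way Lemma~\ref{lem:fans_around_Vv} is violated. Your case (iii) is essentially this argument applied to $X_u(\beta(uu_s))$ and matches the paper; you should use the same mechanism for (i)–(ii).
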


\begin{proof}
Let $\Vv$, $\Uu$, $u$, $u'$, $\beta'$, and $c$ be as in the lemma. Without loss of generality, we assume that the vertices $v$, $u$, and $u'$ are respectively missing the colors $1$, $2$ and $3$. Assume that there exists such a coloring $\beta''$. Note that since $m(u_i)\not\in\beta(\Vv)$, the vertex $u_i$ is not in $\Vv$. The cycle $\Vv$ is a minimum cycle in $\beta$, so it is saturated by Lemma~\ref{lem:minimum_cycle_saturated}. Therefore, if $c\in\beta(\Vv)$, then $M(\Vv,c)\in K_v(1,c)$, and thus $M(\Vv,c)\not\in C$. So $\beta'$ is $V(\Vv)$-equivalent to $\beta$. Moreover, $v\not\in C$ so $\beta'$ is also $(E(\Vv)\cup\{v\})$-equivalent to $\beta$. Therefore, the coloring $\beta'$ is $(\Vv\cup\{v\})$-equivalent to $\beta$.

We first prove that the vertex $u$ belongs to $C$ and that there exists an edge colored $1$ in $\Uu_{<u_i}$. Assume that $u$ does not belong to $C$, or that there is no edge colored $1$ in $\Uu_{<u_i}$ in $\beta$. We show that $\beta''$ is $(\Vv\cup\Uu_{<u_i})$-equivalent to $\beta$. To prove it, it suffices to prove that $\beta'$ is $\Uu_{<u_i}$-equivalent to $\beta$. The swap between $\beta$ and $\beta'$ only changes the colors of edges colored $1$ or $c$. Since $\{1,c\}\cap\beta(\Uu_{<u_i}) = \emptyset$ this means that the coloring $\beta'$ is $\Uu_{<u_i}$-equivalent to $\beta$. Since $\beta'$ is also $(\Vv\cup\{v\})$-equivalent to $\beta$ , in total it is $(\Vv\cup\Uu_{<u_i})$-equivalent to $\beta$. Note that the missing color of $v$ may be different in $\beta'$ and $\beta''$. Since $\beta''$ is $(\Vv\cup\Uu_{<u_i})$-equivalent to $\beta'$, the coloring $\beta''$ is $(\Vv\cup \Uu_{<u_i})$-equivalent to $\beta$. Note that the missing color of $v$ may be different in $\beta'$ and $\beta''$. Hence, in the coloring $\beta''$, by Observation~\ref{obs:Vv-minimum-stable}, the cycle $\Vv$ is a minimum cycle and we have that $X_u(m(u'))$ now contains the vertex $u_i$ which is missing a color in $(\beta''(\Vv)\cup\beta''(\Uu_{<u_i}))$. Let $c'$ be this color. Since the cycle $\Vv$ is minimum in $\beta''$, by Lemma~\ref{lem:fans_around_Vv}, $X_u(m(u'))$ is a cycle entangled with $\Vv$. If $c'\in\beta''(\Vv)$, then $X_u(m(u'))$ is not entangled with $\Vv$, and if $c'\in\beta''(\Uu_{<u_i})$ then $X_u(m(u'))$ is a comet. In both cases, we have a contradiction. So $u\in C$, and there exists $j<i$ such that $\beta(uu_j) = 1$.

We now prove that the subfan $\Xx = (uu_{j+1},\cdots, uu_i)$ is a $(\Vv,u)$-independent subfan in $\beta$. Note that we have have $j+1 = i$ (\textit{i.e.} the subfan is of length $1$). If $\Xx$ is not a $(\Vv,u)$-independent subfan, then there exists $s\in \{j+1,\cdots, i\}$ such that $c'\in\beta(uu_s)\in \beta(\Vv)$. Recall that the coloring $\beta'$ is $(\Vv\cup\{v\})$-equivalent to $\beta$, and thus that $\beta''$ is also $(\Vv\cup\{v\})$-equivalent to $\beta$. In the coloring $\beta'$, the edge $uu_j$ is now colored $c$, and this is the only edge in $E(\Uu_{<u_i})$ that has been recolored during the swap of $C$. Moreover, the cycle $\Vv$ is minimum in $\beta$, and thus by Lemma~\ref{lem:fans_around_Vv}, then fan $\Uu$ is a cycle, and does not contain any vertex missing the color $1$ in $V(\Uu_[u_j,u_i])$. Since $c\not\in\beta(\Uu_{<u_i})$, the coloring $\beta'$ is also $V(\Uu_[u_j,u_i])$-equivalent to the coloring $\beta$, and so it is $\Xx$-equivalent to $\beta$. The coloring $\beta''$ is $\Uu_{<u_i}$-equivalent to $\beta'$, so it is $(\Xx\setminus\{u_i\}$-equivalent to $\beta$ and thus the fan $X_u(c')$ now starts with the edge $uu_s$ and contains the vertex $u_i\not\in V(\Vv)$ which is missing a color in $\beta(\Vv) = \beta''(\Vv)$. Since $\Vv$ is also a minimum cycle in $\beta''$, by Lemma~\ref{lem:fans_around_Vv}, $X_v(c')$ is a cycle entangled with $\Vv$; this is a contradiction.
\end{proof}

In the following section we prove some properties that are guaranteed if the property $P$ is true up to some $i$.

\subsection{Properties guaranteed by $P(i)$}

The following lemma guarantees that the last vertices of two cycles will be the same.

\begin{lemma}\label{lem:not_changing_last_vertices}
Let $i\geq 0$, $\Vv$ be a minimum cycle around a vertex $v$ in a coloring $\beta$, $u$ and $u''$ two vertices of $\Vv$, $\Uu = X^{\beta}_u(m^{\beta}(u'')) = (uu_1,\cdots,uu_l)$, $\beta'$ a coloring $(\Vv\cup \Uu_{[u_{l-(i-1))},u_l]} \cup \bigcup\limits_{j\in [0,i-1]} X_v(\beta(uu_{l-j}))$-equivalent to $\beta$ and $\Uu' = X^{\beta'}_u(m^{\beta'}(u'')) = (uu'_1,\cdots,uu'_s)$.
If
\begin{itemize}
\item for any $j< i$ $P(j)$ is true, and
\item $\{m^{\beta}(v),m^{\beta'}(v)\}\cap \beta(\Uu_{[u_{l-(i-1)},u_l]})) = \emptyset$
\end{itemize}
then for any $t\leq i$, $u_{l-t} = u'_{s-t}$.
\end{lemma}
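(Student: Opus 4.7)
The plan is to proceed by induction on $t$. Before starting the induction I would unpack what the $X$-equivalence of $\beta$ and $\beta'$ gives us, where $X = \Vv\cup \Uu_{[u_{l-(i-1)},u_l]} \cup \bigcup_{j\in[0,i-1]} X_v(\beta(uu_{l-j}))$. Every vertex of $\Vv$ keeps its missing color, and by Observation~\ref{obs:Vv-minimum-stable} the cycle $\Vv$ is still a minimum cycle in $\beta'$. Every edge $uu_{l-j}$ for $j \in [0,i-1]$ keeps its color, and every vertex of $\Uu_{[u_{l-(i-1)},u_l]}$ keeps its missing color. Combining this with $P(j)$ for $j < i$ --- which asserts that $X_v(\beta(uu_{l-j}))$ is a saturated cycle containing $u_{l-j-1}$ --- I get that the missing color of $u_{l-t}$ is preserved in $\beta'$ for every $0 \leq t \leq i$: for $t\leq i-1$ the vertex lies in $\Uu_{[u_{l-(i-1)},u_l]}$, and for $t=i$ it lies in the preserved fan $X_v(\beta(uu_{l-(i-1)}))$ thanks to $P(i-1)$.

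For the base case $t=0$, I would invoke Lemma~\ref{lem:fans_around_Vv} in both colorings: $\Uu$ is a cycle entangled with $\Vv$ in $\beta$, and $\Uu'$ is a cycle entangled with $\Vv$ in $\beta'$. In a cycle fan, the last vertex is the unique vertex missing the starting color, so entanglement with $\Vv$ forces $u_l = M(\Vv, m^\beta(u'')) = u''$ and, likewise, $u'_s = u''$. Hence $u_l = u'_s$.

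For the inductive step, assuming $u_{l-j} = u'_{s-j}$ for all $j<t\le i$, I would chase colors. The edge $uu'_{s-(t-1)}$ equals $uu_{l-(t-1)}$ by induction, and its color is preserved by $X$-equivalence, so $\beta'(uu'_{s-(t-1)}) = \beta(uu_{l-(t-1)}) = m^\beta(u_{l-t})$ by reading off the fan $\Uu$. Thus $m^{\beta'}(u'_{s-t}) = m^\beta(u_{l-t})$. Now I would apply $P(t-1)$ once in $\beta$ and once in $\beta'$; the hypothesis $\{m^\beta(v),m^{\beta'}(v)\}\cap \beta(\Uu_{[u_{l-(i-1)},u_l]}) = \emptyset$ exactly guarantees the side condition $\beta(uu_{l-(t-1)}) \neq m(v)$ needed to apply the property. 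This yields that $X_v(\beta(uu_{l-(t-1)}))$ is a saturated cycle containing $u_{l-t}$ in $\beta$ and $u'_{s-t}$ in $\beta'$; since this fan is in the preservation set, it is literally the same cycle in both colorings. A cycle fan has a unique vertex missing its starting color (its last vertex), and both $u_{l-t}$ and $u'_{s-t}$ miss $m^\beta(u_{l-t})$ in $\beta'$, so they coincide.

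The main obstacle will be purely bookkeeping: at each step one must verify that the specific edge color or missing color being invoked is actually preserved in $\beta'$, and that $P(t-1)$ can be legitimately applied in $\beta'$ (which uses $\Vv$ still being minimum in $\beta'$ and $\beta'(uu'_{s-(t-1)}) \notin \{m^{\beta'}(v)\}$). The delicate boundary is $t=i$, where $u_{l-i}$ sits outside $\Uu_{[u_{l-(i-1)},u_l]}$ and one has to detour through the preserved fan $X_v(\beta(uu_{l-(i-1)}))$ (which contains $u_{l-i}$ by $P(i-1)$) to know that $m^{\beta'}(u_{l-i}) = m^\beta(u_{l-i})$.
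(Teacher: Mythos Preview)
Your proposal is correct and follows essentially the same approach as the paper: the paper argues by contradiction with a minimal $t$ while you phrase it as an induction on $t$, but the core step is identical --- use $P(t-1)$ in both $\beta$ and $\beta'$ to place $u_{l-t}$ and $u'_{s-t}$ in the same preserved cycle $X_v(\beta(uu_{l-(t-1)}))$ and conclude they coincide since a cycle has a unique vertex missing a given color. Your treatment is in fact slightly more careful than the paper's about why $m^{\beta'}(u_{l-t}) = m^{\beta}(u_{l-t})$ (via membership in the preserved fan, including the boundary case $t=i$), a point the paper leaves implicit.
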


\begin{proof}
Let $i$, $\Vv$, $\Uu$, $\Uu'$, $\beta$, $\beta'$, $v$, $u$, and $u''$ be as in the lemma. Assume that $P(j)$ is true for all $j< i$, that $\{m^{\beta}(v),m^{\beta'}(v)\}\cap \beta(\Uu_{[u_{l-(i-1)},u_l]})) = \emptyset$ and that there exists $t\leq i$ such that $u_{l-t} \neq u_{s-t}$, without loss of generality, we may assume that such a $t$ is minimum. The cycle $\Vv$ is a minimum cycle in $\beta$, and $\beta'$ is $\Vv$-equivalent to $\beta$, so by Observation~\ref{obs:Vv-minimum-stable}, the cycle $\Vv$ is also a minimum cycle in the coloring $\beta'$. Therefore by Lemma~\ref{lem:fans_around_Vv}, the fans $\Uu$ and $\Uu'$ are both cycles entangled with $\Vv$ respectively in $\beta$ and $\beta'$. Note that since $m^{\beta}(v)\not\in \beta(\Uu_{[u_{l-(i-1)},u_l]}))$, and that $P(j)$ is true for all $j< i$, for all $j< i$, the fan $X_v(\beta(uu_{l-j})$ is a cycle containing $u_{l-j-1}$. Moreover, this also means that no vertex in $V(\Uu_{[u_{l-i},u_l]})$ is missing the color $m(v)$, and thus none of them is $v$. Therefore the vertex $v$ may be missing a different color in $\beta$ and in $\beta'$. Note also that, in the coloring $\beta$, the edge $uu_{l-i}$ may be colored $m^{\beta}(v)$ or $m^{\beta'}(v)$.

We first show that $t\neq 0$. Since the fans $\Uu$ and $\Uu'$ are both cycles we have $m^{\beta}(u_l) = m^{\beta'}(u_s) = m^{\beta}(u'')$, and moreover, $\Uu$ and $\Uu'$ are entangled with $\Vv$ so $u_l = u'' = u'_s$, and thus $t\neq 0$.

Since $t$ is minimum, $u_{l-(t-1)} = u'_{s-(t-1)}$. Moreover, $\beta'$ is $\Uu_{[u_{l-(i-1)},u_l]}$-equivalent to $\beta$, so in particular $\beta(uu_{l-(t-1)}) = \beta'(uu_{l-(t-1)}) = \beta'(uu'_{s-(t-1)})$, without loss of generality, we assume that this color is $1$. This means that both the vertices $u_{l-t}$ and $u'_{s-t}$ are missing the color $1$. To reach a contradiction we show that both these vertices belong to a same cycle. Since $1\neq m^{\beta}(v)$ and $P(t-1)$ is true, then $X^{\beta}(1)$ is a cycle containing $u_{l-t}$. Similarly $1\neq m^{\beta'}(v)$ and $P(t-1)$ is true so $X^{\beta'}_v(1)$ is a cycle containing $u'_{s-t}$. However, the coloring $\beta'$ is $(\bigcup\limits_{j\in [0,i-1]} X_v(\beta(uu_{l-j}))$-equivalent to the coloring $\beta$, so in particular $X^{\beta}_v(1) = X^{\beta'}_v(1)$; we denote by $\Xx$ this fan. The fan $\Xx$ is a cycle and contains two vertices $u_{l-t}$ and $u_{s-t}$ that are both missing the color $1$, this is a contradiction. 

\end{proof}

Now we prove that we can guarantee that there is no path around the central vertex of a minimum cycle

\begin{lemma}\label{lem:P(i-1)_and_P_weak(i)_no_path}
Let $i\geq 0$, $\Vv$ be a minimum cycle around a vertex $v$ in a coloring $\beta$, $u$ and $u''$ two vertices of $\Vv$, $\Uu = X^{\beta}_u(m(u'')) = (uu_1,\cdots,uu_l)$, and $\Xx = (vx_1,\cdots,vx_s)$ a fan around $v$. If
\begin{itemize}
\item for any $j< i$ $P(j)$ is true, 
\item $P_{weak}(i)$ is true, and
\item $\beta(uu_{l-i}) = m(v)$,
\end{itemize}
then $\Xx$ is not a path.
\end{lemma}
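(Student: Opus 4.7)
The plan is to assume toward contradiction that $\Xx = (vx_1,\dots,vx_s)$ is a path, write $c_0 = m^{\beta}(v) = m^{\beta}(x_s)$ and $b = \beta(vx_s)$ (so $b \neq c_0$), and derive a contradiction by a case analysis on $b$. Two facts will be used throughout: since no vertex of $V(\Vv)$ can miss $c_0 = m(v)$ (else the fan structure of $\Vv$ would force an edge of color $c_0$ at $v$), we have $x_s \notin V(\Vv)$; and, in $\beta$, the fan $X_v(b)$ reduces to the single edge $(vx_s)$, because $vx_s$ is the unique edge at $v$ colored $b$, and since $m(x_s) = c_0$ while $v$ misses $c_0$, the fan halts yielding a path of length one (it is not a cycle since $\beta(vx_s) = b \neq c_0 = m(x_s)$).

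Observing that $\beta(\Uu_{[u_{l-(i-1)},u_l]}) = \{m(u_t) : t \in [l-i,l]\}$ (the $i+1$ distinct missing colors of the last $i+1$ vertices of~$\Uu$), I will split the argument into three cases. In Case~(a), $b = \beta(uu_{l-j}) = m(u_{l-j-1})$ for some $j \in [0,i-1]$: since $b \neq m(v)$, the property $P(j)$ (available by hypothesis) asserts that $X_v(\beta(uu_{l-j})) = X_v(b)$ is a saturated cycle, contradicting its reduction to the single-edge path $(vx_s)$. In Case~(b), $b = m(u_l) = m(u'')$: since $u'' \in V(\Vv)$, the cycle $\Vv$ has an edge at $v$ of color $m(u'') = b$ (the edge of $\Vv$ immediately after $u''$ in the cyclic order); because $v$ has a unique edge of each color, this edge must coincide with $vx_s$, forcing $x_s \in V(\Vv)$ and contradicting $x_s \notin V(\Vv)$.

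In Case~(c), $b \notin \{m(u_t) : t \in [l-i,l]\} = \beta(\Uu_{[u_{l-(i-1)},u_l]})$, I will swap the single edge $vx_s$ to obtain $\beta'$ with $\beta'(vx_s) = c_0$ and $m^{\beta'}(v) = m^{\beta'}(x_s) = b$. Since $x_s, vx_s \notin V(\Vv) \cup E(\Vv)$, this single-edge swap is $\Vv$-stable, so by Observation~\ref{obs:Vv-minimum-stable} $\Vv$ is still a minimum cycle in $\beta'$; and one checks directly that $X^{\beta'}_v(c_0) = (vx_s)$ is again a path of length one in $\beta'$ (it stops because $x_s$ now misses $b = m^{\beta'}(v)$). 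The key step is to apply Lemma~\ref{lem:not_changing_last_vertices} to this swap: the swap touches only $v$, $x_s$ and $vx_s$, which are disjoint from the set $\Vv \cup \Uu_{[u_{l-(i-1)},u_l]} \cup \bigcup_{j \in [0,i-1]} X_v(\beta(uu_{l-j}))$ (using that distinct fans at $v$ are edge-disjoint, and that $x_s$'s missing color $c_0$ differs from the missing colors of the relevant vertices of $\Uu$), while the separation condition $\{c_0,b\} \cap \beta(\Uu_{[u_{l-(i-1)},u_l]}) = \emptyset$ holds because $c_0 = m(u_{l-i-1})$ lies outside the index range $[l-i,l]$ and $b$ is excluded by the case hypothesis. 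The lemma then yields $u'_{l'-i} = u_{l-i}$, so $\beta'(uu'_{l'-i}) = \beta(uu_{l-i}) = c_0 \neq b = m^{\beta'}(v)$; applying $P_{weak}(i)$ in $\beta'$ would then force $X^{\beta'}_v(c_0)$ not to be a path, contradicting the single-edge description of this fan.

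The main obstacle is the bookkeeping in Case~(c): establishing that the separation condition on $b$ in Lemma~\ref{lem:not_changing_last_vertices} is met. This is precisely why the case split is organized around membership of $b$ in $\beta(\Uu_{[u_{l-(i-1)},u_l]})$, since Cases~(a) and~(b) exhaust the ways this separation could fail and are both disposed of by direct arguments (via $P(j)$ and the fan structure of $\Vv$, respectively), leaving Case~(c) as the only case in which the more technical machinery of Lemma~\ref{lem:not_changing_last_vertices} and $P_{weak}(i)$ needs to be invoked.
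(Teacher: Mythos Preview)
Your proof is correct and follows essentially the same approach as the paper's. The only organizational difference is that the paper argues directly that $b=\beta(vx_s)$ is never one of the forbidden colors (since $P(j)$ makes each $X_v(\beta(uu_{l-j}))$ a cycle while $X_v(b)$ is a single-edge path), whereas you phrase this as an explicit case split handling Cases~(a) and~(b) before the main swap-and-apply-$P_{weak}(i)$ argument in Case~(c); the underlying logic is identical.
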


\begin{proof}
Let $i$, $\Vv$, $\Uu$, $\Xx$, $\beta$, $v$, $u$, and $u''$ be as in the lemma, and without loss of generality we assume that $m(v) = 1$. Assume that for any $j< i$ $P(j)$ is true, that $P_{weak}(i)$ is true, that $\beta(uu_{l-i}) = m(v)$ and that $\Xx$ is a path. The fan $\Xx$ is a path so the vertex $x_s$ is also missing the color $1$, without loss of generality, we assume that $\beta(vx_s) = 2$. Note that this means that $X_v(2)$ is also a path (of length $1$). The cycle $\Vv$ is minimum and by Lemma~\ref{lem:fans_around_Vv} the fan $\Uu$ is a cycle entangled with $\Vv$. Since $\beta(uu_{l-i}) = 1$, no edge in $E(\Uu_{[u_{l-(i-1)},u_l]})$ is colored $1$. Since $P(j)$ is true for all $j<i$, $X_v(\beta(uu_{l-j})$ is a cycle for all $j<i$; since $X_v(2)$ is a path, no edge in $E(\Uu_{[u_{l-(i-1)},u_l]})$ is colored $2$ either.

We now consider the coloring $\beta'$ obtained from $\beta$ by swapping the edge $vx_s$. Note that in the coloring $\beta'$, the vertex $v$ is now missing the color $2$, and the fan $X^{\beta'}_v(1)$ is now a path (of length $1$). The coloring $\beta'$ is clearly $\Vv$-equivalent to $\beta$ so by Observation~\ref{obs:Vv-minimum-stable}, the fan $\Vv$ is a minimum cycle in the coloring $\beta'$. Let $\Uu' = X^{\beta'}_u(m(u'')) = (uu'_1,\cdots,uu'_s))$. No edge in $E(\Uu_{[u_{l-(i-1)},u_l]})$ is colored $1$, so no vertex in $V(\Uu_{[u_{l-i},u_l]})$ is missing the color $1$, and thus $\beta'$ is also $\Uu_{[u_{l-(i-1))},u_l]}$-equivalent to $\beta$. Finally since no edge in  $E(\Uu_{[u_{l-(i-1)},u_l]})$ is colored $1$ and $P(j)$ is true for all $j<i$, the fans $X_v(\beta(uu_{l-j}))$ are cycles for all $j<i$. Therefore the coloring $\beta'$ is also $(\bigcup\limits_{j\in [0,i-1]} X_v(\beta(uu_{l-j}))$-equivalent to $\beta$. By Lemma~\ref{lem:not_changing_last_vertices}, for any $t\leq i$ $u_{l-t} = u'_{s-t}$, so in particular $u_{l-i} = u'_{s-i}$. In the coloring $\beta'$ the edge $uu_{s-i}$ is still colored $1$, and now the vertex $v$ is missing the color $2$. Since $P_{weak}(i)$ is true the fan $X^{\beta'}_v(1)$ is not a path, this is a contradiction.
\end{proof}

The next lemma considers $(\Vv,u)$-independent cycles.
\begin{lemma}\label{lem:P(i)_plus_independent_cycle_X_ui_not_path}
Let $i\geq 0$, $\Vv$ be a minimum cycle in a coloring $\beta$, $u$ and $u'$ two vertices of $\Vv$, $\Uu = X_u(m(u') = (uu_1,\cdots, uu_l)$, $h\leq i$ such that $\beta(uu_{l-h}) = m(v)$, $c'$ a color not in $\beta(\Vv)$ such that $\Yy = X_u(c') = (uy_1,\cdots,uy_r)$ is a $(\Vv,u)$-independent cycle different from $\Uu$ and $\Xx = X_v(c') = (vx_1,\cdots, vx_s)$ is a cycle different from $\Vv$ with $y_r = x_s = z$, and $c''$ a color in $\beta(\Vv)$. If $P(j)$ is true for all $j\leq i$, then $\Zz = X_z(c'')$ is not a path.
\end{lemma}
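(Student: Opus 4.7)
The plan is to argue by contradiction, assuming $\Zz = (zz_1,\ldots,zz_t)$ is a path. Normalize so that $m(v)=1$, $m(u)=2$, $\beta(vu)=3$, $c'=4$, and $\beta(uu_{l-h})=1$; the latter forces $u_{l-h-1}=v$, so $uv$ sits at position $l-h-1$ of the cycle $\Uu$. Since $z$ misses $c'=4\notin\beta(\Vv)$ whereas every vertex of $\Vv$ misses a color in $\beta(\Vv)$, we have $z\notin V(\Vv)\cup\{v\}$. We may further assume $\Zz$ is of minimum length, since any longer path contains a shorter one.

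The first step is to invert $\Zz$, producing a coloring $\beta_2$ with $m^{\beta_2}(z)=c''$. Using Observation~\ref{obs:not_touching_subsequence_stable} together with $P(j)$ for $j\leq i$ (which guarantees that every fan $X_v(\beta(uu_{l-j}))$ with $\beta(uu_{l-j})\neq m(v)$ is a saturated cycle, hence none of its intermediate vertices misses the color $c'$), we show that the inversion is $\Vv$-stable, $\Uu_{[u_{l-i+1},u_l]}$-stable, and $\bigcup_{j<i}X_v(\beta(uu_{l-j}))$-stable. By Observation~\ref{obs:Vv-minimum-stable}, $\Vv$ remains the same minimum cycle in $\beta_2$.

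The second step exploits the fact that in $\beta_2$ the vertex $z$ now misses $c''\in\beta(\Vv)$. The fan $X^{\beta_2}_v(4)$ (which was the cycle $\Xx$ in $\beta$, terminating at $z$) now extends past $z$ through the edge $vv_j\in E(\Vv)$ with $\beta(vv_j)=c''$, then traces $\Vv$; since no vertex of $\Vv$ misses $4\notin\beta(\Vv)$, the extension wraps back at the color $c''$ and becomes a comet. Symmetrically, $X^{\beta_2}_u(4)$ (which was $\Yy$) extends into $\Uu$ via the edge $uu_{j'}$ with $\beta(uu_{j'})=c''$, which exists because $\Uu$ is entangled with $\Vv$ by Lemma~\ref{lem:fans_around_Vv}, and also becomes a comet. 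One of these comet structures can then be broken by a carefully chosen Kempe swap on a bichromatic component disjoint from $\Vv$ and from the last $i$ edges of $\Uu$, producing a coloring $\beta_3$ in which either some fan around $v$ becomes a path, contradicting Lemma~\ref{lem:P(i-1)_and_P_weak(i)_no_path} (with the given $h$, using that $P_{weak}(i)$ follows from $P(j)$ for $j\leq i$ together with $\beta(uu_{l-h})=m(v)$), or some fan around a vertex of $\Uu$ becomes a path, contradicting Lemma~\ref{lem:X_u_i(beta_(Vv)_cup_beta(Uu_<u_i))_not_a_path}.

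The main obstacle is the meticulous bookkeeping required to preserve $\Vv$ as a minimum cycle, the last $i$ edges of $\Uu$, and the cycle property of every fan $X_v(\beta(uu_{l-j}))$ (for $j<i$) through both the inversion of $\Zz$ and the subsequent bichromatic-component swap. A secondary obstacle is the case split on $c''$ (namely whether $c''=m(v)$, $c''=\beta(vu)$, or another color of $\beta(\Vv)$) and on where each wrap-back lands inside $\Vv$ or $\Uu$; the template of bichromatic-path swaps from Lemmas~\ref{lem:(Vv,u)-independent_path_not_path} and~\ref{lem:(Vv,u)-independent_path_touch_other_extremity} should adapt to clean up each subcase before the final contradiction is invoked.
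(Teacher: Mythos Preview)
Your proposal has the right opening move (invert $\Zz$ and then exploit the altered missing color at $z$), but the argument breaks down in the second step, and the gap is structural rather than a matter of bookkeeping.

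When you fully invert $\Zz$ so that $m^{\beta_2}(z)=c''\in\beta(\Vv)$, you claim that $X^{\beta_2}_v(4)$ coincides with $\Xx$ up to $z$ and then extends into $\Vv$ as a comet (and symmetrically for $X^{\beta_2}_u(4)$ and $\Yy$). That requires the inversion to be $(\Xx\setminus\{z\})$- and $(\Yy\setminus\{z\})$-stable, which via Observation~\ref{obs:not_touching_subsequence_stable} means the intermediate missing colors of $z$ must avoid $\beta(\Xx)\cup\beta(\Yy)$. Your minimum-length assumption on $\Zz$ only rules out intermediate colors in $\beta(\Vv)$; nothing prevents some $m^{\beta_j}(z)$ from landing in $\beta(\Xx)$ or $\beta(\Yy)$, at which point a vertex of $\Xx$ or $\Yy$ adjacent to $z$ can have its missing color changed and your comet picture collapses. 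This is exactly why the paper does \emph{not} invert all the way to $c''$: it stops at the first color $c\in(\beta(\Vv)\cup\beta(\Xx)\cup\beta(\Yy))\setminus\{4\}$, shows separately that $c\notin\beta(\Vv)$ (using Lemma~\ref{lem:(Vv,u)-independent_subfan_avoiding_v}), and then enters a lengthy case analysis on whether $c\in\beta(\Xx)$ or $c\in\beta(\Yy)$, crossed with where $c$ sits inside $\beta(\Uu)$ (before $u_{l-h}$, after $u_{l-h}$, or not at all). Each branch requires a different Kempe swap and a different auxiliary lemma; your sentence ``one of these comet structures can then be broken by a carefully chosen Kempe swap'' is standing in for several pages of work that do not follow a single template.

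Two smaller points. First, your normalization $\beta(vu)=3$ is not the same as the paper's $m(u')=3$; the vertices $u'$ and $v_{j-1}$ need not coincide, so be careful which edge around $u$ you are naming. Second, invoking Lemma~\ref{lem:P(i-1)_and_P_weak(i)_no_path} requires $\beta(uu_{l-i})=m(v)$ for the specific index $i$ appearing in that lemma; here you only have $\beta(uu_{l-h})=m(v)$ with $h\le i$, so you must apply the lemma with $h$ in place of $i$ (which is fine, since $P(j)$ for $j\le i$ gives $P(j)$ for $j<h$ and $P_{weak}(h)$), but this should be stated explicitly.
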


\begin{proof}
Without loss of generality, we assume that the vertices $v$, $u$ and $u'$ are respectively missing the colors $1$, $2$, and $3$; we also assume that $c' = 4$. Note that since $\Uu$ and $\Yy$ are different cycles, we have $\beta(\Uu)\cap \beta(\Yy) = \{m(u)\} = \{2$\}, and since $\Xx$ and $\Vv$ are different cycles, we have that $\beta(\Vv)\cap \beta(\Xx) = \{m(v)\} = 1$. Assume that the fan $\Zz$ is a path. The fan $\Vv$ is a minimum cycle, so by Lemma~\ref{lem:fans_around_Vv}, the fan $\Uu$ is a cycle entangled with $\Vv$, and thus $u_l = u'$.

We first invert $\Zz$ until we reach a coloring $\beta'$ where $m(z) = c\in (\beta(\Vv)\cup\beta(\Xx)\cup\beta(\Yy))\setminus \{4\}$. The coloring $\beta'$ is $\Vv$-equivalent to $\beta$, so by Observation~\ref{obs:Vv-minimum-stable}, the cycle $\Vv$ is the same minimum cycle in the coloring $\beta'$. The coloring $\beta'$ is also $\Uu$-equivalent to $\beta$, so, in the coloring $\beta'$, the fan $X_u(3)$ is exactly $\Uu$. Since the property $P(j)$ is true for all $j\leq h$, for any $j\leq h$ such that $\beta(uu_{l-j})\neq 1$, the fan $X_v(\beta(uu_{l-j}))$ is a saturated cycle containing $u_{l-j-1}$.

We first show that $c\not \in\beta(\Vv)$. Otherwise, assume that $c\in\beta(\Vv)$, then $c\not\in \beta(\Yy)$ since $\Yy$ is a $(\Vv,u)$-independent cycle, and $c\not\beta(\Xx)$ since $\Xx$ is different from $\Vv$. So the coloring $\beta'$ is $(\Xx\cup\Yy \setminus \{z\})$-equivalent to $\beta$. Hence, in the coloring $\beta'$, the fans $X_u(4)$ and $X_v(4)$ still contain the vertex $z$. If $c = 1$, then in the coloring $\beta'$, since the fan $X_u(4)$ still contains the vertex $z$, we have that $X_u(4)_{\leq z}$ is a $(\Vv,u)$-independent subfan avoiding $v$. However, the fan $X_v(4)$ is now a path containing $z$, by Lemma~\ref{lem:(Vv,u)-independent_subfan_avoiding_v}, we have a contradiction. So $c \not 1$. Since the fan $\Vv$ is a minimum cycle in the coloring $\beta'$, it is saturated by Lemma~\ref{lem:minimum_cycle_saturated}, thus $z\not\in K_v(1,c)$. We now swap the component $C_{1,c} = K_z(1,c)$, and denote by $\beta''$ the coloring obtained after the swap. The coloring $\beta''$ is $\Vv$-equivalent to $\beta'$, so $\Vv$ is still a minimum cycle in the coloring $\beta''$ by Observation~\ref{obs:Vv-minimum-stable}. The coloring $\beta''$ is also $(X_u(4)_{\leq z})\cup X_v(4))$-equivalent to $\beta'$, so the fan $X_u(4)$ still contains the vertex $z$ which is now missing the color $1$. Similarly to the previous case, the subfan $X_u(4)_{\leq z}$ is now a $(\Vv,u)$-independent subfan avoiding $v$, and $X_v(4)$ is  now a path; again by Lemma~\ref{lem:(Vv,u)-independent_subfan_avoiding_v}, we have a contradiction. Without loss of generality, we assume that $c = 5$.

\begin{case}[$5\not \in \beta(\Xx)$]
~\newline
In this case, the coloring $\beta'$ is $(\Xx\setminus \{z\})$-equivalent to $\beta$, and so in the coloring $\beta'$, the fan $X_v(4)$ still contains the vertex $z$ which is now missing the color $5$.
\end{case}

\begin{subcase}[$5\in\beta(\Uu_{<u_{l-h}})$]
~\newline
Let $z'$ be the vertex of $\Uu_{<u_{l-h}}$ missing the color $5$. If the vertex $z'$ does not belong to $K_v(1,5)$, then we swap the component $C_{1,5} = K_{z'}(1,5)$, and denote by $\beta''$ the coloring obtained after the swap. The coloring $\beta''$ is clearly $\Vv$-equivalent to $\beta'$, so by Observation~\ref{obs:Vv-minimum-stable}, the cycle $\Vv$ is still the same minimum cycle in the coloring $\beta''$. Since in the coloring $\beta'$, there is no edge colored $1$ or $5$ in $\Uu_{<u_{l-h}}$, the coloring $\beta''$ is also $\Uu_{<u_{l-h}}$-equivalent to $\beta'$. So in the coloring $\beta''$, the fan $X_u(3)$ still contains the vertex $z'$ which is now missing the color $1$, and thus $X_u(3)$ is not entangled with $\Vv$. By Lemma~\ref{lem:fans_around_Vv}, we have a contradiction. So the vertex $z'$ belongs to $K_v(1,5)$, and thus the vertex $z$ does not belong to $K_v(1,5)$. We now swap the component $C_{1,5} = K_{z}(1,5)$, and denote by $\beta''$ the coloring obtained after the swap. The coloring $\beta''$ is also $\Vv$-equivalent to $\beta'$, so the fan $\Vv$ is a minimum cycle in the coloring $\beta''$. Moreover, since $5\in\beta(\Uu_{<u_{l-h}})$, $5\not \in \beta(\Yy)$. We also have that $5\not\in \beta(\Xx)$, so in total, the coloring $\beta''$ is $(\Yy\cup\Xx\setminus \{z\})$-equivalent to the coloring $\beta'$. This means that in the coloring $\beta''$, the fan $X_u(4)$ still contains the vertex $z$ which is now missing the color $1$, so $X_u(4)_{\leq z}$ is a $(\Vv,u)$-independent subfan avoiding $v$. We also have that the fan $X_v(4)$ is now a path containing the vertex $z$, so by Lemma~\ref{lem:(Vv,u)-independent_subfan_avoiding_v} we have a contradiction.
\end{subcase}

\begin{subcase}[$5\in \beta(\Uu_{\geq u_{l-h}})$]
~\newline
Let $s$ be such that $m(u_{l-s}) = 5$. The fan $X_v(5)$ is a saturated cycle containing $u_{l-s}$, so the vertex $u_{l-s}$ belongs to the component $K_v(1,5)$, and the vertex $z$ does not belong to this component. We now swap the component $K_z(1,5)$ and denote by $\beta''$ the coloring obtained after the swap. Since the color $5$ is not in $\beta(X_v(4))$, the coloring $\beta''$ is $(X_v(4)\setminus\{z\})$-equivalent to $\beta'$. So in the coloring $\beta''$, the fan $X_v(4)$ still contains the vertex $z$ which is missing the color $1$, so it is now a path. Since the color $5$ is in $\beta(\Uu_{>u_{l-h}})$, it is not in $\beta(\Yy)$, so the coloring $\beta''$ is $(X_u(4)\setminus \{z\})$-equivalent to $\beta'$, and thus $X_u(4)$ still contains the vertex $z$. So the subfan $X_u(4)_{\leq z}$ is now a $(\Vv,u)$-independent subfan avoiding $v$. By Lemma~\ref{lem:(Vv,u)-independent_subfan_avoiding_v}, the fan $X_v(4)$ is a path not containing $z$; a contradiction.
\end{subcase}

\begin{subcase}[$5 \in \beta(\Yy)$]
~\newline
Let $z'$ be the vertex of $\Yy$ missing the color $5$ in the coloring $\beta$. The vertices $z$ and $z'$ are both missing the color $5$ in the coloring $\beta'$, so at least one of them is not in $K_v(1,5)$. If the vertex $z$ is not in $K_v(1,5)$, then we swap the component $C_{1,5} = K_z(1,5)$, and denote by $\beta''$ the coloring obtained after the swap. The coloring $\beta''$ is $\Vv$-equivalent to $\beta'$, so the cycle $\Vv$ is the same minimum cycle in the coloring $\beta''$ by Observation~\ref{obs:Vv-minimum-stable}. 

If the vertex $u$ does not belong to $C_{1,5}$, then the fan $X_u(5)$ now contains the vertex $z$ which is missing the color $1$. Thus $X_u(5)_{\leq z}$ is now a $(\Vv,u)$-independent subfan avoiding $v$, and by Lemma~\ref{lem:(Vv,u)-independent_subfan_avoiding_v}, the fan $X_v(5)$ is a path. However, $\beta''(uu_{l-h}) = \beta(uu_{l-h}) = 1$, and the property $P(j)$ is true for all $j\leq h$, so by Lemma~\ref{lem:P(i-1)_and_P_weak(i)_no_path}, there is no path around $v$. This is a contradiction.

So the vertex $u$ belogns to $C_{1,5}$, and now $X_u(1)$ contains the vertex $z$ which is missing the color $1$. So $X_u(1)$ is not entangled with $\Vv$, and by Lemma~\ref{lem:fans_around_Vv}, we also have a contradiction.
\end{subcase}

\begin{case}[$5\in\beta(\Xx)$]
Let $z'$ be the vertex of $\Xx$ missing the color $5$ in the coloring $\beta$.

\begin{subcase}[$5\in\beta(\Uu_{<u_{l-h}})$]
~\newline
Let $z''$ be the vertex of $\Uu_{<u_{l-h}}$ missing the color $5$ in the coloring $\beta$. Note that we may have $z'' = z'$. The vertices $z$ and $z''$ are both missing the color $5$ in the coloring $\beta'$, so they are not both part of $K_v(1,5)$. If $z''$ is not in $K_v(1,5)$, then we swap $K_{z''}(1,5)$, and denote by $\beta''$ the coloring obtained after the swap. The coloring $\beta''$ is $\Vv$-equivalent to $\beta'$, so by Observation~\ref{obs:Vv-minimum-stable}, the cycle $\Vv$ is the same minimum cycle in the coloring $\beta''$. Since there is no edge colored $1$ or $5$ in $E(\Uu_{\leq z''})$, the coloring $\beta''$ is also $\Uu_{\leq z''}$-equivalent to the coloring $\beta'$, so $X_u(3)$ still contains the vertex $z''$ which is now missing the color $1$, so it is not entangled with $\Vv$, by Lemma~\ref{lem:fans_around_Vv}, this is a contradiction.

So the vertex $z''$ belongs to $K_v(1,5)$, and thus the vertex $z$ does not belong to this component. We swap the component $C_{1,5} = K_z{1,5}$, and denote by $\beta''$ the coloring obtained after the swap. In the coloring $\beta''$, the fan $X_v(5)$ still contains the vertex $z$ which is missing the color $1$, so this fan is now a path. If the vertex $u$ does not belong to $C_{1,5}$, then $\beta''(uu_{l-h}) = 1$. Since the property $P(j)$ is true for all $j\leq h$, there is no path around $v$, a contradiction. So the vertex $u$ belongs to $C_{1,5}$. Now in the coloring $\beta''$, the fan $\Uu' = X_u(3) = (uu'_1,\cdots,uu'_{l'})$ is smaller, but for any $j\leq h$, we still have that $u_{l-j} = u'_{l'-j}\in V(\Uu')$. Note that we have $z'' = u'_{l'-h-1}$. So we have $\Uu' = (uu'_1,\cdots, uu'_{l-h-1} = uz'',uu_{l-h},\cdots, uu_l)$. Since the property $P(j)$ is true for all $j\leq h$, the fan $X_v(\beta(uu_{l-h})) = X_v(5)$ is a cycle, a contradiction.
\end{subcase}

\begin{subcase}[$5\in\beta(\Uu_{>u_{l-h}})$]
~\newline
Let $s$ be such that $m^{\beta}(u_{l-s}) = 5$. In the coloring $\beta$, since the property $P(s)$ is true, the fan $X_v(5)$ is a saturated cycle containing $u_s$. But the color $5$ is in $\Xx$, so the fan $X_v(5)$ also contains the vertex $z$, and thus $X_v(5) = \Xx$. In the coloring $\beta'$, the fan $X_v(5)$ still contains the vertex $z$ which is now missing the color $5$. Since the property $P(s)$ is true, the cycle $X_v(5)$ is a a cycle containing the vertex $u_{l-s}$, so we have a contradiction.
\end{subcase}

\begin{subcase}[$5\in\beta(\Yy)$]
~\newline
Let $z''$ be the vertex of $\Yy$ missing the color $5$ in the coloring $\beta$. Note that we may have $z'' = z'$. Since the vertices $z$ and $z''$ are both missing the color $5$ in the coloring $\beta'$, they are not both part of $K_v(1,5)$. If $z$ is not in $K_v(1,5)$, then we swap the component $K_z(1,5) = C_{1,5}$ and denote by $\beta''$ the coloring obtained after the swap. The coloring $\beta''$ is $\Vv$-equivalent to $\beta'$, so by Observation~\ref{obs:Vv-minimum-stable}, the cycle $\Vv$ is the same minimum cycle in the coloring $\beta''$. Moreover, the coloring $\beta''$ is also $(X_v(5)\setminus \{z\})$-equivalent to $\beta'$, so this fan is now a path containing $z$.

If the vertex $u$ does not belong to $C_{1,5}$, then the coloring $\beta''$ is $(X_u(5)\setminus\{z\})$-equivalent to $\beta'$, and thus $X_u(5)$ still contains the vertex $z$ which is now missing the color $1$. So the subfan $X_u(<)_{\leq z}$ is now a $(\Vv,u)$-independent subfan avoiding $v$, and it also contains $z$, by Lemma~\ref{lem:(Vv,u)-independent_subfan_avoiding_v}, the fan $X_v(5)$ is a path that does not contain $z$, a contradiction.

So the vertex $u$ belogns to $C_{1,5}$, and now in the coloring $\beta''$, the fan $X_u(1)$ contains the vertex $z$ which is missing the color $1$. So this fan is not entangled with $\Vv$, by Lemma~\ref{lem:fans_around_Vv}, we also have a contradiction.  
\end{subcase}

\begin{subcase}[$5 \not \in (\beta(\Yy)\cup \beta(\Uu))$]
~\newline
The vertices $z$ and $z'$ are both missing the color $5$, so at least one of them is not in $K_v(1,5)$. 

If the vertex $z$ is not in $K_v(1,5)$, since $P(j)$ is true for all $j\leq h$, then for all $j\leq h$, the vertex $z$ is not in $X_v(\beta(uu_{l-j}))$. We now swap the component $C_{1,5} = K_z(1,5)$ to obtain a coloring $\beta''$ where the subfan $X_u(4)$ is now a $(\Vv,u)$-independent subfan avoiding $v$. The coloring $\beta''$ is $\Vv$-equivalent to $\beta'$, so by Observation~\ref{obs:Vv-minimum-stable} the cycle $\Vv$ is the same minimum cycle in the coloring $\beta'$. By Lemma~\ref{lem:(Vv,u)-independent_subfan_avoiding_v}, the fan $X_v(4)$ is now a path that does not contain $z$. Since for all $j\leq h$, $P(j)$ is true and $z$ does not belong to $X_v(\beta(uu_{l-j})$, the coloring $\beta''$ is also $(\bigcup\limits_{j\in[0,h]}X_v(\beta(uu_{l-j})))$-equivalent to the coloring $\beta'$.

If the vertex $u$ does not belong to $C_{1,5}$, then the coloring $\beta''$ is also $\Uu$ equivalent to the coloring $\beta'$. The edge $uu_{l-h}$ is still colored $1$, and the property $P(j)$ is true for all $j\leq h$. By Lemma~\ref{lem:P(i-1)_and_P_weak(i)_no_path} there is no path around $v$, a contradiction. So the vertex $u$ belongs to the component $C_{1,5}$, and the edge $uu_{l-h}$ is now colored $5$. Let $\Uu' = X^{\beta''}_u(3) = (uu'_1,\cdots, uu'_{l'})$. Since $z\not\in X_v(\beta(uu_{l-j}))$ for all $j\leq h$, the coloring $\beta''$ is $(\bigcup\limits_{j\in[0,h]}X_v(\beta(uu_{l-j})))$-equivalent to the coloring $\beta'$, and it is also $\Uu_{>u_{l-h}}$-equivalent to the coloring $\beta'$, so by Lemma~\ref{lem:not_changing_last_vertices}, for all $j\leq h$, we have $u'_{l-j} = u_{l-j}$. In particular, $u'_{l-h} = u_{l-h}$. The coloring $\beta''$ is $X_u(5)_{<z}$-equivalent to the coloring $\beta'$, so in the coloring $\beta''$, the fan $X_u(5)$ still contains the vertex $z$ which is now missing the color $1$, therefore the fan $X_u(5)$ is a path. Since $P(h)$ is true, we have a contradiction.

So the vertex $z$ belongs to $K_v(1,5)$, and the vertex $z'$ does not belong to this component. We now swap the component $C_{1,5} = K_{z'}(1,5)$, and obtain a coloring $\beta''$ that is $\Vv$-equivalent to $\beta'$. By Observation~\ref{obs:Vv-minimum-stable}, the fan $\Vv$ is the same minimum cycle in the coloring $\beta''$. Since the property $P(j)$ is true for all $j\leq h$, and $z'$ is not in $K_v(1,5)$, then for all $j\leq h$, the vertex $z'$ is not in $X_v(\beta(uu_{l-j}))$, and the coloring $\beta''$ is $(\bigcup\limits_{j\in[0,h]}X_v(\beta(uu_{l-j})))$-equivalent to the coloring $\beta'$. In the coloring $\beta''$, the fan $X_v(4)$ is now a path, so similarly to the previous case, the vertex $u$ belongs to the component $C_{1,5}$. Therefore, in the coloring $\beta''$, the edge $uu_{l-h}$ is colored $5$. 

Let $\Uu' = (uu'_1,\cdots, uu'_{l'})$. Since the coloring $\beta''$ is $(\bigcup\limits_{j\in[0,h]}X_v(\beta(uu_{l-j})))$-equivalent to the coloring $\beta'$, by Lemma~\ref{lem:not_changing_last_vertices}, for any $j\leq h$ we have $u'_{l'-j} = u_{l-j}$. The coloring $\beta''$ is $\Uu_{\leq v}$ equivalent to $\beta'$, so $v$ is in $\Uu''$, and thus we have $X_u(1) = X_u(5) = \Uu'$. So there exists a vertex $z''$ missing the color $5$ in the fan $X_u(1)$. Note that since $m(z') = 1$ and $m(z'') = 5$ we have $z\neq z''$, however,  we may have $z'' = z$, and in this case there exists a vertex in $X_u(1)$ missing a color in $\beta(X_u(4)_{\leq z})$. We now have to distinguish the cases.

\begin{subsubcase}[$z''\neq z$]
~\newline
We consider the coloring $\beta'$. In this coloring, the vertex $z''$ is in $X_u(5)$ since $u$ is in $C_{1,5}$. If the vertex $z''$ also belongs to $C_{1,5}$, then now $X_u(5)_{\leq z''}$ is a subfan avoiding $v$. If there is en edge $uu''$ in $E(X_u(5)_{\leq z''})$ colored with a color in $\beta'(\Vv)$ then the fan $X_u(\beta'(uu'')))$ is not entangled with $\Vv$, and by Lemma~\ref{lem:fans_around_Vv} we have a contradiction. So the subfan $X_u(5)_{\leq z''}$ is a $(\Vv,u)$-independent subfan avoiding $v$. By Lemma~\ref{lem:(Vv,u)-independent_subfan_avoiding_v} the fan $X_v(5)$ is a path, however, in the coloring $\beta'$ the fan $X_v(5)$ still contains the vertex $z$ that is misisng the color $5$, so $X_v(5)$ is a cycle, a contradiction.

So the vertex $z''$ does not belong to $C_{1,5}$, and thus is still missing the color $5$ in the coloring $\beta'$. We now swap the component $K_{z''}(1,5)$ to obtain a coloring $\beta_f$ where $X_u(5)_{\leq z''}$ is a $(\Vv,u)$-independent subfan avoiding $v$, and where $X_u(5)$ is a cycle. Again by Lemma~\ref{lem:(Vv,u)-independent_subfan_avoiding_v} we have a contradiction.
\end{subsubcase}
\begin{subsubcase}[$z'' = z'$]
~\newline
So there exists a vertex $w$ in $X_u(5)$ such that $m(w)\in \beta' (X_u(4)_{\leq z})$ and $w\not \in V(X_u(4)_{\leq z})$. We now need to distinguish whether or not $m(w) = 4$.

\begin{subsubsubcase}[$m(w) \neq 4$]
~\newline
In this case, without loss of generality, assume that $m(w) = 6$, and let $w'$ be the vertex of $X_v(4)_{\leq z}$ missing the color $6$. The vertices $w$ and $w'$ are both misisng the color $6$, so they are not both part of $K_v(1,6)$.

If $w'$ is not in $K_v(1,6)$, then we swap $C_{1,6} = K_{w'}(1,6)$ to obtain a coloring $\beta''$ where $X_u(4)_{\leq w'}$ is a $(\Vv,u)$-independent subfan avoiding $v$. The coloring $\beta''$ is $\Vv$-equivalent to $\Vv$, so the fan $\Vv$ is still the same minimum cycle in the coloring $\beta'$ by Observation~\ref{obs:Vv-minimum-stable}. So by Lemma~\ref{lem:(Vv,u)-independent_subfan_avoiding_v} the fan $X_v(4)$ is a path that does not contain $w'$. If the vertex $u$ does not belong to $C_{1,6}$, then the coloring $\beta''$ is $\Uu$-equivalent to $\beta'$, the property $P(j)$ is true for all $j\leq h$ and $\beta''(uu_{l-h}) = 1$ so by Lemma~\ref{lem:P(i-1)_and_P_weak(i)_no_path} there is no path around $v$, a contradiction.

So the vertex $u$ belongs to $C_{1,6}$, and we have $\beta''(uu_{l-h}) = 6$. Let $\Uu' = X_u^{\beta''}(3) = (uu'_1,\cdots,uu'_{l'})$. The coloring $\beta''$ is $(\bigcup\limits_{j\in[0,h]}X_v(\beta(uu_{l-j})))$-equivalent to the coloring $\beta'$ and is $\Uu_{>u_{l-h}}$-equivalent to the coloring $\beta'$ so by Lemma~\ref{lem:not_changing_last_vertices}, for all $j\leq h$ we have $u'_{l'-j} = u_{l-j}$. In the coloring $\beta'$, the fan $X_v(4)$ contains the vertex $z'$ missing the color $5$, and the fan $X_v(5)$ is a cycle that contains the vertex $z$, and in the coloring $\beta''$, the fan $X_u(4)$ is a path. So there exists a vertex $w''$ in $X_v(4)$ that is missing the color $6$ in the coloring $\beta'$ and that belongs to $C_{1,6}$. this vertex is now missing the color $1$ in the coloring $\beta''$. If $w''$ is in $X^{\beta'}_v(4)_{\leq z'}$, then the fan $X_v^{\beta''}(6)$ is now a comet containing two vertices ($z$ and $z'$) missing the color $5$. Since the property $P(h)$ is true, we have a contradiction. So the vertex $w''$ si in $X^{\beta'}_v(5)$. But now, in the coloring $\beta''$, the fan $X_v(6)$ contains the vertex $z$ which is still missing the color $5$, and the fan $X_v(5)$ is a path, so the fan $X_v(6)$ is a path. Again since $P(h)$ is true, we have a contradiction. So the vertex $w'$ belongs to $K_v(1,6)$.

Therefore, the vertex $w$ does not belong to $K_v(1,6)$, and we swap the component $C_{1,6} = K_w(1,6)$ to obtain a coloring $\beta''$ where $X_u(5)_{\leq w}$ is a subfan avoiding $v$. The coloring $\beta''$ is $\Vv$-equivalent to the coloring $\beta'$, so by Observation~\ref{obs:Vv-minimum-stable}, the fan $\Vv$ is the same minimum cycle in the coloring $\beta''$. Similarly to the previous case, the subfan $X_u(5)_{\leq w}$ is a $(\Vv,u)$-independent subfan avoiding $v$, so $X_v(5)$ is a path, and the vertex $u$ belongs to $C_{1,6}$. Since the vertex $z$ is still missing the color $5$, it means that in the coloring $\beta''$ the fan $X_u(1)$ now contains the vertex $w$ which is missing the color $1$, and so it is not entangled with $\Vv$. By Lemma~\ref{lem:fans_around_Vv} we have a contradiction.
\end{subsubsubcase}

\begin{subsubsubcase}[$m(w) = 4$]
~\newline
Since the fan $\Yy = X_z(5)$ is a path in the coloring $\beta$, the fan $X_z(4)$ is a path in the coloring $\beta'$. In the coloring $\beta'$ we invert the path $X_z(5)$ until we reach a coloring where $m(z)\in \beta'(X_u(5)_{\leq w})$ and denote by $\beta''$ the coloring obtained after the inversion. Note that since $4\in\beta'(X_z(5))\cap \beta(X_u(5)_{\leq w})$ the inversion is well defined. The coloring $\beta''$ is $\Vv$-equivalent to $\beta'$ so the fan $\Vv$ is the same minimum cycle in the coloring $\beta''$. The coloring $\beta''$ is also $\Uu$-equivalent to $\beta'$, so $X_u^{\beta''}(3) = \Uu$ and since $P(j)$ is true for all $j\leq h$, the fan $X_v(\beta''(uu_{l-j}))$ is a saturated cycle if $\beta''(uu_{l-j})\neq 1$. The coloring $\beta''$ is $(X_v(4)\setminus \{z\})$-equivalent to $\beta'$, so the fan $X_v(4)$ still contains the vertex $z'$ which is missing $5$, and the vertex $z$. Finally, the coloring $\beta''$ is $X_u(5)_{\leq w}$-equivalent to the coloring $\beta'$. Let $c_z$ be the missing color of $z$ in $\beta''$, and let $w'$ be the vertex of $X_u(5)_{\leq w}$ missing the color $c_z$. Note that is $c_z = 4$, then we have $w' = w$.

The proof is similar to the previous case, and we now consider the components of $K(1,c_z)$. The vertices $z$ and $w'$ are both missing the color $c_z$ so at least of them is not in $K_v(1,c_z)$. If the vertex $z$ is not in $K_v(1,c_z)$, then we swap the component $C_{1,c_z} = K_z(1,c_z)$ to obtain a coloring $\beta_f$ that is $\Vv$-equivalent to $\beta''$. By Observation~\ref{obs:X-equivalent_plus_identical} the fan $\Vv$ is the same minimum cycle in the coloring $\beta_f$, and now $X_u(4)_{\leq z}$ is a $(\Vv,u)$-independent subfan avoiding $v$, so by Lemma~\ref{lem:(Vv,u)-independent_subfan_avoiding_v} the fan $X_v(4)$ is now a path not containing $z$. Note that the coloring $\beta_f$ is also $(\bigcup\limits_{j\in[0,h]}X_v(\beta(uu_{l-j})))$-equivalent to the coloring $\beta''$. If the vertex $u$ does not belong to $C_{1,c_z}$, then the coloring $\beta_f$ is $\Uu$-equivalent to $\beta''$, and in particular $\beta_f(uu_{l-h}) = 1$. Since $P(j)$ is true for all $j\leq h$, by Lemma~\ref{lem:P(i-1)_and_P_weak(i)_no_path} there is no path around $v$, a contradiction.

So the vertex $u$ belongs to $C_{1,c_z}$, and now $\beta_f(uu_{l-h}) = 6$. Since the fan $X_v(4)$ is now a path that does not contain $z$, it means that in the coloring $\beta''$ there is a vertex $w''$ in $X_v(4)$ which is missing the color $c_z$ and which also belongs to $C_{1,c_z}$. It means that in the coloring $\beta_f$, the fan $X_v(c_z)$ is now a path containing $z$. Let $\Uu' = X_u^{\beta_f}(3) = (uu'_1,\cdots, uu'_{l'})$. The coloring $\beta_f$ is $\Uu_{>u_{l-h}}$-equivalent to $\beta''$ and is also $(\bigcup\limits_{j\in[0,h]}X_v(\beta(uu_{l-j})))$-equivalent to $\beta''$, so for all $j\leq h$, we have $u'_{l'-j} = u_{l-j}$ by Lemma~\ref{lem:not_changing_last_vertices}. In particular $u'_{l-h} = u_{l-h}$. Since $P(h)$ is true, and $\beta_f(uu'_{l'-h}) = c_z$, the fan $X_v(c_z)$ is a cycle, a contradiction.

So the vertex $z$ belongs to $K_v(1,c_z)$ and the vertex $w'$ does not belong to this component. We now swap the component $C_{1,c_z} = K_{w}(1,c_z)$ and denote by $\beta_f$ the coloring obtained after the swap. The coloring $\beta_f$ is $\Vv$-equivalent to $\beta''$ so by Observation~\ref{obs:X-equivalent_plus_identical} the fan $\Vv$ is the same minimum cycle in the coloring $\beta_f$. The coloring $\beta_f$ is also $(\bigcup\limits_{j\in[0,h]}X_v(\beta(uu_{l-j})))$-equivalent to the coloring $\beta''$. In the coloring $\beta_f$ the subfan $X_u(5)_{\leq w'}$ is now a subfan avoiding $v$. If there is an edge $uu''$ in $E(X_u(5)_{\leq w'})$ colored with a color in $\beta_f(\Vv)$, then $X_u(\beta_f(uu''))$ is not entangled with $\Vv$ and by Lemma~\ref{lem:fans_around_Vv} we have a contradiction. So the subfan $X_u(5)_{\leq w'}$ is a $(\Vv,u)$-independent subfan avoiding $v$ and thus by Lemma~\ref{lem:(Vv,u)-independent_subfan_avoiding_v} the fan $X_v(5)$ is now a path that does not contain $w'$. Similarly to the previous case, this means that the vertex $u$ belongs to the component $C_{1,c_z}$, and thus that $\beta_f(uu_{l-h}) = c_z$. The fan $X_v(5)$ still contains the vertex $z$ which is missing the color $c_z$, and the fan $X_v(5)$ is a path, so the fan $X_v(c_z)$ is a path. Let $\Uu' = X^{\beta_f}_u(3) = (uu'_1,\cdots,uu'_{l'})$. Since $P(j)$ is true for all $j\leq h$ and since the coloring $\beta_f$ is $(\bigcup\limits_{j\in[0,h]}X_v(\beta(uu_{l-j})))$-equivalent to the coloring $\beta''$, by Lemma~\ref{lem:not_changing_last_vertices} for $j\leq h$, we have $u'_{l'-j} = u_{l-j}$. In particular, $u'_{l-h} = u_{l-h}$. The edge $uu'_{l'-h}$ is now colored $c_z$ and the property $P(h)$ is true, so the fan $X_v(c_z)$ is a cycle. This is a contradiction.
\end{subsubsubcase}
\end{subsubcase}

\end{subcase}

\end{case}

\end{proof}






Before proving the induction step of the proof we need to introduce a new property implied by $P(i)$.

\subsection{The property $Q(i)$}

\begin{definition}\label{def:Q(i)}
Let $i\geq 0$, we define the property $Q(i)$ as follows:

For any minimum cycle $\Vv$ in a coloring $\beta$, for any pair of vertices $u$ and $u'$ of $\Vv$, let $\Uu = X_u(m(u')) =(uu_1,\cdots,uu_l)$. If $\beta(uu_{l-i}) \neq m(v)$, then for any color $c\in\beta(\Vv)$, the fan $X_{u_{l-i-1}}(c)$ is a cycle entangled with $\Vv$ and $\Uu_{\geq u_{l-i-2}}$.
\end{definition}

We now prove that the property $Q(i)$ is implied by the property $P(i)$. And we first prove th following lemma concerning saturated cycles around the centrel vertex of a minimum cycle.

\begin{lemma}\label{lem:sat_cycle_centered_on_v_entangled_with_Uu}
Let $\Vv = (vv_1,\cdots,vv_k)$ be a minimum cycle in a coloring $\beta$, $u = v_j$ and $u' = v_{j'}$ two vertices of $\Vv$ and $\Ww = (vw_1,\cdots, vw_t)$ a saturated cycle around $v$. Then the fans $\Ww$ and $\Uu = X_u(m(u'))$ are entangled. 
\end{lemma}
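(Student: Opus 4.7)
I would prove this by contradiction, in parallel with the proof of the ``$\Uu$ and $\Vv$ are entangled'' claim inside the proof of Lemma~\ref{lem:fans_around_Vv}. Suppose there is a color $c\in\beta(\Uu)\cap\beta(\Ww)$ with $x:=M(\Uu,c)\ne w:=M(\Ww,c)$.

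The first step is to dispose of the case $c\in\beta(\Vv)$. Since $\Vv$ is a minimum cycle, it is saturated by Lemma~\ref{lem:minimum_cycle_saturated}, and $\Ww$ is saturated by hypothesis, so both $M(\Vv,c)$ and $M(\Ww,c)$ are forced to be the unique endpoint of the bichromatic path $K_v(m(v),c)$ opposite to~$v$, hence $M(\Vv,c)=M(\Ww,c)$. Combined with $M(\Uu,c)=M(\Vv,c)$ from Lemma~\ref{lem:fans_around_Vv}, this gives $x=w$, a contradiction. So $c\notin\beta(\Vv)$ and in particular $c\ne m(v)$.

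Next, saturation of $\Ww$ gives $w\in K_v(m(v),c)$, so $x\notin K_v(m(v),c)$, and I can swap the component $K_x(m(v),c)$ to obtain a coloring $\beta'$. I would then verify that $\beta'$ is $\Vv$-identical to $\beta$: since $c\notin\beta(\Vv)$ and no edge at $v$ is colored $m(v)$, every $v_i\in V(\Vv)$ misses a color in $\beta(\Vv)\setminus\{m(v),c\}$ and is an internal vertex of some $K(m(v),c)$-component (preserving its missing color), while every edge of $\Vv$ has color in $\beta(\Vv)\setminus\{m(v),c\}$ (untouched by the swap). By Observation~\ref{obs:Vv-minimum-stable}, $\Vv$ remains a minimum cycle in $\beta'$, so Lemma~\ref{lem:fans_around_Vv} still applies.

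For the contradiction, in $\beta'$ the vertex $x$ now misses $m(v)$, while $v$ still misses $m(v)$. The key observation is that $v\in V(\Uu)$ (the edge at $u$ of color $m(v)$ exists because $\deg(u)=\chi'(G)$ and $m(u)\ne m(v)$), so in $\beta'$ the fan $\Uu':=X^{\beta'}_u(m(u'))$ — which by Lemma~\ref{lem:fans_around_Vv} must be a cycle entangled with $\Vv$ — should have $v=M(\Uu',m(v))$ by entanglement. If $x\in V(\Uu')$, then $\Uu'$ has two vertices ($x$ and $v$) missing the same color $m(v)$, forcing $\Uu'$ to be a comet, contradicting Lemma~\ref{lem:fans_around_Vv}. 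Performing a case analysis on whether $u\in K_x(m(v),c)$ and on the cyclic positions of $x$ and $v$ in $\Uu$, I would show that the forward walk in $D_u^{\beta'}$ from $uu_1$ either (i) loops back through the edges between $v$ and $x$ — producing a comet — or (ii) ``short-cuts'' through the swapped edges, in which case a vertex of $V(\Vv)\setminus\{u\}$ is dropped from $V(\Uu')$, violating the entanglement $V(\Vv)\setminus\{u\}\subseteq V(\Uu')$ guaranteed by Lemma~\ref{lem:fans_around_Vv}.

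The main obstacle is the residual subcase where the short-cut removes only vertices outside $V(\Vv)$, so neither (i) nor (ii) triggers directly. To handle this I would iterate, as in Claim~2 of Lemma~\ref{lem:fans_around_Vv}: successively swap components $K_x(m(w_{s-i}),m(w_{s-i-1}))$ along the cyclic sequence of missing colors in $\Ww$, each such swap being $\Vv$-identical (since the colors involved lie outside $\beta(\Vv)\setminus\{m(v)\}$ because $\Ww\cap\Vv$ as fans around $v$ share only the color $m(v)$), thereby propagating the disagreement of $x$'s missing color around $\Ww$ until some fan around $u$ is forced to be a path — the final contradiction with Lemma~\ref{lem:fans_around_Vv}.
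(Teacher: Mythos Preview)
Your opening moves match the paper's: dispose of $c\in\beta(\Vv)$ (though note that when $\Ww\ne\Vv$ one has $\beta(\Ww)\cap\beta(\Vv)=\{m(v)\}$, so this case is essentially vacuous), then swap $K_x(m(v),c)$ and observe that if $u$ is not in this component (or there is no edge colored $m(v)$ in $\Uu_{<x}$), the new $X_u(m(u'))$ contains $x$ missing $m(v)$ and fails to be a cycle entangled with $\Vv$, contradicting Lemma~\ref{lem:fans_around_Vv}.

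The gap is in the remaining hard case, where $u$ lies in the swapped component and there is an edge colored $m(v)$ before $x$ in $\Uu$. Your proposed ``iteration along $\Ww$'' in the style of Claim~2 of Lemma~\ref{lem:fans_around_Vv} does not terminate in a usable contradiction: in that claim the iteration walks $w'$'s missing color along $\beta(\Vv)$ until it reaches $m(u)$, which forces a path around $u$. Here the colors $m(w_{s-i})$ all lie in $\beta(\Ww)\setminus\{m(v)\}$, which is disjoint from $\beta(\Vv)\setminus\{m(v)\}$, so you never reach $m(u)$ or any color that forces $X_u(\cdot)$ to be a path; the iteration just cycles around $\Ww$ without producing the desired obstruction. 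Your cases (i)/(ii) also do not exhaust the possibilities, since after the swap $X_u(m(u'))$ can be a perfectly good cycle entangled with $\Vv$ that simply no longer contains $x$.

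The paper handles this hard case by an entirely different mechanism: it does \emph{not} try to contradict Lemma~\ref{lem:fans_around_Vv}, but instead directly constructs an inversion of $\Vv$. After the first swap, $X_u(c)$ is a comet with $v$ and $x$ both missing $m(v)$; a second swap $K_x(m(v),m(u))$ turns it into a path with no $\beta(\Vv)$-colors, which is then partially inverted until $m(u)\in\beta(\Ww)$. From there the cycle structure of $\Ww$ (and tightness of $\Vv$) is exploited through an explicit sequence of swaps---split into two sub-cases on whether $u\in K_v(m(v),c_s)$---that produces $\Vv^{-1}(\beta)$, contradicting minimality of $\Vv$. This constructive inversion is the missing idea in your sketch.
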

\begin{proof}
    By Lemma~\ref{lem:fans_around_Vv}, the fan $\Uu$ is a cycle entangled with $\Vv$, so if $\Ww = \Vv$, the fans $\Ww$ and $\Uu$ are entangled as desired. So assume that $\Ww\neq \Vv$ and that $\Ww$ is not entangled with $\Uu$. Without loss of generality, we assume that the vertices $v$, $u$ and $u'$ are respectively missing the colors $1$, $2$, and $3$. Since $\Ww\neq \Vv$ and $\Ww$ is centered at $v$, we have that $\beta(\Ww)\cap\beta(\Vv) = \{1\}$. Since $\Ww$ and $\Uu$ are not entangled, there exists $c\in \beta(\Uu)\cap\beta(\Ww)$ such that $M(\Uu,c)\neq M(\Ww,c)$. Without loss of generality, since $c\not\in\{1,2,3\}$, we assume that $c= 4$ and that $u_i = M(\Uu,4)$ is the first such vertex in $\Uu$; up to shifting the indices in $\Ww$, we also assume that $m(w_t) = 4$, and thus that $\Ww = X_v(4)$.

    Since the cycle $\Ww$ is saturated, the vertex $w_t$ belongs to $K_v(1,4)$, so the vertex $z$ does not belong to $K_v(1,4)$. We swap the component $C_{1,4} = K_z(1,4)$ and denote by $\beta_2$ the coloring obtained after the swap. 
    
    If $u \not \in C_{1,4}$, or there is no edge colored $1$ in $\Uu_{<i}$, then the coloring $\beta_2$ is $(\Vv\cup\Ww\cup\Uu_{<i})$-equivalent to $\beta$. Hence, in the coloring $\beta_2$, the fan $\Vv$ is a minimum cycle by Observation~\ref{obs:Vv-minimum-stable}, but now the fan $X_u(m(u') = (uu_1,\cdot,uu_i)$ is now a path, by Lemma~\ref{lem:fans_around_Vv}, this is a contradiction.

    So $u \in C_{1,4}$, and there is an edge colored $1$ in $\Uu_{<i}$. Since by Lemma~\ref{lem:fans_around_Vv}, the cycle $\Uu$ is entangled with $\Vv$, the edge $uv_{j-1}$ and the edge $uv$ are in $E(\Uu)$. We denote by $x$ the vertex connected to $u$ y the edge colored $1$ and by $c_{j-1}$ the missing color of $v_{j-1}$ in $\beta$. Note that we may have $v_{j-1} = u'$, and thus $c_{j-1} = 2$. The fan $\Uu$ is of the form $(uu_1,\cdots,uv_{j-1},uv,ux,\cdots,uu_i,\cdots, uu')$. The coloring $\beta_2$ is $(\Vv)$- equivalent to $\beta$, so by Observation~\ref{obs:Vv-minimum-stable}, the cycle $\Vv$ is a minimum cycle in $\beta_2$. But now the fan $X_u(4)$ is a comet where $v$ and $u_i$ are missing the same color $1$, more precisely, $X_u(4) = (ux,\cdots,uu_i,\cdots, uu',uu_1,\cdots, uv_{j-1},uv)$. Note that $X_u(3)$ is a cycle which is a subsequence of $X_u(4)$.  If there is an edge colored with a color $c\in\beta(\Vv)$ in $X_u(4)$ between the edges $ux$ and $uu_i$, then the fan $X_v(c)$ is a comet, which is a contradiction by Lemma~\ref{lem:fans_around_Vv}.

    So there is no edge colored with a color $c\in\beta(\Vv)$ in $X_u(4)$ between the edges $ux$ and $uu_i$. Since the fan $\Vv$ is a minimum cycle, it is saturated by Lemma~\ref{lem:minimum_cycle_saturated}, so $u\in K_v(1,2)$, and thus $u_i\not\in K_v(1,2)$. We now swap the component $K_{u_i}(1,2)$ to obtain a coloring $\beta_3$. The coloring $\beta_3$ is $(\Vv\cup\Ww)$-equivalent to $\beta_2$, so the fan $\Vv$ is a minimum cycle in $\beta_3$ by Observation~\ref{obs:Vv-minimum-stable}.

    We now show that $\Vv$ is invertible in the coloring $\beta_3$. The cycle $\Vv$ is tight by Observation~\ref{obs:tight}, so the vertex $u$ belongs to the component $C_{2,j-1} = K_{v_{j-1}}(2,c_{j-1})$, thus the edges $vu$ and $vv_{j+1}$ also belong to $C_{2,j-1}$. In the coloring $\beta_3$, the fan $X_u(4)$ is now a path that we invert until we reach a coloring $\beta_4$ where $m(u)\in\beta(\Ww)$. Note that since $4\in\beta(\Ww)$, the inversion is well-defined and moreover, since $\beta_3$ is also $(\Ww)$-equivalent to $\beta$, we have $\beta_3(\Ww) = \beta(\Ww)$. Since $u\not\in\Ww$, by Observation~\ref{obs:not_touching_subsequence_stable}, the coloring $\beta_4$ is $(\Ww)$-equivalent to $\beta_3$, so $\Ww$ is still the same cycle in $\beta_4$. Moreover, since $\beta_3(X_u(4))\cap\beta_3(\Vv) = \{2\}$, the coloring $\beta_4$ is $(\Vv\setminus \{u\}\cup C_{2,j-1})$-equivalent to $\beta_3$.

    We denote by $w_s$ the vertex of $\Ww$ such that $m^{\beta_4}(u) = m^{\beta_4}(w_s)$, and we denote by $c_s$ this missing color. Note that we may have that $w_t = w_s$, and thus $c_s = 4$. The vertices $u$ and $w_s$ are missing the same color $c_s$, so they are not both part of the component $K_v(1,c_s)$ and we now have to distinguish the cases.

     \begin{case}[$u\not\in K_v(1,c_s)$]
    In this case, we swap the component $C_{1,c_s} = K_{u}(1,c_s)$ and obtain a coloring that we denote by $\beta_5$. Since $\{1,c_s,2,c_{j-1}\} = 4$, the coloring $\beta_5$ is $(C_{2,j-1})$-equivalent to $\beta_4$, so it is $(C_{2,j-1})$-equivalent to $\beta_3$. In the coloring $\beta_5$, the vertex $u$ is now missing the color $1$, so the fan $X_v(m(u)) = (vv_{j+1},\cdots, vv_{j-1},vu)$ is now a path that we invert, we denote by $\beta_6$ the coloring obtained after the inversion. In the coloring $\beta_6$, the vertices $v_{j+1}$ and $v$ are missing the color $2$, and the vertex $u$ is missing the color $c_{j-1}$. So now the component $C'_{2,j-1} = K_{v_{j'-1}}$ is exactly $C_{2,j-1}\cup\{vv_{j-1}\}\setminus \{vu,vv_{j+1}\}$ and we swap it. After this swap, the vertices $v$ and $u$ are missing the same color $c_{j-1}$, and the edge $uv$ is colroed $1$; we swap this edge and we denote by $\beta_7$ the coloring obtained after the swap. In the coloring $\beta_7$, the vertex $u$ is missing the color $1$, so the component $K_{u}(1,c_s)$ is now exactly $C_{1,c_s}$, so we swap back this component. Note that since $\{1,2,c_s,c_{j-1}\} = 4$, we can swap back $C_{1,c_s}$ before $C'_{2,j-1}$. In the coloring obtained after the swap, the fan $X_{u}(2)$ is now a path that we invert, and we denote by $\beta_8$ the coloring obtained after the inversion. In the coloring $\beta_8$, the vertex $u$ is now missing the color $2$, so the component $K_{v_{j-1}}(2,c_{j-1})$ is now exactly $C'_{2,j-1}\cup\{uv\}$. After swapping back this component we obtain exactly $\Vv^{-1}(\beta_3)$, a contradiction.   
    \end{case}

    \begin{case}[$u\in K_v(1,c_s)$]
        The principle in the same as in the previous case, but instead of changing the missing color of $u$, we will change the missing of of $v$ using the fan $X_v(c_s)$ to transform $\Vv$ into a path. As $u$ belongs to $K_v(1,c_s)$, the vertex $w_s$ does not belong to this component. So we swap the component $C_{1,c_s} = K_{w_s}(1,c_s)$ to obtain a coloring where $X_v(c_s)$ is now a path that we invert; we denote by $\beta_5$ the coloring obtained after the inversion. Note that since $X_v(c_s)$ was a cycle in $\beta_4$, we have $\beta_4(X_v(c_s))\cap\beta_4(\Vv) = \{1\}$,and so $\{2,c_{j-1}\}\cap\beta_4(X_v(c_s)) = \emptyset$. Hence the coloring $\beta_5$ is $(C_{2,j-1})$-equivalent to the coloring $\beta_4$. In the coloring $\beta_5$, the fan $X_v(2) = (vv_{j+1},\cdots, vu)$ is now a path that we invert, and we denote by $\beta_6$ the coloring obtained after the swap. Similarly to the previous case, in the coloring $\beta_6$, the vertices $v$ and $v_{j+1}$ are missing the color $2$, and the vertex $u$ is missing the color $c_{j-1}$. So in the coloring $\beta_6$, the component $C'_{2,j-1} = K_{v_{j-1}}(2,c_{j-1})$ is exactly $C_{2,j-1}\cup\{vv_{j-1}\}\setminus \{vv_{j'+1},vu\}$, and we swap it to obtain a coloring where the vertices $u$ and $v$ are missing the color $c_{j-1}$ and where the edge $uv$ is colored $c_s$. After swapping the edge $uv$, we obtain a coloring where, the fan $X_v(1)$ is now a path that we invert, we denote by $\beta_7$ the coloring obtained after the inversion. In the coloring $\beta_7$, the component $K_{w_s}(1,c_s)$ is exactly $C_{1,c_s}$ and we swap back this component. Note that since $|\{1,2,c_s,c_{j-1}\}| = 4$, we can swap back this component before $C'_{2,j-1}$. In the coloring obtained after the swap, the fan $X_{u}(2)$ is now a path that we invert, we denote by $\beta_8$ the coloring obtained after the swap. In the coloring $\beta_8$, the vertex $u$ is now missing the color $2$, so the component $K_{v_{j-1}}(2,c_{j-1})$ is now exactly $C'_{2,j-1}\cup \{vu\}$ and we swap back this component to obtain $\Vv^{-1}(\beta_3)$ as desired.
    \end{case}
\end{proof}


\begin{lemma}\label{lem:P(i)_implies_Q(i)}
Let $i\geq 0$, if $P(i)$ is true for all $j\leq i$, then $Q(j)$ is true for all $j\leq i$.
\end{lemma}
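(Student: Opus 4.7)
The plan is to fix $j\le i$ and prove $Q(j)$ using the hypotheses $P(0),\ldots,P(i)$, together with Lemmas~\ref{lem:fans_around_Vv}, \ref{lem:X_u_i(beta_(Vv)_cup_beta(Uu_<u_i))_not_a_path}, and~\ref{lem:sat_cycle_centered_on_v_entangled_with_Uu}. Let $\Vv$ be a minimum cycle, $u,u'\in\Vv$, $\Uu=X_u(m(u'))=(uu_1,\ldots,uu_l)$ with $\beta(uu_{l-j})\neq m(v)$, and set $w=u_{l-j-1}$. By $P(j)$, $\Ww=X_v(\beta(uu_{l-j}))$ is a saturated cycle containing $w$; since $m(w)=\beta(uu_{l-j})$ matches the color of the first edge of $\Ww$, the vertex $w$ is actually the last vertex of $\Ww$. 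Lemma~\ref{lem:sat_cycle_centered_on_v_entangled_with_Uu} then yields that $\Ww$ is entangled with $\Uu$. Since $\Vv$ and $\Ww$ are both cycle-fans around $v$, they are either identical or vertex-disjoint in $D_v$; moreover $\beta(uu_{l-j})\neq m(v)$ forbids $w=v$, so either $w\in V(\Vv)$ (equivalently $\Ww=\Vv$) or $w\notin V(\Vv)\cup\{v\}$. Fix $c\in\beta(\Vv)$ with $u''=M(\Vv,c)$, and set $\Zz=X_w(c)$; the target is that $\Zz$ is a cycle entangled with $\Vv$ and with $\Uu_{\geq u_{l-j-2}}$.

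To show $\Zz$ is not a path, handle the two cases above. When $\Ww=\Vv$, Lemma~\ref{lem:fans_around_Vv} applied to $w$ and $u''$ as two vertices of the minimum cycle $\Vv$ gives that $X_w(c)=X_w(m(u''))$ is a cycle entangled with $\Vv$, which already settles a large part of the conclusion. When $w\notin V(\Vv)\cup\{v\}$, Lemma~\ref{lem:X_u_i(beta_(Vv)_cup_beta(Uu_<u_i))_not_a_path} applied with index $l-j-1$ rules out $\Zz$ being a path, since $c\in\beta(\Vv)\subseteq\beta(\Vv)\cup\beta(\Uu_{<w})$. To rule out $\Zz$ being a comet, I follow the familiar template from Lemmas~\ref{lem:only_cycles} and~\ref{lem:fans_around_Vv}: if $\Zz=(wz_1,\ldots,wz_s)$ has $m(z_p)=m(z_s)=c'$ for some $p<s$, split by whether $c'$ lies in $\beta(\Vv)$, in $\beta(\Ww)\setminus\beta(\Vv)$, in $\beta(\Uu_{<w})$, or is fresh. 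In each case, the facts that $\Ww$ is saturated and entangled with $\Uu$ force one of $z_p,z_s$ to lie outside the relevant $K_v$-component; a single Kempe swap on that component produces a coloring that is $\Vv$-, $\Ww$-, and $\Uu_{<w}$-equivalent to $\beta$ in which $\Zz$ collapses to a path, contradicting the previous step via Observation~\ref{obs:Vv-minimum-stable}. Hence $\Zz$ is a cycle.

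Finally, both entanglements are proved by the same contradictory collapse-to-path pattern. Suppose some color $c''$ is shared between $\Zz$ and $\Vv$ (respectively $\Uu_{\geq u_{l-j-2}}$) but with $M(\Zz,c'')\neq M(\Vv,c'')$ (respectively $\neq M(\Uu_{\geq u_{l-j-2}},c'')$): two distinct vertices then miss $c''$, so at least one lies outside $K_v(m(v),c'')$, and swapping that component collapses $\Zz$ to a path, contradicting the first step. The $\Vv$-entanglement mirrors the closing move of the proof of Lemma~\ref{lem:fans_around_Vv}; the $\Uu_{\geq u_{l-j-2}}$-entanglement additionally exploits that $\Uu$ is entangled with both $\Vv$ (Lemma~\ref{lem:fans_around_Vv}) and $\Ww$ (Lemma~\ref{lem:sat_cycle_centered_on_v_entangled_with_Uu}), so any conflict on a color of $\beta(\Vv)\cup\beta(\Ww)$ is ruled out automatically by the previously established entanglements, and the remaining conflicts sit on colors independent of $\Vv$ and $\Ww$, where the corrective swap can be performed without touching $\Uu_{<w}$. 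The main obstacle is precisely this last entanglement: one must verify case by case that every candidate swap either violates an earlier $P(j')$ or $Q(j')$ with $j'<j$, or leaves the low-index part of $\Uu$ unchanged so that the no-path step can legitimately be invoked in the derived coloring.
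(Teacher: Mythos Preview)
Your high-level plan is close to the paper's, and the reductions when $\Ww=\Vv$ and when $w\notin V(\Vv)\cup\{v\}$ are correct starting points. However, the proposal as written is a sketch with a genuine gap in the part you yourself flag as ``the main obstacle.''

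The repeated move ``swap a suitable $(m(v),c')$-component and collapse $\Zz$ to a path, contradicting the no-path step'' is not self-justifying. The no-path step (Lemma~\ref{lem:X_u_i(beta_(Vv)_cup_beta(Uu_<u_i))_not_a_path}) requires that in the \emph{new} coloring, $w$ is still a vertex of $X_u(m(u'))$, and that its position there is still $\leq l'-j-1$. But many of the candidate swaps contain $u$ (since an edge of $\Uu$ is colored $m(v)$), and after such a swap the fan $X_u(m(u'))$ changes. The paper handles exactly this via Lemma~\ref{lem:not_changing_last_vertices}, which guarantees that the last $i{+}1$ vertices of $\Uu$ are preserved under swaps that are $(\Vv\cup\Uu_{>u_{l-i}}\cup\bigcup_{j<i}X_v(\beta(uu_{l-j})))$-stable; you never invoke this, and without it your contradictions do not close. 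A second missing tool is Lemma~\ref{lem:P(i)_plus_independent_cycle_X_ui_not_path}: when the swap hits $u$ and turns $X_u(m(w))$ into a $(\Vv,u)$-independent cycle sharing its last vertex with $X_v(m(w))$, this lemma is what kills the resulting path around $z$, and your sketch does not supply an alternative.

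Concretely, the entanglement of $\Zz$ with $\Uu_{\geq u_{l-j-2}}$ is where the real work sits. The paper first establishes this entanglement (Proposition~\ref{prop:Zz_entangled_with_Vv_and_Uu_>i}) by a four-case analysis on the location of the offending color $c'$: (i) $c'=m(v)$, (ii) $c'\in\beta(\Vv)\setminus\{m(v)\}$, (iii) $c'\in\beta(\Uu_{>z})$, and (iv) $c'=m(u_{l-j-2})$. Cases (iii) and (iv) each split further depending on whether $u$ lies in the relevant Kempe component and whether there is an edge colored $m(v)$ in $\Uu_{<z}$; they use, in addition to Lemma~\ref{lem:not_changing_last_vertices}, the $(\Vv,u)$-independent-subfan machinery (Lemma~\ref{lem:(Vv,u)-independent_subfan_avoiding_v}) and Lemma~\ref{lem:P(i-1)_and_P_weak(i)_no_path}. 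Only \emph{after} entanglement is established does the paper prove $\Zz$ is not a comet, which is then short precisely because entanglement forces the doubled color out of $\beta(\Vv)\cup\beta(\Uu_{\geq u_{l-j-2}})$. Your reversed order (comet first, entanglement second) forces you to redo all of those subcases inside the comet argument, and your proposal does not carry them out.
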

\begin{proof}
Let $i\geq 0$, $\Vv$ be a minimum cycle in a coloring $\beta$, $u$ and $u'$ two vertices of $\Vv$, $\Uu = X_u(m(u')) = (uu_1,\cdots,uu_l)$, and assume that $P(j)$ is true for all $j\leq i$. Without loss of generality, we assume that the vertices $v$, $u$ and $u'$ are respectively missing the colors $1$, $2$, and $3$. Let $t\leq i$, and $z =u_{l-t-1}$. We prove that $Q(t)$ is true.

\begin{claim}\label{cl:m(z)_not_in_beta(Vv)}
The vertex $z$ is not missing a color in $\beta(\Vv)$.
\end{claim}
\begin{proof}
Otherwise, assume that $m(z)\in\beta(\Vv)$. The fan $\Vv$ is a minimum cycle in $\beta$ so by Lemma~\ref{lem:fans_around_Vv}, then fan $\Uu$ is a cycle entangled with $\Vv$.

If $m(z)\neq 1$, then since $\Uu$ is a cycle entangled with $\Vv$ by Lemma~\ref{lem:fans_around_Vv}, we have $z\in V(\Vv)$ so by Lemma~\ref{lem:fans_around_Vv} for any color $c\in \beta(\Vv)$, $X_z(c)$ is a cycle entangled with $\Vv$. Moreover, since the property $P(t)$ is true, so $X_v(\beta(uz)) = (vw_1,\cdots,w_x)$ is a saturated cycle, and by Lemma~\ref{lem:sat_cycle_centered_on_v_entangled_with_Uu} is is entangled with $\Uu = X_u(m(u'))$ and $X_z(m(u)) = (zz_1,\cdots,zz_r)$, and thus $u_{l-t-2} = w_x = z_{r-1}$, so $Q(t)$ is true.

If $m(z) = 1$, then since $\Uu$ is entangled with $\Vv$, we have $z = v$. So for any $c\in\beta(\Vv)$, $X_z(c) = \Vv$ and thus is a cycle entangled with $\Vv$. Moreover, this means that $\beta(uz) = \beta(uv)$ and thus $m(u_{l-t-2}) = \beta(uv)$, so $u_{l-t-2}$ is the vertex just before $u$ in the cycle $\Vv$. By definition of $\Vv$, the fan $X_z(m(u)$ contains this vertex, and thus $Q(t)$ is true. In both cases, we have a contradiction.
\end{proof}

\begin{claim}\label{cl:beta(Uu_>z)_cap_beta(Vv)_is_empty}
There is no edge in $E(\Uu_{>z})$ colored with a color $beta(\Vv)$.
\end{claim}
\begin{proof}
We first prove that there is no vertex in $V(\Uu_{>z})\setminus \{u'\}$ missing a color $c \in\beta(\Vv)$. Otherwise, assume that there exists such a vertex $z'$. The cycle $\Vv$ is minimum in $\beta(\Vv)$, so by Lemma~\ref{lem:fans_around_Vv}, the fan $\Uu$ is entangled with $\Vv$. If $c\neq 1$, then $z'\in V(\Vv)$. By Lemma~\ref{lem:fans_around_Vv}, the fan $\Uu' = X_u(m(z')) = (uu'_1,\cdots,uu'_{l'})$ is a cycle entangled with $\Vv$, so $u'_{l'} = z'$ and $V(\Uu) = V(\Uu')$. Thus there exists $t'<t$ such that $z = u{l'-t'-1}$. Since $t$ is minimum, $Q(t')$ is true, and thus $Q(t)$ is true.

If $c = 1$, then $z' = v$ since $\Uu$ is entangled with $\Vv$, and $\beta(uz') = \beta(uv)$. Let $z''$ be the vertex just before $z'$ in $\Uu$. Since $\beta(uz') = \beta(uv)$, then $m(z'') = \beta(uv)\in\beta(\Vv)$. Since $m(z)\not\in\beta(\Vv)$, we have that $z''\neq z$. This means that $z''$ is a vertex in $V(\Uu_{>z})\setminus \{u'\}$ missing a color in $\beta(\Vv)$, this is a contradiction.
\end{proof}

Let $c\in\beta(\Vv)$, we prove that $\Zz = X_z(c) = (zz_1,\cdots,zz_r)$ is a cycle entangled with $\Vv$ and $\Uu_{\geq z}$.

By Claim~\ref{cl:m(z)_not_in_beta(Vv)} $m(z)\not\in\beta(\Vv)$, so without loss of generality, we assume that $z$ is missing the color $4$. By Lemma~\ref{lem:X_u_i(beta_(Vv)_cup_beta(Uu_<u_i))_not_a_path} the fan $\Zz$ is not a path. Before proving that $\Zz$ is not a comet, we first prove that is it entangled with $\Vv$ and $\Uu_{\geq u_{l-t-2}}$.

\begin{proposition}\label{prop:Zz_entangled_with_Vv_and_Uu_>i}
The fan $\Zz$ is entangled with $\Vv$ and $\Uu_{\geq u_{l-t-2}}$.
\end{proposition}
\begin{proof}
Otherwise, assume that there exists $s$ such that $m(z_s)\in\beta(\Vv)\cup \beta(\Uu_{\geq z})$ and $z_s\not\in V(\Vv)\cup V(\Uu_{\geq z})$. Without loss of generality, we assume that such an $s$ is minimum. We also assume that there is no edge colored with a color in $\beta(\Vv)$ in $E(\Zz_{[z_2,z_{s-1}]})$. Otherwise, if such an edge $zz_x$ exists, is suffices to consider the fan $X_z(\beta(zz_x)) = (zz_x,\cdots,zz_s)$. We now have to distinguish the cases.
\begin{case}[$m(z_s) = 1$]
~\newline
In this case, since, $P(t)$ is true, $X_v(4)$ is a saturated cycle containing $z$, so $v\in K_z(1,4)$, and thus $z_s\not\in K_z(1,4)$. We now swap the component $C_{1,4} = K_{z_s}(1,4)$, and denote by $\beta'$ the coloring obtained after the swap. In the coloring $\beta'$, the fan $X_z(c)$ is now a path. The coloring $\beta'$ is $\Vv$-equivalent to $\beta$, so $\Vv$ is still a minimum cycle in $\beta'$. If the coloring $\beta'$ is also $\Uu_{\leq z}$-equivalent to $\beta$ (\textit{i.e.}, $C_{1,4}$ does not contain $u$ or there is no edge colored $1$ in $\Uu_{\leq z}$), then $z$ is still a vertex of $\Uu = X_u(3)$, and the fan $X_z(c)$ is now a path, by Lemma~\ref{lem:X_u_i(beta_(Vv)_cup_beta(Uu_<u_i))_not_a_path} this is a contradiction. So the vertex $u$ belongs to $C_{1,4}$, and there is an edge $uu_h$ colored $1$ in $\Uu_{<z}$. So in the coloring $\beta'$, the edge $uu_h$ is now colored $4$, and the edge $u_{l-t}$ is now colored $1$. The fan $X_v(4)$ is still a saturated cycle containing $z$, but now the fan $X_u(4)$ is also a cycle containing $z$. In this coloring the fan $X_z(c)$ is a path, so by Lemma~\ref{lem:P(i)_plus_independent_cycle_X_ui_not_path}, we have a contradiction.
\end{case}

\begin{case}[$m(z_s) = c' \in \beta(\Vv)\setminus \{1\}$]
~\newline
In this case, since $\Vv$ is minimum, it is saturated by Lemma~\ref{lem:minimum_cycle_saturated}, thus $z_s \not \in K_v(1,c')$. We now swap the component $C_{1,c'} = K_{z_s}(1,c')$, and denote by $\beta'$ the coloring obtained after the swap. This coloring is $\Vv$-equivalent to $\beta$, so $\Vv$ is still a minimum cycle in the coloring $\beta'$. By Claim~\ref{cl:beta(Uu_>z)_cap_beta(Vv)_is_empty}, there is no edge with a color in $\beta(\Vv)$ in $\Uu_{>z}$, so $\beta'$ is $\Uu_{>z}$-equivalent to $\beta$. Moreover, let $\Uu' = X^{\beta'}_u(3) = (uu'_1,\cdots, uu'_{l'})$; the coloring $\beta'$ is also $(\bigcup\limits_{j \in [0,t]}X_v(\beta(uu_j)))$-equivalent to $\beta$ since each of these fans are saturated cycles, and the vertex $z_s$ does not belong to any of them. So by Lemma~\ref{lem:not_changing_last_vertices}, in the coloring $\beta'$, for any $j\leq (t+1)$, $u'_{l'-j} = u_{l-j}$. In particular, $u'_{l'-t-1} = u_{l-t-1} = z$. But now $X_z(c)$ is not entangled with $\{v\}$ since it contains the vertex $z_s$ which is missing the color $1$. This case is similar to the previous one.
\end{case}

\begin{case}[$m(z_s) = c' \in \beta(\Uu_{>z})$]
~\newline
Let $u_{l-h}$ be the vertex of $\Uu_{>z}$ missing the color $c'$. In this case, since $P(j)$ is true for all $j\leq t$, the fan $X_v(\beta(uu_{l-j}))$ is a saturated cycle. In particular, the vertex $u_{l-h}$ belongs to the component $K_v(1,c')$, and so $z_s$ does not belong to this component. We now swap the component $C_{1,c'} = K_{z_s}(1,c')$, and denote by $\beta'$ the coloring obtained after the swap. Let $\Uu' = X^{\beta'}_u(3) = (uu'_1,\cdots,uu'_{l'})$. If the coloring $\beta'$ is $\Uu_{>z}$-equivalent to $\beta$, then for the same reason as in the previous case, $z$ is exactly the vertex $u'_{l'-t-1}$, and $X_z(c')$ is now not entangled with $\{v\}$ since it contains the vertex $z_s$ that is missing the color $1$. This case is similar to the first one. So $\beta'$ is not $\Uu_{>z}$-equivalent to $\beta$, and thus since it is $\{u_{l-h}\}$-equivalent to $\beta$, the component $C_{1,c'}$ contains the vertex $u$. We now have to distinguish whether or not, in the coloring $\beta$ there is an edge $uu_p$ colored $1$ in $\Uu_{<z}$.

\begin{subcase}[There an edge $uu_p$ colored $1$ in $\Uu_{<z}$]
~\newline
In this case, in the coloring $\beta'$, the edge $uu_p$ is now colored $c'$, and the edge $uu_{l-h+1}$ is now colored $1$. In the coloring $\beta'$, the fan $X_u(4)$ is now a cycle since it contains the vertex $u_{l-h}$ which is still missing the color $c'$, and $X_v(c')$ now contains the vertex $z$ which is still missing the color $4$. The fan $X_v(4)$ is still a cycle containing also the vertex $z$, and the fan $\Uu'$ now contains an edge $uu_{l-p}$ colored $1$ such that $p\leq t$. 

We now consider the components of $K(1,4)$. If the vertex $z$ does not belong to the component $K_v(1,4)$, then we swap it to obtain a coloring $\beta''$ where $X_v(4)$ is now a path. Let $\Uu'' = X^{\beta''}_u(3) = (uu''_1,\cdots,uu''_{l''})$. If $u\not \in K_v(1,4)$, then $\beta''(uu_{l-p}) = \beta''(uu''_{l''-t}) = 1$, but $p \leq t$, and $P(j)$ is true for all $j\leq t$, so by Lemma~\ref{lem:P(i-1)_and_P_weak(i)_no_path} we have a contradiction. Similarly, if $u\in K_v(1,4)$, then now $\beta''(uu_{l-t}) = 1$. Since $\beta''$ is $\bigcup\limits_{j \in [0,t-1]}X_v(\beta(uu_j))$-equivalent to $\beta'$, by Lemma~\ref{lem:not_changing_last_vertices}, for any $j\leq t$, $u''_{l''-j} = u'_{l'-t}$. So the edge $uu_{l-t}$ is exactly the edge $uu''_{l''-t}$. This edge is colored $1$, and $P(j)$ is true for all $j\leq t$, so by Lemma~\ref{lem:P(i-1)_and_P_weak(i)_no_path}, we have a contradiction. 

So the vertex $z$ belongs to $K_v(1,4)$, and therefore the vertex $z_s$ does not belong to this component. We now swap the component $C_{1,4} = K_{z_s}(1,4)$, and denote by $\beta''$ the coloring obtained after the swap.  Let $\Uu'' = X^{\beta''}_u(3) = (uu''_1,\cdots,uu''_{l''})$. Whether or not the vertex $u$ belongs to the component $C_{1,4}$, the fan $X_u(3)$ contains an edge $uu''_{l''-j}$ colored $1$ where $j\leq t$ (if $u$ belongs to the component, $\beta''(uu''{l''-t}) = 1$, and $\beta''(uu''_{l''-p}) = 1$). Moreover, we have that the fan $X_u(4)$ is a cycle containing $z$, the fan $X_v(4)$ is a cycle containing $z$, the fan $X_z(c)$ is a path, and , and the property $P(j)$ is true for all $j\leq t$, so by Lemma~\ref{lem:P(i)_plus_independent_cycle_X_ui_not_path}, we have a contradiction.
\end{subcase}

\begin{subcase}[There is no edge colored $1$ in $\Uu_{<z}$]
~\newline
In this case, the coloring $\beta'$ is $\Uu_{\leq z}$-equivalent to $\beta$. We now consider the components of $K(1,4)$. If $z$ does not belong to $K_v(1,4)$, then we swap the component $K_z(1,4)$ and obtain a coloring where $X_u(3)$ still contains the vertex $z$ which is now missing the color $1$. In the coloring, the cycle $\Vv$ is still a minimum cycle since $\beta'$ is $\Vv$-equivalent to $\beta$, so by Lemma~\ref{lem:fans_around_Vv}, we have a contradiction.

So the vertex $z$ belongs to $K_v(1,4)$, and thus $z_s$ does not belong to this component. We now swap the component $C_{1,4} = K_{z_s}(1,4)$, and denote by $\beta''$ the coloring obtained after the swap. The coloring $\beta''$ is $\Uu_{\leq z}$-equivalent to $\beta'$, so $z\in X_u(3)$. However, now the fan $X_z(c)$ is a path, by Lemma~\ref{lem:X_u_i(beta_(Vv)_cup_beta(Uu_<u_i))_not_a_path}, we have a contradiction.
\end{subcase}
\end{case}

\begin{case}[$m(z_s) = c' = m(u_{l-t-2})$]
~\newline
In this case, since $c'\not\in \beta(\Vv)$, without loss of generality, we assume that $c' = 5$. We now consider the components of $K(1,5)$. If $u_{l-t-2}$ does not belong to $K_{z}(4,5)$, then we swap the component $C_{4,5} = K_{u_{l-t-2}}(4,5)$, and denote by $\beta'$ the coloring obtained after the swap. Let $\Uu' = X^{\beta'}(3) = (uu'_1,\cdots,uu'_{l'})$. The coloring $\beta'$ is $(\Vv\cup\Uu_{>z})$-equivalent to $\beta$, and for any $j\leq t$, $u'_{l' - j} = u_{l-j}$, and $u'_{l'-j} = u_{l-j-1}$ otherwise. Note that this means that $l' = l-1$, \textit{i.e.} $|\Uu'| = |\Uu| -1$. If the color $5$ is not in $X^{\beta}_v(4)$, then $X_v(4)$ is still a cycle containing $z$, and thus it does not contain $u_{l-t-2} = M(X_u(3),4)$, since the property $P(t)$ is true, we have a contradiction.

So the color $5$ is in $X^{\beta}_v(4)$. If $v$ belongs to $C_{4,5}$, then we are a in case similar to the previous one where $X^{\beta'}_v(4)$ is a cycle containing $z$, and thus which does not contain $u_{l-t-2} = M(X_u(3),4)$. Since $P(t)$ is true, we have a contradiction. So $v$ does not belong to $C_{4,5}$, and now, in the coloring $\beta'$, the fan $X_v(5)$ is a comet containing the vertices $z$ and $u_{l-t-2}$ that are both missing the color $4$. We now consider the components of $K(1,4)$. Since the property $P(t)$ is true, $X_v(4)$ is a saturated cycle, to $u_{l-t-2}$ belongs to $K_v(1,4)$, and thus $z$ does not belong to this component. We now swap the component $K_z(1,4)$, and obtain a coloring where $\{uz\}$ is a $(\Vv,u)$-independent subfan avoiding $v$, and where $X_v(5)$ is a path containing $z$, by Lemma~\ref{lem:(Vv,u)-independent_subfan_avoiding_v} we have a contradiction.

So the vertex $u_{l-t-2}$ belongs to the component $K_z(4,5)$, and therefore, the vertex $x_s$ does not belong to this component. We now swap the component $K_{z_s}(4,5)$, to obtain a coloring $(\Vv\cup \Uu)$-equivalent to $\beta$, where $X_z(c)$ is now a path, by Lemma~\ref{lem:X_u_i(beta_(Vv)_cup_beta(Uu_<u_i))_not_a_path}, we again get a contradiction.
\end{case}
\end{proof}

So the fan $\Zz$ is entangled with $\Vv$ and $\Uu_{\leq u_{l-t-2}}$. We now prove that it is not a comet. Assume that $\Zz$ is a comet, then there exists $h<r$ such that $m(z_h) = m(z_r)= c$. By Proposition~\ref{prop:Zz_entangled_with_Vv_and_Uu_>i}, $c\not\in \beta(\Vv)\cup \beta(\Uu_{\geq u_{l-2-t}}$. Without loss of generality, we assume that $c = 5$, and we now consider the components of $K(1,5)$. The vertices $z_h$ and $z_r$ are not both part of $K_v(1,5)$. 

If $z_h$ does not belong to $K_v(1,5)$, then we swap $C_{1,5} = K_v(1,5)$, and denote by $\beta'$ the coloring obtained after the swap. Let $\Uu' = X^{\beta'}_u(3) = (uu'_1,\cdots, uu'_{l'})$. The property $P(j)$ is true for all $j\leq t$, so the coloring $\beta'$ is $(\bigcup\limits_{j \in [0,p]} X_v(\beta(uu_{l-j})))$-equivalent to $\beta$ since each of these fans are saturated cycle. Hence by Lemma~\ref{lem:not_changing_last_vertices}, for any $j\leq (t+1)$, $u'_{l'-j} = u_{l-j}$. In particular, $z = u'_{l'-t-1}$. If the vertex $z$ does not belong to $C_{1,5}$ or $c\neq 1$, then the coloring $\beta'$ is $\Zz_{<z_h}$ equivalent to $\beta$. The fan $X_z(c)$ now contains the vertex $z_h$ which is missing the color $1$, by Proposition~\ref{prop:Zz_entangled_with_Vv_and_Uu_>i} we have a contradiction. So the vertex $z$ belongs to $C_{1,5}$, and $c = 1$. Thus, in the coloring $\beta'$, the edge $zz_{1}$ is now colored $5$, and the edge $zz_{s+1}$ is now colored $1$. If the vertex $z_r$ belongs to the component $C_{1,5}$, it is now missing the color $1$ in the coloring $\beta'$, and $X_z(1)$ is now a fan that contains this vertex. So the fan $X_v(1)$ is not entangled with $\Vv$, a contradiction by Proposition~\ref{prop:Zz_entangled_with_Vv_and_Uu_>i}. If the vertex $z_r$ does not belong to the component, then the fan $X_z(1)$ now contains the vertex $z_s$ which is missing the color $1$, again, a contradiction by Proposition~\ref{prop:Zz_entangled_with_Vv_and_Uu_>i}.

So the vertex $z_h$ belongs to $K_v(1,5)$, and thus the vertex $z_r$ does not belong to the component. We now swap the component $C_{1,5} = K_{z_s}(1,5)$ and denote by $\beta'$ the coloring obtained after the swap. Similarly to the previous case, If the vertex $z$ does not belong to $C_{1,5}$, or if $c\neq 1$, then the coloring $\beta'$ is $\Zz_{<z_r}$-equivalent to the coloring $\beta$, and now $X_z(c)$ contains the vertex $z_r$ missing the color $1$, by Proposition~\ref{prop:Zz_entangled_with_Vv_and_Uu_>i} this is a contradiction. So the vertex $z$ belongs to $C_{1,5}$, and $c = 1$. In this case, the fan $X_z(1)$ stills contains the vertex $z_r$ which is missing the color $1$. Again by Proposition~\ref{prop:Zz_entangled_with_Vv_and_Uu_>i}, this is a contradiction.

Therefore, the fan $\Zz$ is a cycle entangled with $\Vv$ and $\Uu_{\geq u_{l-t-2}}$ and thus $Q(t)$ is true as desired.
\end{proof}

We are now ready to prove that $P(i)$ is true for all $i$.

\subsection{Proof of $P(i)$}

\begin{proof}[Proof of Lemma~\ref{lem:P(i)_is_true}]
Let $i\geq 0$, $\Vv$ be a minimum cycle in a coloring $\beta$, $u$ and $u'$ two vertices of $\Vv$, $\Uu = X_u(m(u')) = (uu_1,\cdots,uu_l)$, and assume that $P(i)$ not verified. Without loss of generality, we assume that $i$ is minimum, ans that the vertices $v$, $u$ and $u'$ are respectively missing the colors $1$, $2$ and $3$. By Lemma~\ref{lem:P(0)}, the property $P(0)$ is true, so $i>0$. Assume that $\beta(uu_{l-i})\neq 1$ and let $\Xx = X_v(\beta(uu_{l-i}))$. 

\begin{claim}\label{cl:beta(Uu_>u_l-i)_cap_beta(Vv)_is_empty}
There is no edge in $E(\Uu_{>u_{l-i}})$ colored with a color in $\beta(\Vv)$
\end{claim}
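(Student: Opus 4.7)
My plan is a proof by contradiction: suppose there is an edge $uu_s\in E(\Uu_{>u_{l-i}})$ whose color $c$ lies in $\beta(\Vv)$. The central idea I will exploit is that $\Uu$ is a cycle entangled with $\Vv$ (Lemma~\ref{lem:fans_around_Vv}), so I can \emph{re-center} $\Uu$ at a different ``last'' vertex and then invoke $P(j')$ for a suitable $j'<i$, which is available by the minimality of $i$.

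The main case to handle is $c\in\beta(\Vv)\setminus\{m(v)\}$. Entanglement forces $u_{s-1}$ to be the unique vertex $v_p\in V(\Vv)$ missing $c$, and applying Lemma~\ref{lem:fans_around_Vv} to the pair $(u,v_p)$ produces the fan $\Uu'' \eqdef X_u(m(v_p))$, which is nothing but the cyclic rotation of $\Uu$ starting at $uu_s$, \ie $\Uu'' = (uu_s,uu_{s+1},\ldots,uu_l,uu_1,\ldots,uu_{s-1})$. I will then set $j' \eqdef s+i-l-1$; since $l-i<s\leq l$ one obtains $0\leq j'<i$, and a direct index calculation shows that in $\Uu''$ the edge $uu_{l-i}$ occupies position $l-j'$ with the preceding vertex being $u_{l-i-1}$. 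Because $\beta(uu_{l-i})\neq m(v)$ by the hypothesis of $P(i)$ and $P(j')$ holds by minimality, applying $P(j')$ to the pair $(u,v_p)$ yields that $X_v(\beta(uu_{l-i}))$ is a saturated cycle containing $u_{l-i-1}$---which is precisely the conclusion of $P(i)$ for the original pair $(u,u')$, contradicting its failure.

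The case $c = m(v)$ I will treat as a preliminary reduction. Here entanglement (as in the analogous step of Lemma~\ref{lem:P(i)_implies_Q(i)}) forces $u_{s-1} = v$, so writing $u = v_h$ in $\Vv$ the preceding edge $uu_{s-1} = uv = vv_h$ has color $m(v_{h-1})\in\beta(\Vv)\setminus\{m(v)\}$. If $s-1>l-i$, then $uu_{s-1}$ is itself an offending edge of the main case above; otherwise $s-1 = l-i$, and then $\beta(uu_{l-i}) = m(v_{h-1})\in\beta(\Vv)\setminus\{m(v)\}$, so $X_v(\beta(uu_{l-i})) = \Vv$, which is saturated by Lemma~\ref{lem:minimum_cycle_saturated}, and by entanglement $u_{l-i-1} = u_{s-2} = v_{h-1}\in V(\Vv)$ lies on it---so $P(i)$ would hold after all, again a contradiction.

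The step I expect to be the main obstacle is the index bookkeeping around the rotation: identifying $\Uu''$ with the cyclic rotation of $\Uu$ and verifying that $j' = s+i-l-1$ is both in $[0,i)$ and places the edge $uu_{l-i}$ at the position $l-j'$ of $\Uu''$ with preceding vertex $u_{l-i-1}$. Once this correspondence is pinned down, the reduction to $P(j')$ is immediate.
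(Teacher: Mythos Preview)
Your proof is correct and follows essentially the same approach as the paper, which simply refers back to the proof of Claim~\ref{cl:beta(Uu_>z)_cap_beta(Vv)_is_empty} in Lemma~\ref{lem:P(i)_implies_Q(i)}: both arguments use the entanglement of $\Uu$ with $\Vv$ to identify $u_{s-1}$ with a vertex of $\Vv$ (or with $v$ itself when $c=m(v)$), cyclically rotate $\Uu$ to make that vertex the new last vertex, and then invoke the inductive hypothesis $P(j')$ for the resulting smaller index $j'<i$. Your index computation $j'=s+i-l-1$ and the handling of the boundary case $s-1=l-i$ are both accurate.
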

\begin{proof}
The proof is similar to the proof of Claim~\ref{cl:beta(Uu_>z)_cap_beta(Vv)_is_empty} of Lemma~\ref{lem:P(i)_implies_Q(i)}.
\end{proof}

We first prove that $P_{weak}(i)$ is true (\textit{i.e.} that $\Xx$ is not a path).

\begin{claim}
The property $P_{weak}(i)$ is true.
\end{claim}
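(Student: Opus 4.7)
The plan is to assume for contradiction that $\Xx = X_v(4)$ is a path, where $4 = \beta(uu_{l-i})$, and to extract a contradiction either with $P(j)$ or $Q(j)$ for some $j<i$ (violating minimality of $i$), with Lemma~\ref{lem:(Vv,u)-independent_subfan_avoiding_v}, or with Lemma~\ref{lem:fans_around_Vv}. First I would record the basic structural observations. Since $\Uu$ is a cycle entangled with $\Vv$ by Lemma~\ref{lem:fans_around_Vv} and $4 \notin \beta(\Vv)$ (otherwise $\Xx$ would coincide with the cycle $\Vv$), the vertex $u_{l-i-1}$, which is missing $4$, lies outside $V(\Xx) \cup V(\Vv) \cup \{v\}$. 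Letting $x_s$ be the last vertex of $\Xx$ and $c'$ its missing color, the path assumption forces $c' \neq 1$ (else $\Xx$ would be a cycle) and $c' \notin \beta(\Xx)$. By minimality of $i$ and Lemma~\ref{lem:P(i)_implies_Q(i)}, both $P(j)$ and $Q(j)$ hold for every $j<i$, so for every such $j$ with $\beta(uu_{l-j})\neq 1$ the fan $X_v(\beta(uu_{l-j}))$ is a saturated cycle containing $u_{l-j-1}$, and these cycles constrain the relevant Kempe components of $K(1,\cdot)$.

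The core step is a single Kempe swap on an appropriately chosen $(1,c')$-component, with a case split on where $c'$ sits. If $c' \in \beta(\Vv)\setminus\{1\}$, saturatedness of $\Vv$ (Lemma~\ref{lem:minimum_cycle_saturated}) puts $M(\Vv, c') \in K_v(1,c')$, so $x_s \notin K_v(1,c')$. If $c' \in \beta(\Uu_{>u_{l-i}})$, then $c' = m(u_{l-j})$ for some $j<i$ and $P(j)$ forces the relevant vertex into $K_v(1,c')$, again pushing $x_s$ out of this component. If $c'$ is genuinely fresh (not in $\beta(\Vv)\cup\beta(\Uu_{\geq u_{l-i}})$) one swaps $K_{x_s}(1,c')$ directly, arguing as in the analogous case analysis carried out inside Lemma~\ref{lem:edge_between_u_u'_not_path}. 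In every scenario the swap $K_{x_s}(1,c')$ produces a coloring $\beta'$ which is $\Vv$-equivalent to $\beta$, so $\Vv$ remains a minimum cycle by Observation~\ref{obs:Vv-minimum-stable}; moreover, using Claim~\ref{cl:beta(Uu_>u_l-i)_cap_beta(Vv)_is_empty} together with the fact that the saturated cycles $X_v(\beta(uu_{l-j}))$ for $j<i$ are themselves $K(1,\cdot)$-components, $\beta'$ is simultaneously $\Uu_{>u_{l-i}}$-equivalent and $\bigcup_{j<i} X_v(\beta(uu_{l-j}))$-equivalent to $\beta$.

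Lemma~\ref{lem:not_changing_last_vertices} then guarantees that the new fan $\Uu' = X^{\beta'}_u(3)$ still agrees with $\Uu$ on its last $i+1$ vertices, so the index $i$ is preserved and the problematic edge $uu_{l-i}$ persists at the same location in $\Uu'$. In $\beta'$ the vertex $x_s$ is now missing $1$, so $X^{\beta'}_v(4)$ gains $x_s$ with that missing color and either (i) becomes a path hitting $x_s$, which together with the initial segment $\Uu_{\leq u_{l-i-1}}$ exhibits a $(\Vv,u)$-independent subfan avoiding $v$ contradicting Lemma~\ref{lem:(Vv,u)-independent_subfan_avoiding_v}, or (ii) becomes a comet/cycle that places a vertex incorrectly, violating $Q(j)$ for some $j<i$.

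The principal obstacle is the bookkeeping around whether $u$ itself lies inside the swapped $(1,c')$-component. If $u \notin K_{x_s}(1,c')$ the fan $\Uu$ and all the intermediate saturated cycles are essentially untouched; if $u \in K_{x_s}(1,c')$ then the edge $uu_{l-i}$ and some earlier edge $uu_p$ with $\beta(uu_p)=1$ and $p<l-i$ both change color, so $\Uu$ reroutes through a new vertex missing $1$ and its length may drop. Controlling this second case by re-invoking Lemma~\ref{lem:not_changing_last_vertices} and the entanglement statements granted by $Q(j)$ for $j<i$, so that the last $i+1$ vertices of $\Uu'$ still coincide with those of $\Uu$, is what makes the contradiction survive either scenario and lets the inductive step close.
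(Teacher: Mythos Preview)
Your proposal rests on a misreading of what it means for the fan $\Xx = X_v(4)$ to be a \emph{path}. In the digraph $D_v$, the vertex $vx_s$ has out-degree zero precisely when $m(x_s) = m(v) = 1$, since there is then no edge incident to $v$ coloured $m(x_s)$ to point to. So the path assumption gives $c' = m(x_s) = 1$, not $c' \neq 1$; it is when $m(x_s) = 4 = \beta(vx_1)$ that $\Xx$ is a cycle. (Compare the proof of Lemma~\ref{lem:only_cycles}: ``Assume that $\Ww$ induces a path, so $m(v) = m(w_t) = 1$.'') Consequently your central move, swapping the component $K_{x_s}(1,c')$, is vacuous: with $c'=1$ there is no such bichromatic component, and the entire case split on the location of $c'$ collapses. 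None of the subsequent applications of Lemma~\ref{lem:not_changing_last_vertices} or the contradiction via Lemma~\ref{lem:(Vv,u)-independent_subfan_avoiding_v} are reachable from this starting point.

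The paper's argument is structurally quite different. It first inverts $\Xx$ down to a single edge $vz$ and then swaps that edge, so that $m(v)$ becomes $4$; this is what creates a $(\Vv,u)$-independent subfan avoiding $v$ inside $X_u(1)$ at the vertex $z'$ of $\Uu'$ missing $4$. From there the proof uses $P(i-1)$ to locate $u'_{l'-i}$ in the saturated cycle $X_v(5)$, swaps $K_{z'}(4,5)$, and then crucially invokes $Q(i-1)$ (via Lemma~\ref{lem:P(i)_implies_Q(i)}) to find an edge between $u'_{l'-i}$ and $z$, whose colour drives a five-way case analysis. Your outline never changes $m(v)$ and never calls on $Q(i-1)$, so even with the definitional fix it would be missing the mechanism that actually closes the argument.
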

\begin{proof}
Assume that $\beta'(uu_{l-i})\neq 1$ and that $\Xx =X_v(\beta(uu_{l-i}))$ is a path. Then we have that $\beta(uu_{l-i})\not \in \beta(\Vv)$. Without loss of generality, we assume that $\beta(uu_{l-i}) = 4$. Moreover, we have that $m(u_{l-i})\neq 1$. Since $P(j)$ is true for all $j<i$, for all $j<i$, if $\beta(uu_{l-j})\neq 1$, then $X_u(\beta(uu_{l-j})$ is a saturated cycle. We now invert $\Xx$ until we reach a coloring where $X_v(4)$ is a path of length $1$; we denote by $z$ the only vertex of this coloring. Up to a relabeling of the colors, we assume that $v$ is also missing the color $1$ in $\beta'$. The coloring $\beta'$ is $\Vv$-equivalent to the coloring $\beta$, so $\Vv$ is the same minimum cycle in the coloring $\beta$. So by Lemma~\ref{lem:fans_around_Vv} the fan $\Uu' = X_u(m(u')) = (uu'_1,\cdots,uu'_{l'})$ is a cycle entangled with $\Vv$. Moreover, the coloring $\beta'$ is $(\bigcup\limits_{j<i}X_v(\beta(uu_{l-j})))$-equivalent to $\beta$, so by Lemma~\ref{lem:not_changing_last_vertices}, for any $j\leq i$, $u'_{l'-j} = u_{l-j}$, the fan $X_v(\beta'(uu'_{l'-j}))$ is a saturated cycle containing $u'_{l'-j-1}$. So in particular, $uu'_{l'-i} \in E(\Uu')$, and there is a vertex missing the color $4$ in $\Uu'$. Let $z'$ be this vertex. Note that since $X_v(4)$ is a path, for all $j<i$, $X_v(\beta'(uu'_{l-j}))$ does not contain the vertex $z'$.

We now swap the edge $vz$, and denote by $\beta''$ the coloring obtained after the swap. If the coloring $\beta''$ is $\Uu'$-equivalent to $\beta'$, then it means that $v\not\in V(\Uu')$. So in the coloring $\beta''$ the fan $X_u(3) = \Uu'$ contains the vertex $z'$ which is still missing the color $4$. This color is also the missing color of the vertex $v$. Thus, $\Uu'$ is not entangled with $\Vv$, and by Lemma~\ref{lem:fans_around_Vv}, we have a contradiction.

So the vertex $v$ belongs to $V(\Uu')$, and in the coloring $\beta'$, the fan $X_u(1)$ contains the vertex $z'$ which is missing the color $4$. If there is an edge $uu''$ of $E(V_u(1)_{\leq z'})$ colored with a color of $\beta''$, then $X_u(\beta''(uu''))$ is not entangled with $\Vv$, so by Lemma~\ref{lem:fans_around_Vv}, we have a contradiction. Therefore, the subfan $X_u(1)_{\leq z'}$ is a $(\Vv,u)$-independent subfan avoiding $v$. The coloring $\beta''$ is $\Uu'_{\leq v}$-equivalent to the coloring $\beta'$, so in the coloring $\beta''$, the fan $\Uu'' = X_u(3)$ is equal to $(uu'_1,\cdots,uv,uu'_{l'-i)},\cdots, uu'_{l'} = uu')$.

Since $P(j)$ is true for all $j<i$, for all $j<i$ the fan $X_v(\beta''(uu'_{l'-j}))$ is a saturated cycle containing $u'_{l'-j-1}$. In particular, the fan $X_v(\beta''(uu'_{l'-(i-1)})$ is a saturated cycle containing $u'_{l'-i}$. Without loss of generality, we assume that $m(u'_{l'-i}) = 5$. The vertex $u'_{l'-i}$ belongs to the component $K_v(4,5)$, so the vertex $z'$ does not belong to this component. We now swap the component $C_{4,5} = K_{z'}(4,5)$, and denote by $\beta_3$ the coloring obtained after the swap. Note that $\beta_3$ is $\Vv$-equivalent to $\beta''$, so by Observation~\ref{obs:Vv-minimum-stable} the cycle $\Vv$ is the same minimum cycle in the coloring $\beta_3$. The coloring $\beta_3$ is also $\Uu''$-equivalent to $\beta''$, so we still have that $X_u(3) = (uu'_1,\cdots,uv,uu'_{l'-i},\cdots,uu'_{l'} = uu')$.

If the vertex $z$ does not belong to $C_{4,5}$, then we can swap back the edge $vz$. The fan $X_u(3) = X_u(1)$ still contains the vertex $z'$ which is missing the color $5$, and $X_v(\beta''(uu'_{l'-i-1}))$ is still a saturated cycle containing the vertex $u'_{l'-i}$. Since $P(i-1)$ is true, we have a contradiction. So the vertex $z$ belongs to $C_{4,5}$, and in the coloring $\beta_3$ the vertex $z$ is missing the color $5$.

Since the property $P(j)$ is true for all $j<i$, by Lemma~\ref{lem:P(i)_implies_Q(i)}, the property $Q(i-1)$ is true, and so the fan $X_{u'_{l-i}}(2)$ is a cycle containing $z'$, and therefore there is an edge $u'_{l'-i}z$. We denote by $c'$ the color of this edge. We now swap this edge, and denote by $\beta_4$ the coloring obtained after the swap. The coloring $\beta_4$ is $\Vv$-equivalent to $\beta_3$, so the fan $\Vv$ is the same minimum cycle in the coloring $\beta_4$ by Observation~\ref{obs:Vv-minimum-stable}. The coloring $\beta_4$ is also $X_u(3)_{< u'_{l'-i}}$, so the vertex $u'_{l'-i}$ is still in $X_u(3)$. Note that now the vertex $u'_{l'-i}$ and $z'$ are both missing the color $c'$. We now have to distinguish the case.

\begin{case}[$c' = 1$]
~\newline
In this case, the fan $X_u(1)$ contains the vertex $z'$ missing the color $1$, and the fan $X_u(3)$ contains the vertex $u'_{l'-i}$ missing the color $1$. So the fan $X_u(3)$ is a comet containing two vertices missing the color $1$, so by Lemma~\ref{lem:fans_around_Vv}, we have a contradiction.
\end{case}
\begin{case}[$c'\in\beta_3(\Vv)$]
~\newline
In this case, since $u'_{l'-i}\in V(X_u(3))$, the fan $X_u(3)$ is not entangled with $\Vv$, so by Lemma~\ref{lem:fans_around_Vv}, we have a contradiction.
\end{case}

Without loss of generality, we now assume that $c' = 6$.

\begin{case}[$6\in\beta_3(X_u(3)_{<u'_{l'-i}})$]
~\newline
In this case, the fan $X_u(3)$ is now a comet where two vertices are missing the color $6$, thus by Lemma~\ref{lem:fans_around_Vv}, we also have a contradiction.
\end{case}

\begin{case}[$6 \in \beta_3(X_u(3)_{>u'_{l'-i}})$]
~\newline
Let $t<i$ such that $m^{\beta_3}(u'_{l'-t}) = 6$. Since $P(t-1)$ is true, in the coloring $\beta_3$, the fan $X_v(6)$ is a cycle containing $u'_{l'-t}$. Since $P(i-1)$ is true, the fan $X_v(5)$ is a cycle containing $u'_{l'-i}$. We first prove that in the coloring $\beta_3$, we have $X_v(5) = X_v(6)$. In the coloring $\beta_4$, the vertex $u'_{l'-i}$ is missing the color $6$, so the fan $X_u(3)$ is equal to $(uu'_1,\cdots, uv,uu'_{l'-i},uu'_{l'-(t-1)},\cdots, uu'_{l'} = uu')$, and $u'_{l'-i}$ is now the vertex missing the color $6$ in this cycle. Since $P(t-1)$ is true, the fan $X_v(6)$ is now a cycle containing $u'_{l'-i}$. The only vertices whose missing color is different in $\beta_3$ and $\beta_4$ are the vertices $u'_{l'-i}$ and $z'$. In the coloring $\beta_3$, since $z'\not\in X_v(6)$, if $u'_{l'-i}\not\in V(X_v(6))$ the coloring $\beta_4$ is $X_v(6)$-equivalent to the coloring $\beta_3$. This means that in the coloring $\beta_4$, the fan $X_v(6)$ is a cycle containing the vertex $u'_{l'-t}$, and not containing $u'_{l'-i}$, a contradiction. So in the coloring $\beta_3$, the vertex $u'_{l'-i}$ belongs to $X_v(6)$, and thus $X_v(5) = X_v(6)$ as desired. 

So, in the coloring $\beta_3$, the cycle $X_v(5)$ contains the vertex $u'_{l'-t}$ which is missing the color $6$. We now consider the coloring $\beta_4$. The fan $X_v(5)$ still contains the vertex $u'_{l'-t}$ which is still missing the color $6$. The fan $X_v(6)$ is a saturated cycle containing the vertex $u'_{l'-i}$, so the fna $X_v(5)$ is a comet containing $X_v(6)$ as a subfan. The cycle $X_v(6)$ is saturated, so $u'_{l'-i}$ belongs to $K_v(4,6)$, and thus $z'$ does not belong to this component.

We now swap the component $C_{4,6} = K_{z'}(4,6)$, and denote by $\beta_5$ the coloring obtained after the swap. The coloring $\beta_5$ is $\Vv$-equivalent to $\beta_4$, so the fan $\Vv$ is the same minimum cycle in the coloring $\beta_5$. Since the vertex $u'_{l'-i}\not\in C_{4,6}$, and $\beta_4(uu'_{l'-}) = 4$, the vertex $u$ does not belong either to $C_{4,6}$, and therefore the coloring $\beta_5$ is $X_u(3)$-equivalent to the coloring $\beta_4$. The fan $X_u(1)$ still contains the vertex $z'$ which is now missing the color $4$, so the subfan $X_u(1)_{\leq z'}$ is a $(\Vv,u)$-independent subfan avoiding $v$. By Lemma~\ref{lem:(Vv,u)-independent_subfan_avoiding_v}, the fan $X_v(1)$ is a path that does not contain $z'$. In the coloring $\beta_5$, the vertex $z$ is still missing the color $5$, and we still have $\beta_5(vz) = 1$. If the vertex $u'_{l'-t}$ does to belong to $C_{4,6}$, then the coloring $\beta_5$ is $X_v(5)$-equivalent to $\beta_4$, and therefore the fan $X_v(1)$ is a comet containing $X_v(6)$ as a subfan. So the vertex $u'_{l'-t}$ belongs to the component $C_{4,6}$, and it is now missing the color $4$.

In the coloring $\beta_5$, the fan $X_u(5)$ still contains the vertex $u'_{l'-t}$ which is now missing the color $4$. So there is no edge $uu''$ in $E(X_u(5)_{\leq u'_{l'-t}}))$ colroed with a color in $\beta_5(\Vv)$, otherwise, $X_u(\beta_5(uu''))$ is not entangled with $\Vv$, and by Lemma~\ref{lem:fans_around_Vv} we have a contradiction. So the subfan $X_u(5)_{\leq u'_{l'-t}}$ is a $(\Vv,u)$-independent subfan avoiding $v$. By Lemma~\ref{lem:(Vv,u)-independent_subfan_avoiding_v}, the fan $X_v(5)$ is a path that does not contain $u'_{l'-t}$, a contradiction.
\end{case}

\begin{case}[$6 \not \in \beta_3(X_u(3))\cap\beta_3(\Vv)\cup\{1\}$]
~\newline
In the coloring $\beta_4$, the vertex $u'_{l'-i}$ is missing the color $6$, and $uu'_{l'-i}\in E(X_u(3))$. So we have $X_u(6) = X_u(3)$. Since the fan $X_u(5)$ also contains the vertex $u'$, either $X_u(5) = X_u(6) = X_u(3)$, or $X_u(5)$ is a comet which contains $X_u(3)$ as a subfan.
\begin{subcase}[$X_u(5) = X_u(3)$]
~\newline
Let $z''$ be the vertex of $X_u(3)$ missing the color $5$. Note that we may have $z'' = z$. Since $P(i-1)$ is true, the fan $X_v(5)$ is now a cycle containing $z''$. But $u'_{l'-i}$ is the only vertex whose missing color is different in $\beta_3$ and $\beta_4$, so in the coloring $\beta_4$, the fan $X_v(5)$ still contains the vertex $u'_{l'-i}$ which is now missing the color $6$. Therefore, the fan $X_v(6)$ is equal to the fan $X_v(5)$ and is a saturated cycle containing $z''$ and $u'_{l'-i}$. The vertex $z$ is still missing the color $5$, so the fan $X_v(1)$ is now a comet containing $X_v(5)$ as a subfan.

Since the fan $X_v(6)$ is saturated, the vertex $u'_{l'-i}$ belongs to the component $K_v{4,6}$, and thus the vertex $z'$ does not belong to this component. We now swap the component $C_{4,6} = K_{z'}(4,6)$, and denote by $\beta_5$ the coloring obtained after the swap. The coloring $\beta_5$ is $\Vv$-equivalent to $\beta_4$, so the fan $\Vv$ is a minimum cycle in this coloring. Now the fan $X_u(1)$ still contains the vertex $z'$ which is now missing the color $4$, so it is a $(\Vv,u)$-independent subfan avoiding $v$ so by Lemma~\ref{lem:(Vv,u)-independent_subfan_avoiding_v}, the fan $X_v(1)$ is a path. But the coloring $\beta_5$ is also $X_v(1)$-equivalent to the coloring $\beta_4$, so the fan $X_v(1)$ is a comet. This is a contradiction.
\end{subcase}

\begin{subcase}[$X_u(5)$ is a comet containing $X_u(3)$]
~\newline
Let $u'_{l'-t}$ be the first vertex of $X_u(5)$ which is not in $X_u(3)$, and let $z''$ be the vertex of $X_u(3)$ missing the color $c_t = m(u'_{l'-t})$. In the coloring $\beta_3$, since $P(t-1)$ is true, the fan $X_v(c_t)$ is a saturated cycle containing $u'_{l'-t}$. If the coloring $\beta_4$ is $X_v(c_t)$-equivalent to the coloring $\beta_3$, then in the coloring $\beta_4$ the fan $X_v(c_t)$ still contains the vertex $u'_{l'-t}$, and thus does not contain the vertex $z''$. Since $P(t-1)$ is true, we have a contradiction.

So the coloring $\beta_4$ is not $X_v(c_t)$-equivalent to the coloring $\beta_3$. Since $u'_{l'-i}$ and $z'$ are the only vertices whose missing color are different in $\beta_3$ and $\beta_4$, and $z'\not\in V(X_v(c_t))$, we have that $u'_{l'-i}\in V(X_v(c_t))$. In the coloring $\beta_3$ the vertex $u'_{l'-i}$ is also in $X_v(5)$, so in this coloring we have $X_v(5) = X_v(c_t)$. Therefore, the vertex $u'_{l'-t}$ also belongs to $X_v(5)$ in the coloring $\beta_4$.

In the coloring $\beta_4$, the vertex $u'_{l'-i}$ is now missing the color $6$, and since $P(t-1)$ is true, the fan $X_v(c_t)$ is a saturated cycle containing the vertex $z''$. So in this coloring, we have $X_v(c_t) = X_v(6)$. However, in this coloring, the vertex $u'_{l'-t}$ still belongs to $X_v(5)$, it also belongs to $X_u(5)$ and is still missing hte color $c_t$. The cycle $X_v(c_t)$ is saturated, so the vertex $z''$ belongs to the component $K_v(4,c_t)$, and thus the vertex $u'_{l'-t}$ does not belong to this component. We now swap the component $K_{u'_{l'-t}}(4,c_t)$ and denote by $\beta_5$ the coloring obtained after the swap.

The coloring $\beta_5$ is $\Vv$-equivalent to $\beta_4$, so by Observation~\ref{obs:Vv-minimum-stable}, the fan $\Vv$ is a minimum cycle in the coloring $\beta_5$. The coloring $\beta_5$ is also $((X_u(5)\cup X_v(5))\setminus \{u'_{l'-t}\})$-equivalent to the coloring $\beta_4$, so the vertex $u'_{l'-t}$ still belongs to both $X_u(5)$ and $X_v(5)$. So the subfan $X_u(5)_{\leq u'_{l'-t}}$ is a $(\Vv,u)$-independent subfan avoiding $v$, by Lemma~\ref{lem:(Vv,u)-independent_subfan_avoiding_v}, the fan $X_v(5)$ is a path that does not contain $u'_{l'-t}$. Again we have a contradiction.
\end{subcase}
\end{case}
\end{proof}

By the previous claim, we have that $\Xx$ is not a path, we now prove that it is not a comet. Assume that $\Xx = (vx_1,\cdots, vx_t)$ is a comet where $x_s$ and $x_t$ are missing the same color $c_s$. Since $x_s$ and $x_t$ are both missing the color $c_s$ at least one of them is not in $K_v(1,c_s)$. Since $P(j)$ is true for all $j<i$, for all $j<i$, if $\beta(uu_{l-j})\neq 1$, then $X_v(\beta(uu_{l-j}))$ is a cycle, so $\beta(\Xx)\cap (\bigcup\limits_{j\in [0,i-1]}\beta(X_v(\beta(uu_{l-j})))) = \emptyset$.

\begin{case}[$c_s \not\in \beta(\Uu_{u>_{l-1}})$]
~\newline
If $x_s$ is not in $K_v(1,c_s)$, then we swap the component $C_{1,c_s} = K_{x_s}(1,c_s)$ and obtain a coloring $\beta'$ which is $\Vv$-equivalent to $\beta$, so the fan $\Vv$ is the same minimum cycle in the coloring $\beta'$. In the coloring $\beta'$, the fan $X_v(4)$ is now a path. Moreover, $c_s\not\in\beta(\Uu_{\geq u_{l-i}})$, and by Claim~\ref{cl:beta(Uu_>u_l-i)_cap_beta(Vv)_is_empty}, so $1\not\in \beta(\Uu_{\geq u_{l-i}})$. Therefore, the coloring $\beta'$ is $\Uu_{\geq u_{l-i}}$-equivalent to the coloring $\beta$. Let $\Uu' = X^{\beta'}_u(3) = (uu'_1,\cdots,uu'_{l'})$. Since $\beta(\Xx)\cap (\bigcup\limits_{j\in [0,i-1]}\beta(X_v(\beta(uu_{l-j})))) = \emptyset$, the coloring $\beta'$ is $(\bigcup\limits_{j\in [0,i-1]}\beta(X_v(\beta(uu_{l-j})))$-equivalent to $\beta$. So by Lemma~\ref{lem:not_changing_last_vertices}, for all $j\leq i$, we have $u'_{l'-i} = u_{l-i}$.  In particular $u'_{l'-i} = u_{l-i}$. Since $\beta'(uu'_{l'-i}) = 4$, and $P{weak}(i)$ is true, the fan $X_v(4)$ is not a path.

Similarly, if $x_t\not\in K_v(1,c_s)$, we swap the component $C_{1,c_s} = K_{x_t}(1,c_s)$. Note that $\Vv$ is a minimum cycle, so it is saturated by Lemma~\ref{lem:minimum_cycle_saturated}, and thus $x_t\not\in V(\Vv)$. The coloring $\beta'$ is therefore $\Vv$-equivalent to $\Vv$, so the fan $\Vv$ is the same minimum cycle in this coloring. The fan $X_v(4)$ is now a path the coloring $\beta'$. Let $\Uu' = X^{\beta'}_u(3) = (uu'_1,\cdots,uu'_{l'})$. Similarly to the previous case, the coloring $\beta'$ is $\Uu_{>u_{l-i}}$-equivalent to $\beta$ and $(\bigcup\limits_{j\in [0,i-1]}\beta(X_v(\beta(uu_{l-j})))$-equivalent to $\beta$. So by Lemma~\ref{lem:not_changing_last_vertices}, for all $j\leq i$, we have $u'_{l'-j} = u_{l-j}$. In particular, $u'_{l'-i} = u_{l-i}$, and $\beta'(uu'_{l'-i}) = 4$. Since $P_{weak}(i)$ is true, the fan $X_v(4)$ is not a path, a contradiction.
\end{case}

\begin{case}[$c_s \in \beta(\Uu_{>u_{l-1}})$]
~\newline
Let $t'$ be such that $m(u_{l-t'}) = c_s$. Since $P(j)$ is true for all $j<i$, the fan $X_v(c_s)$ is saturated cycle containing $u_{l-t'} = x_t$. So the vertex $x_s$ does not belong to $K_v(1,c_s)$. We now swap the component $C_{1,c_s} = K_{x_s}(1,c_s)$ to obtain a coloring $\beta'$ where $X_v(4)$ is now a path. The coloring $\beta'$ is $\Vv$-equivalent to $\beta$, so by Observation~\ref{obs:Vv-minimum-stable}, the cycle $\Vv$ is the dame minimum cycle in the coloring $\beta'$. If the vertex $u$ does not belong to $C_{1,c_s}$, then the coloring $\beta'$ is also $\Uu$-equivalent to $\beta$, and thus $X_u(3) = \Uu$. Since $\beta'(uu_{l-i}) = 4$, and $P_{weak}(i)$ is true, the fan $X_v(4)$ is not a path. this is a contradiction.

So the vertex $u$ belongs to $C_{1,c_s}$, and in the coloring $\beta'$, the edge $uu_{l-(t'-1)}$ is now colored $1$. Let $\Uu' = X^{\beta'}_u(3)$. The coloring $\beta'$ is $\Uu_{>u_{l-t'}}$-equivalent to $\beta$. The coloring $\beta'$ is also $(\bigcup\limits_{j\in [0,t'-1]}\beta(X_v(\beta(uu_{l-j})))$-equivalent to $\beta$, so by Lemma~\ref{lem:not_changing_last_vertices}, for any $j\leq t'$ we have $u'_{l'-j} = u_{l-j}$. In particular, $u'_{l'-(t'-1)} = u_{l-(t'-1)}$. Now the edge $uu'_{l'-(t'-1)}$ is colored $1$, and the fan $X_v(4)$ is a path. Since $P(j)$ is true for all $j\leq t'$, by Lemma~\ref{lem:P(i-1)_and_P_weak(i)_no_path} there is not path around $v$, a contradiction. 
\end{case}

So the fan $\Xx = (vx_1,\cdots,vx_t)$ is a cycle, we now prove that it is saturated. Otherwise, there exists $x_s$ such that $x_s\not\in K_v(1,m(x_s))$. Note that since $P(j)$ is true for all $j<i$, for all $j<i$, the fan $X_v(\beta(uu_{l-j}))$ is a saturated cycle, so $\beta(X_v(\beta(uu_{l-j})))\cap \beta(\Xx) = \emptyset$, and in particular $x_s\not\in \beta(\Uu_{>u_{l-i}})$.

\begin{case}[$m(x_s) \neq 4$]
~\newline
Without loss of generality, assume that $m(x_s) = 5$. Since $x_s$ does not belong to $K_v(1,5)$, we swap the component $C_{1,5} = K_{x_s}(1,5)$ and obtain a coloring $\beta'$ where $X_v(4)$ is a path. The coloring $\beta'$ is $\Vv$-equivalent to $\beta$, so by Observation~\ref{obs:Vv-minimum-stable}, the cycle $\Vv$ is the same minimum cycle in the coloring $\beta'$. Let $\Uu' = X_u^{\beta'}(3) = (uu'_1,\cdots, uu'_{l'})$. Moreover, $5\not\in\beta(\Uu_{\geq u_{l-i}})$, and by Claim~\ref{cl:beta(Uu_>u_l-i)_cap_beta(Vv)_is_empty}, the color $1$ does not appear either in $\Uu_{>u_{l-i}}$. The coloring $\beta'$ is also $(\bigcup\limits_{j\in [0,i-1]}\beta(X_v(\beta(uu_{l-j})))$-equivalent to $\beta$, so by Lemma~\ref{lem:not_changing_last_vertices}, for any $j\leq i$, we have $u'_{l'-j} = u_{l-j}$. In particular, $u'_{l'-i} = u_{l-i}$. The edge $uu'_{l'-i}$ is still colored $4$ in the coloring $\beta'$ and the property $P_{weak}(i)$ is true, so $X_v(4)$ is not a path, a contradiction.
\end{case} 

\begin{case}[$m(x_s) = 4$]
~\newline
In this case, we swap the component $C_{1,4} = K_{x_s}(1,4)$ and denote by $\beta'$ the coloring obtained after the swap. If the vertex $u$ does not belong to this component, then we are in a coloring similar to the previous case. So the vertex $u$ belongs to $C_{1,4}$, and we have $\beta'(uu_{l-i}) = 1$. In the coloring $\beta'$ is $\Vv$-equivalent to $\beta$, so by Observation~\ref{obs:Vv-minimum-stable}, the cycle $\Vv$ is the same minimum cycle in this coloring. The fan $X_v(4)$ is now a path in the coloring $\beta'$. Let $\Uu' =X^{\beta'}_u(3) =  (uu'_1,\cdots, uu'_{l'})$. The coloring $\beta'$ is $\Uu_{>u_{l-i}}$-equivalent to $\beta$, and is also $(\bigcup\limits_{j\in [0,i-1]}\beta(X_v(\beta(uu_{l-j})))$-equivalent to $\beta$. So by Lemma~\ref{lem:not_changing_last_vertices} for all $j\leq i$, we have $u'_{l'-j} = u_{l-j}$. In particular $u'_{l'-i} = u_{l-i}$. The property $P(j)$ is true for all $j<i$,a nd $P_{weak}(i)$ is also true, so by Lemma~\ref{lem:P(i-1)_and_P_weak(i)_no_path} there is no path around $v$. This is a contradiction.
\end{case}

So the fan $\Xx = (vx_1,\cdots,vx_t)$ is a saturated cycle, and thus $x_t\in K_v(1,4)$. Since $P(i)$ is false, we have $x_t\neq u_{l-i-1}$. So the vertex $u_{l-i-1}$ which is also missing the color $4$ does not belong to $K_v(1,4)$. We now swap the component $C_{1,4} = K_{u_{l-i-1}}(1,4)$ and denote by $\beta'$ the coloring obtained after the swap. Be Lemma~\ref{lem:u_i_not_in_K_v(m(u_i),m(v))_generalized}, the vertex $u$ belongs to $C_{1,4}$, there is an edge $uu''$ colored $1$ in $\Uu_{<u_{l-i}}$, and the subfan $X_u(1)_{\leq u_{l-i}}$ is a $(\Vv,u)$-independent subfan. So in the coloring $\beta'$, the vertex $u_{l-i}$ is now missing the color $1$, the edge $uu''$ is now colored $4$, and the subfan $X_u(4)_{\leq u_{l-i}}$ is a $(\Vv,u)$-independent subfan avoiding $v$. By Lemma~\ref{lem:(Vv,u)-independent_subfan_avoiding_v}, the fan $X_v(4)$ is a path. However, the coloring $\beta'$ is $X_v(4)$-equivalent to the coloring $\beta$, so the fan $X_v(4)$ is a cycle, a contradiction.
\end{proof}

\section{Cycles interactions}\label{sec:cycles_interaction}

In this section we prove Lemma~\ref{lem:cycles_interactions}.

\begin{proof}
We first prove that all the three cycles are tight and saturated.

\begin{claim}\label{cl:V_X_Y_saturated}
The cycles $\Vv$, $\Xx$, and $\Yy$ are saturated and tight.
\end{claim}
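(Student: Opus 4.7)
The plan is to dispatch the three cycles in two groups, since $\Vv$ is a minimum cycle (so its properties are immediate from earlier results) while $\Xx$ and $\Yy$ are only known to be cycles around $v$ and require contradiction arguments driven by Lemma~\ref{lem:only_cycles}.

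First, for $\Vv$: because $\Vv$ is a minimum cycle in $\beta$, Lemma~\ref{lem:minimum_cycle_saturated} gives saturation and Observation~\ref{obs:tight} gives tightness, so this case is immediate.

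Next, for $\Xx$ (and symmetrically $\Yy$). The idea is that any failure of saturation or tightness can be "repaired" by a single Kempe swap that is stable on $\Vv$ and which, when pulled through the fan $X_v(\beta(vx_1))$, truncates that fan into a non-cyclic object. This contradicts Lemma~\ref{lem:only_cycles}, which asserts that every fan around $v$ must be a cycle as long as the minimum cycle $\Vv$ persists. Concretely, for saturation I would proceed as follows. Suppose $\Xx=(vx_1,\ldots,vx_t)$ is not saturated and pick an index $i$ with $x_i\notin K_v(m(v),m(x_i))$; observe that $m(x_i)\neq m(v)$ because $v$ has no edge colored $m(v)$. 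Swap the component $C=K_{x_i}(m(v),m(x_i))$. This component avoids $v$ by choice of $x_i$, and its $\Vv$-stability follows from the saturation of $\Vv$ proved in the first paragraph: if $m(x_i)\in\beta(\Vv)$, then the vertex of $\Vv$ missing $m(x_i)$ lies in $K_v(m(v),m(x_i))$ and so not in $C$, while other vertices of $\Vv$ cannot be endpoints of $C$ and their edges in $E(\Vv)$ are colored outside $\{m(v),m(x_i)\}$. In the resulting coloring $\beta'$, the cycle $\Vv$ is still minimum by Observation~\ref{obs:Vv-minimum-stable}, but now $x_i$ misses $m(v)$. Tracing $X_v(\beta(vx_1))$ in $\beta'$, the fan must terminate at or before $x_i$ and is therefore a path. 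This contradicts Lemma~\ref{lem:only_cycles}.

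For tightness of $\Xx$ and $\Yy$, I would run a parallel argument: if $\Xx$ fails tightness at index $i$, so that $x_i\notin K_{x_{i-1}}(m(x_{i-1}),m(x_i))$, swap $K_{x_i}(m(x_{i-1}),m(x_i))$. Again using the saturation/tightness of $\Vv$ to argue $\Vv$-stability, the new coloring has $x_i$ missing $m(x_{i-1})$; following $X_v(\beta(vx_1))$ one sees that either the fan has been turned into a comet that visits $x_{i-1}$ twice (via $x_i$), or, depending on whether the Kempe component passes through $v$, the first $i-1$ edges of $\Xx$ now form a sub-cycle around $v$ closed up by an edge that used to belong to $\Xx_{>x_i}$, producing a fan that is no longer a simple cycle. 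In either situation, Lemma~\ref{lem:only_cycles} (or the minimality of $\Vv$ applied to the shorter cycle thus produced) yields a contradiction.

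The main obstacle is the $\Vv$-stability analysis in the $\Xx,\Yy$ cases: we must rule out the swapped Kempe component meeting $E(\Vv)$ or disturbing any $m(v_j)$. This is precisely where the first paragraph pays off — the saturation and tightness of $\Vv$ control exactly where its relevant bichromatic chains go, keeping them outside of the components we need to swap. A secondary technical nuisance is the tightness case when the swapped component happens to contain $v$, which requires a careful re-tracing of the fan in $\beta'$ to confirm that the resulting fan is not simply a smaller cycle of the same type, but a structurally degenerate one forbidden by Lemma~\ref{lem:only_cycles}.
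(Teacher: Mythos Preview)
Your approach is essentially the paper's: handle $\Vv$ via Lemma~\ref{lem:minimum_cycle_saturated} and Observation~\ref{obs:tight}, and for $\Xx,\Yy$ perform a single Kempe swap that leaves $\Vv$ intact while producing a non-cycle fan around $v$, contradicting Lemma~\ref{lem:only_cycles}. The saturation case is exactly the paper's argument.

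In the tightness case there is a small but real wrinkle. You swap $K_{x_i}(m(x_{i-1}),m(x_i))$, whereas the paper swaps $K_{x_{i-1}}(m(x_{i-1}),m(x_i))$. This matters because $vx_i$ is coloured $m(x_{i-1})$, so $v$ is \emph{always} in $K_{x_i}$ and \emph{never} in $K_{x_{i-1}}$; your case split ``depending on whether the Kempe component passes through $v$'' is therefore vacuous, and with your choice the edges $vx_i,vx_{i+1}$ do get recoloured. After your swap the fan $X_v(\beta(vx_1))$ is not a comet at all --- it is the genuine smaller cycle $(vx_1,\ldots,vx_{i-1},vx_{i+1},\ldots,vx_t)$ --- so tracing that particular fan gives nothing, and ``minimality of $\Vv$'' does not forbid smaller cycles around $v$. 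The contradiction comes instead from the fan $X_v(m(x_i))$, which now starts at $vx_i$ (newly coloured $m(x_i)$), moves to $vx_{i+1}$, runs once around the smaller cycle, and closes up as a comet with $x_i$ and $x_{i-1}$ both missing $m(x_{i-1})$. So your swap does work, but not for the reason you sketch. The paper's choice of $K_{x_{i-1}}$ avoids the issue entirely: since $v$ is absent from that component, no edge of $\Xx$ is recoloured, only $m(x_{i-1})$ changes to $m(x_i)$, and the original fan $X_v(\beta(vx_i))$ is immediately a comet.
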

\begin{proof}
As the fan $\Vv$ is not invertible, it is saturated by Lemma~\ref{lem:minimum_cycle_saturated}. If $\Xx$ or $\Yy$ are not saturated (without loss of generality, we can assume that $\Xx$ is not saturated), then we swap a component $K_u(c_v,c_u)$ with $u$ in $\Xx$ and $u\not\in K_v(c_v,c_u)$ to transform $\beta$ into a coloring where $\Vv$ is still a cycle of the same size, and where a fan around $v$ is a path, by Lemma~\ref{lem:only_cycles}, $\Vv$ is invertible in this coloring, and so it is in the original coloring.
Similarly, assume that $\Xx$ or $\Yy$ is not tight, without loss of generality, we can assume that $\Xx$ is not tight. Then we can find two consecutive vertices of $\Xx$, $u_{i}$, and $u_{i-1}$ such that the component $K_{u_{i-1}}(m(u_i),m(u_{i-1}))$ does not contain $u_{i}$. If we swap this component, we obtain a coloring where $\Vv$ is still a cycle of the same size, and where a fan around $v$ is a comet, again by Lemma~\ref{lem:only_cycles}, $\Vv$ is invertible in this coloring, and so it is in the coloring $\beta$.
\end{proof}

By Lemma~\ref{lem:fans_around_Vv}, we already have that if $(z,z')\in \Vv^2$, then $X_z(c_{z'})$ is a cycle containing $z'$, so we now assume that $(z,z')$ is not in $\Vv^2$.

\begin{claim}\label{cl:z_not_path}
The fan $\Zz$ is not a path.
\end{claim}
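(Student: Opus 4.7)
The plan is to assume, for contradiction, that $\Zz = (zz_1,\ldots,zz_r)$ is a path and produce a coloring $\beta'$ that is $\Vv$-equivalent to $\beta$ in which some fan around $v$ fails to be a cycle. By Observation~\ref{obs:Vv-minimum-stable}, $\Vv$ will still be a minimum cycle in $\beta'$, so Lemma~\ref{lem:only_cycles} is violated. By Lemma~\ref{lem:fans_around_Vv} we may already assume that $z$ and $z'$ are not both in $V(\Vv)$, so the interesting case is when at least one of them lives on $\Xx$ or $\Yy$ but not on $\Vv$. The main case split is therefore on the location of $z$.

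In the main case $z\notin V(\Vv)$, the vertex $z$ lies on one of the saturated cycles around $v$, say $z=x_i\in V(\Xx)$ with $\Xx=(vx_1,\ldots,vx_s)$. I would invert $\Zz$ to obtain a coloring $\beta'$ in which $z$ is missing $c_{z'}$ instead of $m^\beta(z)$. The only edges whose colors change lie at $z$, so if $v\notin V(\Zz)$ the coloring $\beta'$ is immediately $\Vv$-equivalent to $\beta$ (no color among $\beta(\Vv)\cup\{m(v)\}$ can be touched at $z$, because $m^\beta(z)\neq m(v)$ by the saturation of $\Xx$ established in Claim~\ref{cl:V_X_Y_saturated}). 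When $v\in V(\Zz)$, one instead inverts $\Zz$ only up to the sub-fan strictly preceding $v$, which is again $\Vv$-stable by Observation~\ref{obs:not_touching_subsequence_stable}. In either situation, the sequence $\Xx$ ceases to be a cycle in $\beta'$: the edge $vx_{i+1}$ is still colored $m^\beta(z)$, but the new missing color of $z$ is $c_{z'}=m(z')$, so tracing $X_v^{\beta'}(\beta(vx_1))$ beyond $vx_i=vz$ now follows the edge colored $c_{z'}$, which cannot be $vz'$ (since $z'$ is missing that color) and therefore forces the fan to be a path or a comet, contradicting Lemma~\ref{lem:only_cycles}.

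In the remaining case $z\in V(\Vv)$, we have $z'\in V(\Xx)\cup V(\Yy)\setminus V(\Vv)$. Here the idea is to use that $\Xx$ and $\Yy$ are tight and saturated (Claim~\ref{cl:V_X_Y_saturated}) to perform a $\Vv$-stable sequence of swaps of the form $K_{z'}(m(z'),c)$ walking $c$ along the missing colors of $\Xx$ or $\Yy$, until one reaches a coloring where the missing color of $z'$ belongs to $\beta(\Vv)$; this reduces the situation to a pair of vertices both in $V(\Vv)$, at which point Lemma~\ref{lem:fans_around_Vv} (and Corollary~\ref{cor:X_u(m(v))_cycle}) applies and $\Zz$ must be a cycle.

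The main obstacle is the subcase $v\in V(\Zz)$: the partial inversion of $\Zz$ must stop at precisely the right place to leave $\Vv$ untouched, and the identification of the destroyed fan around $v$ requires one to know that the new missing color of $z$ propagates non-trivially through $\Xx$ or $\Yy$. The tools developed for the induction, in particular Lemma~\ref{lem:P(i)_is_true}, Lemma~\ref{lem:P(i)_implies_Q(i)}, Lemma~\ref{lem:not_changing_last_vertices} and Lemma~\ref{lem:(Vv,u)-independent_subfan_avoiding_v}, are exactly what is needed to track missing colors across the inversion and pin down the resulting fan around $v$ as a path or comet, yielding the contradiction with Lemma~\ref{lem:only_cycles}.
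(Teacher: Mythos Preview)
Your Case 1 ($z\notin V(\Vv)$) is on the right track and close to the paper's argument, but the paper is more economical: rather than fully inverting $\Zz$ and treating $v\in V(\Zz)$ as a separate sub-case, the paper inverts $\Zz$ only until $m(z)$ first lands in $(\beta(\Vv)\cup\beta(\Xx)\cup\beta(\Yy))\setminus\{m_\beta(z)\}$. Since $m(v)$ belongs to this set, the inversion automatically halts before the edge $zv$ is ever swapped, so no sub-case is needed; and since every intermediate value of $m(z)$ lies outside this set, none of the three cycles is disturbed along the way. Your one-line justification for $\Vv$-equivalence after a \emph{full} inversion (``$m^\beta(z)\neq m(v)$'') is not sufficient: the intermediate values of $m(z)$ may well land in $\beta(\Vv)$, at which point some $z_{j'}$ could lie in $V(\Vv)$ and stability fails. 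The paper's early stopping is precisely what avoids this, and it replaces your whole last paragraph of obstacle-handling by a single sentence.

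Your Case 2 ($z\in V(\Vv)$, $z'\notin V(\Vv)$) is a genuine gap. The plan to push $m(z')$ into $\beta(\Vv)$ by $\Vv$-stable swaps $K_{z'}(m(z'),c)$ cannot succeed. The color sets $\beta(\Xx)\setminus\{m(v)\}$ and $\beta(\Vv)$ are disjoint, so walking $m(z')$ through colors of $\Xx$ (or $\Yy$) never reaches $\beta(\Vv)\setminus\{m(v)\}$; the only shared color is $m(v)$, but since $\Xx$ is saturated the component $K_{z'}(m(z'),m(v))$ contains $v$, and swapping it alters $m(v)$. More fundamentally, even if you could arrange $m^{\beta'}(z')\in\beta(\Vv)$, Lemma~\ref{lem:fans_around_Vv} (or Corollary~\ref{cor:X_u(m(v))_cycle}) would only tell you that $X_z^{\beta'}(m^{\beta'}(z'))$ is a cycle in $\beta'$; this is a \emph{different} fan from $\Zz=X_z(m^\beta(z'))$, since the starting color has changed, so nothing follows about $\Zz$ in $\beta$. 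The paper makes no case distinction on the location of $z$: the same partial inversion of $\Zz$ is used throughout, and the resulting non-cycle fan around $v$ yields the contradiction with Lemma~\ref{lem:only_cycles} directly.
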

\begin{proof}
As $\Zz$ is a path, we invert it until we reach a coloring where $m(z)\in (\beta(\Vv)\cup\beta(\Xx)\cup\beta(\Yy))\setminus \{m_{\beta}(z)\}$. In this coloring, the fan $\Vv$ is still a cycle of the same size, and, there is a fan around $v$ which is a path or a comet, by Lemma~\ref{lem:only_cycles}, this is a contradiction.
\end{proof}

\begin{claim}
The fan $\Zz$ is entangled with $\Vv$, $\Xx$, and $\Yy$.
\end{claim}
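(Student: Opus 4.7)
The plan is to argue by contradiction, treating entanglement with each of $\Vv$, $\Xx$, $\Yy$ uniformly: by Claim~\ref{cl:V_X_Y_saturated} all three are saturated cycles around $v$, so the argument for any one of them is structurally identical, and I will write it for $\Vv$. Suppose $\Zz$ is not entangled with $\Vv$; pick $c \in \beta(\Zz)\cap\beta(\Vv)$ with $w := M(\Zz,c) \neq M(\Vv,c) =: w'$, so both miss $c$.

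Since $\Vv$ is saturated, $w' \in K_v(c_v,c)$, and consequently $w \notin K_v(c_v,c)$. Let $\beta'$ be the coloring obtained by swapping $C := K_w(c_v,c)$. I will verify that this swap is $\Vv$-stable. First, $v \notin C$, because the $v$-component is $K_v(c_v,c) \neq C$. Next, no vertex of $V(\Vv)$ is an endpoint of $C$: no $v_i$ misses $c_v$, since in a cycle fan around $v$ every $m(v_i)$ differs from $m(v) = c_v$; and the only $v_i$ possibly missing $c$ is $w'$ itself, which lies in $K_v \neq C$. Finally, the only edge of $\Vv$ possibly colored $c$ is the unique edge $vv_i$ adjacent to $w'$, and since it is incident to $v$ it lies in $K_v(c_v,c)$, not $C$. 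Hence by Observation~\ref{obs:Vv-minimum-stable} the cycle $\Vv$ remains a minimum cycle in $\beta'$, and by Lemma~\ref{lem:only_cycles} every fan around $v$ in $\beta'$ is a cycle.

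The contradiction will come from exhibiting a fan around $v$ in $\beta'$ that is instead a path. In $\beta'$ the vertex $w$ now misses $c_v = m(v)$. If $w$ is a neighbor of $v$, then the edge $vw$ (which is unaffected by the swap since $v \notin C$) keeps its color, and the fan $X^{\beta'}_v(\beta(vw))$ is a path of length one, ending at $w$ which misses $m(v)$; this contradicts Lemma~\ref{lem:only_cycles}, and we are done in this case. The genuine obstacle is therefore the case $w \notin N(v)$. My plan is to choose $c$ so that $w$ is the \emph{earliest} vertex of $\Zz$ witnessing a failure of entanglement with $\Vv$; the subfan $\Zz_{\leq w}$ is then entangled with $\Vv$ up to $w$, and I intend to cascade a sequence of further bichromatic swaps along $\Zz_{\leq w}$, each preserving $\Vv$-stability by the same argument, propagating the conflict forward through $\Zz$ until it reaches a neighbor of $v$, at which point the path-fan contradiction applies. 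This cascading construction mirrors those in the proofs of Lemma~\ref{lem:fans_around_Vv} and Lemma~\ref{lem:P(i)_implies_Q(i)}, and handling it rigorously is the technical heart of the argument.
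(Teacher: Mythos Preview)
Your argument is correct through the case $w \in N(v)$, and the $\Vv$-stability verification is clean. But the case $w \notin N(v)$—your proposed cascading—is a genuine gap that you yourself flag as the technical heart and leave undone. The difficulty is structural: the vertices of $\Zz$ are neighbors of $z$, not of $v$, and there is no reason any vertex of $\Zz_{\leq w}$ must be adjacent to $v$ (entanglement of $\Zz_{<w}$ with $\Vv$ is vacuous whenever $\beta(\Zz_{<w}) \cap \beta(\Vv) = \emptyset$), so there is no guaranteed endpoint for your cascade. The analogies you draw to Lemma~\ref{lem:fans_around_Vv} and Lemma~\ref{lem:P(i)_implies_Q(i)} do not transfer, since those cascades run along fans centered at the vertex where the eventual contradiction is harvested.

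The paper sidesteps this obstruction with a single additional swap. Once $m(w) = c_v$ (either directly, or after your swap), swap $K_w(c_z, c_v)$ where $c_z = m(z)$: since $z$ lies in one of the three saturated cycles, $z \in K_v(c_v, c_z)$, so this component avoids $z$ and all of $\Vv$, $\Xx$, $\Yy$; afterwards $m(w) = c_z = m(z)$. Because $w$ is automatically a neighbor of $z$ (it lies in $V(\Zz)$), the fan $\Zz = X_z(c_{z'})$ is now itself a path, contradicting Claim~\ref{cl:z_not_path}. The insight you missed is to aim the contradiction at $\Zz$ via the immediately preceding claim rather than trying to manufacture a path-fan around $v$; this replaces your undeveloped cascade with one controlled move.
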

\begin{proof}
Let us assume that there exists $z''\in \Zz\setminus(\Vv\cup\Yy\cup\Xx)$ with $m(z'')\in (\beta(\Vv)\cup\beta(\Yy)\cup\beta(\Xx))\setminus\{c_z\}$. If $m(z'') = c_v$, since the cycles are saturated by Claim~\ref{cl:V_X_Y_saturated}, $K_{z''}(c_z,c_v)$ does not contain any vertex of $(\Vv\cup\Yy\cup\Xx)$, and after swapping it, we obtain a coloring where $\Vv$ is still a cycle of the same size and where $\Zz$ is a path, by Claim~\ref{cl:z_not_path}, this is a contradiction. If $m(z'') \neq c_v$, then, since the cycles are saturated, the component $K_{z''}(c_v,m(z''))$ does not contain any vertex of $(\Vv\cup\Yy\cup\Xx)$, so if we swap it, we obtain a coloring which corresponds to the previous case.
\end{proof}

\begin{claim}
The fan $\Zz$ is not a comet.
\end{claim}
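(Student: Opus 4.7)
My plan is to exhibit a single Kempe swap that turns the presumed comet $\Zz$ into a path while leaving $\Vv$ unchanged as a minimum cycle, and then to contradict Claim~\ref{cl:z_not_path} in the resulting coloring.

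Suppose $\Zz = (zz_1,\ldots,zz_t)$ is a comet, so there exist $i<t$ with $m(z_i) = m(z_t) = c$; note that $c \neq c_z$ since $z$ has no edge of color $c_z$. I first rule out $c \in \beta(\Vv)\cup\beta(\Xx)\cup\beta(\Yy)$. In that situation the preceding entanglement claim forces each of $z_i,z_t$ into $\Vv\cup\Xx\cup\Yy$; being distinct and missing the same color $c$, they must sit in two different cycles among the three, since each cycle has a unique vertex per missing color from its spectrum. But by saturation of those two cycles (Claim~\ref{cl:V_X_Y_saturated}), both would then belong to $K_v(c_v,c)$, which is a bichromatic path having $v$ (missing $c_v$) as one endpoint, and therefore can contain at most one further vertex missing $c$, a contradiction. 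Hence $c \notin \beta(\Vv)\cup\beta(\Xx)\cup\beta(\Yy)$; in particular, no vertex of $V(\Vv)\cup V(\Xx)\cup V(\Yy)\cup\{v\}$ misses $c$, so $z_i$ and $z_t$ lie outside this set.

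Since $K_z(c_z,c)$ is a bichromatic path with $z$ as one endpoint, at most one of $z_i,z_t$ lies in it; by the symmetry between them, assume $z_t \notin K_z(c_z,c)$ and set $C = K_{z_t}(c_z,c)$. Mirroring the disjointness argument already carried out in the preceding entanglement claim, I use the saturation of $\Vv,\Xx,\Yy$ together with $c \notin \beta(\Vv\cup\Xx\cup\Yy)$ and $z_t \notin V(\Vv\cup\Xx\cup\Yy)\cup\{v\}$ to check that $C$ meets neither $V(\Vv)\cup V(\Xx)\cup V(\Yy)\cup\{v,z\}$ nor any edge of $\Vv\cup\Xx\cup\Yy$, and further avoids the edges $zz_1,\ldots,zz_{t-1}$ and the vertices $z_1,\ldots,z_{t-1}$. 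Swapping $C$ then produces a coloring $\beta'$ in which $\Vv$ is still the same minimum cycle (Observation~\ref{obs:Vv-minimum-stable}), the edge sequence $zz_1,\ldots,zz_t$ keeps its colors, the missing colors of $z$ and of $z_1,\ldots,z_{t-1}$ are unchanged, and only $m^{\beta'}(z_t) = c_z$.

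Consequently, in $\beta'$ the fan $X_z^{\beta'}(c_{z'})$ traverses exactly $(zz_1,\ldots,zz_t)$ and then terminates at $z_t$, since $z$ has no edge of color $c_z = m^{\beta'}(z_t)$; hence $\Zz^{\beta'}$ is a path. The hypotheses of Lemma~\ref{lem:cycles_interactions} transfer to $\beta'$, so Claim~\ref{cl:z_not_path} applied in $\beta'$ yields the desired contradiction. The main obstacle is the disjointness verification for $C$ in the previous paragraph: it rests on the same careful Kempe-chain bookkeeping that underlies the preceding claim and constitutes the technical heart of the argument.
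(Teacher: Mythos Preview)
Your proposal is correct and follows essentially the same approach as the paper: rule out $c\in\beta(\Vv)\cup\beta(\Xx)\cup\beta(\Yy)$ using the preceding entanglement claim, then swap a $(c,c_z)$-component not containing $z$ to turn $\Zz$ into a path and contradict Claim~\ref{cl:z_not_path}. Your treatment of the first step is more elaborate than the paper's one-line appeal to entanglement (once both $z_i,z_t$ are forced into $V(\Vv)\cup V(\Xx)\cup V(\Yy)$, the fact that the three cycles share only $c_v$ in their spectra already yields the contradiction directly, without the saturation detour), and a few of your disjointness assertions are phrased more strongly than is strictly provable or needed---for instance $z_i$ might well lie in $C$, but this is harmless since the resulting fan is then an even shorter path---yet the skeleton of the argument and its dependence on the earlier claims are identical to the paper's.
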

\begin{proof}
Assume that $\Zz$ is a comet, there exist $z_1$ and $z_2$ with $m(z_1) = m(z_2) = c$. By the previous claim, we have that $c\not\in(\beta(\Vv)\cup\beta(\Xx)\cup\beta(\Yy))$, otherwise, $\Zz$ is not entangled with one of these cycles. Hence, the component $K_z(c,m(z))$ either contains $z_1$ or $z_2$, and without loss of generality we can assume that $z_1\not\in K_z(c,m(z))$. If we swap $K_{z_1}(c,m(z))$ we obtain a coloring where no edge of $(\Vv\cup\Xx\cup\Yy)$ has changed and where $\Zz$ is a path, by Claim~\ref{cl:z_not_path} this is a contradiction.
\end{proof}

By the previous claims, $\Zz$ is a cycle, and as it is entangled with the three other cycles, it contains $z'$; this concludes the proof.

\end{proof}

\paragraph*{Acknowledgements.}

We thank Marthe Bonamy, Franti\v{s}ek Kardo\v{s} and \'{E}ric Sopena for helpful discussions, and gratefully thank Penny Haxell for pointing out a mistake in an earlier version

\bibliography{reference}

\newcommand{\etalchar}[1]{$^{#1}$}
\begin{thebibliography}{BDK{\etalchar{+}}21}

\bibitem[AC16]{casselgren}
Armen~S. Asratian and Carl~Johan Casselgren.
\newblock Solution of {V}izing's problem on interchanges for the case of graphs
  with maximum degree 4 and related results.
\newblock {\em Journal of Graph Theory}, 82(4):350--373, 2016.

\bibitem[BDK{\etalchar{+}}21]{bonamy2021vizing}
Marthe Bonamy, Oscar Defrain, Tereza Klimo{\v{s}}ov{\'a}, Aur{\'e}lie Lagoutte,
  and Jonathan Narboni.
\newblock On vizing's edge colouring question.
\newblock {\em arXiv preprint arXiv:2107.07900}, 2021.

\bibitem[MMS12]{mcdonald2012kempe}
Jessica McDonald, Bojan Mohar, and Diego Scheide.
\newblock Kempe equivalence of edge-colorings in subcubic and subquartic
  graphs.
\newblock {\em Journal of Graph theory}, 70(2):226--239, 2012.

\bibitem[Moh06]{mohar2006kempe}
Bojan Mohar.
\newblock Kempe equivalence of colorings.
\newblock In {\em Graph Theory in Paris}, pages 287--297. Springer, 2006.

\end{thebibliography}

\end{document}